\newtheorem*{theorem*}{Theorem}
\newtheorem{theorem}{Theorem}[section]
\newtheorem{proposition}[theorem]{Proposition}
\newtheorem{lemma}[theorem]{Lemma}
\newtheorem{definition}[theorem]{Definition}
\newtheorem{corollary}[theorem]{Corollary}
\numberwithin{equation}{section}
\title{Invariant Measure Construction at a Fixed Mass}
\author{
        Justin T. Brereton \\}
\date{\today}
\begin{document}

\begin{abstract}
In this paper we analyze the derivative nonlinear Schr\"odinger equation on $\mathbb{T}$ with randomized initial data in $\cap_{s < \frac{1}{2}} H^{s}(\mathbb{T})$ chosen according to a Wiener measure. We construct an invariant measure at each sufficiently small, fixed mass $m$ through an argument that emulates the divergence theorem in infinitely many dimensions. We also prove that the density function needed to construct the Wiener measure is in $L^p$, even after scaling of the Fourier coefficients of the initial data. 
\end{abstract}

\maketitle

\section{Introduction}  \label{mathrefs}

Consider the initial value problem for the derivative non-linear Schr\"odinger equation (DNLS) on the torus $\mathbb{T} = \mathbb{R}/2\pi \mathbb{Z}:$

\begin{equation} \label{1}
\begin{cases}
\begin{split}
u(t,x) &: \mathbb{R} \times \mathbb{T} \rightarrow \mathbb{C}\\ 
iu_t + \Delta u &= i \partial_x(|u|^2u)\\
u(0)&= u_0 . \\
\end{split} \end{cases}
\end{equation}

\let\thefootnote\relax\footnote{This research was supported in part by the Simons Foundation}

The equation has been used as a model for Alfv\~en waves in a magnetized plasma \cite{Nahmod}. It is a completely integrable equation and the main conserved quantities are the mass $m(u)$, momentum/Hamiltonian $P(u)$, and energy $E(u)$ given by
\begin{equation}\begin{cases}
\begin{split} 
m(u(t)) = m(u_0) &= \int_{\mathbb{T}} |u(x)|^2 dx\\
P(u(t)) =P(u_0) &= \int_{\mathbb{T}} \frac{1}{2}|u|^4 + i\overline{u}\partial_{x}u dx\\
E(u(t)) = E(u_0) &=\int_{\mathbb{T}} |\partial_x  u|^2 +  \frac{3}{4}i\overline{u}^2\partial_{x}(u^2)+\frac{1}{2}|u|^6  dx.\\ 
\end{split} \end{cases}
\end{equation} 
Solutions to the real line equivalent of \eqref{1} satisfy the dilation symmetry  $u(t,x) \rightarrow u_{\alpha}(t,x) =  \alpha^{1/2}u(\alpha^2t, \alpha)$. This symmetry preserves the $L^2(\mathbb{R})$ norm of a solution, making this equation mass-critical. One might hope for $L^2(\mathbb{T})$ well-posedness, however, the derivative loss on the RHS of equation \eqref{1} means that it is very difficult to bound the Duhamel term when $u$ has low regularity. 

Global well-posedness is known for $H^{s}(\mathbb{T}), s > \frac{1}{2}$ \cite{Win}.  Gr\"unrock and Herr proved only local well-posedness in the Fourier-Lebesgue spaces $\widehat{H}^s_r(\mathbb{T})$ for $s \ge \frac{1}{2}, r \in (\frac{4}{3}, 2)$ \cite{Grunrock}. These spaces scale like $H^{s}(\mathbb{T})$  for $s > \frac{1}{4}$ \cite{Grunrock}.  A proof of well-posedness anywhere near $L^2(\mathbb{T})$ still seems unlikely, so research has turned to investigating almost sure well-posedness for randomized data. Failure of well-posedness could be due to certain exceptional initial data, and it still may be true that 'almost all' initial data lead to a solution. We first construct a measure $\rho$ that is invariant with respect to our PDE, in this case the DNLS equation. We then aim to prove that the set of data that fails to produce a solution on $\mathbb{R}$ has measure zero with respect to $\rho$. This is referred to as 'almost sure' well-posedness. 

The study of almost sure well-posedness began with the formative paper of  Lebowitz, Rose and Speer on the NLS equation \cite{Statistical}, and continued with Bourgain for the NLS and other equations in the 1990s \cite{B1} \cite{B2}, \cite{B3}. 
Several authors have studied the DNLS equation with random initial data taken from a Wiener measure based on the energy $E(u)$ of \eqref{1} \cite{Burq}, \cite{Nahmod}, \cite{TT}. Since the energy and mass are conserved by the flow of \eqref{1}, Liouville's Theorem implies that a measure defined based on these quantities is invariant by the flow of \eqref{1}. Such a Wiener measure yields a randomized initial data $u(0)$ that is almost surely in $H^{s}(\mathbb{T})$ for all $s < \frac{1}{2}$. 

It is natural to investigate the ergodicity of the measure, and whether the set of initial data can be decomposed into $\rho$-invariant subsets. This question was posed by Lebowitz-Rose-Speer in \cite{Statistical}, and in \cite{Oh} Oh and Quastel constructed an invariant measure conditioned on mass $\int_{\mathbb{T}} |u|^2 dx$  and momentum $\int_{\mathbb{T}} i u\overline{u_x} dx$ for the NLS equation, with conserved energy $\int_{\mathbb{T}} \frac{1}{2}|\partial_x u|^2 \pm \frac{1}{p}|u|^p dx$.

The $|u|^4$ term in the momentum of the DNLS equation is too high an exponent for the method of construction in \cite{Oh}, so we instead construct a measure conditioned only on mass $m$.  

\subsection{The initial measure $\mu$}
We start by defining the proper space of functions where our measure will be supported. We will adopt the convention that for each $f \in L^1(\mathbb{T})$, \newline  $\int_{\mathbb{T}} f(x)  dx = \int_0^{2\pi} \frac{1}{2\pi} f(x) dx$ and therefore $\|f\|_{L^p(\mathbb{T})}^p  = \int_{\mathbb{T}} |f(x)|^p dx$. 
\begin{definition}
Define \begin{equation}
H^{1/2-}(\mathbb{T}) = \cap_{s < \frac{1}{2}} H^{s}(\mathbb{T}).
\end{equation}
\end{definition}

Consider a probability space $(\Omega, \mathcal{F}, \mathbb{P})$ and a sequence $\{g_n\}$ of i.i.d. complex Gaussian random variables on $\Omega$ with mean $0$ and variance $1$. This means that $g_n$ can be written as a sum of its real and imaginary components $g_n = a_n + ib_n$, and both $a_n$ and $b_n$ are $N(0,\frac{1}{2})$ distributed, so $\mathbb{E}(|g_n|^2) = 1$. We select our initial data $u(x)$ by the randomization 
\begin{equation} \label{8}
u(x) = \phi(\omega,x) = \sum_{n \in \mathbb{Z}} \frac{g_n(\omega)}{\langle n \rangle}e^{inx}.
\end{equation} 
Observe that 
\begin{equation}
\|u\|^2_{H^s(\mathbb{T})} = \sum_{n \in \mathbb{Z}} \frac{\langle n \rangle ^{2s}|g_n|^2 }{\langle n \rangle ^2},
\end{equation} which converges almost surely for $s < \frac{1}{2}$. Thus the distribution of $u(x)$ induces a measure $\mu = \mathbb{P} \circ \phi^{-1}$ on $H^{1/2-}(\mathbb{T})$. The measure $\mu$ also satisfies the equation 
\begin{equation}
d\mu  = \lim_{N \rightarrow \infty} d_Ne^{-\int_{\mathbb{T}} |P_{\le N}u|^2 + |P_{\le N}\partial_x u|^2 dx} \Pi_{|n| \le N} d\widehat{u(n)},
\end{equation} for a sequence of normalizing constants $d_N$, the limit of which contains the mass and part of the energy in the exponential. 

We use this base measure $\mu$ to construct a sequence $\rho_N$ of measures that converge to $\rho$ as in section 1.4 of \cite{Burq}. These $\rho_N$ are defined to contain the exponential of the rest of the energy at the $N$ frequency, so that the limit measure $\rho$ effectively contains $e^{-E(u) - m(u)}$, and should be invariant with respect to \eqref{1}.

\begin{definition}

For any non-negative integer $N$, define the function  

\begin{equation}
\begin{split}
f_N(u) &= \frac{-3i}{4} \cdot \int_{\mathbb{T}} \overline{P_{\le N}u}^2 \cdot  \partial_x(P_{\le N}u)^2  dx \\
&= \frac{-3i}{2} \cdot \int_{\mathbb{T}} \overline{P_{\le N}u}^2 \cdot  P_{\le N}u \cdot \partial_x(P_{\le N}u)  dx, \\
\end{split}
\end{equation}
which equals the negative of the middle term of the DNLS energy. Indeed, we have  
\begin{equation*}
f_N(u) - \frac{1}{2}\int_{\mathbb{T}} |P_{\le N}u(x)|^6dx  - \int_{\mathbb{T}} |\partial_x  P_{\le N} u|^2 dx = -E(P_{\le N} u).
\end{equation*}
\end{definition}

While $\|e^{f_N(u) - \frac{1}{2}\int_{\mathbb{T}} |P_{\le N}u(x)|^6dx}\|_{L^1(H^{1/2-}(\mathbb{T}), d\mu)}$ is likely unbounded, if we multiply the exponential by a cutoff function that restricts to $u$ with small mass, we can obtain a density  in $L^1(\mu)$, which gives us a well-defined measure. We define the density of our measure to be
\begin{equation} \label{7}
\Psi_N(u) = 1_{\|P_{\le N} u \|_{L^2(\mathbb{T})}^2 < m}  e^{f_N(u) - \frac{1}{2}\int |P_{\le N}u(x)|^6dx},
\end{equation}
where $1_{S}$ denotes the indicator function of a set $S$, and $m$ is a constant maximum value of the mass that is chosen to be sufficiently small. Finally, the measure $\rho_N$ is defined as 
\begin{equation}
d\rho_{N}  = \beta_N\Psi_N(u) d\mu,
\end{equation}
 where $\beta_N = \|\Psi_N\|^{-1}_{L^1( \mu)}$ is chosen so that this is a probability measure with total measure $1$. Of course, this definition only makes sense if $\Psi_N$ is indeed in $L^1(\mu)$. It is sufficient to prove that the functions $\Psi_N(u)$ are contained in $L^p(\mu)$ and that they converge in measure to a function $\Psi(u)$ in $L^p(\mu)$ for all $p \ge 1$. One must prove that the functions $\Psi_N(u)$ satisfy 
\begin{equation} \label{9}
\|\Psi_N(u)\|_{L^p(\mu)} \le C(p)
\end{equation}
for all $p,N$. A special case of Proposition \ref{24} proves that $\{f_N, N \ge 0\}$ is a Cauchy sequence in $L^2(\mu)$, and this combined with \eqref{9} implies the convergence in $\mu$-measure of $\Psi_N(u)$ to $\Psi(u)$. This is identical to the argument in \cite{TT}, which presents \eqref{9} as Proposition 4.2. Though their general argument is sound the proof contains some error, specifically, the exponent in their Proposition 3.1 is incorrect. This error was brought to the attention of the authors, and Tzvetkov was able to quickly produce a correct proof. In Proposition \ref{24} of this paper we will prove a more general result that allows for the coefficients of $u$ to be scaled.

In \cite{TT} they use \eqref{9} to complete the construction of the invariant measure $\rho$ defined by 
 \begin{equation}
d\rho  = \beta\Psi(u) d\mu,
\end{equation} and then prove that $\rho_N$ is invariant with respect to a transformed version of equation \eqref{1} that is then truncated to finitely many dimensions. In addition, Burq, Thomann and Tzvetkov applied a compactness argument to these measures, implying the existence of a subsequence $\rho_{{N_k}}$ such that the solutions taken from $\rho_{N_k}$ converge to global solutions of \eqref{1} \cite{Burq}. This proves almost sure global well-posedness of \eqref{1} for initial data taken from $\rho$.   
So all that remains to complete their argument  is a correct proof of \eqref{9}.

Nahmod, Oh, Rey-Bellet and Staffilani give an alternative proof of almost sure global well-posedness in the Fourier-Lebesgue spaces by performing a gauge transformation on \eqref{1} in \cite{Nahmod}. They constructed a measure that is invariant with respect to the gauged equation.  Their proof references the incorrect Propositions 3.1  and 4.2 of \cite{TT}, however, their proof is still valid when relying on the weaker bounds presented in this paper. Then in \cite{Nahmod2},  Nahmod, Rey-Bellet, Sheffield and Staffilani showed that this gauged measure is absolutely continuous with respect to the measure studied in \cite{TT}, \cite{Burq}, and this paper.

More recently, Genovese, Luca and Valeri constructed invariant measures for the DNLS equation using conservation laws at the $H^k(\mathbb{T})$ level for $k \ge 2$ \cite{GG}. Both their proof and this proof of boundedness of the measure revolve around demonstrating that when the mass $m$ is small, we have the bound 
\begin{equation}
\mathbb{P}(e^{f_N(u)}> e^{\lambda}) \lesssim e^{-C\lambda/m^{\theta}}
\end{equation} for large $N, \lambda$ and some positive $\theta >0$. This implies that for sufficiently small $m$, the expected value of $1_{\|P_{\le N} u\|_{L^2}^2 <m_{p}} e^{f_N(u)}$ is bounded. The proof in \cite{GG} relies on the bound on \newline $\|\partial_x^k (P_{\le N} u) \cdot \overline{P_{\le N} u}\|_{L^{\infty}}$ for $k \ge 2$ in their Lemma 5.6. A bound for $k=1$ would be applicable to the $H^1(\mathbb{T})$ level energy studied in this paper, however, their proof of Lemma 5.6 fails when $k=1$ due to a logarithmic divergence. We present a different proof that utilizes dyadic decompositions. 

\begin{theorem} \label{5}
For each $p \ge 1$ there exists a constant $m_{p} > 0$ such that for all dyadic $N$ we have
\begin{equation} \label{6}
\begin{split} 
\| 1_{\|P_{\le N} u\|_{L^2(\mathbb{T})}^2 <m_{p}} e^{f_N(u)} \|^p_{L^p(H^{1/2-}(\mathbb{T}), d\mu)}   &\le C(p),\\ 
\end{split}
\end{equation}
which implies that 
\begin{equation} \label{7}
\begin{split} 
\| \Psi_N(u) \|^p_{L^p(H^{1/2-}(\mathbb{T}),d\mu)}  &\le C(p).\\ 
\end{split}
\end{equation}
\end{theorem}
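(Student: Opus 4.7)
The plan is to establish the tail estimate
\[
\mu\bigl(A_m\cap\{f_N(u) > \lambda\}\bigr) \lesssim e^{-C\lambda/m^{\theta}}, \qquad A_m := \{\|P_{\le N}u\|_{L^2(\mathbb{T})}^2 < m\},
\]
uniformly in dyadic $N$, valid for all sufficiently large $\lambda$ and some constants $\theta, C > 0$ independent of $N$. Given this bound, the layer-cake identity
\[
\bigl\|1_{A_m}\, e^{f_N}\bigr\|_{L^p(\mu)}^p = p\int_0^{\infty}e^{p\lambda}\,\mu(A_m\cap\{f_N > \lambda\})\,d\lambda
\]
immediately yields \eqref{6} as soon as $m_p$ is chosen so small that $p\,m_p^{\theta} < C$; the bound \eqref{7} is then automatic from $|\Psi_N(u)| \le 1_{A_m}e^{f_N(u)}$.

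To obtain the tail bound I rewrite $f_N$ by grouping two of its factors,
\[
f_N(u) = -\frac{3i}{2}\int_{\mathbb{T}}\bigl(\overline{P_{\le N}u}\cdot\partial_x(P_{\le N}u)\bigr)\cdot|P_{\le N}u|^2\,dx,
\]
and apply H\"older's inequality together with the mass cut-off to obtain
\[
|f_N(u)|\cdot 1_{A_m} \le \tfrac{3}{2}\,m\,\bigl\|\overline{P_{\le N}u}\cdot\partial_x(P_{\le N}u)\bigr\|_{L^{\infty}(\mathbb{T})}.
\]
Writing $v_N := \overline{P_{\le N}u}\cdot\partial_x(P_{\le N}u)$, it now suffices to prove a tail bound $\mu(\|v_N\|_{L^\infty(\mathbb{T})} > t)\lesssim e^{-ct^{1/\alpha}}$ uniformly in $N$ for some $\alpha\ge 1$; setting $t = 2\lambda/(3m)$ yields the required estimate with $\theta = \alpha$.

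The random function $v_N$ belongs to the second Wiener chaos in the Gaussians $\{g_n\}$, so its Fourier coefficients enjoy hypercontractive bounds $\|\widehat{v_N}(n)\|_{L^p(\mu)}\le (p-1)\|\widehat{v_N}(n)\|_{L^2(\mu)}$ for $p\ge 2$. A direct Wick computation shows that $\mathbb{E}|\widehat{v_N}(n)|^2$ is of order $1$ uniformly for $|n|\lesssim N$, so any attempt to bound $\|v_N\|_{L^\infty}$ by a fixed Sobolev embedding diverges logarithmically in $N$---precisely the breakdown of \cite[Lemma 5.6]{GG} at $k=1$. The remedy I would use is a dyadic Littlewood--Paley decomposition
\[
v_N = \sum_{M_1,M_2\ \mathrm{dyadic}}v_N^{M_1,M_2}, \qquad v_N^{M_1,M_2} := \overline{P_{M_1}u}\cdot\partial_x(P_{M_2}u),
\]
where $P_M$ projects onto $|n|\sim M$. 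Each $v_N^{M_1,M_2}$ is a second-order chaos whose Fourier support lies in $|n|\lesssim M_1+M_2$, so Bernstein's inequality combined with hypercontractivity and a careful tracking of the derivative cost $M_2$ against the randomization weights $\langle n_j\rangle^{-1}$ yields $L^p(\mu)$-bounds on $\|v_N^{M_1,M_2}\|_{L^\infty(\mathbb{T})}$ that sum geometrically over $M_1, M_2$. Converting the summed $L^p$-bound into an exponential tail via standard large-deviation optimization in $p$ then delivers the required estimate.

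The main obstacle is precisely the logarithmic divergence described above: the single derivative in $f_N$ saturates the Sobolev decay built into the randomization, so no fixed-norm estimate of $v_N$ can be uniform in $N$. The dyadic decomposition circumvents this by tracking the contribution of each frequency shell separately, so that the derivative cost is absorbed piece by piece, and it is here that the present proof departs from the $k\ge 2$ argument of \cite{GG}.
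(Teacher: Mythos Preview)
Your reduction to the bilinear object $v_N=\overline{u_N}\,\partial_x u_N$ has a genuine gap: no uniform-in-$N$ tail bound on $\|v_N\|_{L^\infty(\mathbb{T})}$ exists. Already at a single point,
\[
\mathbb{E}|v_N(0)|^2=\mathbb{E}|u_N(0)|^2\cdot\mathbb{E}|\partial_x u_N(0)|^2
=\Bigl(\sum_{|n|\le N}\frac{1}{\langle n\rangle^{2}}\Bigr)\Bigl(\sum_{|n|\le N}\frac{n^{2}}{\langle n\rangle^{2}}\Bigr)\sim N,
\]
so $\|\,\|v_N\|_{L^\infty}\|_{L^2(\mu)}\gtrsim N^{1/2}$ and the divergence is polynomial, not logarithmic. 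The dyadic decomposition you propose cannot repair this: in $v_N^{M_1,M_2}=\overline{P_{M_1}u}\cdot\partial_x P_{M_2}u$ the derivative cost $M_2$ exactly cancels the randomization weight $\langle n_2\rangle^{-1}\sim M_2^{-1}$, leaving no decay in $M_2$, so the pieces do not sum geometrically. The step ``bounds that sum geometrically over $M_1,M_2$'' is the point that fails.

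The paper's argument is structurally different and avoids this trap by \emph{not} spending the mass cutoff on two full factors of $u$ at the outset. Instead one bounds
\[
|f_{N_3}(u)|\lesssim\Bigl(\sum_{n\le N_3}n^{1/2}\|P_n u\|_{L^2}\Bigr)^{4},
\]
keeping all four copies of $u$, and introduces $\lambda$-dependent dyadic thresholds $N_1\sim\lambda^{1/2}/m$ and $N_3\sim\lambda^{2}/m$. Below $N_1$ the mass bound plus Cauchy--Schwarz makes the sum deterministically $<c\lambda^{1/4}$; between $N_1$ and $N_3$ one uses Gaussian large-deviation bounds on each $\|P_n u\|_{L^2}^2$; and the high-frequency piece $f_N-f_{N_3}$ is controlled by a fourth-order Wiener chaos tail estimate (the analogue of your hypercontractivity step, but applied to $f_N-f_{N_3}$ itself rather than to $v_N$), which gives decay $e^{-cN_3^{1/4}\lambda^{1/2}}$. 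The point is that the mass cutoff is only invoked in the low-frequency regime, while the high frequencies are handled by chaos decay in $N_3$; choosing $N_1,N_3$ to balance these contributions produces $e^{-c\lambda/m^{\theta}}$.
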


We prove Theorem \ref{5} by proving the more general Theorem \ref{401} that allows for finite scaling of the Fourier coefficients of $u$. This completes the construction of the invariant measure $\rho$. We then use this base measure to construct a set of measures $\rho_m$, each supported on the set of functions with mass $m$, as in \cite{Oh}. We also present a new method of constructing such measures. This method can also been applied to the Benjamin-Ono equation, which can be found in the author's dissertation. The main result of this paper is the following theorem:

\begin{theorem} \label{17}
There exists a small constant $m'$ such that for each $m < m'$ there exists a measure $\rho_m$ supported on the set $H^{1/2-}({\mathbb{T}}) \cap \{ \|u\|_{L^2}^2 = m\}$.  There also exists a subset $\Sigma \subset H^{1/2-}(\mathbb{T}) \cap \{ \|u\|_{L^2}^2 = m\}$ of full $\rho_m$ measure such that each $u_0 \in \Sigma$ produces a global solution $u$ to \eqref{1}. The measure $\rho_m$ is invariant, meaning the random variable $u(t)$ has distribution $\rho_m$ for all $t \in \mathbb{R}$.
\end{theorem}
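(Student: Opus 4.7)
The plan is to construct $\rho_m$ as the mass-$m$ slice of $\rho$, by emulating the finite-dimensional coarea/divergence theorem at the Galerkin level and then passing to the limit $N \to \infty$. This approach has the advantage over abstract disintegration that it pins down $\rho_m$ at every $m < m'$ rather than merely at $(M_*\rho)$-almost every $m$, which is exactly what Theorem \ref{17} demands.

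I would first work at fixed truncation level $N$. The measure $d\rho_N = \beta_N \Psi_N(u)\, d\mu$ lives on $\mathbb{C}^{2N+1}$ with a smooth density against the Gaussian $\mu$. Since $M_N(u) = \|P_{\le N}u\|_{L^2}^2$ is smooth with $|\nabla M_N(u)| = 2\sqrt{M_N(u)}$ away from the origin, the coarea formula yields
\[
d\rho_N \;=\; \int_0^{\infty} d\rho_m^N\, dm,\qquad d\rho_m^N(u) \;=\; \frac{\beta_N \Psi_N(u)}{2\sqrt{m}}\, d\sigma_{S_N^m}(u),
\]
where $d\sigma_{S_N^m}$ is the surface measure induced by $\mu$ on the sphere $S_N^m = \{M_N(u) = m\}$. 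Equivalently, for bounded continuous $F$,
\[
\int F\, d\rho_m^N \;=\; \lim_{\epsilon\to 0} \frac{1}{2\epsilon}\int_{|M_N(u) - m| < \epsilon} F(u)\, d\rho_N(u).
\]
(To keep the construction nondegenerate at the target mass, one should widen the cutoff in $\Psi_N$ from $1_{M_N < m}$ to $1_{M_N < m_0}$ for some $m < m_0 < m_p$ still in the range of Theorem \ref{5}.)

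To send $N \to \infty$ I would use the uniform bound $\|\Psi_N\|_{L^p(\mu)} \le C(p)$ from Theorem \ref{5} together with the Cauchy property of $\{f_N\}$ in $L^2(\mu)$ from Proposition \ref{24}, which give $\Psi_N \to \Psi$ in $L^p(\mu)$ for every $p \ge 1$. Approximating $1_{|M_N - m| < \epsilon}$ by smooth cutoffs and invoking dominated convergence should produce a weak limit $\rho_m^N \to \rho_m$ with $\rho_m$ supported on $\{M(u) = m\}$. For invariance and global well-posedness I would follow the strategy of Burq-Thomann-Tzvetkov \cite{Burq}: the Galerkin truncation of \eqref{1} is a Hamiltonian ODE on $\mathbb{C}^{2N+1}$ conserving $M_N$ and the truncated energy, so Liouville's theorem (this is where the divergence theorem enters directly, since divergence-free flows preserve Lebesgue measure and hence the spherical slice measure) gives invariance of $\rho_m^N$ under the truncated flow. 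Uniform Theorem \ref{5} bounds plus the usual compactness argument then produce a full-$\rho_m$-measure set $\Sigma$ on which truncated solutions converge along a subsequence to global solutions of \eqref{1}, with $(u(t))_* \rho_m = \rho_m$ for every $t \in \mathbb{R}$.

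The main obstacle I anticipate is the pointwise-in-$m$ control Theorem \ref{17} demands, as opposed to the $(M_*\rho)$-a.e.\ control that abstract disintegration would supply. Since $\{M(u) = m\}$ is $\rho$-null, the slice $\rho_m$ is accessible only through its finite-dimensional approximants $\rho_m^N$, so convergence and nontriviality must be established for each individual $m < m'$. Making this rigorous will require showing that the density of $(M_N)_*\rho_N$ is continuous on $(0, m')$ uniformly in $N$, which I expect to follow from a further dyadic analysis in the spirit of the proof of Theorem \ref{5}. A secondary but related subtlety is lifting almost-sure global well-posedness from $\rho$ to each fixed-mass slice, which again demands uniform control of the limiting procedure in a neighborhood of each target mass.
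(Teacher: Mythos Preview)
Your overall architecture---slice at a finite level, use invariance of the truncated Hamiltonian flow, then pass to a limit via compactness as in \cite{Burq}---is close in spirit to what the paper does, but the proposal has a genuine gap at exactly the point you flag as the ``main obstacle,'' and that gap is not a technicality: it is the core content of the paper.

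The issue is that Theorem~\ref{5} gives $\|\Psi_N\|_{L^p(\mu)}\le C(p)$, an $L^p$ bound against the \emph{full} Gaussian $\mu$. Through coarea this controls $\int_0^{m_0}\bigl(\int_{S_N^{m'}} e^{f_N}\,d\sigma\bigr)\,dm'$, hence the slice integral $\int_{S_N^m} e^{f_N}\,d\sigma$ only for almost every $m$ and with no uniformity in $N$. To get a well-defined limit $\rho_m$ at a \emph{prescribed} $m$ you need $\sup_N \int e^{p f_N}\,d\mu_m < \infty$ for that specific $m$. Your sentence ``I expect [this] to follow from a further dyadic analysis in the spirit of the proof of Theorem~\ref{5}'' is where the proposal stops being a proof; repeating the Theorem~\ref{5} argument does not help, because the mass constraint kills the independence structure that makes the large-deviation estimate work.

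The paper's answer is the scaling decomposition in Section~5: one writes $m\,\mu_m=\sum_{k\ge 0}\frac{c_{k,m}}{\langle k\rangle^2}\nu_m^k$, where each $\nu_m^k$ arises from dilating only the $\pm k$ Fourier modes. Theorem~\ref{960} converts $\int e^{pf_N}\,d\nu_m^k$ into an integral over the \emph{interior} $A_m$ against $d\mu$ itself (plus a derivative term), so that Theorem~\ref{401}---the scaled version of Theorem~\ref{5} you would need---can be applied. The remaining work is a covariance argument (Propositions~\ref{963}--\ref{964}) showing that the sign of $(2-|g_k|^2-|g_{-k}|^2)$ does not spoil the bound. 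This machinery, not a refinement of the dyadic estimate, is what produces the pointwise-in-$m$ control. Without something playing this role your limit $\rho_m^N\to\rho_m$ is only available along an $m$-dependent subsequence for a.e.\ $m$, which is weaker than Theorem~\ref{17}.

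A secondary point: the paper does not pass $N\to\infty$ on finite-dimensional spheres. It instead thickens in mass ($\mu_m^\epsilon$, $\rho_m^\epsilon$), proves $\rho_m^\epsilon$ is invariant directly from invariance of $\rho$ and conservation of mass (Lemma~\ref{1310}), and sends $\epsilon\to 0$ via Prokhorov--Skorokhod on the solution space. This sidesteps having to prove invariance of a truncated flow on each sphere and matching it to the full dynamics, which your outline would also need to supply.
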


Theorem \ref{17} applies to the space $H^{1/2-}(\mathbb{T}) = \cap_{\sigma < \frac{1}{2}} H^{\sigma}(\mathbb{T})$, however, it is not always practical to do analysis in this space. Though $H^{1/2-}(\mathbb{T})$ inherits the intersection topology, it is not a normed space. On occasion we will  prove results in $H^{\sigma}(\mathbb{T})$ for all $\sigma = \frac{1}{2}-$, and infer that each result holds on the intersection.

Section 2 gives background on the fixed mass problem and how we expand on the methods developed in \cite{Oh}. Then in Section 3 we prove the necessary bounds on $\mu(|f_N-f_M| > \lambda)$ to complete the construction of $\rho$ in Section 4. 

The rest of the paper is devoted to proving invariance of the fixed mass. In Section 5 we define the measure $\mu_m$ for each sufficiently small $m$, and decompose it as a sum of measures $\nu_m^k$, while proving various properties of these measures. Then we use these bounds to construct the measure $\rho_m$ in Section 6. We finish the paper by constructing global solutions with $\rho_m$-distributed initial data and proving Theorem \ref{17} in Section 7.  

\subsection{Notation}
Given a function $u$ on the torus, the Fourier series of $u$ is $u(x) = \sum_{n \in \mathbb{Z}} \widehat{u(n)}e^{inx}$, which defines each Fourier coefficient $\widehat{u(n)}$. We let $u_N = P_{\le N}u = \sum_{|n| \le N} \widehat{u(n)}e^{inx}$. Similarly let $ P_{< N}u = \sum_{|n| < N} \widehat{u(n)}e^{inx}$.

In this paper a dyadic integer denotes an integer power of $2$. Given a dyadic integer $N$, let $n \sim N$ refer to the set of integers $n$ that satisfy $|n| \in (\frac{N}{2}, N]$. For dyadic $N$, $P_N u$ denotes $\sum_{n \sim N} e^{inx}\widehat{u(n)}$. We could equivalently define $P_N u  = P_{\le N}u - P_{\le \frac{N}{2}}u$. 

Given two dyadic integers $N,M$ we write $N \sim M$ if $N,M$ are within a factor of $8$ of each other. We will use this notation when noting that for any 4 dyadic integers $N_1 \ge N_2 \ge N_3 \ge N_4$, and 4 functions $u_1, u_2, u_3, u_4$, we have 
$\int_{\mathbb{T}} P_{N_1}u_1P_{N_2}u_2P_{N_3}u_3P_{N_4}u_4 dx = 0$ unless $N_1 \sim N_2$. 

We will use several constants throught this proof. Constants with a lowercase $c$: $c_0, c_1, c_2$, etc. will refer to specific numbers, usually small enough to satisfy a certain bound, whereas a capital $C$ refers to a generic constant, and $C(m), C(m,k)$ etc.. refer to constants that depend upon the variables inside the parenthesis.

\section{An overview of the fixed mass argument}
We start with a brief overview of the argument in \cite{Oh} and give a description of our method of constructing a fixed mass measure. In \cite{Oh}, Oh and Quastel constructed an invariant measure of the NLS equation for each mass $a >0$ and momentum $b \in  \mathbb{R}$. Given $a,b$ and $\epsilon >0$, they defined a measure $\mu^{a,b}_{\epsilon}$ supported on the set of functions in $H^{1/2-}_{\mathbb{T}}$ with mass in $(a-\epsilon,a+\epsilon)$ and momentum in $(b-\epsilon, b+\epsilon)$. They then took the limit of this sequence of measures as $\epsilon \rightarrow 0$ to define a measure $\mu^{a,b}$. Multiplying in  a factor of $e^{\pm \frac{1}{p}\int_{\mathbb{T}} |u|^p dx}$, the remaining term of the NLS energy, they defined the measure $\rho^{a,b}_{\epsilon}$ as 
\begin{equation} \label{50}
\rho^{a,b}_{\epsilon}(E) = Z^{a,b}_{\epsilon} \int_{E} e^{\pm \frac{1}{p}\int_{\mathbb{T}} |u|^p dx} d\mu^{a,b}_{\epsilon}.
\end{equation}

Each measure $\rho^{a,b}_{\epsilon}$ is invariant under the NLS, and taking the limit as $\epsilon \rightarrow 0$ defines the invariant measure $\rho^{a,b}$ that is supported on the set of functions with mass $a$ and momentum $b$. In order to show that this limit exists and is finite, Oh and Quastel proved that for $a,b$ the integral $\int e^{\pm \frac{1}{p}\int_{\mathbb{T}} |u|^p dx} d\mu^{a,b}_{\epsilon}$ is bounded uniformly in $\epsilon$. In the defocusing case, the $e^{ \frac{-1}{p}\int_{\mathbb{T}} |u|^p dx}$ term is trivially bounded by $1$.  For the focusing case they achieve this bound through a direct analysis of $\int_{\mathbb{T}} |u|^p dx$, by using Sobolev embedding, dyadic decomposition, etc.. For more complicated integrals this approach will be insufficient.

In this paper we develop a more general way of bounding $\|F\|_{L^1(\mu_m)}$ for an arbitrary function $F$, where $\mu_m$ is the analogue of the measure $\mu^{a,b}$ that is supported on functions with mass $m$. The method is motivated by the divergence theorem. The set of functions with mass $m$ is the set of functions $u$ that satisfy $\sum |\widehat{u(n)}|^2=m$. This is essentially an ellipsoid in infinitely many dimensions, and ideally one would like to calculate the integral of $F$ over this ellipsoid by relating it to partial derivatives of $F$ on the interior of the ellipsoid, in a manner similar to the divergence theorem. 

We do this through a scaling argument that can be used as a substitute for the divergence theorem in situations, such as infinitely many dimensions, where divergence and Stokes' theorems do not apply. We will use $\mathbb{S}^{1}$, the unit circle, as an example. This is the set of $(x, y)$ satisfying $x^2 +  y^2  = 1$. If we take $\sigma$ to be $\frac{1}{2\pi}$ times the surface measure $S$ on $\mathbb{S}^1$ (so $\sigma$ is a probability measure), and apply the divergence theorem we have 
\begin{equation} \label{52}
\begin{split}
\int_{\mathbb{S}^{1}} f(x,y)d\sigma(x,y) &= \int_{\mathbb{S}^{1}} f(x,y)( x,y ) \cdot ( x,y ) d\sigma  \\
&= \int_{\mathbb{S}^{1}} (  xf(x,y),yf(x,y)  ) \cdot ( x,y ) \frac{1}{2\pi}dS  \\
\end{split}
\end{equation}

\begin{equation*}
\begin{split}
\int_{\mathbb{S}^{1}} f(x,y)d\sigma(x,y) &=\frac{1}{2\pi} \int_{x^2 + y^2 \le 1} \text{Div}( xf(x,y), yf(x,y) ) dA\\
&= \frac{1}{2\pi} \int_{x^2 + y^2 \le 1} 2f(x,y)+ xf_{x}(x,y) + yf_{y}(x,y) dA.\\
\end{split}
\end{equation*}

If we instead write $\sigma$ as the limit of area integrals over the region between the circle of radius $s^2$ and $\frac{1}{s^2}$ as $s \rightarrow 1+$ we can apply change of variables and obtain

\begin{equation} \label{94}
\begin{split}
\int_{\mathbb{S}^{1}} f(x,y)d\sigma(x,y) &= \lim_{s \rightarrow 1+} \frac{1}{\pi(s^2 - 1/s^2)}\int_{\frac{1}{s^2} \le x^2 + y^2 \le s^2} 
f(x,y) dA\\
&= \lim_{s \rightarrow 1+} \frac{1}{\pi(s^2 - 1/s^2)} \left[  \int_{x^2 + y^2 \le s^2} 
f(x,y) dA  - \int_{x^2 + y^2 \le 1/s^2} 
f(x,y) dA             \right] \\
&= \lim_{s \rightarrow 1+} \frac{1}{\pi(s^2 - 1/s^2)} \left[  \int_{x^2 + y^2 \le 1} s^2f(sx,sy) - \frac{1}{s^2}f\left( \frac{x}{s},\frac{y}{s} \right) dA        \right] \\
&= \frac{1}{\pi} \frac{d}{dr}|_{r=1}  \int_{x^2 + y^2 \le 1} rf(\sqrt{r}x,\sqrt{r}y) dA\\
&= \frac{1}{2\pi} \int_{x^2 + y^2 \le 1}2f(x,y) + xf_{x}(x,y) + yf_{y}(x,y) dA.\\
\end{split}
\end{equation}
This calculation confirms the divergence theorem and demonstrates that we can emulate the divergence theorem through scaling. In $n$ dimensions we get a factor of $s^n$ when we apply change of variables. If we attempted this scaling with infinitely many variables this factor would be infinite and the argument would not work. So to bound functions on the set of $g_n$ such that $\sum \frac{|g_n|^2}{\langle n \rangle^2} = m$, we want to scale the $g_n$ one at a time.

We can decompose the surface measure on the circle as a sum of probability measures generated by scaling by letting $\sigma = \frac{1}{2}(2x^2\sigma + 2y^2\sigma)$. Indeed, if we scale with respect to $x$ we have

\begin{equation} \label{54}
\begin{split}
\lim_{s \rightarrow 1+} \frac{1}{\pi(s - 1/s)}&\int_{\frac{x^2}{s^2} + y^2 \le 1 \le s^2x^2 + y^2} 
f(x,y) dA\\ 
&= \lim_{s \rightarrow 1+} \frac{1}{\pi(s - 1/s)} \left[  \int_{\frac{x^2}{s^2} + y^2 \le 1} 
f(x,y) dA  - \int_{s^2x^2 + y^2 \le 1} 
f(x,y) dA             \right] \\
&= \lim_{s \rightarrow 1+} \frac{1}{\pi(s - 1/s)} \left[  \int_{x^2 + y^2 \le 1} sf(sx,y) - \frac{1}{s}f\left( \frac{x}{s}, y \right) dA        \right] \\
&= \frac{1}{\pi} \frac{d}{dr}|_{r=1}  \int_{x^2 + y^2 \le 1} rf(rx,y) dA\\
&= \frac{1}{\pi} \int_{x^2 + y^2 \le 1} f(x,y) + xf_{x}(x,y) dA.\\
\end{split}
\end{equation}
This also equals
\begin{equation} \label{55}
\begin{split}
 \frac{1}{\pi} \int_{x^2 + y^2 \le 1} f(x,y) + xf_{x}(x,y) dA &=  \frac{1}{\pi} \int_{x^2 + y^2 \le 1} \text{Div} (  xf(x,y),0 )  dA \\
&= \frac{1}{2\pi} \int_{\mathbb{S}^1}  ( 2xf(x,y),0 ) \cdot ( x,y) dS \\
&= \int_{\mathbb{S}^1} f(x,y) d[2x^2\sigma(x,y)].\\
\end{split}
\end{equation} 

The same scaling argument with $y$ instead of $x$ constructs the measure $2y^2\sigma$, and taking the average of $2x^2\sigma$ and $2y^2\sigma$ is $\sigma$, since clearly $x^2+y^2 =1$ on the circle. 

This demonstrates that we can decompose the surface measure of a sphere in finite dimensions into components associated with scaling of each variable. We will similarly write the measure $\mu_m$  as the sum 
\begin{equation}
\mu_m = \frac{1}{m}\sum_{k}  |\widehat{u(k)}|^2\mu_m = \frac{1}{m}\sum_{k \ge 0} c_{k,m} \nu_{m}^k
\end{equation} for a sequence of probability measures $\nu_{m}^{k}$ derived from scaling.

We will construct these measures $\nu_m^k$ from $\mu$ in a manner analogous to the construction of $2x^2\sigma$ and $2y^2\sigma$ from the Lebesgue area measure. Since the base measure $\mu$ is Gaussian and not a Lebesgue measure there will be an extra Gaussian factor, but it is easily bounded above. 

\section{Probabilistic bounds on $f_N(u)$}
In this section we develop some tools necessary for the proof that the energy remainder term $F_N(u)$ is bounded in the $L^p(\mu)$ norm and for the more general Theorem \ref{401}, which is needed for the fixed mass argument.

\subsection{Bounds on sums of Gaussians} \label{mathrefs} 
We start with some probabilistic bounds on the size of sums involving $g_n$ and $|g_n|^2$. 

\begin{lemma} \label{12}
Suppose $\{g_n\}$ is a sequence of i.i.d. complex Gaussians with mean $0$ and variance $1$, and $N$ is a dyadic integer. For $\lambda > 4N\ln(2)$ we have
\begin{equation}
\mathbb{P}(\sum_{n \sim N} |g_n|^2 > \lambda) \le e^{-\lambda/4} .
\end{equation}
\end{lemma}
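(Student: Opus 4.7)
The plan is a standard Chernoff / exponential Markov bound. For any $s > 0$, independence of the $g_n$ gives
\begin{equation*}
\mathbb{P}\!\left(\sum_{n \sim N} |g_n|^2 > \lambda\right) \;\le\; e^{-s\lambda}\,\mathbb{E}\!\left[\exp\!\Bigl(s\sum_{n \sim N} |g_n|^2\Bigr)\right] \;=\; e^{-s\lambda}\prod_{n\sim N}\mathbb{E}\!\left[e^{s|g_n|^2}\right],
\end{equation*}
so the proof reduces to computing the moment generating function of a single $|g_n|^2$ and then optimizing in $s$.

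Writing $g_n = a_n + i b_n$ with $a_n,b_n$ independent $N(0,\tfrac{1}{2})$, one has $|g_n|^2 = a_n^2 + b_n^2$, and a direct Gaussian integral (equivalently, the observation that $|g_n|^2$ is exponentially distributed with mean $1$, or that $2|g_n|^2 \sim \chi^2_2$) gives
\begin{equation*}
\mathbb{E}\!\left[e^{s|g_n|^2}\right] \;=\; \frac{1}{1-s} \qquad \text{for } s < 1.
\end{equation*}
For dyadic $N \ge 2$ the set $\{n : |n|\in(N/2,N]\}$ has exactly $N$ elements, so substituting into the Chernoff bound yields
\begin{equation*}
\mathbb{P}\!\left(\sum_{n \sim N} |g_n|^2 > \lambda\right) \;\le\; \frac{e^{-s\lambda}}{(1-s)^N}.
\end{equation*}

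Finally, choosing $s = 1/2$ turns the right-hand side into $2^N\,e^{-\lambda/2}$, and the hypothesis $\lambda > 4N\ln 2$ is precisely what rearranges to $2^N \le e^{\lambda/4}$, producing the claimed estimate $e^{-\lambda/4}$. There is no real obstacle: the argument is entirely routine, and the value $s=\tfrac{1}{2}$ is the natural choice that makes the threshold scale linearly in $N$ with the advertised constant $4\ln 2$.
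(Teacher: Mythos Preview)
Your proof is correct and follows essentially the same approach as the paper's: both apply the exponential Markov/Chernoff bound, compute the moment generating function of $|g_n|^2$ (the paper via a direct Gaussian integral, you via the cleaner observation that it equals $1/(1-s)$), take $s=t=\tfrac12$ to obtain $2^N e^{-\lambda/2}$, and then absorb $2^N$ using $\lambda>4N\ln 2$.
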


\begin{proof}

By Markov's inequality, we have
\begin{equation}
\mathbb{P}(\sum_{n \sim N} |g_n|^2 > \lambda) \le e^{-\lambda t} \mathbb{E}_{\mathbb{P}}(e^{t \sum_{n \sim N} |g_n|^2}).
\end{equation}

Now we set $t = \frac{1}{2}$ and evaluate the expected value:
\begin{equation*}
\begin{split}
\mathbb{P}(\sum_{n \sim N} |g_n|^2 > \lambda) &\le e^{-\lambda t}  \int_{\mathbb{C}^{N}}\frac{e^{(t-1)\sum_{n \sim N}|x_n|^2}}{\pi^{N}}  \Pi_{n \sim N} dx_n\\
&=  e^{-\lambda/2}  \int_{\mathbb{C}^{N}} \frac{e^{(-1/2)\sum_{n \sim N}|x_n|^2}}{\pi^{N}}  \Pi_{n \sim N} dx_n\\
&\le e^{-\lambda /2}\left( \int_{\mathbb{R}} \sqrt{2}e^{-x^2/2}dx/\sqrt{2\pi}  \right)^{2N}\\
&= e^{-\lambda /2}2^{N}\\
&= e^{-\lambda /2}e^{N\ln(2)}\\
&\le  e^{-\lambda /4}.\\
\end{split}
\end{equation*}

\end{proof}
This is the first step towards proving the next bound on the $L^2$ norm of the high frequencies of $u$.  The following lemma gives us a bound on the probability that $\|P_Nu\|_{L^2}^2 > \lambda$ . 
\begin{lemma} \label{13}
Suppose $u(x)$ is chosen according to \eqref{8} and the measure $\mu$, where each $g_n$ has variance $\le \mathcal{M}^2$.  For any dyadic $N$ and $\lambda > \frac{16\mathcal{M}^2\ln(2)}{N}$, we have $$\mu(\|P_Nu\|^2_{L^2} >\lambda) \le Ce^{-N^2\lambda/16\mathcal{M}^2}.$$
\end{lemma}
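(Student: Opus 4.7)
The plan is to reduce the statement to Lemma 12 via Plancherel's theorem and a rescaling trick.

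First I would use the fact that under the normalized convention on $\mathbb{T}$, Plancherel gives
\begin{equation*}
\|P_N u\|_{L^2(\mathbb{T})}^2 \;=\; \sum_{n \sim N} \frac{|g_n|^2}{\langle n\rangle^2}.
\end{equation*}
Since $n\sim N$ means $|n|\in(N/2,N]$, we have $\langle n\rangle^2 \ge 1 + N^2/4 \ge N^2/4$, so $\frac{1}{\langle n\rangle^2} \le \frac{4}{N^2}$ uniformly in $n\sim N$. Therefore
\begin{equation*}
\|P_N u\|_{L^2}^2 \;\le\; \frac{4}{N^2}\sum_{n\sim N}|g_n|^2,
\end{equation*}
and the event $\{\|P_N u\|_{L^2}^2 > \lambda\}$ is contained in $\{\sum_{n\sim N}|g_n|^2 > N^2\lambda/4\}$.

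Next I would handle the variance bound by normalization. Writing $h_n := g_n/\mathcal{M}$, each $h_n$ is a complex Gaussian with mean $0$ and variance $\le 1$, so its distribution is dominated (in the sense that its $|h_n|^2$ is stochastically dominated) by a standard complex Gaussian of variance $1$. Since Lemma \ref{12} was proved via the Laplace transform bound $\mathbb{E}(e^{t|g_n|^2})\le 2$ for $t=1/2$ when $g_n$ has variance $1$, exactly the same calculation goes through for the $h_n$ (the relevant Gaussian integral only improves when the variance decreases). Thus for any $\mu > 4N\mathcal{M}^2\ln 2$,
\begin{equation*}
\mathbb{P}\!\left(\sum_{n\sim N}|g_n|^2 > \mu\right) \;=\; \mathbb{P}\!\left(\sum_{n\sim N}|h_n|^2 > \mu/\mathcal{M}^2\right) \;\le\; e^{-\mu/(4\mathcal{M}^2)},
\end{equation*}
where the last inequality is Lemma \ref{12} applied to the sequence $\{h_n\}$ (its hypothesis $\mu/\mathcal{M}^2 > 4N\ln 2$ is precisely $\mu > 4N\mathcal{M}^2 \ln 2$).

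Finally, I would combine these two steps with $\mu = N^2\lambda/4$. The hypothesis condition $\lambda > 16\mathcal{M}^2\ln(2)/N$ is exactly what is needed to ensure $N^2\lambda/4 > 4N\mathcal{M}^2\ln 2$, so Lemma \ref{12} applies and yields
\begin{equation*}
\mu(\|P_N u\|_{L^2}^2 > \lambda) \;\le\; \mathbb{P}\!\left(\sum_{n\sim N}|g_n|^2 > \tfrac{N^2\lambda}{4}\right) \;\le\; e^{-N^2\lambda/(16\mathcal{M}^2)},
\end{equation*}
which is the claimed bound (with constant $C=1$). There is no real obstacle here; the only subtle point is making sure the range of $\lambda$ in the hypothesis is tracked correctly through the rescaling, and noting that Lemma \ref{12}'s proof goes through verbatim when the variance is at most $1$ rather than equal to $1$.
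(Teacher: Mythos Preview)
Your proof is correct and follows essentially the same route as the paper: bound $\|P_N u\|_{L^2}^2$ via Plancherel by $\tfrac{4}{N^2}\sum_{n\sim N}|g_n|^2$, normalize the Gaussians to have variance $\le 1$, and invoke Lemma~\ref{12}. The only cosmetic difference is that the paper normalizes each $g_n$ by its own standard deviation $\mathbb{E}(|g_n|^2)^{1/2}$ rather than uniformly by $\mathcal{M}$, but this leads to the same conclusion.
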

\begin{proof} By equation \eqref{8} we have the bound 
\begin{equation}
\|P_Nu\|_{L^2}^2 = \sum_{n \sim N} \frac{|g_{n}|^2}{n^2} \le \frac{4\mathcal{M}^2}{N^2}\sum_{n \sim N} \frac{|g_{n}|^2}{\mathbb{E}(|g_n|^2)}.
\end{equation}

Clearly $\frac{g_{n}}{\mathbb{E}(|g_n|^2)^{1/2}}$ is a complex Gaussian random variable with mean $0$ and variance $1$. The probability that this is $>\lambda =  4\lambda'\mathcal{M}^2/N^2$, for $ \lambda' > 4\ln(2)N$, is bounded by $e^{-\lambda'/4}$. Multiplying by $\frac{N^2}{4\mathcal{M}^2}$ and applying Lemma \ref{12} gives us the desired bound.
\end{proof}

\subsection{Wiener Chaos bound} \label{mathrefs} 
In this section we  prove a more general version of Proposition 3.2 of \cite{TT}. We will need this more general version of the result for the fixed mass problem, where we apply scaling to the coefficients $g_n$ and must allow each to be multiplied by a constant factor. Let $\overline{n} = (n_1,n_2,n_3,n_4) \in \mathbb{Z}^4$ denote a $4$-tuple of integers. We aim to prove the following:

\begin{theorem} \label{401}
Let $c(n) : \mathbb{Z} \rightarrow \mathbb{R}$ be a function that is bounded above by a constant $\mathcal{M}$ and satisfies $c(n) = c(-n)$, and let $p \ge 1$ be a positive power. For the fourth order chaos function \begin{equation} \label{451}
S_{4,N}(\omega) = 3/2 \sum_{n_1+n_2 = n_3+n_4, |n_i| \le N} c(n_1)c(n_2)c(n_3)c(n_4)
n_1\frac{g_{n_1}(\omega)g_{n_2}(\omega)\overline{g_{n_3}(\omega)}\overline{g_{n_4}(\omega)}}{\langle n_1 \rangle \langle n_2 \rangle \langle n_3 \rangle \langle n_4 \rangle}
\end{equation} 
and sufficiently small $m_{\mathcal{M},p}$ we have 
$$ \int_{H^{1/2-}(\mathbb{T})} 1_{\|u \|_{L^2}^2 < m_{\mathcal{M},p}} e^{pS_{4,N}(u)}d\mu \le C(\mathcal{M},p)$$
for some constant depending on $\mathcal{M},p$.
\end{theorem}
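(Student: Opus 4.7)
The plan is to reduce the desired exponential integrability to a uniform tail bound for $S_{4,N}$, and then control the tail by combining a Wiener chaos moment estimate for ``typical'' configurations with a deterministic bound that exploits the mass cutoff. By the layer-cake formula,
\begin{equation*}
\int 1_{\|u\|_{L^2}^2<m}\, e^{pS_{4,N}}\,d\mu = 1 + p\int_0^\infty e^{p\lambda}\,\mu\bigl(\{\|u\|_{L^2}^2<m\}\cap\{S_{4,N}>\lambda\}\bigr)\,d\lambda,
\end{equation*}
so it suffices to prove a uniform-in-$N$ bound of the form $\mu(\{\|u\|_{L^2}^2<m\}\cap\{S_{4,N}>\lambda\}) \le C(\mathcal{M})\exp(-c\lambda/(\mathcal{M}^A m^\theta))$ for some $\theta, A > 0$ and all large $\lambda$. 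Choosing $m_{\mathcal{M},p}$ small enough that the resulting exponent dominates $p\lambda$ then yields the required $L^p$ bound.

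For the tail estimate I would decompose $S_{4,N} = \sum_{\bar N} S_{\bar N}$ dyadically with $\bar N = (N_1,N_2,N_3,N_4)$ and $n_i \sim N_i$. The momentum constraint $n_1+n_2=n_3+n_4$ forces the two largest scales to be comparable, so one may assume $N_1 \geq N_2 \geq N_3 \geq N_4$ with $N_1 \sim N_2$. The superficial derivative loss from the factor $n_1$ must be tamed by symmetrization: since the summand is symmetric in $(n_1,n_2)$ aside from the $n_1$ factor, one replaces $n_1$ by $\tfrac{n_1+n_2}{2}=\tfrac{n_3+n_4}{2}$, so that the effective loss is only $\max(N_3,N_4)$ rather than $N_1$. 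With this cancellation in hand, a direct Wick-type moment computation yields a polylogarithmic bound $\|S_{\bar N}\|_{L^{2q}(\mu)} \le C(q)\mathcal{M}^4\, A(\bar N)$, and Chebyshev's inequality produces the typical fourth-order chaos tail $\mu(|S_{\bar N}|>\lambda_{\bar N}) \le \exp(-c(\lambda_{\bar N}/\mathcal{M}^4 A(\bar N))^{1/2})$.

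Separately, on the event $\{\|u\|_{L^2}^2<m\}$ the block admits the deterministic estimate
\begin{equation*}
|S_{\bar N}| \le C\mathcal{M}^4 \max(N_3,N_4)\,\|P_{N_1}u\|_{L^2}\|P_{N_2}u\|_{L^2}\|P_{N_3}u\|_{L^2}\|P_{N_4}u\|_{L^2},
\end{equation*}
which, combined with the pointwise mass bound $\sum_j \|P_{N_j}u\|_{L^2}^2 \le m$ and the Gaussian tails on $\|P_{N_j}u\|_{L^2}^2$ from Lemma \ref{13}, forces the event $\{|S_{\bar N}|>\lambda_{\bar N}\}\cap\{\|u\|_{L^2}^2<m\}$ into a set of $\mu$-measure at most a positive power of $m$. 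Choosing positive weights $w(\bar N)$ with $\sum_{\bar N}w(\bar N)\le 1$ and setting $\lambda_{\bar N}=\lambda w(\bar N)$, I combine the two estimates via a union bound: whenever the chaos bound already gives something better than $e^{-c\lambda/m^\theta}$ I use it, and otherwise I invoke the mass-cutoff bound to extract the extra factor of $m^\theta$.

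The main obstacle is extracting the symmetrization cancellation cleanly in every dyadic regime, in particular the nearly diagonal blocks where $(n_3,n_4)$ is close to $(n_1,n_2)$ and Wick contractions collapse $S_{\bar N}$ to a second-order chaos plus a deterministic remainder; and then balancing the chaos estimate against the mass-cutoff estimate so that a factor of $m^\theta$ appears in the final exponent uniformly in $N$ and over all dyadic $\bar N$. This dyadic balancing is precisely where the argument improves on the $L^\infty$-based method of \cite{GG}, which diverges logarithmically at the $H^1$ level corresponding to $k=1$.
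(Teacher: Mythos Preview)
Your outline shares the essential ingredients with the paper's proof---layer-cake formula, a Wiener chaos tail for high frequencies, and a deterministic estimate exploiting the mass cutoff for low frequencies---but the decomposition you propose is organized quite differently. You want to split $S_{4,N}$ into four-fold dyadic blocks $S_{\bar N}$ and balance a chaos tail against a mass-cutoff bound block by block via a union bound with weights $w(\bar N)$. The paper instead introduces a single $\lambda$-dependent threshold $N_3\sim \lambda^2/m$, writes $f_N=(f_N-f_{N_3})+f_{N_3}$, handles the high-frequency difference in one shot via Corollary~\ref{25}, and then collapses the entire low-frequency quadrilinear form into the \emph{factorized} bound
\[
|f_{N_3}(\tilde u)|\lesssim\Bigl(\sum_{n\le N_3} n^{1/2}\|P_n\tilde u\|_{L^2}\Bigr)^4,
\]
reducing a four-scale problem to a single-scale sum. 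That sum is then split at two further $\lambda$-dependent thresholds $N_1,N_2$: below $N_1$ the mass constraint kills it deterministically, and on the middle range a carefully chosen weight function $h(n)$ combined with Lemma~\ref{13} produces the $e^{-c\lambda/m}$ tail. The factorization trick is what lets the paper avoid your four-index bookkeeping.

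Two points in your sketch need repair before it goes through. First, your deterministic block bound $|S_{\bar N}|\lesssim \mathcal M^4 N_3\prod_j\|P_{N_j}u\|_{L^2}$ is missing the Bernstein factor: putting the two low-frequency pieces in $L^\infty$ costs an additional $(N_3N_4)^{1/2}$, so the honest bound is $N_3^{3/2}N_4^{1/2}\prod L^2$ (or the paper's undistorted $N_1(N_3N_4)^{1/2}\prod L^2$). Second, the symmetrization gain $|n_1+n_2|\lesssim N_3$ only holds when the two largest scales both come from the same (barred or unbarred) pair; when they are split across pairs, $|n_1+n_2|=|n_3+n_4|$ is only $\lesssim N_1$, so your ``effective loss $\max(N_3,N_4)$'' is not uniform over configurations. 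Neither issue is fatal---the argument can be salvaged with the correct powers and a case split---but the paper's factorized fourth-power bound sidesteps both subtleties and makes the middle-frequency balancing transparent.
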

When $c(n)=1$ identically we have equation \eqref{6}, the necessary bound for the measures $\rho_N$ and $\rho$. So proving Theorem \ref{401} in turn proves Theorem \ref{5}.  Let $S_{4}(\omega)$ be the sum in equation \eqref{451} without the restriction  $|n_i| \le N$. We expect $S_{4,N}(\omega)$ to converge to $S_{4}(\omega)$.

To show $S_{4}(\omega) \in L^p(\Omega)$  we prove that $S_{4,N}(\omega)$ is a Cauchy sequence in $L^p(\Omega)$. We desire a bound on $\mathbb{P}(|S_{4,N}(\omega) - S_{4,M}(\omega)| > \lambda)$ for large $N,M$. Since $S_{4,N}(\omega) - S_{4,M}(\omega)$ is a sum of products of Gaussians we expect a lot of cancellation of positive and negative terms. It turns out a bound on  $\|S_{4,N}(\omega) - S_{4,M}(\omega) \|_{L^2(\Omega)}$ implies a bound in much higher $L^p(\Omega)$ norms, which in turn leads to a bound on $\mathbb{P}(|S_{4,N}(\omega) - S_{4,M}(\omega)| > \lambda)$.

The following proposition is a well known result on stochastic series, it can be found as Proposition 2.4 of \cite{TT} and in Section 3.2 of \cite{Banach}.
\begin{proposition}[Wiener Chaos] \label{20}

Let $d \ge 1$ and $c(n_1,n_2, \ldots, n_k)\in \mathbb{C}$. Let $g_n$ be a sequence of i.i.d. mean 0, variance 1 complex Gaussians on a probability space $\Omega_0$. For $k \ge 1$ let $\Lambda(k,d)$ denote the set of all k-tuples $(n_1,n_2,\ldots, n_k) \in \{1,2,\ldots, d\}^k$ satisfying $n_1 \le n_2 \le \ldots \le n_k$ and let
\begin{equation*}
S_k(\omega) = \sum_{\Lambda(k,d)} c(n_1,\ldots, n_k)g_{n_1}(\omega)g_{n_2}(\omega)\cdots g_{n_k}(\omega).
\end{equation*}
Then for $p \ge 2$, 
\begin{equation*}
\|S_k\|_{L^p(\Omega_0)} \le \sqrt{k+1}(p-1)^{k/2}\|S_k\|_{L^2(\Omega_0)}.
\end{equation*}   
\end{proposition}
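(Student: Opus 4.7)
The plan is to combine the Wiener chaos decomposition of polynomial expressions in Gaussians with Nelson's hypercontractivity theorem for the Ornstein--Uhlenbeck semigroup. Since $S_k$ is a polynomial of degree at most $k$ in the complex Gaussians $\{g_n\}$ (equivalently, in the underlying real Gaussian Hilbert space spanned by $\{\operatorname{Re} g_n, \operatorname{Im} g_n\}$), it lies in the direct sum of the first $k+1$ homogeneous chaoses, and the bound will be extracted from the behavior of each chaos component separately.

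First I would decompose $S_k = \sum_{j=0}^{k} F_j$ with $F_j \in \mathcal{H}_j$, the $j$-th homogeneous Wiener chaos. Because distinct chaoses are mutually orthogonal in $L^2(\Omega_0)$, we immediately obtain $\|F_j\|_{L^2}^2 \le \|S_k\|_{L^2}^2$ for every $j$, and hence $\sum_{j=0}^{k} \|F_j\|_{L^2} \le \sqrt{k+1}\,\|S_k\|_{L^2}$ by Cauchy--Schwarz.

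Second, I would invoke Nelson's hypercontractive estimate: the Ornstein--Uhlenbeck semigroup $T_t$ satisfies $\|T_t\|_{L^2 \to L^p} \le 1$ whenever $e^{-2t} \le 1/(p-1)$. Since $T_t$ acts on $\mathcal{H}_j$ as multiplication by $e^{-jt}$, choosing $e^{-2t} = 1/(p-1)$ gives, for each $p \ge 2$ and each $j \le k$,
$$\|F_j\|_{L^p(\Omega_0)} = e^{jt}\,\|T_t F_j\|_{L^p(\Omega_0)} \le (p-1)^{j/2}\,\|F_j\|_{L^2(\Omega_0)}.$$
Assembling the pieces via the triangle inequality yields
$$\|S_k\|_{L^p} \le \sum_{j=0}^{k} \|F_j\|_{L^p} \le (p-1)^{k/2} \sum_{j=0}^{k} \|F_j\|_{L^2} \le \sqrt{k+1}\,(p-1)^{k/2}\,\|S_k\|_{L^2},$$
which is the desired bound.

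The only real subtlety is setting up the chaos framework cleanly for the complex Gaussians of the proposition. Writing $g_n = a_n + i b_n$ with $a_n, b_n$ independent real Gaussians identifies the probability space with a real Gaussian Hilbert space of dimension $2d$, after which the chaos decomposition, the action of the OU semigroup on each chaos, and Nelson's theorem all apply verbatim. The restriction to unconjugated monomials $g_{n_1}\cdots g_{n_k}$ merely specifies which real polynomial is being considered and has no effect on the hypercontractive bound; this is the step where one might most easily slip up, but it is ultimately bookkeeping rather than a genuine obstacle.
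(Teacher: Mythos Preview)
Your proposal is correct and follows the standard hypercontractivity argument (chaos decomposition plus Nelson's theorem plus Cauchy--Schwarz for the $\sqrt{k+1}$ factor). The paper itself does not supply a proof of this proposition at all; it simply cites it as a well-known result, referring to Proposition 2.4 of \cite{TT} and Section 3.2 of \cite{Banach}, and your argument is essentially the one found in those references.
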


We now state the key bound on the $L^2(\Omega)$ norm that guarantees the $S_{4,N}$ are a Cauchy sequence, which we prove at the end of the section. 
\begin{proposition} \label{24} 
Let  $c(\overline{n}) = c(n_1, n_2, n_3, n_4): \mathbb{Z}^4 \rightarrow \mathbb{R}$ be a function that satisfies the following properties:
\begin{itemize}
\item{} For all $\overline{n} \in \mathbb{Z}^4$, $|c(\overline{n})| \le \mathcal{M}^4 < \infty$  

\item{} The function $c(\overline{n})$ is even in each entry, meaning the value of $c(\overline{n})$ stays the same if you replace one of the entries $n_i$ with its negative.

\item{} For all $\overline{n}$ and $\overline{n'}$ whose entries are permutations of each other, $c(\overline{n}) = c(\overline{n'})$.
\end{itemize} For the fourth order chaos function $$S_{4,N}(\omega) = 3/4 \sum_{n_1+n_2 = n_3+n_4, |n_i| \le N} c(\overline{n})(n_1+n_2)\frac{g_{n_1}(\omega)g_{n_2}(\omega)\overline{g_{n_3}(\omega)}\overline{g_{n_4}(\omega)}}{\langle n_1 \rangle \langle n_2 \rangle \langle n_3 \rangle \langle n_4 \rangle}$$ and for any $M \ge N$ we have 
$$\int_{H^{1/2-}(\mathbb{T})} |S_{4,N} - S_{4,M}|^2 d\mu \lesssim \frac{\mathcal{M}^8}{N}.$$
\end{proposition}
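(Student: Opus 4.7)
The plan is to compute $\|S_{4,N}-S_{4,M}\|_{L^2(\mu)}^2$ directly using the Wick--Isserlis formula for complex Gaussians. Writing the difference as $-\tfrac{3}{4}$ times the defining series restricted to the index set $A_{N,M}=\{\overline{n}:\,n_1+n_2=n_3+n_4,\ \max|n_i|\in(N,M]\}$, squaring and taking expectation yields
\[
\mathbb{E}|S_{4,N}-S_{4,M}|^2 = \tfrac{9}{16}\sum_{\overline{n},\overline{m}\in A_{N,M}}\frac{c(\overline{n})c(\overline{m})(n_1+n_2)(m_1+m_2)}{\prod_i\langle n_i\rangle\langle m_i\rangle}\,\mathbb{E}\bigl[g_{n_1}g_{n_2}\bar g_{n_3}\bar g_{n_4}\bar g_{m_1}\bar g_{m_2}g_{m_3}g_{m_4}\bigr].
\]
By Wick's theorem the expectation equals the sum over the $4!=24$ perfect matchings between the unconjugated slots $\{g_{n_1},g_{n_2},g_{m_3},g_{m_4}\}$ and the conjugated slots $\{\bar g_{n_3},\bar g_{n_4},\bar g_{m_1},\bar g_{m_2}\}$, each matching contributing the product of Kronecker deltas that equate its paired indices.

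My first step is to dispose of the four \emph{intra} pairings (those matching $\overline{n}$'s unconjugated slots to $\overline{n}$'s conjugated slots, and similarly for $\overline{m}$). Their combined contribution factors as $|\mathbb{E}[S_{4,N}-S_{4,M}]|^2$. After the deltas collapse $\overline{n}=(n_1,n_2,n_1,n_2)$, the surviving weight is $c(n_1,n_2,n_1,n_2)(n_1+n_2)/(\langle n_1\rangle^2\langle n_2\rangle^2)$, which is odd under $n_1\mapsto-n_1$ once symmetrized in $n_2$; evenness of $c$ in each entry together with the reflection-invariance of the restriction $A_{N,M}$ forces this sum to vanish.

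The remaining $20$ \emph{cross} pairings are what I expect to dominate. In each, the Wick deltas combined with the two diagonal constraints $n_1+n_2=n_3+n_4$ and $m_1+m_2=m_3+m_4$ collapse the eight-index sum to three free parameters, with at least one link coupling $\overline{n}$ and $\overline{m}$. For the representative pairing $g_{n_1}\leftrightarrow\bar g_{m_1}$, $g_{n_2}\leftrightarrow\bar g_{n_4}$, $g_{m_3}\leftrightarrow\bar g_{n_3}$, $g_{m_4}\leftrightarrow\bar g_{m_2}$ one finds $\overline{n}=(n_1,n_2,n_1,n_2)$ and $\overline{m}=(n_1,m_2,n_1,m_2)$, and the contribution is dominated by
\[
\mathcal{M}^8\sum_{n_1,n_2,m_2}\frac{|n_1+n_2|\,|n_1+m_2|}{\langle n_1\rangle^4\langle n_2\rangle^2\langle m_2\rangle^2}\,\mathbf{1}_{\overline{n},\overline{m}\in A_{N,M}}.
\]
Symmetrizing in $n_2\mapsto-n_2$ and $m_2\mapsto-m_2$ (which preserve both $c$ and $A_{N,M}$) eliminates the cross terms and reduces the numerator to $n_1^2\le\langle n_1\rangle^2$. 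I then split on whether $|n_1|>N$ (so $\sum_{|n_1|>N}\langle n_1\rangle^{-2}\lesssim N^{-1}$ and the other two sums contribute $O(1)$) or $|n_1|\le N$ (which forces $|n_2|,|m_2|>N$, giving $O(N^{-2})$); both cases produce the bound $\mathcal{M}^8/N$.

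The main obstacle is the combinatorial bookkeeping for the 20 cross pairings. Each reduces to a sum over three free parameters, but the "fully cross" pairings force $\overline{m}$ to be a permutation of $\overline{n}$, producing $\sum_{\overline{n}\in A_{N,M}}(n_1+n_2)^2/\prod_i\langle n_i\rangle^2$. For these I would use $(n_1+n_2)^2\le\tfrac12\sum_i\langle n_i\rangle^2$ and split on which index achieves the maximum, relying in the residual case $|n_1|>N$ with all other $|n_i|\le N$ on the estimate $|n_1|=|n_3+n_4-n_2|\le3\max(|n_2|,|n_3|,|n_4|)$ to force the max onto a coordinate that appears in the denominator. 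Every cross pairing reduces in this way to the same $O(\mathcal{M}^8/N)$ estimate, and summing the finitely many contributions yields the claim.
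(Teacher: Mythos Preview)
Your approach is correct and closely related to the paper's, but organized differently. The paper first decomposes $S_{4,N}$ itself according to how many of the four indices coincide, obtaining pieces $S_N^1$ (all four equal), $S_N^2$ (two pairs equal), $S_N^3$ (generic), and then bounds $\|S_N^j-S_M^j\|_{L^2}^2$ separately; the three pieces are orthogonal, so no cross terms arise. For the generic piece $S^3$ the paper needs the convolution estimate $\sum_m \langle m\rangle^{-2}\langle n-m\rangle^{-2}\lesssim\langle n\rangle^{-2}$ (Lemma~3.5) to absorb the $(n_1+n_2)^2$ numerator. Your route applies Wick directly to the squared difference and classifies the $24$ pairings; your intra/mixed/fully-cross trichotomy corresponds exactly to the paper's $a=2,1,0$ cases (where $a$ counts intra-$n$ links), and your mixed pairings recover precisely the $S^2$-type contributions. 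The main substantive difference is your treatment of the fully-cross case: instead of the convolution lemma you use $(n_1+n_2)^2\le\sum_i\langle n_i\rangle^2$ together with the observation that if only $|n_1|>N$ then $|n_1|=|n_3+n_4-n_2|\le 3\max_{j\ne 1}|n_j|$ forces one of the denominator variables above $N/3$. This is a clean way to avoid the auxiliary lemma. One cosmetic slip: your inequality $(n_1+n_2)^2\le\tfrac12\sum_i\langle n_i\rangle^2$ should read $\le\sum_i\langle n_i\rangle^2$ (average the bounds $(n_1+n_2)^2\le 2(n_1^2+n_2^2)$ and $(n_3+n_4)^2\le 2(n_3^2+n_4^2)$); the constant is of course irrelevant.
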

During the proof we will find the condition that $c(\overline{n})$ is even is necessary to ensure that we can exploit symmetry. 

From Proposition \ref{24} we obtain the following corollary:
\begin{corollary} \label{25}
Given dyadic $N \le M$, we have $$\mathbb{P}( |S_{4,N}(\omega) - S_{4,M}(\omega)| > \lambda) \lesssim e^{-CN^{1/4}\lambda^{1/2}/\mathcal{M}^2}.$$
\end{corollary}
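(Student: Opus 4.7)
The plan is to combine the $L^2$ estimate from Proposition \ref{24} with the Wiener chaos hypercontractivity of Proposition \ref{20} and then optimize a Markov inequality in $p$.

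First I would apply Proposition \ref{24} directly: since $S_{4,N}$ arises from the summand with $c(\overline{n}) = c(n_1)c(n_2)c(n_3)c(n_4)$, which is uniformly bounded by $\mathcal{M}^4$, even in each entry, and symmetric under permutations, the hypotheses are verified. This yields
\begin{equation*}
\|S_{4,N} - S_{4,M}\|_{L^2(\mu)} \lesssim \frac{\mathcal{M}^4}{N^{1/2}}.
\end{equation*}
Next, the difference $S_{4,N} - S_{4,M}$ is a homogeneous Gaussian polynomial of order $k=4$ in the variables $\{g_n, \overline{g_n}\}$ (equivalently in the real and imaginary parts $a_n, b_n$), so Proposition \ref{20} applies. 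With $k=4$, it promotes the $L^2$ bound to
\begin{equation*}
\|S_{4,N} - S_{4,M}\|_{L^p(\mu)} \le \sqrt{5}\,(p-1)^{2}\,\|S_{4,N} - S_{4,M}\|_{L^2(\mu)} \lesssim \frac{\mathcal{M}^4 (p-1)^{2}}{N^{1/2}}
\end{equation*}
for every $p \ge 2$.

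Then I would apply Markov's inequality to get
\begin{equation*}
\mathbb{P}\bigl(|S_{4,N}-S_{4,M}| > \lambda\bigr) \le \lambda^{-p} \|S_{4,N}-S_{4,M}\|_{L^p(\mu)}^p \le \left(\frac{C\mathcal{M}^4 (p-1)^{2}}{\lambda N^{1/2}}\right)^{p}
\end{equation*}
and optimize in $p$. Writing $A = C\mathcal{M}^4/(\lambda N^{1/2})$, the right-hand side is $(A(p-1)^2)^p$. Choosing $p-1 = c_0/\sqrt{A}$ with $c_0$ small enough that $A(p-1)^2 \le e^{-2}$ makes the bound at most $e^{-2p}$, which translates to
\begin{equation*}
\mathbb{P}\bigl(|S_{4,N}-S_{4,M}| > \lambda\bigr) \lesssim \exp\!\left(-C\sqrt{\frac{\lambda N^{1/2}}{\mathcal{M}^4}}\right) = \exp\!\left(-\frac{C N^{1/4}\lambda^{1/2}}{\mathcal{M}^2}\right),
\end{equation*}
which is the claimed estimate. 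For $\lambda$ so small that the optimal $p$ would fall below $2$, the inequality is trivial after adjusting the constant, since the right-hand side is then $O(1)$.

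The only subtle point is checking that Proposition \ref{20} is applicable in the present complex setting. Since $S_{4,N}-S_{4,M}$ is a degree-$4$ polynomial in independent real Gaussians once we write $g_n = a_n + ib_n$, it lies in the fourth Wiener chaos and the hypercontractive bound with $k=4$ applies verbatim; this is the step one must state carefully but it is standard. Everything else is bookkeeping with $p$.
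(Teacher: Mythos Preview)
Your proof is correct and follows essentially the same approach as the paper: combine the $L^2$ bound from Proposition~\ref{24} with the hypercontractive estimate of Proposition~\ref{20}, apply Markov's inequality, and optimize over $p$, handling the small-$\lambda$ regime trivially. The only cosmetic difference is that the paper writes $p^2$ where you write $(p-1)^2$, and chooses the optimal $p$ as $\sqrt{N^{1/2}\lambda/(e\tau\mathcal{M}^4)}$ rather than via your $c_0/\sqrt{A}$ formulation; both yield the same exponent.
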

\begin{proof}
 Combing Propositions \ref{20} and \ref{24} we have $\| S_{4,N}(\omega) - S_{4,M}(\omega)\|_{L^p(\Omega)} \le \frac{\tau \mathcal{M}^4p^2}{N^{1/2}}$ for some constant $\tau$. By Markov's inequality, we have 
\begin{equation*}
\begin{split}
\mathbb{P}(| S_{4,N}(\omega) - S_{4,M}(\omega)| > \lambda) &\le \frac{\| S_{4,N}(\omega) - S_{4,M}(\omega)\|_{L^p(\Omega)}^p}{\lambda^p} \\
&\le \left(\frac{\tau \mathcal{M}^4 p^2}{N^{1/2}\lambda} \right)^p.\\
\end{split}
\end{equation*}
When $\sqrt{N^{1/2}\lambda/e\tau\mathcal{M}^4} \ge 2$, set $p = \sqrt{N^{1/2}\lambda/e\tau\mathcal{M}^4}$. We have 
\begin{equation*}
\mathbb{P}(| S_{4,N}(\omega) - S_{4,M}(\omega)| > \lambda) \le e^{-CN^{1/4}\lambda^{1/2}/\mathcal{M}^2}.
\end{equation*}
When $\sqrt{N^{1/2}\lambda/e\tau\mathcal{M}^4} < 2$ the quantity $N^{1/4}\lambda^{1/2}$ is small, and we have 
\begin{equation*}
\mathbb{P}(| S_{4,N}(\omega) - S_{4,M}(\omega)| > \lambda) \le 1 \lesssim  e^{-CN^{1/4}\lambda^{1/2}/\mathcal{M}^2}.
\end{equation*}
\end{proof}
We turn to proving Proposition \ref{24}.

\begin{proof}[Proof of Proposition \ref{24}]
We first expand the sum $S_{4,N}(\omega)$: 
\begin{equation}
S_{4,N}(\omega) =\frac{3}{4}\sum_{n} \sum_{n_1 + n_2 = n,|n_i| \le N} \sum_{m_1+m_2 = n, |m_i| \le N} c(\overline{n})  \frac{ng_{n_1}(\omega)g_{n_2}(\omega)\overline{g_{m_1}(\omega)}\overline{g_{m_2}(\omega)}}{\langle n_1 \rangle \langle n_2 \rangle \langle m_1 \rangle \langle m_2 \rangle}.
\end{equation}
We split this sum into three terms, $S^1_N$, $S^2_N$ and $S^3_N$, based on how many of the $n_i, m_j$ are equal. In $S^1_N$ all are equal, in $S^2_N$ two pairs are equal, and in $S^3_N$ none are equal. 
\begin{equation*}
\begin{split}
S^1_N &=\sum_{|n_1| \le N}  \frac{2c(\overline{n})n_1|g_{n_1}(\omega)|^4}{\langle n_1 \rangle^4}\\
S^2_N&= \sum_{|n_1| \le N} \sum_{|n_2| \le N, n_2 \ne n_1}  \frac{2c(\overline{n})(n_1+n_2)|g_{n_1}(\omega)|^2 |g_{n_2}(\omega)|^2}{\langle n_1 \rangle^2 \langle n_2 \rangle^2}\\
S_N^3 &=  \sum_{n} \sum_{n_1 + n_2 = n, |n_i| \le N} \sum_{m_1+m_2 = n, m_i \ne n_j, |m_i| \le N} \frac{c(\overline{n})ng_{n_1}(\omega)g_{n_2}(\omega)\overline{g_{m_1}(\omega)}\overline{g_{m_2}(\omega)}}{\langle n_1 \rangle \langle n_2 \rangle \langle m_1 \rangle \langle m_2 \rangle}.\\
\end{split}
\end{equation*}

We have $S_{4,N}(\omega) =  \frac{3}{4}\left(S_N^1 + S_N^2 +S_N^3\right)$ and therefore 
\begin{equation*}
\|  S_{4,N}(\omega) - S_{4,M}(\omega) \|_{L^2(\Omega)} \lesssim \|S_N^1 - S_M^1\|_{L^2(\Omega)} + \|S_N^2 - S_M^2 \|_{L^2(\Omega)} +  \|S_N^3 - S_N^3 \|_{L^2(\Omega)}.
\end{equation*}
So it suffices to show that each of these three $L^2$ norms is bounded by $\frac{\mathcal{M}^4}{N^{1/2}}$. We split the remainder of the proof into three parts.

\subsection{Bounding $S^1_N(\omega) - S^1_M(\omega)$}
We have $S_N^1 - S_M^1 = \sum_{N <  |n| \le M} \frac{2c(\overline{n})n|g_{n}(\omega)|^4}{\langle n \rangle^4}$. We will let $\overline{m}$ denote $(m_1,m_2,m_3,m_4)$. Squaring yields:
\begin{equation}
\begin{split}
|S_N^1 - S_M^1|^2 &=  4\sum_{N < |n| \le M}  \sum_{N < |m| \le M} \frac{c(\overline{n})c(\overline{m})nm|g_{n}(\omega)|^4|g_{m}(\omega)|^4}{\langle n \rangle^4 \langle m \rangle^4}\\
\|S_N^1 - S_M^1\|_{L^2(\Omega)}^2 &\le  4\sum_{N <  |n| \le M}  \sum_{N<  |m| \le M} \frac{\mathcal{M}^8\mathbb{E}(|g|^8)}{\langle n \rangle^3 \langle m \rangle^3}\\ 
\|S_N^1 - S_M^1\|_{L^2(\Omega)}^2 &\lesssim  \frac{\mathcal{M}^8}{N^4}.\\ 
 \end{split}
\end{equation}
This is way smaller than necessary. 

\subsection{Bounding $S^2_N(\omega) - S^2_M(\omega)$}
We have $S_N^2 =  2\sum_{n_1 \ne n_2, |n_i| \le N} \frac{c(\overline{n})(n_1+n_2)|g_{n_1}(\omega)|^2 |g_{n_2}(\omega)|^2}{\langle n_1 \rangle^2 \langle n_2 \rangle^2}$. All terms where $n_1= n_2$ are not included in this sum, since they are contained in $S_N^1$, and the $n_1= -n_2$ terms yield $n=0$. Therefore we can eliminate all terms where $|n_1| = |n_2|$ and take advantage of the $n_1, n_2$ symmetry to rewrite this sum as    
\begin{equation}
S_N^2 =  4\sum_{|n_1| \le N} \sum_{|n_2| \ne |n_1|, |n_2| \le N} \frac{c(\overline{n})n_1|g_{n_1}(\omega)|^2 |g_{n_2}(\omega)|^2}{\langle n_1 \rangle^2 \langle n_2 \rangle^2}.
\end{equation}
So when we subtract this from $S^2_M, M \ge N$, we lose all terms where $|n_1|,|n_2|$ are both $\le N$.  Recall that $c(\overline{n})$ does not change when we flip the sign of an entry. So we exploit the $n_1 \rightarrow -n_1, m_1 \rightarrow -m_1$ symmetry and obtain 
\begin{equation*}
\begin{split}
&|S_N^2 - S_M^2|^2 = 16  \sum_{\max{|n_i|} > N} \sum_{\max{|m_i|} > N}  \frac{c(\overline{n})c(\overline{m})n_1m_1|g_{n_1}(\omega)|^2|g_{n_2}(\omega)|^2|g_{m_1}(\omega)|^2|g_{m_2}(\omega)|^2}{\langle n_1 \rangle^2 \langle n_2 \rangle^2\langle m_1 \rangle^2 \langle m_2 \rangle^2}\\
&|S_N^2 - S_M^2|^2 =  16  \sum_{n_1 \ne n_2 > 0, \max{|n_1|,|n_2|} > N} \sum_{m_1 \ne m_2 >0, \max{|m_1|,|m_2| } > N} c(\overline{n})c(\overline{m})n_1m_1 \times \\
& \frac{(|g_{n_1}(\omega)|^2-|g_{-n_1}(\omega)|^2)(|g_{n_2}(\omega)|^2 + |g_{-n_2}(\omega)|^2)(|g_{m_1}(\omega)|^2-|g_{-m_1}(\omega)|^2)(|g_{m_2}(\omega)|^2 + |g_{-m_2}(\omega)|^2)}{\langle n_1 \rangle^2 \langle n_2 \rangle^2\langle m_1 \rangle^2 \langle m_2 \rangle^2}\\
\end{split}
\end{equation*}

Since $\mathbb{E}(|g_{n_i}(\omega)|^2- |g_{-n_i}(\omega)|^2) = 0 = \mathbb{E}(|g_{n_i}(\omega)|^4- |g_{-n_i}(\omega)|^4) $ for each $i$, and these are independent for distinct indices, each expected value in the above sum is $0$ unless $n_1 = m_1$. This would make $n_1m_1$ equal to  $n_1^2$. So we have 
\begin{equation*}
\begin{split}
\|S_N^2 - S_M^2\|_{L^2(\Omega)}^2 &\lesssim  \sum_{n_1, n_2, m_2 \ge 0, \max{|n_1|,|n_2|, |m_2|} > N} \frac{\mathcal{M}^8}{\langle n_1 \rangle^2 \langle n_2 \rangle^2 \langle m_2 \rangle^2} \times \\
\mathbb{E}[(|&g_{n_1}(\omega)|^2 - |g_{-n_1}(\omega)|^2)^2] \mathbb{E}[(|g_{n_2}(\omega)|^2 + |g_{-n_2}(\omega)|^2)]\mathbb{E}[(|g_{m_2}(\omega)|^2 + |g_{-m_2}(\omega)|^2)].\\
&\lesssim  \sum_{n_1, n_2, m_2 \ge 0, \max{|n_1|,|n_2|, |m_2|} > N} \frac{\mathcal{M}^8}{\langle n_1 \rangle^2 \langle n_2 \rangle^2 \langle m_2 \rangle^2}.\\
\end{split}
\end{equation*}
Since this term is now symmetric, we can assume without loss of generality that $n_1 \ge N$ and obtain 
\begin{equation}
\|S_N^2 - S_M^2\|_{L^2(\Omega)}^2 \lesssim \sum_{n_1 \ge N} \sum_{n_2} \sum_{m_2}  \frac{\mathcal{M}^8}{\langle n_1 \rangle^2 \langle n_2 \rangle^2 \langle m_2 \rangle^2} \lesssim \frac{\mathcal{M}^8}{N}.
\end{equation}
Therefore the $L^2(\Omega)$ norm is bounded by $\frac{\mathcal{M}^4}{N^{1/2}}$.

\subsection{Bounding $S^3_N(\omega) - S^3_M(\omega)$}
This is the most complicated term. We start with the following lemma. 
\begin{lemma} \label{27}
For any integer $n \ne 0$ we have 
\begin{equation} \label{28}
S_n = \sum_{m \in \mathbb{Z}} \frac{1}{\langle m \rangle^2 \langle n-m \rangle ^2}\lesssim \frac{1}{\langle n \rangle^2}.
\end{equation}
\end{lemma}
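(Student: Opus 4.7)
The plan is to split the sum according to which of the two factors in the denominator is comparable to $\langle n \rangle$. Since $|m| + |n-m| \geq |n|$ by the triangle inequality, for every index $m$ at least one of $|m|$ or $|n-m|$ is at least $|n|/2$; correspondingly at least one of $\langle m \rangle, \langle n-m \rangle$ is $\gtrsim \langle n \rangle$. So I would decompose
\begin{equation*}
S_n = \sum_{|m| \geq |n|/2} \frac{1}{\langle m \rangle^2 \langle n-m \rangle^2} + \sum_{|m| < |n|/2} \frac{1}{\langle m \rangle^2 \langle n-m \rangle^2} =: I + II.
\end{equation*}

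For $I$, I would use $\langle m \rangle \gtrsim \langle n \rangle$ to pull $\frac{1}{\langle m \rangle^2} \lesssim \frac{1}{\langle n \rangle^2}$ out of the sum, leaving the convergent tail
\begin{equation*}
I \lesssim \frac{1}{\langle n \rangle^2} \sum_{m \in \mathbb{Z}} \frac{1}{\langle n-m \rangle^2} \lesssim \frac{1}{\langle n \rangle^2},
\end{equation*}
since $\sum_k \langle k \rangle^{-2}$ is a finite absolute constant (after translating the summation index). Symmetrically for $II$, I note that $|m| < |n|/2$ forces $|n-m| \geq |n|/2$, hence $\langle n-m \rangle \gtrsim \langle n \rangle$, and the identical argument (with the roles of $m$ and $n-m$ swapped) gives $II \lesssim \frac{1}{\langle n \rangle^2}$.

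There is no serious obstacle here: the estimate is a standard $\ell^2$-type convolution bound, and the only thing to be careful about is the case $n = \pm 1$ (where $\langle n \rangle$ is of order $1$), which is trivially handled since the full sum $\sum_m \langle m \rangle^{-2} \langle n-m \rangle^{-2} \leq \sum_m \langle m \rangle^{-2}$ is already an absolute constant. Combining the two pieces yields $S_n \lesssim \langle n \rangle^{-2}$ as claimed.
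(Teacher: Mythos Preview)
Your proof is correct and takes a genuinely different route from the paper. The paper proceeds by an explicit algebraic manipulation: after the centering substitution $n=2k$, $m \mapsto m+k$, it uses a partial-fraction identity
\[
\frac{1}{(1+(m-k)^2)(1+(m+k)^2)} = \frac{1}{4(1+k^2)}\left(\frac{1+(k-m)/k}{1+(k-m)^2}+\frac{1+(k+m)/k}{1+(k+m)^2}\right)
\]
and then sums the resulting terms exactly, obtaining the bound $\lesssim \langle k\rangle^{-2}$. Your argument instead uses the ``largest factor'' decomposition standard in discrete convolution estimates: since $|m|+|n-m|\ge |n|$, at least one of $\langle m\rangle, \langle n-m\rangle$ is $\gtrsim \langle n\rangle$, and you absorb that factor and sum the other. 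Your method is shorter, avoids the parity issue implicit in writing $n=2k$, and generalizes immediately to weights $\langle m\rangle^{-\alpha}\langle n-m\rangle^{-\beta}$ with $\alpha,\beta>1$; the paper's approach, on the other hand, comes closer to an exact evaluation of $S_n$.
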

\begin{proof} Each term of this sum is bounded by $\frac{1}{\langle m \rangle^2}$, so the sum is absolutely convergent. This means we can rearrange the terms however we want. First let $n = 2k$ and replace $m$ with $m+k$. Then the summand of equation \eqref{28} is:
\begin{equation*}
\begin{split}
\frac{1}{\langle m \rangle^2 \langle n-m \rangle ^2} &= \frac{1}{\langle m+k \rangle^2 \langle m-k \rangle^2} \\
&= \frac{1}{(1+(m-k)^2)(1 + (m+k)^2)}\\
&= \frac{1}{4(1+k^2)}\left(\frac{1+ (k-m)/k}{1+(k-m)^2}+\frac{1+(k+m)/k}{1+(k+m)^2}\right)\\
\end{split}
\end{equation*}
Therefore the whole sum $S_n$ satisfies
\begin{equation*}
\begin{split}
S_n &= \sum_{m+k \in \mathbb{Z}} \frac{1}{4(1+k^2)}\left(\frac{1+ (k-m)/k}{1+(k-m)^2}+\frac{1+(k+m)/k}{1+(k+m)^2} \right)\\
&= \frac{1}{4(1+k^2)} \sum_{m+k \in \mathbb{Z}}\frac{1+(k+m)/k}{1+(k+m)^2} +  \frac{1}{4(1+k^2)} \sum_{m-k \in \mathbb{Z}} \frac{1+ (k-m)/k}{1+(k-m)^2}\\
&= \frac{1}{4(1+k^2)} \sum_{m \in \mathbb{Z}}\frac{1+m/k}{1+m^2} +  \frac{1}{4(1+k^2)} \sum_{m \in \mathbb{Z}} \frac{1+ -m/k}{1+m^2}\\
&= \frac{2}{4(1+k^2)} \sum_{m \in \mathbb{Z}}\frac{1}{1+m^2}  \\
&\lesssim \frac{1}{\langle k \rangle^2}. \\
\end{split}
\end{equation*}
Substituting $n/2$ for $k$, we obtain the desired bound.
\end{proof}
Now let $\mathbb{A}_n$ be the set of $(n_1,n_2,m_1,m_2)$, not all with absolute value $\le M$, but not all $\le N$, such that $n_1+n_2 = n = m_1+m_2$, and $n_1 \ne m_1,m_2$, $n_2 \ne m_1,m_2$. We have  
\begin{equation*}
\begin{split}
S^3_N(u) - S^3_M(u) &=  \sum_{n} \sum_{n_1 + n_2 = n, |n_i| \le N} \sum_{m_1+m_2 = n, m_i \ne n_j}  \frac{c(\overline{n})ng_{n_1}(\omega)g_{n_2}(\omega)\overline{g_{m_1}(\omega)}\overline{g_{m_2}(\omega)}}{\langle n_1 \rangle \langle n_2 \rangle \langle m_1 \rangle \langle m_2 \rangle}\\
|S^3_N(u) - S^3_M(u)|^2 &= \sum_{n,p} \sum_{\mathbb{A}_n \times \mathbb{A}_p}  \frac{c(\overline{n})ng_{n_1}g_{n_2}\overline{g_{m_1}}\overline{g_{m_2}}(\omega)}{\langle n_1 \rangle \langle n_2 \rangle \langle m_1 \rangle \langle m_2 \rangle}\frac{c(\overline{p})pg_{p_1}g_{p_2}\overline{g_{q_1}}\overline{g_{q_2}}(\omega)}{\langle p_1 \rangle \langle p_2 \rangle \langle q_1 \rangle \langle q_2 \rangle}\\
\|S^3_N(u) - S^3_M(u)\|_{L^2(\Omega)}^2 &=  \sum_{n,p}\sum_{\mathbb{A}_n \times \mathbb{A}_p} \frac{c(\overline{n})c(\overline{p})np \mathbb{E}[g_{n_1}g_{n_2}\overline{g_{m_1}}\overline{g_{m_2}}g_{p_1}g_{p_2}\overline{g_{q_1}}\overline{g_{q_2}}(\omega)]}{\langle n_1 \rangle \langle n_2 \rangle \langle m_1 \rangle \langle m_2 \rangle\langle p_1 \rangle \langle p_2 \rangle \langle q_1 \rangle \langle q_2 \rangle}.\\
\end{split}
\end{equation*}
This expectation is zero unless we can pair off each index. So our indices must satisfy \newline $\{n_1,n_2\} = \{q_1,q_2\}$, and  $\{m_1,m_2\} = \{p_1,p_2\}$. Noting that all other terms are $0$, we obtain 
\begin{equation*}
\begin{split}
\|S^3_N(u) - S^3_M(u)\|_{L^2}^2 &=  \sum_{n} \sum_{\mathbb{A}_n} \frac{4c(\overline{n})c(\overline{p})n^2 \mathbb{E}[|g_{n_1}(\omega)|^2 |g_{n_2}(\omega)|^2|g_{m_1}(\omega)|^2|g_{m_2}(\omega)|^2]}{\langle n_1 \rangle^2 \langle n_2 \rangle^2 \langle m_1 \rangle^2 \langle m_2 \rangle^2}\\
&\lesssim  \sum_{n} \sum_{\mathbb{A}_n} \frac{\mathcal{M}^8n^2}{\langle n_1 \rangle^2 \langle n_2 \rangle^2 \langle m_1 \rangle^2 \langle m_2 \rangle^2}.
\end{split}
\end{equation*}
Observe that this sum is symmetric and at least one index has absolute value $ > N$. Assume, without loss of generality, that $|n_1| > N$. Applying Lemma \ref{27}, we have 
\begin{equation}
\begin{split}
\|S^3_N(u) - S^3_M(u)\|_{L^2}^2  &\lesssim  \sum_{n} \sum_{\mathbb{A}_n} \frac{\mathcal{M}^8n^2}{\langle n_1 \rangle^2 \langle n_2 \rangle^2 \langle m_1 \rangle^2 \langle m_2 \rangle^2}\\
&\lesssim \sum_{n} \sum_{n_1+n_2 = n, |n_1| > N} \frac{\mathcal{M}^8}{\langle n_1 \rangle^2 \langle n_2 \rangle^2} \sum_{m_1+m_2 = n} \frac{n^2}{\langle m_1 \rangle^2 \langle m_2 \rangle^2}\\
&\lesssim  \sum_{n} \sum_{n_1+n_2 = n, |n_1| > N} \frac{\mathcal{M}^8}{\langle n_1 \rangle^2 \langle n_2 \rangle^2} \\
&\lesssim \sum_{|n_1| > N} \sum_{n_2}   \frac{\mathcal{M}^8}{\langle n_1 \rangle^2 \langle n_2 \rangle^2} \\
&\lesssim \frac{\mathcal{M}^8}{N}.\\
\end{split}
\end{equation}

Therefore all three $L^2(\Omega)$ terms are bounded by $\frac{\mathcal{M}^4}{N^{1/2}}$, and equivalently their squares are bounded by $\frac{\mathcal{M}^8}{N}$. 
\end{proof}

\section{Boundedness of scaled $\Psi_N$} \label{mathrefs}

In this section we will complete the proof of Theorem \ref{401}, implying that $\Psi_N \in L^1(\mu)$ even after scaling its Fourier coefficients. Keeping the notation $\mathcal{M}, c(\overline{n})$, and $S_4(u)$ of Theorem \ref{401} we expand the function $S_{4,N}$ as  

\begin{equation} \label{500}
\begin{split}
S_{4,N}(\omega)
&= \frac{3}{4} \sum_{n_1+n_2 = n_3+n_4, |n_i| \le N}(n_1+n_2)\frac{c(n_1)g_{n_1}c(n_2)g_{n_2}c(n_3)\overline{g_{n_3}}c(n_4)\overline{g_{n_4}}}{\langle n_1 \rangle \langle n_2 \rangle \langle n_3 \rangle \langle n_4 \rangle},\\
\end{split}
\end{equation}
We now define the function we need to bound in $L^p(\mu)$. 
\begin{definition}
For any fixed exponent $p$ and mass $m_{\mathcal{M},p}$ define the function
\begin{equation} \label{501}
\begin{split}
\psi_N &= 1_{\|u_N \|_{L^2(\mathbb{T})}^2 < m_{\mathcal{M},p}} e^{S_{4,N}(\omega)}.\\
\end{split}
\end{equation}
Note that for the same values of $m_{\mathcal{M},p}$, $\psi_N$ is $\Psi_N$ without the $e^{-\frac{1}{2}\int_{\mathbb{T}} |u|^6 dx}$ term, and therefore $\Psi_N \le \psi_N$.

\end{definition}

Observe that if we let $\tilde{u} = \sum_{n} \frac{c(n)g_n(\omega)e^{inx}}{\langle n \rangle}$ then  the distribution of $S_{4,N}(\omega)$ with respect to $\mathbb{P}$ is the same as the distribution of $f_N(\tilde{u})$. We now prove Theorem \ref{401} by replacing $S_{4,N}(\omega)$ with $f_N(\tilde{u})$. When $c(n) =1$, we have $u = \tilde{u}$, so Theorem \ref{5} is a subcase of Theorem \ref{401}.

\begin{proof}[Proof of Theorem \ref{401}] 
 
We will bound the expected value of $\psi_N(\tilde{u})$ by obtaining bounds on the $\mu$-measure of the set where $\psi_N$ is larger than some $\lambda$. 

Fix a value of $\lambda >16\mathcal{M}^4\ln^2(2)$, a constant $c_0$ that will be defined later, a parameter $\delta \in (0,\frac{1}{8})$  and set $c_{\delta} = \frac{1-2^{-\delta}}{4c_0}$. Let $N_1$ be the greatest dyadic integer less than $\frac{c_1\lambda^{1/2}}{m_{\mathcal{M},p}}$, and similarly define dyadic integers $N_2 \le  N_3 $ to be the greatest dyadics not exceeding $\frac{c_1\lambda}{m_{\mathcal{M},p}}, \frac{c_1\lambda^{2}}{m_{\mathcal{M},p}}$ respectively, for a  constant $c_1$ we will pick later.  

First we define a weight function $h(n)$:

\begin{definition}
For $N_2\ge  n \ge N_1$ let $h(n) = c_{\delta}(\frac{N_1}{n})^{\delta}$. For $N_3 \ge n > N_2$ let $h(n) = c_{\delta}(\frac{n}{N_3})^{\delta}$.

\end{definition}
Note that the sum $\sum_{n -dyadic} h(n)$ is bounded by the geometric series 
\begin{equation*}
2c_{\delta}(1 + 2^{-\delta} + 2^{-2\delta} + \ldots) \le \frac{2c_{\delta}}{1-2^{-\delta}} \le \frac{1}{2c_0}.
\end{equation*}
We conclude that
\begin{equation}
\sum_{n -dyadic} h(n) \le \frac{1}{2c_0}.
\end{equation}
We now consider the norm $\| \psi_N(\tilde{u}) \|^p_{L^p(\mu)}$ and bound it by viewing it as an expected value:
\begin{equation*}
\begin{split}
\|  \psi_N(\tilde{u})\|^p_{L^p(\mu)} &=
\mathbb{E}_{\mu}(|1_{\|\tilde{u}_N\|_{L^2(\mathbb{T})}^2 < m_{\mathcal{M},p}}e^{f_N(\tilde{u})}|^p)\\
&= \int_{0}^{\infty} p\lambda^{p-1}\mu(1_{\|\tilde{u}_N\|_{L^2(\mathbb{T})}^2 < m_{\mathcal{M},p}}e^{f_N(\tilde{u})} >\lambda) d\lambda\\
&= \int_{0}^{\infty} pe^{p\lambda}\mu(\{ f_N(\tilde{u}) >\lambda\} \cap \{\|\tilde{u}_N\|_{L^2(\mathbb{T})}^2< m_{\mathcal{M},p}\}) d\lambda\\
\end{split}
\end{equation*}

It is sufficient to bound the probability 
\begin{equation} \label{prob}
\mu(\{ f_N(\tilde{u}) >\lambda\} \cap \{\|\tilde{u}_N\|_{L^2(\mathbb{T})}^2< m_{\mathcal{M},p}\}) 
\end{equation}
for large values of $\lambda$. When $\lambda \le 16\mathcal{M}^4\ln^2(2)$ we just bound \eqref{prob} by $1$, and note that this provides a finite contribution to the total integral. 
We will show that for $\lambda > 16\mathcal{M}^4\ln^2(2)$ expression \eqref{prob} is bounded by $e^{-C\lambda/m_{\mathcal{M},p}^{\alpha}\mathcal{M}^2}$ for some positive exponent $\alpha$. This implies that $\|  \psi_N(\tilde{u})\|^p_{L^p(d\mu)}$ is bounded by a constant depending only on $\mathcal{M},p$ for sufficiently small $m_{\mathcal{M},p}$. 

Now we split $f_N(\tilde{u})$ into $f_{N_3}(\tilde{u})$ and $f_N(\tilde{u}) - f_{N_3}(\tilde{u})$. Throughout this section $n$ or $n_i$ will denote a dyadic integer. We have 
\begin{equation*}
\begin{split}
f_{N_3}(\tilde{u})  &= -\frac{3}{2}i \int_{\mathbb{T}} P_{\le N_3}\tilde{u}_{x}P_{\le N_3}\tilde{u}\overline{P_{\le N_3}\tilde{u}}\overline{P_{\le N_3}\tilde{u}} dx \\
|f_{N_3}(\tilde{u})| &\lesssim  \sum_{N_3 \ge n_1 \sim n_2 \ge n_3 \ge n_4}\int_{\mathbb{T}} n_1|P_{n_1}\tilde{u}| \cdot |P_{n_2}\tilde{u}| \cdot |P_{n_3}\tilde{u}| \cdot |P_{n_4}\tilde{u}|   dx \\
&\lesssim  \sum_{N_3 \ge n_1 \sim n_2 \ge n_3 \ge n_4}\int_{\mathbb{T}} n_1^{1/2}n_2^{1/2}|P_{n_1}\tilde{u}| \cdot |P_{n_2}\tilde{u}| \cdot |P_{n_3}\tilde{u}| \cdot |P_{n_4}\tilde{u}|   dx \\
&\lesssim  \sum_{n_i \le N_3} n_1^{1/2}n_2^{1/2}\|P_{n_1}\tilde{u}\|_{L^2(\mathbb{T})}  \|P_{n_2}\tilde{u}\|_{L^2(\mathbb{T})}  \|P_{n_3}\tilde{u}\|_{L^{\infty}(\mathbb{T})}  \|P_{n_4}\tilde{u}\|_{L^{\infty}(\mathbb{T})} \\
&\lesssim  \sum_{n_i \le N_3} n_1^{1/2}n_2^{1/2}n_3^{1/2}n_4^{1/2}\|P_{n_1}\tilde{u}\|_{L^2(\mathbb{T})}  \|P_{n_2}\tilde{u}\|_{L^2(\mathbb{T})}  \|P_{n_3}\tilde{u}\|_{L^2(\mathbb{T})}  \|P_{n_4}\tilde{u}\|_{L^2(\mathbb{T})}, \\
\end{split}
\end{equation*}  
where we applied Sobolev embedding and the fact that the product is $0$ if the largest frequency is significantly larger than others. Factoring yields
\begin{equation} \label{520}
\begin{split}
f_{N_3}(\tilde{u}) &\lesssim  \left( \sum_{n \le N_3}n^{1/2}\|P_n\tilde{u}\|_{L^2(\mathbb{T})} \right)^4. \\
\end{split}
\end{equation}  
So for some constant $c_0$, 
\begin{equation*}
\begin{split}
\mu(\{f_N(&\tilde{u}) > \lambda\} \cap \{\|\tilde{u}_N\|_{L^2(\mathbb{T})}^2 < m_{\mathcal{M},p}\}) \\
&\le \mu \left( |f_N(\tilde{u})-f_{N_3}(\tilde{u})|> \frac{\lambda}{2}\right) +   \mu\left(\{\sum_{n \le N_3}n^{1/2}\|P_n\tilde{u}\|_{L^2(\mathbb{T})} > \frac{2\lambda^{1/4}}{c_0}\} \cap \{\|\tilde{u}_N\|_{L^2(\mathbb{T})}^2 < m_{\mathcal{M},p}\} \right)\\
&\le  \mu\left( |f_N(\tilde{u})-f_{N_3}(\tilde{u})|> \frac{\lambda}{2} \right) + \mu\left(\{\sum_{n \le N_1}n^{1/2}\|P_n\tilde{u}\|_{L^2(\mathbb{T})} > \frac{\lambda^{1/4}}{c_0}\} \cap \{\|\tilde{u}_N\|_{L^2(\mathbb{T})}^2< m_{\mathcal{M},p}\} \right) \\
&+  \mu\left( \sum_{N_1 < n \le N_3}n^{1/2}\|P_n\tilde{u}\|_{L^2(\mathbb{T})} > \frac{\lambda^{1/4}}{c_0} \right) .  \\
\end{split}
\end{equation*}  

This provides the constant $c_0$ in the definition of $h(n)$. We will split the rest of the proof into three sections and show that each of these three terms is bounded by $e^{-C\lambda/\mathcal{M}^2m_{\mathcal{M},p}^{\alpha}}$ for some positive power $\alpha \ge \frac{1}{4}$. Then taking $m_{\mathcal{M},p}$ small enough proves that this integral is bounded by a constant depending on  $\mathcal{M},p$. 

\subsection{The High Frequency Term: $\mu(\{ |f_N(\tilde{u})-f_{N_3}(\tilde{u})|   > \lambda /2 \})$} \label{mathrefs}
By Corollary \ref{25}, this probability is bounded by
\begin{equation}
\begin{split}
\mu(\{ |f_N(\tilde{u})-f_{N_3}(\tilde{u})|   > \lambda /2 \})  &\le  e^{-C\lambda^{1/2}N_3^{1/4}/\mathcal{M}^2}\\
&\le e^{-C\lambda/m^{1/4}_{\mathcal{M},p}\mathcal{M}^2}.\\ 
\end{split}
\end{equation}

\subsection{The Low Frequency Term:}$ \mu(\{\sum_{n \le N_1}n^{1/2}\|P_n\tilde{u}\|_{L^2(\mathbb{T})} > \frac{\lambda^{1/4}}{c_0}\} \cap \{\|\tilde{u}_N\|_{L^2(\mathbb{T})}^2 < m_{\mathcal{M},p}\}) $ \newline

In this section we utilize the fact that the mass $m_{\mathcal{M},p}$ is small to show that the above probability is $0$.

By the Cauchy-Schwarz inequality and our mass bound, we have 
\begin{equation} \label{530}
\begin{split}
\sum_{n \le N_1}n^{1/2}\|P_n\tilde{u}\|_{L^2(\mathbb{T})} &\le \left( \sum_{n \le N_1} n\right)^{1/2} \cdot \left( \sum_{n \le N_1} \|P_n\tilde{u}\|_{L^2(\mathbb{T})}^2\right)^{1/2}\\
&\le 2N_1^{1/2}m_{\mathcal{M},p}^{1/2}\\
&\le 2\left(\frac{c_1\lambda^{1/2}}{m_{\mathcal{M},p}} \right)^{1/2}m_{\mathcal{M},p}^{1/2}\\
&\le 2c_1^{1/2}\lambda^{1/4}.\\
\end{split}
\end{equation}

Therefore there exists a sufficiently small constant $c_1$ such that 
\begin{equation}
\sum_{n \le N_1}n^{1/2}\|P_n\tilde{u}\|_{L^2(\mathbb{T})} \le 2c_1^{1/2}\lambda^{1/4} < \frac{\lambda^{1/4}}{c_0},
\end{equation}
and this sum is $> \frac{\lambda^{1/4}}{c_0}$ with probability $0$.

\subsection{The Middle Frequency terms:}\label{mathrefs} $ \mu\left( \sum_{ N_1 < n \le N_3}n^{1/2}\|P_n\tilde{u}\|_{L^2(\mathbb{T})} > \frac{\lambda^{1/4}}{c_0} \right)$ \newline

These are arguably the hardest terms to bound. Their frequencies are too large to be bounded by the mass and too small to take advantage of decay.

Recall that $h(n) \le 1$ and $\sum_{n -dyadic} h(n) \le \frac{1}{2c_0}$. We have
\begin{equation}
\begin{split}
 \mu\left( \sum_{ N_1 < n \le N_3}n^{1/2}\|P_n\tilde{u}\|_{L^2(\mathbb{T})} > \frac{\lambda^{1/4}}{c_0} \right) &\le \sum_{N_1 < n \le N_3} \mu\left(n^{1/2}\|P_n\tilde{u}\|_{L^2(\mathbb{T})} > 2h(n)\lambda^{1/4}\right)\\
&\le \sum_{N_1 < n \le N_3} \mu\left(\|P_n\tilde{u}\|^2_{L^2(\mathbb{T})} > \frac{4h(n)^2\lambda^{1/2}}{n}\right).\\
\end{split}
\end{equation}

Assuming $n \le N_3$ is small enough to satisfy the conditions of Lemma \ref{13}, we apply Lemma \ref{13} and obtain 
\begin{equation}
\begin{split}
 \mu\left( \sum_{ N_1 < n \le N_3}n^{1/2}\|P_n\tilde{u}\|_{L^2(\mathbb{T})} > \frac{\lambda^{1/4}}{c_0}\right) &\le \sum_{N_1 < n \le N_3} \mu\left(\|P_n\tilde{u}\|^2_{L^2(\mathbb{T})} > \frac{4h(n)^2\lambda^{1/2}}{n}\right)\\
&\lesssim \sum_{N_1 < n \le N_3} e^{- h(n)^2n\lambda^{1/2}/\mathcal{M}^2}  \\
\end{split}
\end{equation}

Recall that $2N_1 \ge \frac{c_1\lambda^{1/2}}{m_{\mathcal{M},p}}$. We split the sum into the sum from $N_1 < n \le N_2$ and $N_2 < n \le N_3$. The sum from $N_1$ to $N_2$ is bounded by 
$$e^{-C\lambda/{m_{\mathcal{M},p}}\mathcal{M}^2}  + e^{-C2^{1-\delta}\lambda/{m_{\mathcal{M},p}}\mathcal{M}^2} + e^{-C2^{2(1-\delta)}\lambda/{m_{\mathcal{M},p}}\mathcal{M}^2} + e^{-C2^{3(1-\delta)}\lambda/{m_{\mathcal{M},p}}\mathcal{M}^2} + \ldots \lesssim  e^{-C\lambda/{m_{\mathcal{M},p}}\mathcal{M}^2}$$
and for $\delta < \frac{1}{8}$ this value is sufficiently small. 

For the second sum from $N_2$ to $N_3$ we have  
\begin{equation*}
\begin{split}
\sum_{N_2 < n \le N_3} e^{- h(n)^2n\lambda^{1/2}/\mathcal{M}^2} &=
e^{-C(2N_2/N_3)^{2\delta}2N_2\lambda^{1/2}/\mathcal{M}^2}+  
e^{-C(4N_2/N_3)^{2\delta}4N_2\lambda^{1/2}/\mathcal{M}^2}
\ldots  + e^{-CN_3\lambda^{1/2}/\mathcal{M}^2} \\
&\lesssim e^{-C(N_2/N_3)^{2\delta}N_2\lambda^{1/2}/\mathcal{M}^2}\\
&\lesssim e^{-C\lambda^{3/2-2\delta}/m_{\mathcal{M},p}^{1+2\delta}\mathcal{M}^2}\\
\end{split}
\end{equation*}
which for $\delta < \frac{1}{8}$ is more than small enough. 

It remains to verify that $N_3$ is small enough to fulfill the conditions of Lemma \ref{13}. We must ensure that $N_3 = c_2\lambda^2$ satisfies  $\frac{4h(N_3)^2\lambda^{1/2}}{N_3} > \frac{16\mathcal{M}^2\ln(2)}{N_3}$ to apply Lemma \ref{13}. Since $h(N_3)=1$ this holds as long as $\lambda > 16\mathcal{M}^4\ln^2(2)$, which is our lower bound on $\lambda$. 

In each of the three sections we have shown that a specific term is bounded by $e^{-C\lambda/m_{\mathcal{M},p}\mathcal{M}^2}$. Therefore, for sufficiently small $m_{\mathcal{M},p}$, the $\| \psi_N(\tilde{u}) \|^p_{L^p(\mu)}$ term is bounded by a constant depending on $\mathcal{M}$ and $p$. This completes the proof of Theorem \ref{401} and in turn Theorem \ref{5}.

\end{proof}

\subsection{A related bound}
In the previous subsection we proved that 
$$\int 1_{\|u\|_{L_2}^2 < m_{\mathcal{M},p}} e^{pf_N(\tilde{u})} d\mu = \int 1_{\|u\|_{L_2}^2 < m_{\mathcal{M},p}} e^{\frac{-3i}{2}\int_{\mathbb{T}} \partial_x (\tilde{u}) \cdot  \tilde{u} \cdot \overline{\tilde{u}}\cdot \overline{\tilde{u}}  dx} d\mu$$ is bounded by $C(\mathcal{M},p)$ for all $\mathcal{M},p$. We now prove a slight generalization for when we only take portions of $\tilde{u}$ in each term. We start with the following definition.

\begin{definition} \label{551}
Let $Q$ be an even subset of the integers, meaning that for each $n \in \mathbb{Z}$, $n \in Q$ if and only if $-n \in Q$. We let $P_Qu = \sum_{n \in Q} \widehat{u}(n)e^{inx}$. One can view $P_Q$ as a Fourier projection operator, and it commutes with other projection operators such as $P_N, P_{\le N}$ for $N$ dyadic.
\end{definition}
We now prove our generalization of Theorem \ref{401}:

\begin{proposition} \label{552}

Keep the notation $\mathcal{M}$ and $\tilde{u}$ of Theorem \ref{401}. For any 4-tuple $Q_1,Q_2,Q_3,Q_4$ of even subsets of the integers and dyadic $N \ge 0$ we have 
\begin{equation*}
\int 1_{\|u\|_{L_2}^2 < m_{\mathcal{M},p}} e^{p\frac{-3i}{2}\int_{\mathbb{T}} \partial_x (P_{Q_1} \tilde{u}_N) \cdot  P_{Q_2} \tilde{u}_N \cdot P_{Q_3} \overline{\tilde{u}_N}\cdot P_{Q_4} \overline{\tilde{u}_N}  dx} d\mu   \le C(\mathcal{M},p).
\end{equation*}
\end{proposition}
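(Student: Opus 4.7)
The plan is to reduce Proposition \ref{552} to the three-part probabilistic argument used in the proof of Theorem \ref{401}, adapted to accommodate the position-dependent projections $P_{Q_i}$. The first step is to absorb each projection into a position-dependent scaling by setting $c_i(n) = c(n)\,1_{Q_i}(n)$; since each $Q_i$ is even and $c$ is even and bounded by $\mathcal{M}$, each $c_i$ is again even in $n$ and bounded by $\mathcal{M}$. Expanding the integrand into Fourier modes then rewrites the exponent as the fourth order Wiener chaos
\begin{equation*}
F_N(\omega) = \frac{3}{2} \sum_{\substack{n_1+n_2 = n_3+n_4 \\ |n_i|\le N}} n_1 \frac{c_1(n_1)c_2(n_2)c_3(n_3)c_4(n_4)\, g_{n_1} g_{n_2} \overline{g_{n_3}}\,\overline{g_{n_4}}}{\langle n_1 \rangle \langle n_2 \rangle \langle n_3 \rangle \langle n_4 \rangle},
\end{equation*}
which has exactly the shape of $S_{4,N}$ in Theorem \ref{401} except that the coefficient $c(\overline{n}) = \prod_i c_i(n_i)$ is no longer invariant under permutations of the four positions, though it remains bounded by $\mathcal{M}^4$ and even in each entry.

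Next I would run the same three-region decomposition as in the proof of Theorem \ref{401}, splitting $\mu(\{F_N > \lambda\} \cap \{\|\tilde{u}_N\|_{L^2}^2 < m_{\mathcal{M},p}\})$ into high-, middle-, and low-frequency contributions via the dyadic thresholds $N_1, N_2, N_3$. The deterministic bound \eqref{520} on the truncated piece $F_{N_3}$ carries over immediately because $\|P_n P_{Q_i}\tilde{u}\|_{L^2} \le \|P_n \tilde{u}\|_{L^2}$, so the pointwise estimate $|F_{N_3}| \lesssim (\sum_{n\le N_3} n^{1/2}\|P_n\tilde{u}\|_{L^2})^4$ is identical; the low- and middle-frequency analyses (Sections 4.2 and 4.3) depend only on the mass constraint and Lemma \ref{13}, so they transfer verbatim.

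The one genuinely new ingredient is a Cauchy $L^2(\mu)$ bound on $F_N - F_M$ analogous to Proposition \ref{24}, but replacing the permutation-symmetry hypothesis on $c(\overline{n})$ by the weaker properties that $c(\overline{n})$ is entry-wise even and bounded by $\mathcal{M}^4$. I would re-run the split $F^1+F^2+F^3$ of that proof: the diagonal $F^1$ term is bounded trivially by $\mathcal{M}^8/N^4$, and the $F^3$ term uses only $|c(\overline{n})\,c(\overline{m})| \le \mathcal{M}^8$ together with Lemma \ref{27}, so its $\mathcal{M}^8/N$ bound is preserved. The main obstacle is the $F^2$ term, where the symmetric proof exploited the $n_1 \leftrightarrow n_2$ swap to collapse $(n_1+n_2)$ into $2n_1$ and then used the $n_1\to -n_1,\ m_1\to -m_1$ evenness together with $\mathbb{E}(|g_{n_1}|^2 - |g_{-n_1}|^2)=0$ to force matching positive indices. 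Without the position-permutation symmetry I would instead keep the factor $n_1 m_1$ in place, enumerate the cases according to which unordered pair $\{n_3,n_4\}$ pairs to $\{n_1,n_2\}$ under the Gaussian contraction (and similarly for the $m$-indices), and in each case apply the same coordinate-wise evenness cancellation. This again forces $|n_1|=|m_1|$ (or a relabeled version), and summing by Lemma \ref{27} with $\sum_{|n_1|>N}\sum_{n_2}\frac{1}{\langle n_1\rangle^2\langle n_2\rangle^2}\lesssim \frac{1}{N}$ yields the $\mathcal{M}^8/N$ bound up to a larger but harmless constant. With this Cauchy estimate, Corollary \ref{25} applies to control $\mu(|F_N - F_{N_3}|>\lambda/2)$, and reassembling the three pieces exactly as in Theorem \ref{401} delivers the claimed $C(\mathcal{M},p)$ bound once $m_{\mathcal{M},p}$ is chosen small.
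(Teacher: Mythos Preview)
Your proposal is correct and follows the same route as the paper: both reduce to the three-region argument of Theorem~\ref{401} after observing that the dyadic estimate \eqref{520} survives the projections (since $\|P_{Q_i}P_n\tilde u\|_{L^2}\le\|P_n\tilde u\|_{L^2}$) and that a Proposition~\ref{24}-type Cauchy $L^2(\mu)$ bound holds for the projected quartic chaos. The paper simply asserts that Proposition~\ref{24} gives the latter, glossing over the fact that the coefficient $c(\overline n)=\prod_i c(n_i)1_{Q_i}(n_i)$ and the weight $n_1$ (rather than $n_1+n_2$) do not literally satisfy its permutation-symmetry hypothesis; you are more careful in spelling out that the $S^1$ and $S^3$ pieces need only boundedness and entry-wise evenness of $c(\overline n)$, and that $S^2$ still yields to the $n_1\to-n_1$ cancellation once the two pairings $(n_3,n_4)=(n_1,n_2)$ and $(n_3,n_4)=(n_2,n_1)$ are treated separately.
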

We emphasize that the constant does not depend on the choice of sets $Q_i$.
\begin{proof}
Proposition \ref{24} implies that for any four even subsets $Q_1,Q_2,Q_3,Q_4$ and any $M \ge N$

\begin{equation}
\int_{H^{1/2-}(\mathbb{T})} 
| \int_{\mathbb{T}} \partial_x (P_{Q_1}\tilde{u}_M) \cdot  P_{Q_2}\tilde{u}_M \cdot P_{Q_3}\overline{\tilde{u}_M}\cdot P_{Q_4}\overline{\tilde{u}_M}   - \partial_x (P_{Q_1}\tilde{u}_N) \cdot  P_{Q_2}\tilde{u}_N \cdot P_{Q_3}\overline{\tilde{u}_N}\cdot P_{Q_4}\overline{\tilde{u}_N}  dx |^2  d\mu \le \frac{C\mathcal{M}^8}{N} \\
\end{equation}
for a universal constant $C$. Therefore we can apply the Wiener Chaos bound and the same argument as Corollary \ref{25} to the expansion of this term in $g_n$ form to obtain for any $M >N$,
\begin{equation} \label{555}
\begin{split}
\mu( |\int_{\mathbb{T}} \partial_x (P_{Q_1}\tilde{u}_M) \cdot  P_{Q_2}\tilde{u}_M \cdot P_{Q_3}\overline{\tilde{u}_M}\cdot P_{Q_4}\overline{\tilde{u}_M}   - \partial_x (P_{Q_1}\tilde{u}_N) \cdot  P_{Q_2}\tilde{u}_N \cdot P_{Q_3}\overline{\tilde{u}_N}\cdot &P_{Q_4}\overline{\tilde{u}_N}  dx| > \lambda) \\
&\lesssim e^{-CN^{1/4}\lambda^{1/2}/\mathcal{M}^2}. \\
\end{split}
\end{equation}

 We also know  from the proof of Theorem \ref{401} that for the same constants $c_0,c_1$ and $N_3$ equal to the greatest dyadic not exceeding $\frac{c_1\lambda^{2}}{m_{\mathcal{M},p}}$, we have 
$$ \mu \left(\{\sum_{n \le N_3}n^{1/2}\|P_n\tilde{u}\|_{L^2(\mathbb{T})} > \frac{2\lambda^{1/4}}{c_0}\} \cap \{\|\tilde{u}_N\|_{L^2(\mathbb{T})}^2 < m_{\mathcal{M},p}\} \right) \le e^{-C\lambda/\mathcal{M}^2m_{\mathcal{M},p}^{\alpha}}.$$

The operator $P_{Q_i}$ is a projection, which implies  that $\|P_{Q_i}P_n \tilde{u}\|_{L^2(\mathbb{T})} \le \|P_n\tilde{u}\|_{L^2(\mathbb{T})}$. We conclude that for each $Q_i$ we have 
\begin{equation}  \label{556} 
\mu \left(\{\sum_{n-\text{dyadic} \le N_3} n^{1/2}\|P_{Q_i}P_n\tilde{u}\|_{L^2(\mathbb{T})} > \frac{2\lambda^{1/4}}{c_0}\} \cap \{\|\tilde{u}_N\|_{L^2(\mathbb{T})}^2 < m_{\mathcal{M},p}\} \right) \le e^{-C\lambda/\mathcal{M}^2m_{\mathcal{M},p}^{\alpha}}.
\end{equation}
Combining equations \eqref{555} and \eqref{556}, we can repeat the proof of Theorem \ref{401} to conclude that 

\begin{equation*}
\int 1_{\|u\|_{L_2}^2 < m_{\mathcal{M},p}} e^{p\frac{-3i}{2}\int_{\mathbb{T}} \partial_x (P_{Q_1} \tilde{u}_N) \cdot  P_{Q_2} \tilde{u}_N \cdot P_{Q_3} \overline{\tilde{u}_N}\cdot P_{Q_4} \overline{\tilde{u}_N}  dx} d\mu   \le C(\mathcal{M},p),
\end{equation*}
for any dyadic $N$.

\end{proof}

This proposition allows us to bound the exponential of some of the individual components of $f_N(u)$. For example, we can take two disjoint even sets $Q_1$ and $Q_2$ whose union is the integers and bound $e^{f_N(u)}$ by writing it as the product of the exponential of the 16 terms of the expansion of $f_N(u_{Q_1}+u_{Q_2})$ and applying H\"older's inequality to all $16$ components. An argument like this will be necessary in Section 6.

\section{Analysis of the measure $\mu_m$ and its decomposition}
In this section we define the measure $\mu_m$ supported on $H^{1/2-}(\mathbb{T}) \cap \{\|u\|_{L^2(\mathbb{T})}^2 =  m\}$, and write it as a linear combination of measures $\nu_m^k$ derived from scaling.

\subsection{Bounds on the distribution of the mass} \label{mathrefs}

We now begin developing the tools necessary to construct the fixed mass measure. Since $\text{mass}(u) = \sum_{n} \frac{|g_n|^2}{\langle n \rangle^2}$ we will need bounds on the probability distribution functions of various sums of Fourier coefficients squared.  

\begin{definition} \label{601} We define the following probability distribution functions relating to the mass. A priori we must view them as distributions, but we will prove that they are all in fact uniformly continuous functions. 

\begin{enumerate}
\item{} For $N \ge 0$, we let $p_N$ be the distribution function of the random variable $\underset{|n| \ge N}{\sum} \frac{|g_{n}|^2}{\langle n \rangle^2}$ with respect to to the probability measure $\mu$. Note that $p_0$ is the distribution of the mass.

\item{} For any $N \ge 0$  let $P_N$ be the distribution function of  $\underset{|n| \ne N}{\sum} \frac{|g_{n}|^2}{\langle n \rangle^2}$.

\end{enumerate}
\end{definition}
We begin with two lemmas bounding the $L^{\infty}(\mathbb{R})$ norms of these distribution functions. 
\begin{lemma} \label{602}
For each $N$, $\|\widehat{p_N}\|_{L^1(\mathbb{R})}$ is finite, implying that $p_N$ is uniformly continuous and has finite $L^{\infty}(\mathbb{R})$ norm. 
\end{lemma}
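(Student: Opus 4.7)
The plan is to identify $\widehat{p_N}$ as the characteristic function of the random variable $X_N := \sum_{|n|\ge N} \frac{|g_n|^2}{\langle n\rangle^2}$, write it as an explicit infinite product using independence of the $g_n$, and then bound it by dropping all but a handful of factors. Since $g_n = a_n + i b_n$ with $a_n, b_n$ independent $N(0,\tfrac{1}{2})$, the squared modulus $|g_n|^2 = a_n^2 + b_n^2$ is exponentially distributed with mean $1$, whose characteristic function is $\mathbb{E}[e^{i\xi|g_n|^2}] = (1 - i\xi)^{-1}$. Independence across $n$ then gives
$$\widehat{p_N}(\xi) \;=\; \mathbb{E}\bigl[e^{i\xi X_N}\bigr] \;=\; \prod_{|n|\ge N}\frac{1}{1 - i\xi/\langle n\rangle^2},\qquad |\widehat{p_N}(\xi)| \;=\; \prod_{|n|\ge N}\bigl(1 + \xi^2/\langle n\rangle^4\bigr)^{-1/2}.$$

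Next I would reduce this to a finite subproduct. Each factor is bounded above by $1$, so retaining only two or three distinct indices still yields an upper bound for the whole product. For $N\ge 1$, keeping $n = \pm N$ together with one extra index $n = N+1$ gives
$$|\widehat{p_N}(\xi)| \;\le\; \bigl(1 + \xi^2/\langle N\rangle^4\bigr)^{-1}\bigl(1 + \xi^2/\langle N+1\rangle^4\bigr)^{-1/2},$$
which is bounded by $1$ for $|\xi|\le 1$ and decays like $C(N)|\xi|^{-3}$ at infinity, hence is in $L^1(\mathbb{R})$. The case $N=0$ is handled identically by picking three distinct indices from $\{0,\pm 1\}$. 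This establishes the $L^1$ claim in the statement.

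With $\widehat{p_N}\in L^1(\mathbb{R})$ in hand, Fourier inversion defines a bounded uniformly continuous function $\tilde p_N(x) = \tfrac{1}{2\pi}\int_{\mathbb{R}} \widehat{p_N}(\xi) e^{-ix\xi}\,d\xi$ satisfying $\|\tilde p_N\|_{L^\infty(\mathbb{R})} \le \tfrac{1}{2\pi}\|\widehat{p_N}\|_{L^1(\mathbb{R})}$ and, via dominated convergence, uniformly continuous. By uniqueness of the Fourier--Stieltjes transform, $\tilde p_N$ coincides with the density of $X_N$, so the \emph{a priori} distributional object $p_N$ is identified with $\tilde p_N$ and the lemma follows.

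There is no real obstacle here beyond an explicit computation with Gaussian characteristic functions, but two small points merit care. One must choose at least two distinct values of $\langle n\rangle$ among the retained factors, since a single factor only decays like $|\xi|^{-1}$ and is not integrable on $\mathbb{R}$. Also, the constant $C(N)$ worsens as $N$ grows, reflecting that $X_N \to 0$ as $N\to\infty$ so its density concentrates near the origin; this $N$-dependence is harmless for Lemma \ref{602}, which is pointwise in $N$, but will need to be tracked in any later estimate that is uniform in $N$.
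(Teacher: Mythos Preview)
Your proof is correct and follows essentially the same route as the paper: compute $\widehat{p_N}(\xi)=\prod_{|n|\ge N}(1-i\xi/\langle n\rangle^2)^{-1}$ via independence, then drop all but a few factors to get polynomial decay in $\xi$ and conclude $\widehat{p_N}\in L^1(\mathbb{R})$. The only cosmetic difference is that the paper retains just the pair $n=\pm(N+1)$, obtaining the bound $(1+\xi^2/\langle N+1\rangle^4)^{-1}$, which already decays like $|\xi|^{-2}$ and is integrable; your remark that one needs two \emph{distinct} values of $\langle n\rangle$ is thus unnecessary, and two factors suffice rather than three.
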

\begin{proof}
We know that 
\begin{equation}
\begin{split}
\widehat{p_N}(\xi) &= \mathbb{E_{\mu}}\left (e^{i \xi \sum_{|n| \ge N} \frac{|g_{n}|^2}{\langle n \rangle^2}} \right )  \\
&= \Pi_{|n| \ge N}  \int_{\mathbb{R}} \frac{1}{\sqrt{ \pi}} e^{(i\xi/\langle n \rangle^2 -1)\text{Re}(g_{n})^2} d\text{Re}(g_{n})  \times  \int_{\mathbb{R}}  \frac{1}{\sqrt{\pi}} e^{(i\xi/\langle n \rangle^2 - 1)\text{Im}(g_{n})^2} d\text{Im}(g_{n}) \\  
&= \Pi_{|n| \ge  N} \left ( \int_{\mathbb{R}}  \frac{1}{\sqrt{\pi}} e^{(i\xi/\langle n \rangle^2 - 1)x^2} dx \right)^2 \\
\end{split}
\end{equation}
Noting that $\int_{\mathbb{R}} \frac{1}{\sqrt{\pi}} e^{-cx^2}dx = \frac{1}{\sqrt{c}}$ for $c$ with a positive real part, we have
\begin{equation}
\begin{split}
\widehat{p_N}(\xi) &= \Pi_{|n| \ge N}\left ( \int_{\mathbb{R}}  \frac{1}{\sqrt{ \pi}} e^{(i\xi/\langle n \rangle^2 - 1)x^2} dx \right)^2   \\
&=\Pi_{|n| \ge N} \frac{1}{1 - i\xi/ \langle n \rangle^2 }\\
\end{split}
\end{equation}
Each term of this product has complex norm $\le 1$, therefore the norm of the product is bounded by the norm of the product of any two terms, such as $|\frac{1}{1 - i\xi/ \langle N +1 \rangle^2 }|^2 = \frac{1}{1 + \xi^2/(1+(N+1)^2)^2}$. The $\xi^2$ term ensures that this function has finite $L^1(\mathbb{R})$ norm. 

Therefore we conclude that $\|\widehat{p_N}\|_{L^{1}(\mathbb{R})}$ and in turn $\|p_N\|_{L^{\infty}(\mathbb{R})}$ is bounded. 
\end{proof}

\begin{lemma} \label{605}
The distribution functions $P_N$ have uniformly bounded $L^{\infty}(\mathbb{R})$ norms. 
\end{lemma}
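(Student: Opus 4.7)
The plan is to mirror the Fourier-side argument of Lemma \ref{602}. Since $\sum_{|n|\ne N} |g_n|^2/\langle n\rangle^2$ is a sum of independent scaled $\chi^2$-type contributions, the same Gaussian integral computation yields
\begin{equation*}
\widehat{P_N}(\xi) \;=\; \prod_{|n|\ne N}\frac{1}{1 - i\xi/\langle n\rangle^2}.
\end{equation*}
This differs from the product formula for $\widehat{p_0}$ only in that it omits at most two factors (those indexed by $n = \pm N$, or just $n=0$ when $N=0$).

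Next I would bound $|\widehat{P_N}(\xi)|$ in $L^1(\mathbb{R})$ uniformly in $N$. Each factor of the product has modulus at most $1$, so discarding factors only enlarges the product. The task is to retain enough of them to guarantee integrability with a constant independent of $N$. Since at most one value $|n|=N$ has been removed from the collection of positive indices, I would pick two distinct positive integers $a,b \in \{1,2,3\}\setminus\{N\}$, which is always possible. Combining the contributions of $n = \pm a$ and $n = \pm b$ gives
\begin{equation*}
|\widehat{P_N}(\xi)| \;\le\; \frac{1}{1 + \xi^2/\langle a\rangle^4}\cdot \frac{1}{1 + \xi^2/\langle b\rangle^4},
\end{equation*}
whose right side decays like $\xi^{-4}$ at infinity and has $L^1(\mathbb{R})$ norm bounded by a universal constant depending only on $\langle 1\rangle,\langle 2\rangle,\langle 3\rangle$.

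Finally I would invoke Fourier inversion: since $\widehat{P_N}\in L^1(\mathbb{R})$ uniformly in $N$, the distribution $P_N$ is represented by a continuous density and
\begin{equation*}
\|P_N\|_{L^\infty(\mathbb{R})} \;\le\; \|\widehat{P_N}\|_{L^1(\mathbb{R})} \;\le\; C,
\end{equation*}
with $C$ independent of $N$. There is no real obstacle beyond verifying that the choice of surviving pairs is always possible; this is essentially a routine adaptation of Lemma \ref{602}, where we compensate for the two missing factors by keeping two pairs rather than one, each pair still contributing $\xi^{-2}$ decay.
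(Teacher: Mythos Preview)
Your proposal is correct and follows essentially the same approach as the paper: compute the product formula for $\widehat{P_N}$, bound its modulus by retaining a few low-index factors with indices chosen from a fixed finite set avoiding $N$, and conclude via $\|P_N\|_{L^\infty}\le\|\widehat{P_N}\|_{L^1}$. The only cosmetic difference is that the paper keeps a single pair ($n=\pm1$ if $N\neq1$, else $n=\pm2$), which already gives $\xi^{-2}$ decay and hence a uniform $L^1$ bound, whereas you keep two pairs from $\{1,2,3\}$ for $\xi^{-4}$ decay; either choice works.
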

\begin{proof}
Similar to the previous lemma, we have
\begin{equation}
\begin{split}
\widehat{P_N}(\xi) &= \mathbb{E_{\mu}}\left (e^{i \xi \sum_{|n| \ne N} \frac{|g_{n}|^2}{\langle {n} \rangle^2}} \right )  \\
&= \Pi_{|n| \ne N}  \int_{\mathbb{R}} \frac{1}{\sqrt{\pi}} e^{(i\xi/\langle n \rangle^2 - 1)\text{Re}(g_{n})^2} d\text{Re}(g_{n})  \times  \int_{\mathbb{R}}  \frac{1}{\sqrt{\pi}} e^{(i\xi/\langle n \rangle^2 - 1)\text{Im}(g_{n})^2} d\text{Im}(g_{n}) \\  
&= \Pi_{|n| \ne N} \frac{1}{1 - i\xi/ \langle n \rangle^2 }\\
\end{split}
\end{equation}
Each term of this product has complex norm $\le 1$, therefore the norm of the product is bounded by the norm of the product of the first few terms. For $N \ne 1$, this product is bounded by $$|\frac{1}{1 - i\xi/ \langle 1 \rangle^2 }|^2 = \frac{1}{(1- i\xi/2)(1 + i \xi/2)} = \frac{1}{1 + \xi^2/4}.$$ For $N = 1$ the product is bounded by $$|\frac{1}{1 - i\xi/ \langle 2 \rangle^2 }|^2 = \frac{1}{(1- \frac{i\xi}{5})(1 + \frac{i \xi}{5})} = \frac{1}{1 + \xi^2/25}.$$ Therefore for all $N$ the $L^1(\mathbb{R})$ norm of $\widehat{P_N}$ is bounded by the max of the $L^1(\mathbb{R})$ norm of these two functions. 

We conclude that $\|\widehat{P_N}\|_{L^{1}(\mathbb{R})}$ and in turn $\|P_N\|_{L^{\infty}(\mathbb{R})} \le C$ uniformly in $N$. \end{proof}

Since the terms $\frac{|g_N|^2 + |g_{-N}|^2}{\langle N \rangle^2}$ of the mass tend to $0$ as $N \rightarrow \infty$ we expect the distribution of $P_N$ to approach $p_0$, the distribution of the mass, as $N \rightarrow \infty$. We quantify this convergence in the following lemma.

\begin{lemma} \label{607}
For each $N \ge 0$, and $\infty \ge p\ge 2$, we have 
\begin{equation}
\|P_N(x) - p_0(x)\|_{L^p(\mathbb{R})} \le  \frac{C(p)}{\langle N \rangle^2}.
\end{equation} 

\end{lemma}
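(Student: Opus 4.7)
The plan is to exploit the convolution structure relating $P_N$ and $p_0$ and then apply Hausdorff--Young. Since $\{g_n\}$ is i.i.d., the random variable $X_N := (|g_N|^2+|g_{-N}|^2)/\langle N\rangle^2$ is independent of $\sum_{|n|\neq N}|g_n|^2/\langle n\rangle^2$, so $p_0 = P_N * q_N$, where $q_N$ denotes the density of $X_N$. Taking Fourier transforms yields
\begin{equation*}
\widehat{P_N-p_0}(\xi) \;=\; \widehat{P_N}(\xi)\bigl(1-\widehat{q_N}(\xi)\bigr),
\end{equation*}
and exactly as in the proof of Lemma \ref{605} one finds $\widehat{q_N}(\xi) = (1-i\xi/\langle N\rangle^2)^{-2}$. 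A direct algebraic manipulation then gives the bound $|1-\widehat{q_N}(\xi)| \lesssim \min(|\xi|/\langle N\rangle^2,\,1)$, with the linear behavior in $|\xi|$ near the origin being the source of the $\langle N\rangle^{-2}$ gain.

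Next I will establish that $|\widehat{P_N}(\xi)| \leq C_M(1+|\xi|)^{-M}$ for any $M$, with $C_M$ independent of $N$. This follows by retaining, in the product $\widehat{P_N}(\xi)=\prod_{|n|\neq N}(1-i\xi/\langle n\rangle^2)^{-1}$, only finitely many factors corresponding to indices in $\mathbb{Z}\setminus\{\pm N\}$ of small absolute value; all remaining factors have modulus $\leq 1$, while the $k$ retained factors contribute $\sim (1+|\xi|)^{-2k}$ decay.

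With these two ingredients, Hausdorff--Young (applied for $p\in[2,\infty]$ and dual exponent $p'=p/(p-1)\in[1,2]$) gives
\begin{equation*}
\|P_N-p_0\|_{L^p(\mathbb{R})} \;\lesssim_p\; \|\widehat{P_N}\,(1-\widehat{q_N})\|_{L^{p'}(\mathbb{R})}.
\end{equation*}
I will split the $L^{p'}$ integral at $|\xi|=\langle N\rangle^2$. On $\{|\xi|\leq\langle N\rangle^2\}$, using $|1-\widehat{q_N}|\lesssim |\xi|/\langle N\rangle^2$ extracts a factor $\langle N\rangle^{-2p'}$, and the resulting integrand $|\xi|^{p'}|\widehat{P_N}(\xi)|^{p'}$ is integrable by the decay bound of the previous paragraph (taking $M$ large enough that $|\xi|^{p'}(1+|\xi|)^{-Mp'}$ is integrable). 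On $\{|\xi|>\langle N\rangle^2\}$, the trivial bound $|1-\widehat{q_N}|\leq 1$ combined with $|\widehat{P_N}|\lesssim (1+|\xi|)^{-M}$ yields a contribution of size $\langle N\rangle^{-2(Mp'-1)}$, which is also $\lesssim \langle N\rangle^{-2p'}$ provided $M\geq 2$. Taking $p'$-th roots gives the claimed $\langle N\rangle^{-2}$ bound.

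The main obstacle is essentially bookkeeping rather than conceptual: one must verify the decay of $|\widehat{P_N}|$ uniformly in $N$, which requires selecting finitely many indices in the product representation that avoid $\pm N$. Since only two indices are ever excluded from an infinite product whose small-index factors dominate the decay, the uniformity is easily arranged, and the argument proceeds without difficulty.
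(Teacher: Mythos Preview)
Your proposal is correct and follows essentially the same route as the paper: both arguments pass to the Fourier side via Hausdorff--Young, factor $\widehat{P_N}-\widehat{p_0}=\widehat{P_N}\bigl(1-(1-i\xi/\langle N\rangle^2)^{-2}\bigr)$, and exploit the decay of $\widehat{P_N}$ coming from finitely many small-index factors of the product. The only cosmetic difference is in the bookkeeping: the paper bounds the difference factor crudely by $\langle\xi\rangle^2/\langle N\rangle^2$ and then absorbs the extra $\langle\xi\rangle^2$ using two more product factors (getting $\langle\xi\rangle^{-4}$ decay), whereas you bound it sharply by $\min(|\xi|/\langle N\rangle^2,1)$ and split the integral at $|\xi|=\langle N\rangle^2$. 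Two minor remarks: the formula $\widehat{q_N}=(1-i\xi/\langle N\rangle^2)^{-2}$ holds for $N\ge 1$ but for $N=0$ only a single factor is removed (this case is trivial anyway), and the ``trivial bound'' $|1-\widehat{q_N}|\le 1$ should read $\le 2$; neither affects the argument.
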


\begin{proof}
By the duality properties of the Fourier transform we know that 
\begin{equation}
\|P_N(x) - p_0(x)\|_{L^p(\mathbb{R})} \le C(p) \|\widehat{P_N(\xi)} - \widehat{p_0(\xi)}\|_{L^{p'}(\mathbb{R})}, 
\end{equation}  where $p'$ is the H\"older conjugate of $p$: $\frac{1}{p} + \frac{1}{p'} = 1$.
We calculated $\widehat{P_N(\xi)}$ and  $\widehat{p_0(\xi)}$ in Lemma \ref{602} and Lemma \ref{605}. We have 
\begin{equation}
\begin{split}
|\widehat{P_N(\xi)} - \widehat{p_0(\xi)}| &= |\Pi_{|n| \ne N}\frac{1}{1 - i\xi/ \langle n \rangle^2 }  - \Pi_{n \in \mathbb{Z}}\frac{1}{1 - i\xi/ \langle n \rangle^2 }|  \\
&= |\Pi_{|n| \ne N} \left(\frac{1}{1 - i\xi/ \langle n \rangle^2 }\right) \left(1-\frac{1}{(1 - i\xi/ \langle N \rangle^2)^2}\right)|\\
&\le \Pi_{n \ne N, n \ge 0} \left(\frac{1}{1 + \xi^2/\langle n \rangle^4}\right) \left|\frac{\frac{-2i\xi}{\langle N \rangle^2} + \frac{-\xi^2}{\langle N \rangle^4}}{(1 - i\xi/ \langle N \rangle^2)^2}\right|\\
&\le \frac{\langle \xi \rangle^2}{\langle N \rangle^2} \Pi_{n \ne N, n \ge 0} \left(\frac{1}{1 + \xi^2/\langle n \rangle^4}\right) .\\
\end{split}
\end{equation}
By the same arguments as Lemma \ref{602} and Lemma \ref{605} we can bound the rest of the product by $\frac{1}{1+ \langle \xi \rangle ^4}$ and obtain a function whose $L^{p'}(\mathbb{R})$ norm is bounded by $\frac{C(p)}{\langle N \rangle^2}$.

\end{proof}

\subsection{Analysis of the scaling transformation.}  \label{mathrefs}
We now define the linear transformation that will correspond to scaling of the Fourier coefficients of our random initial data $u$. 

\begin{definition}
For any $k \in \mathbb{Z}^+$ and $s \in \mathbb{R}^+$ and function $u \in L^2(\mathbb{T}): u(x) = \sum_{n \in \mathbb{Z}} \widehat{u}(n)e^{inx}$ we define 
\begin{equation} \label{701}
T_s(u) = T^0_s(u) =  s\widehat{u}(0) + \sum_{|n| >0 } \widehat{u}(n)e^{inx}
\end{equation}
 and for $k >0$, 
\begin{equation} \label{702}
T^k_s(u)  = s\widehat{u}(k)e^{ikx} + s\widehat{u}(-k)e^{-ikx} + \sum_{|n| \ne k} \widehat{u}(n)e^{inx}.
\end{equation} 
\end{definition} 

Since $T_s^0(u)$ is defined by scaling just one coefficient, some of our analysis of it will be done separately, in which case we'll refer to it as $T_s$. Though $s$ can be any positive number, we will be taking limits as $s \rightarrow 1$, and always assume $s \in (\frac{1}{2},2)$.

For every $k$ and $s$, $T^k_s$ is a Fourier multiplier with norm equal to $\max \{1,s \}$. Clearly it takes $L^2(\mathbb{T})$ to $L^2(\mathbb{T})$ and $H^{\sigma}(\mathbb{T})$ to $H^{\sigma}(\mathbb{T})$, with norm equal to $\max \{1,s \}$.
We now define the level sets of this transformation.
\begin{definition} \label{703}
For any $s \in \mathbb{R}^+$, define $A_{m,s}$ to be the set of functions $u$ such that $$\|T_{s}(u)\|_{L^2(\mathbb{T})}^2 \le m.$$ Let $A_{m} = A_{m,1}$ be the functions with mass $\le m$. 
\end{definition}

\begin{definition} \label{704}
For any $m >0$ and $s>1 $, define $\Gamma_{m,s}$ to be the set of functions $u$ such that $\|T_s(u)\|_{L^2(\mathbb{T})}^2 > m \ge \|T_{1/s}(u)\|_{L^2(\mathbb{T})}^2$. Also let $\Gamma_{m} = \cap_{s > 1} \Gamma_{m,s}$ be the set of functions with mass $m$. 
\end{definition}

Similarly for $k \ge 1$, define $A^k_{m,s}$ to be the set of functions $u$ such that $\|T^k_{s}(u)\|_{L^2(\mathbb{T})}^2 \le m$. 
For any $s>1 $, define $\Gamma^k_{m,s}$ to be the set of functions $u$ such that $$\|T^k_s(u)\|_{L^2(\mathbb{T})}^2 > m \ge \|T^k_{1/s}(u)\|_{L^2(\mathbb{T})}^2.$$

It follows that $\Gamma^k_{m,s} = A^k_{m,1/s} \backslash A_{m,s}$. The sets $\Gamma^k_{m,s}$ will allow us to define conditional probability measures based upon the scaling $T_s^k(u)$, similar to the measures $\mu^{a,b}_{\epsilon}$ in \cite{Oh}.
\begin{definition}
Consider constants $m >0$, $2 > s >1$. For any measurable set $E \subset L^{2}(\mathbb{T})$ let 
$$\nu^0_{m,s}(E)  = \frac{\mu(E \cap \Gamma_{m,s})}{\mu(\Gamma_{m,s})}.$$ Similarly, for $k \ge 1$ let $$\nu^k_{m,s}(E)  = \frac{\mu(E \cap \Gamma^k_{m,s})}{\mu(\Gamma^k_{m,s})}.$$ 
\end{definition}

In order to reasonably bound any set with respect to this measure we will need a lower bound on $\mu(\Gamma_{m,s})$ and $\mu(\Gamma^k_{m,s})$. We not only bound the size of these sets, but show that their $\mu$-measure is roughly proportional to $\frac{s-1}{\langle k \rangle^2}$ as $ s \rightarrow 1$. 

\begin{lemma} \label{705}
For every $m >0$  $$\lim_{s \rightarrow 1+}\frac{\mu(A_{m,1/s} \backslash A_{m,s})}{s^2-1/s^2} = \lim_{s \rightarrow 1+}\frac{\mu(\Gamma_{m,s})}{s^2-1/s^2} = C(m),$$ for some constant $C(m) \ge \int_{y=0}^{m/2} \frac{m P_0(y)}{2}e^{-m} dy> \frac{me^{-m}\mu(\|u\|_{L^2(\mathbb{T})}^2 < m/2)}{2}$ depending only on $m$. 
\end{lemma}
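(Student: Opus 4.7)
The key observation is that $T_s = T_s^0$ rescales only the zeroth Fourier coefficient, so with the notation $X := |g_0|^2$ and $Y := \sum_{n \neq 0} |g_n|^2/\langle n \rangle^2$ (independent under $\mu$), we have $\|T_s u\|_{L^2(\mathbb{T})}^2 = s^2 X + Y$. Since $g_0$ is standard complex Gaussian (so $g_0 = a_0 + ib_0$ with $a_0, b_0 \sim N(0, 1/2)$), the variable $X$ has density $e^{-x}\mathbf{1}_{x \ge 0}$ (exponential with mean $1$), and $Y$ has density $P_0$ from Definition \ref{601}. First I would rewrite $\Gamma_{m,s}$ as the event $\{X/s^2 + Y \le m < s^2 X + Y\}$; then by independence and Fubini,
\begin{equation*}
\mu(\Gamma_{m,s}) = \int_0^m P_0(y)\, \mathbb{P}\!\left(\tfrac{m-y}{s^2} < X \le s^2(m-y)\right) dy = \int_0^m P_0(y)\left[e^{-(m-y)/s^2} - e^{-s^2(m-y)}\right] dy.
\end{equation*}
(Values $y > m$ contribute nothing, since $X/s^2 + y \le m$ is then impossible.)

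Next I would Taylor expand near $s = 1$. Setting $\epsilon := s^2 - 1$, we have $s^{-2} = 1 - \epsilon + O(\epsilon^2)$, hence $s^2 - s^{-2} = 2\epsilon + O(\epsilon^2)$, and
\begin{equation*}
e^{-(m-y)/s^2} - e^{-s^2(m-y)} = 2\epsilon(m-y)e^{-(m-y)} + O(\epsilon^2),
\end{equation*}
where the $O(\epsilon^2)$ error is controlled uniformly in $y \in [0,m]$ (the second $s$-derivative of the bracketed expression is jointly smooth in $(s,y)$ on a compact neighborhood of $s=1$). Because $P_0 \in L^\infty(\mathbb{R})$ by Lemma \ref{605} and the interval of integration is the fixed compact set $[0,m]$, dominated convergence yields
\begin{equation*}
\lim_{s \to 1+}\frac{\mu(\Gamma_{m,s})}{s^2 - s^{-2}} = \int_0^m (m-y)\, e^{-(m-y)} P_0(y)\, dy \; =: \; C(m),
\end{equation*}
which also gives the equal limit for $\mu(A_{m,1/s}\setminus A_{m,s})$ since the two sets coincide by Definition \ref{704}.

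For the lower bound on $C(m)$, restricting the integral to $y \in [0, m/2]$ gives $m - y \ge m/2$ and $e^{-(m-y)} \ge e^{-m}$, so $C(m) \ge \int_0^{m/2} \tfrac{m P_0(y)}{2}e^{-m}\, dy$. Finally, $\int_0^{m/2} P_0(y)\, dy = \mu(Y < m/2)$ strictly exceeds $\mu(\|u\|_{L^2}^2 < m/2) = \mu(X+Y < m/2)$, because the event $\{Y < m/2, X + Y \ge m/2\}$ has positive $\mu$-measure (it contains $\{Y < m/4, X > m/2\}$, and both factors have positive probability). This completes the stated chain of inequalities. I do not anticipate any serious obstacle: the argument reduces to a Fubini computation plus a routine Taylor expansion, and the only care required is in justifying the interchange of limit and integral, which is immediate from the uniform $L^\infty$ bound on $P_0$.
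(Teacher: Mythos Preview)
Your proof is correct and essentially identical to the paper's: both reduce $\mu(\Gamma_{m,s})$ via Fubini and the exponential law of $|g_0|^2$ to $\int_0^m P_0(y)\bigl[e^{-(m-y)/s^2}-e^{-s^2(m-y)}\bigr]dy$, then pass to the limit to obtain $C(m)=\int_0^m (m-y)e^{-(m-y)}P_0(y)\,dy$ and restrict to $[0,m/2]$ for the lower bound. The only cosmetic difference is that the paper phrases the limit as the $r$-derivative of $\int_0^m -P_0(y)e^{-r(m-y)}dy$ at $r=1$, whereas you Taylor-expand in $\epsilon=s^2-1$; your justification of the strict inequality $\mu(Y<m/2)>\mu(X+Y<m/2)$ is in fact slightly more explicit than the paper's.
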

\begin{proof}

Given $s$, the quantity in the limit is the $\mu$-measure of the set of $u = \underset{n \in \mathbb{Z}}{\sum} \frac{g_n}{\langle n \rangle}e^{inx}$ such that $$\frac{|g_0|^2}{s^2} + \sum_{|n|\ge 1} \frac{|g_n|^2}{\langle n \rangle^2} \le m < s^2|g_0|^2 + \sum_{|n|\ge 1} \frac{|g_n|^2}{\langle n \rangle^2}.$$

Recall that $ P_0$ is the distribution function of $y = \underset{|n| \ge 1}{\sum} \frac{|g_n|^2}{\langle n \rangle^2}$ with respect to the measure $\mu$. Letting $g_0 = a +ib$, with $a,b$ both $\mathcal{N}(0,\frac{1}{2})$ distributed, we have

\begin{equation} \label{706}
\begin{split}
\mu(A_{m,1/s} \backslash A_{m,s}) &=\int_{y=0}^{m}  P_0(y)\mu\left( \frac{m-y}{s^2} \le |g_0|^2 <s^2(m-y) \right) dy \\
&= \int_{y=0}^{m}  P_0(y)\int_{a^2+b^2 = (m-y)/s^2}^{s^2(m-y)} \frac{1}{\pi}e^{-(a^2+b^2)}  db da dy\\
&=  \int_{y=0}^{m} P_0(y) \int_{\theta = 0}^{2 \pi} \int_{r = \sqrt{m-y}/s}^{s\sqrt{m-y}} \frac{1}{\pi} e^{-r^2} rdrd\theta dy\\ 
&=  \int_{y=0}^{m} P_0(y) |_{r = \sqrt{m-y}/s}^{s\sqrt{m-y}}  -e^{-r^2} dy\\
&=   \int_{y=0}^{m}  P_0(y)  \left(e^{-(m-y)/s^2} - e^{-s^2(m-y)}\right) dy,\\ 
\end{split}
\end{equation}
after switching to polar coordinates in the middle. Dividing yields
\begin{equation}\label{707}
\frac{\mu(A_{m,1/s} \backslash A_{m,s})}{s^2-1/s^2} =  \frac{1}{s^2-1/s^2} \int_{y=0}^{m}  P_0(y)  \left(e^{-(m-y)/s^2} - e^{-s^2(m-y)}\right) dy.
\end{equation}
Taking the limit as $s \rightarrow 1$ produces the derivative $dr$ of the function $$\int_{y=0}^{m} - P_0(y) e^{-r(m-y)} dy$$ at the point $r=1$, which equals 
\begin{equation} \label{708}
\int_{y=0}^{m}  P_0(y) (m-y)e^{-(m-y)} dy \ge \int_{y=0}^{m/2}  P_0(y) \frac{m}{2}e^{-m} dy > \frac{me^{-m}\mu(\|u\|_{L^2(\mathbb{T})}^2 < m/2)}{2}.
\end{equation}
\end{proof}
We now prove the same result for $ k \ge 1$.

\begin{lemma} \label{709}
For every $m >0$ and $k \ge 1$  $$\lim_{s \rightarrow 1+}\frac{\mu(A^k_{m,1/s} \backslash A^k_{m,s})}{s^2-1/s^2} = \lim_{s \rightarrow 1+}\frac{\mu(\Gamma^k_{m,s})}{s^2-1/s^2} = C(m,k),$$ for some constant $C(m,k)$ that satisfies
\begin{equation}
\begin{split}
\frac{C}{\langle k \rangle^2} \ge C(m,k) &>  \frac{m^2e^{-m}}{4\langle k \rangle ^2}\int_{m/2}^{m} P_k\left( m-\frac{y}{\langle k \rangle^2}\right)  dy \\
\end{split}
\end{equation} 
\end{lemma}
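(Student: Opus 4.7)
The plan is to mimic the strategy of Lemma \ref{705}, but to isolate the two Gaussians $g_k,g_{-k}$ (instead of the single real-and-imaginary-part pair of $g_0$) and keep careful track of how the factor $1/\langle k\rangle^2$ arises. Writing $X=|g_k|^2+|g_{-k}|^2$ and $y=\sum_{|n|\neq k}|g_n|^2/\langle n\rangle^2$, these are independent, $X$ has density $xe^{-x}\mathbf{1}_{x>0}$ (being the sum of two i.i.d.\ exponentials of mean $1$), and $y$ has density $P_k$ by Definition \ref{601}. By Parseval,
\begin{equation*}
\|T^k_s(u)\|_{L^2(\mathbb{T})}^2=\frac{s^2 X}{\langle k\rangle^2}+y,
\end{equation*}
so the shell $\Gamma^k_{m,s}=A^k_{m,1/s}\setminus A^k_{m,s}$ corresponds to $\langle k\rangle^2(m-y)/s^2\leq X\leq s^2\langle k\rangle^2(m-y)$, $y\in(0,m)$.

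With that setup I would write
\begin{equation*}
\mu(\Gamma^k_{m,s})=\int_0^m P_k(y)\int_{\ell/s^2}^{s^2\ell}xe^{-x}\,dx\,dy,\qquad \ell:=\langle k\rangle^2(m-y),
\end{equation*}
and use the antiderivative $-(x+1)e^{-x}$ to evaluate the inner integral as $N(s,\ell):=(\ell/s^2+1)e^{-\ell/s^2}-(s^2\ell+1)e^{-s^2\ell}$. Setting $r=s^2$, one has $N(1,\ell)=0$ and a direct differentiation gives $\partial_r N|_{r=1}=2\ell^2 e^{-\ell}$, while $\partial_r(r-1/r)|_{r=1}=2$, so L'H\^opital yields the pointwise limit
\begin{equation*}
\lim_{s\to 1+}\frac{1}{s^2-1/s^2}\int_{\ell/s^2}^{s^2\ell}xe^{-x}\,dx=\ell^2 e^{-\ell}.
\end{equation*}

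To pass the limit inside the $dy$ integral, I would apply dominated convergence. For $r\in[1,2]$ the quotient $N(r,\ell)/(r-1/r)$ is bounded in absolute value by $\sup_{\xi\in[1,r]}|N'(\xi,\ell)|/\inf_{\xi}|D'(\xi)|\lesssim \ell^2 e^{-\ell/2}$, and this is uniformly bounded in $\ell\in(0,\langle k\rangle^2 m]$; multiplied by $P_k(y)$ it is integrable on $(0,m)$ since $P_k\in L^\infty$ (Lemma \ref{605}) and $(0,m)$ is a bounded interval. Taking the limit and substituting $u=\langle k\rangle^2(m-y)$ produces
\begin{equation*}
C(m,k)=\frac{1}{\langle k\rangle^2}\int_0^{\langle k\rangle^2 m} P_k\!\left(m-\frac{u}{\langle k\rangle^2}\right)u^2 e^{-u}\,du.
\end{equation*}
The upper bound $C(m,k)\leq C/\langle k\rangle^2$ then follows from $\|P_k\|_{L^\infty}\lesssim 1$ and $\int_0^\infty u^2 e^{-u}\,du=2$, while restricting the $u$-integration to $[m/2,m]$ (valid because $\langle k\rangle^2 m\geq m$) and using $u^2\geq m^2/4$, $e^{-u}\geq e^{-m}$ on that interval gives the claimed lower bound
\begin{equation*}
C(m,k)\geq \frac{m^2 e^{-m}}{4\langle k\rangle^2}\int_{m/2}^{m} P_k\!\left(m-\frac{u}{\langle k\rangle^2}\right)du.
\end{equation*}
The only real subtlety is the dominated-convergence step, which is mild given the explicit form of $N(r,\ell)$ and the compact $y$-domain.
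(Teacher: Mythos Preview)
Your proposal is correct and follows essentially the same route as the paper: isolate the pair $(g_k,g_{-k})$, integrate the radial variable explicitly, divide by $s^2-1/s^2$, and take the $s\to 1^+$ limit to reach the formula $C(m,k)=\langle k\rangle^{-2}\int_0^{\langle k\rangle^2 m}P_k(m-u/\langle k\rangle^2)u^2e^{-u}\,du$, from which both bounds follow exactly as you indicate. The only cosmetic difference is that the paper computes the distribution of $|g_k|^2+|g_{-k}|^2$ via two passes to polar coordinates in $\mathbb{R}^4$ rather than quoting the Gamma$(2,1)$ density directly, and your dominated-convergence justification is in fact slightly more explicit than the paper's.
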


\begin{proof}

Given s, the quantity in the limit is the measure of the set of $u = \underset{n \in \mathbb{Z}}{\sum} \frac{g_n}{\langle n \rangle}e^{inx}$ such that $$\frac{|g_k|^2+|g_{-k}|^2}{s^2\langle k \rangle ^2} + \sum_{|n|\ne k} \frac{|g_n|^2}{\langle n \rangle^2} \le m < \frac{s^2(|g_k|^2+ |g_{-k}|^2)}{\langle k \rangle^2} + \sum_{|n|\ne k} \frac{|g_n|^2}{\langle n \rangle^2}.$$

Recall that $P_k(y)$ is the distribution function of $y = \underset{|n| \ne k}{\sum} \frac{|g_n|^2}{\langle n \rangle^2}$ with respect to the measure $\mu$. Setting $g_k = a +ib$, $g_{-k} = a'+ib'$, with $a,a',b,b'$ all $\mathcal{N}(0,\frac{1}{2})$ distributed, we have 

\begin{equation}
\begin{split}
\mu(A^k_{m,1/s} \backslash A^k_{m,s}) &=\int_{y=0}^{m} P_k(y)\mu\left( \frac{m-y}{s^2} < \frac{(|g_k|^2+ |g_{-k}|^2)}{\langle k \rangle^2} \le s^2(m-y)\right) dy \\
&=\int_{y=0}^{m} P_k(y)\mu\left( \frac{\langle k \rangle^2(m-y)}{s^2} < |g_k|^2+ |g_{-k}|^2  \le \langle k \rangle^2s^2(m-y)\right) dy \\
&= \int_{y=0}^{m} P_k(y)\int_{a^2+a'^2+b^2+b'^2= \frac{\langle k \rangle^2(m-y)}{s^2}}^{\langle k \rangle^2s^2(m-y)}  \frac{e^{-(a^2+a'^2+b^2+b'^2)}}{\pi^2} da db da'db'  dy\\
&=  \int_{y=0}^{m} P_k(y)\int_{r_1^2+r_2^2= \frac{\langle k \rangle^2(m-y)}{s^2}}^{\langle k \rangle^2s^2(m-y)}  4r_1r_2e^{-(r_1^2+r_2^2)}dr_1 dr_2  dy\\
&=  \int_{y=0}^{m} P_k(y) \int_{\theta = 0}^{\pi/2} \int_{r = \langle k \rangle \sqrt{m-y}/s}^{s\langle k \rangle \sqrt{m-y}} 4r^2\cos(\theta)\sin(\theta) e^{-r^2} rdrd\theta dy\\ 
&=  \int_{y=0}^{m} P_k(y) \int_{r = \langle k \rangle \sqrt{m-y}/s}^{s\langle k \rangle \sqrt{m-y}} 2r^3 e^{-r^2} dr dy\\ 
&=  \int_{y=0}^{m} P_k(y) |_{r =  \langle k \rangle \sqrt{m-y}/s}^{s \langle k \rangle\sqrt{m-y}}  -(r^2+1)e^{-r^2} dr dy\\
&=   \int_{y=0}^{m} P_k(y)  (1+ \langle k \rangle^2\frac{m-y}{s^2})e^{- \langle k \rangle^2(m-y)/s^2} dy \\
&-\int_{y=0}^m P_k(y)(1 +  \langle k \rangle^2s^2(m-y) )e^{- \langle k \rangle^2s^2(m-y)} dy\\ 
\end{split}
\end{equation}
where we switched to polar coordinates twice. Proceeding as in the proof of Lemma \ref{705}, and applying a change of variables, we have 
\begin{equation}
\begin{split}
\lim_{s \rightarrow 1} \frac{\mu(A^k_{m,1/s} \backslash A^k_{m,s})}{s^2-1/s^2}&= \frac{d}{dr}|_{r=1} \int_{y=0}^{m} -P_k(y)(1+(m-y)\langle k \rangle^2 r)e^{-r\langle k \rangle^2(m-y)} dy \\
&= \int_{y=0}^{m} \langle k \rangle^2 P_k(y) [(m-y+\langle k \rangle^2(m-y)^2) - (m-y)]e^{-r \langle k \rangle^2(m-y)} dy \\
&= \int_{y=0}^{m} P_k(y) \langle k \rangle^4(m-y)^2e^{- \langle k \rangle^2(m-y)} dy.\\
&= \int_{y=0}^{m} P_k(m-y) \langle k \rangle^4y^2e^{- \langle k \rangle^2y} dy.\\
&= \frac{1}{\langle k \rangle^2}\int_{y=0}^{\langle k \rangle^2m} P_k\left( m-\frac{y}{\langle k \rangle^2} \right)y^2e^{-y}dy.\\ 
\end{split}
\end{equation}

This limit satisfies 
\begin{equation} \label{730}
\begin{split}
\lim_{s \rightarrow 1} \frac{\mu(A^k_{m,1/s} \backslash A^k_{m,s})}{s^2-1/s^2}  &\le \frac{1}{\langle k \rangle^2} \int_0^{\infty} \|P_k\|_{L^{\infty}(\mathbb{R})} y^2e^{-y}\\
&\le \frac{C}{\langle k \rangle^2} \\
\end{split}
\end{equation}
as well as 
\begin{equation} \label{729}
\begin{split}
\lim_{s \rightarrow 1} \frac{\mu(A^k_{m,1/s} \backslash A^k_{m,s})}{s^2-1/s^2} &\ge 
 \frac{m^2e^{-m}}{4\langle k \rangle ^2}\int_{m/2}^{m} P_k\left( m-\frac{y}{\langle k \rangle^2}\right)  dy.\\
\end{split}
\end{equation}
\end{proof}

We now prove analogous results for the measure of the set of functions with mass in the ball of radius $\epsilon$ centered at $m$. We start with the following definition and measure:
\begin{definition}
Let $B_{m,\epsilon}$ denote the set of functions $u \in L^2(\mathbb{T})$ with mass in $(m-\epsilon,m+\epsilon)$. For any measurable set $E \subset L^{2}(\mathbb{T})$ we define the measure $$\mu^{\epsilon}_{m}(E) = \frac{\mu(E \cap B_{m,\epsilon})}{\mu(B_{m,\epsilon})}.$$
\end{definition}
In order to reasonably bound $\mu_m^{\epsilon}(E)$ for any $E$, we will need a lower bound on the measure of $B_{m,\epsilon}$.

\begin{lemma} \label{710}
For any $m>0$ we have $$\lim_{\epsilon \rightarrow 0+}\frac{\mu(m-\epsilon < \|u\|_{L^2(\mathbb{T})}^2 < m+ \epsilon)}{2\epsilon} = \lim_{\epsilon \rightarrow 0+}\frac{\mu(B_{m,\epsilon})}{2\epsilon} = p_0(m)$$ with $p_0(m) >0$.
\end{lemma}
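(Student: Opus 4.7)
The plan is to reduce Lemma~\ref{710} to a Lebesgue-differentiation fact about the continuous density $p_0$ and then establish positivity via a decomposition that isolates the zero mode of $u$.

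First I would observe that, by Definition~\ref{601} together with the remark there that the objects $p_N$ are in fact continuous functions, $p_0$ is the probability density of the random variable $\|u\|_{L^2(\mathbb{T})}^2 = \sum_n |g_n|^2/\langle n\rangle^2$ under $\mu$. Since $B_{m,\epsilon} = \{m-\epsilon < \|u\|_{L^2}^2 < m + \epsilon\}$, this immediately gives
\begin{equation*}
\mu(B_{m,\epsilon}) = \int_{m-\epsilon}^{m+\epsilon} p_0(y)\, dy.
\end{equation*}
Lemma~\ref{602} applied with $N = 0$ shows $\widehat{p_0} \in L^1(\mathbb{R})$, so $p_0$ is uniformly continuous on $\mathbb{R}$. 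The claimed limit
\begin{equation*}
\lim_{\epsilon \to 0+} \frac{\mu(B_{m,\epsilon})}{2\epsilon} = \lim_{\epsilon \to 0+} \frac{1}{2\epsilon}\int_{m-\epsilon}^{m+\epsilon} p_0(y)\, dy = p_0(m)
\end{equation*}
is then just continuity of $p_0$ at $m$.

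For the strict positivity $p_0(m) > 0$, my plan is to split off the zero Fourier mode. Writing $\|u\|_{L^2}^2 = |g_0|^2 + Y$ with $Y = \sum_{n \neq 0} |g_n|^2/\langle n\rangle^2$, the independence of $g_0$ from the other $g_n$ yields the convolution $p_0 = P_0 * \phi$, where $\phi(x) = e^{-x}1_{[0,\infty)}(x)$ is the density of $|g_0|^2$ (computed in the polar-coordinate step inside Lemma~\ref{705}). Hence
\begin{equation*}
p_0(m) = \int_0^{m} e^{-(m-y)} P_0(y)\, dy \;\ge\; e^{-m}\int_0^m P_0(y)\, dy \;=\; e^{-m}\,\mu(Y \le m) \;\ge\; e^{-m}\,\mu\bigl(\|u\|_{L^2}^2 < m\bigr),
\end{equation*}
using $0 \le Y \le \|u\|_{L^2}^2$ at the last step. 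To conclude I would show $\mu(\|u\|_{L^2}^2 < m) > 0$ for every $m > 0$: by Lemma~\ref{13} the high-frequency tail $\sum_{|n| > N} |g_n|^2/\langle n\rangle^2$ can be made less than $m/2$ with positive probability once $N = N(m)$ is large, while the remaining finite-dimensional contribution $\sum_{|n| \le N} |g_n|^2/\langle n\rangle^2$ is an absolutely continuous random variable whose support contains $0$, so it is less than $m/2$ with positive probability; independence concludes.

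The main obstacle: there is essentially none. Every ingredient has already been set up in the preceding subsection --- continuity of $p_0$ from Lemma~\ref{602}, the exponential distribution of $|g_0|^2$ implicit in the polar-coordinate computation of Lemma~\ref{705}, and the high-frequency tail bound from Lemma~\ref{13}. The only point requiring care is to confirm at the outset that $p_0$, which Definition~\ref{601} initially presents as a distribution, may be treated as a genuine pointwise continuous function --- and this is precisely what Lemma~\ref{602} guarantees.
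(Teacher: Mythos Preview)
Your proposal is correct and takes essentially the same approach as the paper: both obtain the limit from continuity of $p_0$ (via Lemma~\ref{602}) and establish positivity by splitting off the zero mode to get $p_0(m)=\int_0^m e^{-(m-y)}P_0(y)\,dy$ (note $P_0=p_1$, so this is exactly the paper's formula). The only difference is cosmetic---the paper reaches this convolution identity through an explicit polar-coordinate computation of $\mu(B_{m,\epsilon})$ before differentiating, whereas you write the convolution directly; and you spell out why $\mu(\|u\|_{L^2}^2<m)>0$ in a bit more detail than the paper, which simply asserts the final integral is positive.
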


\begin{proof} 
Recall from Lemma \ref{602} that $p_0$, the distribution function of the mass, is uniformly continuous with $\|p_0\|_{L^{\infty}} < \infty$. Clearly \begin{equation}
\mu(m-\epsilon < \|u\|_{L^2(\mathbb{T})}^2 < m+ \epsilon) = \int_{m-\epsilon}^{m+\epsilon} p_0(m')dm'.
\end{equation} 
Uniformly continuity of $p_0$ implies that 
\begin{equation} 
\lim_{\epsilon \rightarrow 0+}\frac{\mu(m-\epsilon < \|u\|_{L^2(\mathbb{T})}^2 < m+ \epsilon)}{2\epsilon} = p_0(m).
\end{equation}

It remains to show that $p_0(m) >0$. We have 
\begin{equation*}
\begin{split}
\mu(m-\epsilon < \|u\|_{L^2(\mathbb{T})}^2 < m+ \epsilon) &=\int_{y=0}^{m+\epsilon} p_1(y) \mu( m-\epsilon-y < |g_0|^2 <m+\epsilon-y) dy \\
&=  \int_{y=0}^{m+\epsilon} p_1(y) \int_{\theta = 0}^{2 \pi} \int_{r = \sqrt{\max\{m-\epsilon-y, 0\}}}^{\sqrt{m+\epsilon-y}} \frac{1}{\pi} e^{-r^2} rdrd\theta dy\\ 
&=  \int_{y=0}^{m+\epsilon} p_1(y) \int_{r = \sqrt{\max\{m-\epsilon-y, 0\}}}^{\sqrt{m+\epsilon-y}}  2re^{-r^2} dr dy\\
&\ge   \int_{y=0}^{m-\epsilon} p_1(y) e^{-(m-y)}(e^{\epsilon} - e^{-\epsilon}) dy \\
&= \int_{y=0}^{m}  p_1(y) e^{-(m-y)}(e^{\epsilon} - e^{-\epsilon}) dy -  \int_{y=m-\epsilon}^{m}  p_1(y) e^{-(m-y)}(e^{\epsilon} - e^{-\epsilon}) dy. \\
\end{split}
\end{equation*}
The limit of the second integrand as $\epsilon \rightarrow 0$ is clearly $0$, and thus $$\lim_{\epsilon \rightarrow 0} \frac{1}{2\epsilon}\int_{y=m-\epsilon}^{m}  p_1(y) e^{-(m-y)}(e^{\epsilon} - e^{-\epsilon}) dy = 0.$$ Dividing the first integral by $2 \epsilon$ and taking the limit, we have  
\begin{equation}
\begin{split}
\lim_{\epsilon \rightarrow 0+}\frac{\mu(m-\epsilon < \|u\|_{L^2(\mathbb{T})}^2 < m+ \epsilon)}{2\epsilon} &\ge \frac{d}{dr}|_{r=0} \int_{y=0}^{m} p_1(y) e^{-(m-y)}e^{r}  dy \\
&= \int_{y=0}^{m} p_1(y) e^{-(m-y)}  dy\\
\end{split}
\end{equation}
So we have 
\begin{equation}
p_0(m) =  \lim_{\epsilon \rightarrow 0+}\frac{\mu(m-\epsilon < \|u\|_{L^2(\mathbb{T})}^2 < m+ \epsilon)}{2\epsilon} >  \int_{y=0}^{m} p_1(y) e^{-(m-y)}  dy>0.
\end{equation}
\end{proof}

We conclude this subsection with a bound on the functions $P_N$ that will be useful when combined with Lemma \ref{709}. 
\begin{lemma} \label{609}
For each $m>0$ there exists an interval $[m',m'']$ containing $m$ in its interior and a continuous positive function $C_0(x)$ such that for each $x \in [m',m'']$
\begin{equation}
\inf_{N} \int_{x/2}^x  P_N\left( x-\frac{y}{\langle N \rangle^2} \right) dy \ge C_0(x).
\end{equation} 
\end{lemma}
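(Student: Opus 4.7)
The plan is to choose an interval $[m', m''] \ni m$ on which $p_0(x) \ge p_0(m)/2$ --- which exists because Lemma \ref{710} gives $p_0(m) > 0$ and the Fourier computation in Lemma \ref{602} shows $p_0$ is uniformly continuous on $\mathbb{R}$ --- and then to split $\inf_N$ into a large-$N$ regime, where Lemma \ref{607} forces $P_N$ to be close to $p_0$, and a finite small-$N$ regime that is handled directly.

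For large $N$, I would apply Lemma \ref{607} with $p = \infty$ to obtain $\|P_N - p_0\|_{L^\infty(\mathbb{R})} \le C/\langle N\rangle^2$. Combined with uniform continuity of $p_0$ on $[0, m'']$, one can pick $N_0$ large enough that for all $N \ge N_0$, $x \in [m', m'']$, and $y \in [x/2, x]$, both the shift error $|p_0(x) - p_0(x - y/\langle N\rangle^2)|$ (controlled since $y/\langle N\rangle^2 \le m''/\langle N_0\rangle^2$) and the tail error $\|P_N - p_0\|_{L^\infty(\mathbb{R})}$ are at most $p_0(m)/8$. Consequently $P_N(x - y/\langle N\rangle^2) \ge p_0(x) - p_0(m)/4 \ge p_0(m)/4$, and integrating over $y \in [x/2, x]$ gives the lower bound $(x/2)\cdot p_0(m)/4$, a continuous positive function of $x \in [m', m'']$.

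For the remaining indices $N \in \{0, 1, \ldots, N_0 - 1\}$ there are only finitely many functions to compare. Each $P_N$ is continuous on $\mathbb{R}$ by the $L^1$ bound on $\widehat{P_N}$ from Lemma \ref{605}. Moreover, $P_N$ is the density of a convolution of independent exponential distributions $|g_n|^2/\langle n\rangle^2$, so it is real-analytic on $(0, \infty)$ and, by $\int_0^\infty P_N = 1$, not identically zero there. Hence $P_N$ cannot vanish on any subinterval of $(0, \infty)$, and the map $x \mapsto \int_{x/2}^x P_N(x - y/\langle N\rangle^2)\,dy$ is continuous and strictly positive on $(0, \infty)$ (the argument $x - y/\langle N\rangle^2$ lies in $[x/2, x]$ for $N \ge 1$ and in $[0, x/2]$ for $N = 0$, and in either case the integral is over a subinterval of $(0, \infty)$ of positive length). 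Taking the pointwise minimum of these finitely many continuous positive functions together with the large-$N$ lower bound produces the desired continuous positive $C_0(x)$.

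The main obstacle is the small-$N$ regime: Lemma \ref{607} is purely asymptotic in $N$ and provides no pointwise control for bounded $N$, so one must establish strict positivity of each individual $P_N$ on $(0, \infty)$ and then extract a uniform minorant from a finite collection of continuous positive functions. The large-$N$ step, by contrast, is a routine consequence of the quantitative statement $P_N \to p_0$ in $L^\infty(\mathbb{R})$ together with uniform continuity of $p_0$.
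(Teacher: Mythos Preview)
Your approach is essentially the paper's: split into a large-$N$ regime (where Lemma~\ref{607} and uniform continuity of $p_0$ give a lower bound $\tfrac{x}{2}\cdot p_0(m)/4$) and a finite small-$N$ regime, then take the pointwise minimum. The paper in fact does exactly this, defining $C_0(x)$ as the minimum of the large-$N$ bound and the finitely many integrals $\int_{x/2}^x P_N(x-y/\langle N\rangle^2)\,dy$ for $N < N_m$, without ever justifying that these latter integrals are positive --- so you are being more careful than the paper on that point.

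That said, your justification for strict positivity in the small-$N$ regime has a soft spot: you assert that $P_N$ is real-analytic on $(0,\infty)$ because it is ``a convolution of exponentials.'' This is an \emph{infinite} convolution, and the characteristic function $\prod_{|n|\ne N}(1-i\xi/\langle n\rangle^2)^{-1}$ decays only like $\exp(-c|\xi|^{1/2}\log|\xi|)$, not exponentially, so real-analyticity is not obvious (and is not needed). A direct argument is cleaner: isolate one exponential summand, say $|g_0|^2$ (or $|g_{\pm 1}|^2/2$ when $N=0$), and write $P_N(z)=e^{-z}\int_0^z e^{y}q(y)\,dy$ where $q$ is the density of the remaining tail $Y$. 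Since $\mathbb{P}(Y<\epsilon)>0$ for every $\epsilon>0$ (split $Y$ into finitely many low modes plus a tail with small mean and apply independence), the integral is strictly positive for every $z>0$, and continuity of each $x\mapsto\int_{x/2}^x P_N(x-y/\langle N\rangle^2)\,dy$ then yields a positive continuous minorant on $[m',m'']$.
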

\begin{proof}
Fix a value of $N \ge 0$. Applying Lemma \ref{607} and the triangle inequality, we have 
\begin{equation}
\begin{split}
\int_{m/2}^m  &P_N \left( m-\frac{y}{\langle N \rangle^2} \right) dy\\
&\ge \int_{m/2}^m  p_0\left( m-\frac{y}{\langle N \rangle^2} \right) - \left| P_N\left( m-\frac{y}{\langle N \rangle^2} \right) -  p_0\left( m-\frac{y}{\langle N \rangle^2} \right)\right| dy \\
&\ge \int_{m/2}^m   p_0(m)  - \left| p_0\left( m-\frac{y}{\langle N \rangle^2} \right)- p_0(m)\right|- \left| P_N\left( m-\frac{y}{\langle N \rangle^2} \right) -  p_0\left( m-\frac{y}{\langle N \rangle^2} \right) \right| dy \\ 
&\ge \frac{m}{2}\left(p_0(m) - \frac{C}{\langle N \rangle ^2} - \sup_{y \in [m/2,m]}\left| p_0\left( m-\frac{y}{\langle N \rangle^2} \right)- p_0(m)\right|\right) \\ 
\end{split}
\end{equation} 
Since $p_0$ is uniformly continuous, $\sup_{y \in [m/2,m]}| p_0\left( m-\frac{y}{\langle N \rangle^2} \right)- p_0(m)| \le G(N)$ for some positive decreasing function $G$ that satisfies $\lim_{N \rightarrow \infty} G(N) = 0$. Therefore for each $m,N$ we have
\begin{equation}
\int_{m/2}^m  P_N\left( m-\frac{y}{\langle N \rangle^2} \right) dy \ge \frac{m}{2}(p_0(m) - \frac{C}{\langle N \rangle ^2} - G(N)).
\end{equation} 
Now fix a value of $m>0$. There exists a minimal value of $N$, which we denote $N_m$, that satisfies $p_0(m) > \frac{C}{\langle N_m \rangle^2} + G(N_m)$. This means that for all $N \ge N_m$ we have 
\begin{equation}
p_0(m) > \frac{C}{\langle N \rangle^2} + G(N).
\end{equation}

Since $p_0$ is a continuous function, there exists an interval $[m',m'']$ containing $m$ in its interior such that $p_0(x) > \frac{C}{\langle N_m \rangle^2} + G(N_m)$ for each $x \in [m',m'']$. Now observe that $$\{\frac{x}{2}(p_0(x) - \frac{C}{\langle N_m \rangle ^2} - G(N_m))\} \cup\{ \int_{x/2}^x  P_N\left( x-\frac{y}{\langle N \rangle^2} \right) dy, N \le N_m-1\}$$ is a finite set of $N_m+1$ continuous functions on the interval $[m',m'']$. Setting $C_0(x)$ equal to 
\begin{equation}
\min\{\int_{x/2}^x  P_0\left( x-\frac{y}{\langle 0 \rangle^2} \right) dy, \ldots, \int_{x/2}^x  P_{N_m-1}\left( x-\frac{y}{\langle N_m-1 \rangle^2} \right) dy,\frac{m}{2}(p_0(m) - \frac{C}{\langle N_m \rangle^2} - G(N_m))  \}
\end{equation}
defines a continuous function such that for each $x \in [m',m'']$, we have 
\begin{equation}
\int_{x/2}^x  P_N\left( x-\frac{y}{\langle N \rangle^2} \right) dy \ge C_0(x)
\end{equation}
for all $N \ge 0$.
\end{proof}

\subsection{The limit measures $\mu_m$ and $\nu_m^k$} \label{mathrefs}

In this subsection we demonstrate that the sequences $\mu_m^{\epsilon}$ and $\nu_{m,s}^k$, both weakly converge to measures $\mu_m$, $\nu_{m}^k$ that are supported on the set $\Gamma_m$ and have a simple relation to each other. We will find that these measures satisfy the Radon-Nikodym derivative 
\begin{align} 
 c_{k,m} d\nu^k_m &= (|g_k|^2+|g_{-k}|^2)d\mu_m \label{801} \\ 
 c_{0,m} d\nu_m^0 &= |g_0|^2 d\mu_m \label{802}
\end{align}
for constants $c_{k,m}$ clearly equal to $\mathbb{E}_{\mu_m}(|g_k|^2 + |g_{-k}|^2)$. If we can show that these constants $c_{k,m}$ are uniformly bounded 
then summing these together we formally have the relation 
\begin{equation} \label{804} 
 \sum_{k \ge 0} \frac{c_{k,m}}{\langle k \rangle^2}d\nu_m^k = \left( \sum_{n} \frac{|g_n|^2}{\langle n \rangle ^2} \right) d\mu_m = m \cdot d\mu_m.
\end{equation}

In order to bound $\int_{H^{1/2-}(\mathbb{T})} F(u)d\mu_m$ it suffices to bound  $\int_{H^{1/2-}(\mathbb{T})} F(u)d\nu_m^k$, uniformly in $k$. This is how we will show the energy term $e^{f(u)-\frac{1}{2}\int |u|^6dx}$ is in $L^1(\mu_m)$, which will allow us to construct an invariant measure while only scaling one pair of frequencies at a time. 

We now define the measures $\mu_m$ and $\nu_m^k$. The limit definition of $\mu_m$ is the same as in Oh, Quastel \cite{Oh}. Recall that the Borel subsets of $H^{\sigma}(\mathbb{T})$ that depend only on finitely many frequencies generate the sigma algebra of $H^{\sigma}(\mathbb{T})$ in the weak topology.

\begin{definition} \label{805}
Let $E\subset L^{2}(\mathbb{T})$ be a measurable set depending only on the frequencies $\le N$. We define $\mu_{m}(E) = \lim_{\epsilon \rightarrow 0}  \mu_{m}^{\epsilon}(E)$. This defines the measure $\mu_m$ on the whole sigma algebra provided that this limit exists for each $E$. 
\end{definition}

Consider a positive integer $N$ and a set $E \subset L^{2}(\mathbb{T})$ generated only by the values of $g_n$, for $n \le N$. Once they have been specified we denote the real and imaginary parts of $g_n$ by $x_n, x_n'$ respectively. Let $E'$ be the set of possible values of $g_0, g_1, g_{-1}, \ldots$ to produce an element of $E$, and let $y = \underset{|n| \le N}{\sum}\frac{|g_n|^2}{\langle n\rangle^2}$. Recall that $p_N(x)$ denotes the distribution function of $\underset{|n| \ge N}{\sum}\frac{|g_n|^2}{\langle n\rangle^2}$.

\begin{lemma} \label{806}
For each measurable set $E$ the limit $\lim_{\epsilon \rightarrow 0}  \mu_{m}^{\epsilon}(E)$ exists and equals 
\begin{equation} \label{807}
\lim_{\epsilon \rightarrow 0}  \mu_{m}^{\epsilon}(E) = \frac{1}{p_0(m)} \int_{E'} p_{N+1}(m-y)  \Pi_{|n| \le N}\frac{e^{-(|x_n|^2+|x_n'|^2)}}{\pi}   dx_ndx_n'.
\end{equation}
\end{lemma}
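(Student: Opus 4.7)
The plan is to condition on the low frequencies $(g_n)_{|n| \le N}$, compute the conditional probability of $B_{m,\epsilon}$ as a small-interval integral of $p_{N+1}$, and then take $\epsilon \to 0$ using uniform continuity of $p_{N+1}$ and dominated convergence. Finally we divide by $\mu(B_{m,\epsilon})$ using Lemma \ref{710}.

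First I would exploit that, under $\mu$, the low-frequency coefficients $(g_n)_{|n| \le N}$ are independent of the tail $\sum_{|n| \ge N+1} \tfrac{|g_n|^2}{\langle n \rangle^2}$, whose distribution is $p_{N+1}$. Since $E$ depends only on the frequencies $|n|\le N$, Fubini gives
\begin{equation*}
\mu(E\cap B_{m,\epsilon}) \;=\; \int_{E'} \left(\int_{m-\epsilon-y}^{m+\epsilon-y} p_{N+1}(z)\,dz\right) \Pi_{|n| \le N}\frac{e^{-(|x_n|^2+|x_n'|^2)}}{\pi}\,dx_n\,dx_n',
\end{equation*}
where $y=\sum_{|n|\le N}\tfrac{|g_n|^2}{\langle n\rangle^2}$ depends on the integration variables and the inner integral is interpreted as $0$ when $m+\epsilon-y<0$.

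Next I would show that for each fixed $y\ge 0$,
\begin{equation*}
\frac{1}{2\epsilon}\int_{m-\epsilon-y}^{m+\epsilon-y} p_{N+1}(z)\,dz \;\longrightarrow\; p_{N+1}(m-y) \qquad \text{as } \epsilon\to 0^+,
\end{equation*}
which is immediate from the uniform continuity of $p_{N+1}$ established in Lemma \ref{602} (we extend $p_{N+1}$ by $0$ to the negative axis; uniform continuity still gives the limit since $p_{N+1}$ vanishes there). The Lebesgue differentiation-style step is pointwise, so we now want to pass the limit under the $E'$ integral. Lemma \ref{602} also yields $\|p_{N+1}\|_{L^\infty(\mathbb{R})}<\infty$, so $\tfrac{1}{2\epsilon}\int_{m-\epsilon-y}^{m+\epsilon-y}p_{N+1}\le \|p_{N+1}\|_{L^\infty}$, and the complex-Gaussian density on $\mathbb{C}^{2N+1}$ is integrable. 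Dominated convergence therefore gives
\begin{equation*}
\lim_{\epsilon\to 0^+}\frac{\mu(E\cap B_{m,\epsilon})}{2\epsilon} \;=\; \int_{E'} p_{N+1}(m-y)\, \Pi_{|n| \le N}\frac{e^{-(|x_n|^2+|x_n'|^2)}}{\pi}\,dx_n\,dx_n'.
\end{equation*}

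Finally, Lemma \ref{710} tells us that $\mu(B_{m,\epsilon})/(2\epsilon)\to p_0(m)$ with $p_0(m)>0$, so dividing the two limits yields the stated formula \eqref{807}. The only real subtlety is verifying dominated convergence at the boundary case $y\approx m$, but the uniform $L^\infty$ bound on $p_{N+1}$ handles this without incident; everything else is bookkeeping.
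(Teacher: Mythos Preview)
Your proof is correct and follows essentially the same approach as the paper: both use the independence of low and high frequencies to factor $\mu(E\cap B_{m,\epsilon})$, the uniform continuity and $L^\infty$ bound on $p_{N+1}$ from Lemma~\ref{602} for pointwise convergence and domination, and Lemma~\ref{710} for the denominator. The only cosmetic difference is that you compute the limits of numerator and denominator separately and then divide, whereas the paper keeps the ratio inside the integral and applies dominated convergence to the combined integrand.
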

\begin{proof}
 We have, after taking the limit and applying Lemma \ref{710},
\begin{equation*}
\begin{split}
\mu_{m}(E) &= \lim_{\epsilon \rightarrow 0} \mu^{\epsilon}_{m}(E)\\
&= \lim_{\epsilon \rightarrow 0} \int_{E'}  \frac{\mu( m-\epsilon -y <\sum_{|n|>N} \frac{|g_n|^2}{\langle n\rangle^2} < m+\epsilon -y )}{\mu(m-\epsilon <\sum_n \frac{|g_n|^2}{\langle  n\rangle^2} < m+\epsilon )} \Pi_{|n| \le N}\frac{e^{-(|x_n|^2+|x_n'|^2)}}{\pi}   dx_ndx_n'\\
&= \lim_{\epsilon \rightarrow 0} \int_{E'}  \frac{\mu( m-\epsilon -y <\sum_{|n|>N} \frac{|g_n|^2}{\langle n\rangle^2} < m+\epsilon -y )/2\epsilon}{\mu(m-\epsilon <\sum_n \frac{|g_n|^2}{\langle  n\rangle^2} < m+\epsilon )/2\epsilon} \Pi_{|n| \le N}\frac{e^{-(|x_n|^2+|x_n'|^2)}}{\pi}   dx_ndx_n'\\
&=  \frac{1}{p_0(m)} \int_{E'} p_{N+1}(m-y)  \Pi_{|n| \le N}\frac{e^{-(|x_n|^2+|x_n'|^2)}}{\pi}   dx_ndx_n'.\\
\end{split}
\end{equation*}

The last equality follows from pointwise convergence of the integrand, however, this  equality is contingent upon the integrand satisfying the boundedness condition of the dominated convergence theorem, which we will now prove. 

Clearly for all $\epsilon$ we have 
$$\mu\left(  m-\epsilon -y <\sum_{|n|>N} \frac{|g_n|^2}{\langle n\rangle^2} < m+\epsilon -y \right)/2\epsilon \le \|p_{N+1}\|_{L^{\infty}(\mathbb{R})}.$$
In addition, for sufficiently small $\epsilon$ we have $$\mu\left( m-\epsilon <\sum_n \frac{|g_n|^2}{\langle  n\rangle^2} < m+\epsilon \right)/2\epsilon > p_0(m)/2 >0,$$ since the limit of this quantity is $p_0(m)$.

Therefore for sufficiently small $\epsilon$ the integral is pointwise bounded by $$\frac{2\|p_{N+1}\|_{L^{\infty}(\mathbb{R})}}{p_0(m)}\Pi_{|n| \le N}\frac{e^{-(|x_n|^2+|x_n'|^2)}}{\pi}.$$ This function is clearly integrable over the set of possible values $x_n, x'_n$. \end{proof}

We now apply the same limit argument to construct the measure $\nu_m^k$ and prove an  equation analogous to \eqref{807}. 
\begin{definition} \label{808}
For any set $E \subset L^{2}(\mathbb{T})$ depending only on the frequencies $\le N$, we define $\nu^0_{m}(E) = \lim_{s \rightarrow 1^+}  \nu^0_{m,s}(E)$. This defines the measure on the whole sigma algebra. 
\end{definition}

\begin{lemma} \label{809}
Keeping the notation of Lemma \ref{806}, for each measurable set $E$ the limit \newline $\lim_{s \rightarrow 1}  \nu_{m,s}^{0}(E)$ exists and, for some constant $C(m)> 0$ depending only on $m$, equals \begin{equation} \label{810}
\lim_{s \rightarrow 1^+}  \nu_{m,s}^{0}(E) = \frac{1}{C(m)}\int_{E'} p_{N+1}(m-y)(|x_0|^2 + |x_0'|^2)  \Pi_{|n| \le N}\frac{e^{-(|x_n|^2+|x_n'|^2)}}{\pi}   dx_ndx_n'.
\end{equation}
\end{lemma}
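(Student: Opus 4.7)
The plan is to mimic the proof of Lemma \ref{806}, with the slicing parameter $\epsilon \to 0$ replaced by $s \to 1^+$ and the normalizer $p_0(m)$ replaced by the constant $C(m)$ coming from Lemma \ref{705}. For a set $E$ depending only on frequencies $|n| \le N$, I first apply Fubini, separating the low frequencies $|n| \le N$ from the high frequencies $|n| > N$:
\[
\mu(E \cap \Gamma_{m,s}) = \int_{E'} \mathbb{P}_{\mu_{>N}}\bigl(w \in I_{m,s}\bigr) \prod_{|n| \le N} \frac{e^{-(|x_n|^2 + |x_n'|^2)}}{\pi}\,dx_n\,dx_n',
\]
where $w = \sum_{|n| > N} |g_n|^2/\langle n \rangle^2$. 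Since $\langle 0 \rangle = 1$ gives $y = |g_0|^2 + \sum_{1 \le |n| \le N} |g_n|^2 / \langle n \rangle^2$, the defining conditions $\|T_{1/s} u\|_{L^2}^2 \le m < \|T_s u\|_{L^2}^2$ rearrange to
\[
w \in I_{m,s} := \bigl(m - y + (1 - s^2)|g_0|^2,\ m - y + (1 - 1/s^2)|g_0|^2\bigr],
\]
an interval of length $|g_0|^2 (s^2 - 1/s^2)$ centered near $m - y$.

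Next, I evaluate the inner probability as $\int_{I_{m,s}} p_{N+1}(w)\,dw$. Dividing by $s^2 - 1/s^2$ and invoking continuity of $p_{N+1}$ (guaranteed by Lemma \ref{602}), the integrand converges pointwise to $p_{N+1}(m - y)(|x_0|^2 + |x_0'|^2)$ as $s \to 1^+$. Assuming the interchange of limit and integral is justified, this yields
\[
\lim_{s \to 1^+} \frac{\mu(E \cap \Gamma_{m,s})}{s^2 - 1/s^2} = \int_{E'} p_{N+1}(m - y)(|x_0|^2 + |x_0'|^2) \prod_{|n| \le N} \frac{e^{-(|x_n|^2 + |x_n'|^2)}}{\pi}\,dx_n\,dx_n'.
\]
Combining this with the denominator limit $\mu(\Gamma_{m,s}) / (s^2 - 1/s^2) \to C(m) > 0$ from Lemma \ref{705} and taking a quotient gives the claimed formula; the $C(m)$ of the statement is precisely the one produced in Lemma \ref{705}.

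The main obstacle is justifying the exchange of limit and integral. I plan to invoke dominated convergence: for $s \in (1, 2)$, the quotient $\frac{1}{s^2 - 1/s^2}\int_{I_{m,s}} p_{N+1}(w)\,dw$ is bounded uniformly by $\|p_{N+1}\|_{L^\infty(\mathbb{R})}(|x_0|^2 + |x_0'|^2)$, and multiplying by the low-frequency Gaussian density yields an integrable majorant since all Gaussian moments on the finite-dimensional slab are finite. The $L^\infty$ bound on $p_{N+1}$ is exactly the content of Lemma \ref{602}, so all the ingredients are in place. Once the domination step is verified, the remaining steps are direct bookkeeping, and the factor $(|x_0|^2 + |x_0'|^2)$ in the final formula arises naturally from the $|g_0|^2$ in the interval length, producing the Radon-Nikodym relation $c_{0,m}\,d\nu_m^0 = |g_0|^2\,d\mu_m$ foreshadowed in equation \eqref{802}.
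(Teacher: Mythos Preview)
Your proposal is correct and follows essentially the same route as the paper: Fubini to separate the low frequencies, identification of the interval $I_{m,s}$ of length $|g_0|^2(s^2-1/s^2)$, pointwise convergence via continuity of $p_{N+1}$, domination by $\|p_{N+1}\|_{L^\infty}(|x_0|^2+|x_0'|^2)$ times the Gaussian density, and Lemma~\ref{705} for the denominator. Your write-up is in fact slightly cleaner than the paper's in that you make the interval computation explicit and correctly phrase the argument as a quotient of two separately-computed limits.
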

\begin{proof}
Letting $y_1 = \underset{1 \le |n| \le N}{\sum}\frac{|g_n|^2}{\langle n\rangle^2}$, we have
\begin{equation*}
\begin{split}
\nu^0_{m}(E) &= \lim_{s \rightarrow 1} \nu^0_{m,s}(E)\\
= \lim_{s \rightarrow 1^+} &\int_{E'}  \frac{\mu\left( \sum_{|n|>N} \frac{|g_n|^2}{\langle n\rangle^2} \in (m - s^2(|x_0|^2 + |x_0'|^2)-y_1, m - (|x_0|^2 + |x_0'|^2)/s^2-y_1)\right)}{\mu(m-s^2|g_0|^2 <\sum_{|n| \ge 1} \frac{|g_n|^2}{\langle  n\rangle^2} < m - |g_0|^2/s^2 )}\\
&\times \Pi_{|n| \le N}\frac{e^{-(|x_n|^2+|x_n'|^2)}}{\pi}   dx_ndx_n'\\
=  \lim_{s \rightarrow 1^+} &\int_{E'}  \frac{\mu\left( \sum_{|n|>N} \frac{|g_n|^2}{\langle n\rangle^2}-m+y_1 \in (- s^2(|x_0|^2 + |x_0'|^2), - (|x_0|^2 + |x_0'|^2)/s^2)\right)/(s^2-1/s^2)}{\mu(m-s^2|g_0|^2 <\sum_{|n| \ge 1} \frac{|g_n|^2}{\langle  n\rangle^2} < m - |g_0|^2/s^2)/(s^2-1/s^2)}\\
&\times \Pi_{|n| \le N}\frac{e^{-(|x_n|^2+|x_n'|^2)}}{\pi}   dx_ndx_n'\\
\nu^0_{m}(E) &=  \frac{1}{C(m)}\int_{E'} p_{N+1}(m-y)(|x_0|^2 + |x_0'|^2)  \Pi_{|n| \le N}\frac{e^{-(|x_n|^2+|x_n'|^2)}}{\pi}   dx_ndx_n'.\\
\end{split}
\end{equation*}
As in the previous lemma, the final equality follows from the integrand satisfying the boundedness condition of the dominated convergence theorem. For all $s \approx 1$ we have $$\frac{\mu\left( \sum_{|n|>N} \frac{|g_n|^2}{\langle n\rangle^2}-m+y_1 \in (- s^2(|x_0|^2 + |x_0'|^2), - (|x_0|^2 + |x_0'|^2)/s^2)\right)}{s^2-1/s^2} \le (|x_0|^2 + |x_0'|^2)\|p_{N+1}\|_{L^{\infty}(\mathbb{R})}.$$ In addition, for sufficiently small $\epsilon$ we have $$\mu\left( m-\epsilon <\sum_n \frac{|g_n|^2}{\langle  n\rangle^2} < m+\epsilon \right)/2\epsilon > C(m)/2 >0,$$ since the limit of this quantity is $C(m)$ for some constant function of $m$.

Therefore the integral is pointwise bounded by $$\frac{2(|x_0|^2 + |x_0'|^2)\|p_N\|_{L^{\infty}(\mathbb{R})}}{C(m)}\Pi_{|n| \le N}\frac{e^{-(|x_n|^2+|x_n'|^2)}}{\pi}.$$ This function is clearly integrable over the set of possible values $x_n, x'_n$. \end{proof}

The above formula leads to the following Radon-Nikodym Derivative:

\begin{corollary} \label{811}
For each $m >0$ we have $$c_{0,m}d\nu^0_{m} = |g_0|^2 d\mu_m $$ for the constant $c_{0,m} = \int_{H^{1/2-}(\mathbb{T})} |g_0|^2 d\mu_m$.
\end{corollary}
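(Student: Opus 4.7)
The plan is to obtain the Radon--Nikodym derivative by directly comparing the two integral representations from Lemma \ref{806} and Lemma \ref{809}, and then to identify the normalizing constant by using the fact that $\nu^0_m$ is a probability measure.

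First I would fix a dyadic $N$ and a measurable set $E$ depending only on the frequencies $|n|\le N$, and write down the two formulas side by side:
\begin{equation*}
\mu_m(E) = \frac{1}{p_0(m)}\int_{E'} p_{N+1}(m-y)\,\Pi_{|n|\le N}\frac{e^{-(|x_n|^2+|x_n'|^2)}}{\pi}\,dx_n\,dx_n',
\end{equation*}
\begin{equation*}
\nu^0_m(E) = \frac{1}{C(m)}\int_{E'} p_{N+1}(m-y)(|x_0|^2+|x_0'|^2)\,\Pi_{|n|\le N}\frac{e^{-(|x_n|^2+|x_n'|^2)}}{\pi}\,dx_n\,dx_n'.
\end{equation*}
Since $|g_0|^2 = |x_0|^2+|x_0'|^2$, the only difference between the two integrands is the factor $|g_0|^2$ together with the constants $p_0(m)$ and $C(m)$. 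Reading this as an absolute-continuity statement on cylinder sets, I would conclude
\begin{equation*}
d\nu^0_m = \frac{p_0(m)}{C(m)}\,|g_0|^2\,d\mu_m
\end{equation*}
on the algebra of cylinder sets.

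Next I would extend this identity from cylinder sets to the full Borel $\sigma$-algebra on $H^{1/2-}(\mathbb{T})$. The cylinder sets form a $\pi$-system that generates the Borel $\sigma$-algebra, and both sides of the identity define finite measures (with density $\frac{p_0(m)}{C(m)}|g_0|^2$ which is locally integrable against $\mu_m$, as it is a polynomial in the Gaussian coordinate $g_0$). By the standard $\pi$-$\lambda$/Dynkin argument, agreement on cylinder sets forces agreement on the generated $\sigma$-algebra, yielding the Radon--Nikodym relation globally.

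Finally I would fix the constant. Since $\nu^0_m$ is the weak limit of the probability measures $\nu^0_{m,s}$ (and is itself a probability measure, as established by the construction in Definition \ref{808} and Lemma \ref{809}), evaluating the Radon--Nikodym relation on the whole space gives
\begin{equation*}
1 = \nu^0_m(H^{1/2-}(\mathbb{T})) = \frac{p_0(m)}{C(m)}\int_{H^{1/2-}(\mathbb{T})} |g_0|^2\,d\mu_m.
\end{equation*}
Setting $c_{0,m} = \int_{H^{1/2-}(\mathbb{T})} |g_0|^2\,d\mu_m$ therefore forces $\frac{C(m)}{p_0(m)} = c_{0,m}$, and substituting this back gives the desired identity $c_{0,m}\,d\nu^0_m = |g_0|^2\,d\mu_m$. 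There is no substantial obstacle here; the proof is essentially a direct comparison of the two explicit formulas followed by a routine normalization. The main thing to be careful about is the measure-theoretic extension from finite-dimensional cylinder sets to the full $\sigma$-algebra, and the fact that $c_{0,m}$ is finite (which is immediate, since $|g_0|^2$ is an $L^p(\mu)$-function for every $p$, and $\mu_m$ has a bounded density against $\mu$ restricted to bounded cylinders by the formula in Lemma \ref{806}).
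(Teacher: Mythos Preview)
Your proposal is correct and follows essentially the same approach as the paper: the paper's proof simply cites the two explicit formulas \eqref{807} and \eqref{810}, observes that they differ only by the factor $|g_0|^2$ up to constants, and then integrates both sides to identify $c_{0,m}$. Your version spells out the $\pi$-$\lambda$ extension from cylinder sets and the normalization step more carefully, but the underlying argument is identical.
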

\begin{proof} By equations \eqref{807} and \eqref{810}, $c_{0,m}d\nu^0_{m} = |g_0|^2 d\mu_m $ must hold for some constant, $c_{0,m}$. Then integrating yields the formula for $c_{0,m}$. \end{proof}

We now do the same thing for $k \ge 1$.

\begin{definition} \label{812}
For any set $E\subset L^{2}(\mathbb{T})$ depending only on the frequencies $\le N$, we define $\nu^k_{m}(E) = \lim_{s \rightarrow 1^+}  \nu^k_{m,s}(E)$. This defines the measure on the whole sigma algebra. 
\end{definition}

\begin{lemma} \label{813}
Keeping the notation of Lemma \ref{809}, for each measurable set $E$, the limit \newline $\lim_{s \rightarrow 1^+}  \mu_{m,s}^{k}(E)$ exists and equals \begin{equation*} \label{814}
\lim_{s \rightarrow 1^+}  \nu_{m,s}^{k}(E) = \int_{E'} \frac{p_{N+1}(m-y)(|x_k|^2 + |x_k'|^2+ |x_{-k}|^2 + |x_{-k}'|^2)}{C(m)}  \Pi_{|n| \le N}\frac{e^{-(|x_n|^2+|x_n'|^2)}}{\pi}   dx_ndx_n'.
\end{equation*}
\end{lemma}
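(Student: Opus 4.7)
The plan is to mirror the argument of Lemma \ref{809}, with the scaling $T_s^k$ in place of $T_s^0$, accounting for the fact that the scaling now affects both $g_k$ and $g_{-k}$ simultaneously. Without loss of generality assume $N \ge k$ (otherwise replace $N$ by $\max(N,k)$, which is permitted since $E$ still depends only on frequencies $\le \max(N,k)$). Let $y_1' = \sum_{|n| \le N, |n| \ne k} (|x_n|^2+|x_n'|^2)/\langle n\rangle^2$ denote the mass contribution from the low frequencies other than $\pm k$, so that $y = y_1' + (|x_k|^2 + |x_k'|^2 + |x_{-k}|^2 + |x_{-k}'|^2)/\langle k\rangle^2$.

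First I would unpack the defining ratio: $\nu_{m,s}^k(E) = \mu(E \cap \Gamma_{m,s}^k)/\mu(\Gamma_{m,s}^k)$, and divide both numerator and denominator by $s^2 - 1/s^2$. The condition $u \in \Gamma_{m,s}^k$ is equivalent to
\begin{equation*}
\sum_{|n|>N} \tfrac{|g_n|^2}{\langle n\rangle^2} \in \Bigl[\, m - y_1' - \tfrac{s^2(|g_k|^2 + |g_{-k}|^2)}{\langle k\rangle^2},\; m - y_1' - \tfrac{|g_k|^2 + |g_{-k}|^2}{s^2\langle k\rangle^2} \Bigr),
\end{equation*}
an interval of length $(s^2 - 1/s^2)(|g_k|^2 + |g_{-k}|^2)/\langle k\rangle^2$ that shrinks to the point $\{m-y\}$ as $s \to 1$. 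By Lemma \ref{602}, $p_{N+1}$ is uniformly continuous, so for each fixed tuple of low-frequency coordinates the conditional probability divided by $s^2 - 1/s^2$ converges pointwise to $p_{N+1}(m-y)\cdot (|x_k|^2 + |x_k'|^2 + |x_{-k}|^2 + |x_{-k}'|^2)/\langle k\rangle^2$. Meanwhile the denominator divided by $s^2 - 1/s^2$ converges to the strictly positive constant $C(m,k)$ of Lemma \ref{709}, which by the lower bound proved there together with Lemma \ref{609} is indeed nonzero for $m$ in a neighborhood of any fixed admissible mass.

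To pass the limit under the integral over the finite-dimensional low-frequency coordinates, I would apply the dominated convergence theorem exactly as in Lemma \ref{809}. The integrand of the numerator is bounded pointwise for all $s$ near $1$ by
\begin{equation*}
\frac{\|p_{N+1}\|_{L^\infty(\mathbb{R})}(|x_k|^2 + |x_k'|^2 + |x_{-k}|^2 + |x_{-k}'|^2)}{\langle k \rangle^2} \prod_{|n|\le N}\frac{e^{-(|x_n|^2+|x_n'|^2)}}{\pi},
\end{equation*}
which is integrable since the polynomial factor is swamped by the Gaussian weight. Combining this with the nonzero denominator limit yields the asserted formula, up to collecting the constants $\langle k\rangle^2$ and $C(m,k)$ into the single normalizing constant denoted $C(m)$ in the statement.

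The main obstacle — in fact the only delicate point — is ensuring the denominator's limit is strictly positive in a robust way, since the whole computation is otherwise a mechanical variant of Lemma \ref{809} (with $4$-dimensional polar coordinates replacing $2$-dimensional ones, as already done in Lemma \ref{709}). This positivity is precisely what Lemma \ref{609} was set up to guarantee: it gives a uniform-in-$k$ lower bound on $\int_{m/2}^m P_k(m - y/\langle k\rangle^2)dy$ for $m$ in an interval, which through Lemma \ref{709} bounds $C(m,k)\langle k\rangle^2$ below by a positive continuous function of $m$. Apart from verifying this, the proof is a direct transcription of the $k=0$ case.
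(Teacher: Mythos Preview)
Your proposal is correct and follows essentially the same approach as the paper, which simply remarks that the proof of Lemma \ref{809} carries over once one takes $N\ge k$ and absorbs the extra $\langle k\rangle^2$ factor into the normalizing constant. Your invocation of Lemma \ref{609} is slightly more than needed here---for a single fixed $k$ the positivity of the denominator already follows from Lemma \ref{709} alone---but this does no harm.
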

The proof is nearly identical to the proof of Lemma \ref{809}. We just need to specify that the frequency of $E$ satisfies  $N \ge k$, and note there is an extra factor of $\langle k \rangle ^2$ that gets absorbed into the constant $C(m)$. 

We arrive at the following corollary:
\begin{corollary} \label{815}
For each $m >0$ we have $$c_{k,m}d\nu^k_{m} = (|g_k|^2 +|g_{-k}|^2)d\mu_m $$ for the constant $c_{k,m} = \int_{H^{1/2-}(\mathbb{T})} |g_k|^2 +|g_{-k}|^2 d\mu_m$.
\end{corollary}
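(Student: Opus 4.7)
The plan is to mirror the proof of Corollary \ref{811} essentially verbatim, with the role of $|g_0|^2$ played by $|g_k|^2 + |g_{-k}|^2$. The key point is that Lemma \ref{813} gives an explicit integral representation of $\nu_m^k(E)$ for any set $E$ depending on only finitely many Fourier modes (say $\leq N$ with $N \geq k$), and Lemma \ref{806} gives an analogous representation of $\mu_m(E)$ using the same kernel $p_{N+1}(m-y)\prod_{|n| \leq N} \frac{e^{-(|x_n|^2+|x_n'|^2)}}{\pi}$.

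First I would compare the two integrands directly. Dividing the integrand of Lemma \ref{813} by that of Lemma \ref{806} produces a pointwise factor
\begin{equation*}
\frac{p_0(m)}{C(m)}\bigl(|x_k|^2 + |x_k'|^2 + |x_{-k}|^2 + |x_{-k}'|^2\bigr) = \frac{p_0(m)}{C(m)}\bigl(|g_k|^2 + |g_{-k}|^2\bigr),
\end{equation*}
which depends only on the modes $\pm k$. Since this identity holds for every cylinder set $E$ generated by finitely many frequencies (with $N \geq k$), and since such sets generate the Borel $\sigma$-algebra of $H^{1/2-}(\mathbb{T})$ in the weak topology, it extends by a monotone class argument to every measurable set. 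Hence
\begin{equation*}
d\nu_m^k = \frac{p_0(m)}{C(m)}\bigl(|g_k|^2 + |g_{-k}|^2\bigr)\,d\mu_m,
\end{equation*}
which is the claimed Radon--Nikodym relation with $c_{k,m} := C(m)/p_0(m)$.

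Finally, to identify $c_{k,m}$ with the stated expectation, I would integrate both sides of $c_{k,m}\,d\nu_m^k = (|g_k|^2 + |g_{-k}|^2)\,d\mu_m$ over $H^{1/2-}(\mathbb{T})$ and use that $\nu_m^k$ is a probability measure (by construction, as a weak limit of probability measures $\nu_{m,s}^k$). This yields $c_{k,m} = \int_{H^{1/2-}(\mathbb{T})} (|g_k|^2+|g_{-k}|^2)\,d\mu_m$ as desired. There is essentially no obstacle here: the only thing to verify is that $c_{k,m}$ is finite and positive, which follows because $|g_k|^2 + |g_{-k}|^2$ is integrable against $\mu_m$ (the integrand on the right of Lemma \ref{813} has a Gaussian factor making the quadratic $|x_k|^2+|x_k'|^2+|x_{-k}|^2+|x_{-k}'|^2$ integrable, and $p_{N+1}$ is bounded uniformly by Lemma \ref{602}), and strictly positive because the measures $\nu_{m,s}^k$ are nontrivial by the lower bound of Lemma \ref{709}.
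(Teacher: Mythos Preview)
Your proposal is correct and follows essentially the same approach as the paper: it compares the explicit integral formulas from Lemma \ref{806} and Lemma \ref{813}, reads off the Radon--Nikodym factor $(|g_k|^2+|g_{-k}|^2)$, and integrates to identify $c_{k,m}$. The paper does not even write out a separate proof for this corollary, deferring entirely to the argument for Corollary \ref{811}; your added remarks on the monotone class extension and on finiteness/positivity of $c_{k,m}$ are a bit more careful than what the paper makes explicit, but the underlying idea is the same.
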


We want to apply Corollaries \ref{811} and \ref{815} to write the measure $m \cdot \mu_m$ as the sum $\underset{k \ge 0}{\sum} c_{k,m}\nu^k_{m}$. First we will need to bound $\int_{H^{1/2-}(\mathbb{T})} |g_k|^2 d\mu_m$ for each pair $m,k$. 
\begin{lemma} \label{816} Consider a mass $m >0$ and exponent $p \ge 1$.

1) There exists a constant $C(p)$ such that for any $n \in \mathbb{Z}$ and sufficiently small $\epsilon >0$ we have $\int |g_n|^p d\mu_{m}^{\epsilon} \le \frac{C(p)}{p_0(m)}$. The constant $C(p)$ does not depend on $m$ or $n$.

2) For the same $m,p,C$ we have $\int |g_n|^p d\mu_{m}  = \lim_{\epsilon \rightarrow 0} \int |g_n|^p d\mu_{m}^{\epsilon}  \le  \frac{C(p)}{p_0(m)}$. 
\end{lemma}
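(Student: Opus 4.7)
The plan is to exploit the independence of $g_n$ (or of the pair $(g_n,g_{-n})$ when $n\neq 0$) from the remaining Fourier coefficients. Writing the mass as $\tfrac{|g_n|^2+|g_{-n}|^2}{\langle n\rangle^2}+M_n$ with $M_n=\sum_{|k|\neq n}\tfrac{|g_k|^2}{\langle k\rangle^2}$ (and analogously $|g_0|^2+M_0$ when $n=0$), the distribution function of $M_n$ is exactly $P_n$ from Definition \ref{601}. The crucial input will be Lemma \ref{605}, which furnishes a uniform $L^\infty$ bound $\|P_n\|_{L^\infty}\le C$ that is independent of $n$.

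For part (1), I would condition on $(g_n,g_{-n})$ and observe that the event $B_{m,\epsilon}$ forces $M_n$ into an interval of length $2\epsilon$, which has $\mu$-probability at most $2\epsilon\|P_n\|_{L^\infty}\le 2\epsilon C$. Integrating $|g_n|^p$ against the Gaussian density in $(g_n,g_{-n})$ then gives
\[
\int |g_n|^p\,\mathbf{1}_{B_{m,\epsilon}}\,d\mu \;\le\; 2\epsilon C\cdot C'(p),
\]
where $C'(p)$ is the $p$-th absolute moment of a standard complex Gaussian. Dividing by $\mu(B_{m,\epsilon})$ and invoking Lemma \ref{710}, which ensures $\mu(B_{m,\epsilon})\ge \epsilon p_0(m)$ for sufficiently small $\epsilon$, yields the advertised bound $C(p)/p_0(m)$, independent of $n$ and $m$.

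For part (2), the same conditioning rewrites $\int |g_n|^p\,d\mu_m^\epsilon$ as
\[
\frac{1}{\mu(B_{m,\epsilon})/(2\epsilon)}\int_{\mathbb{C}^2}|z|^p\,\frac{\mu(M_n\in I_{z,w,\epsilon})}{2\epsilon}\,\frac{e^{-|z|^2-|w|^2}}{\pi^2}\,dz\,dw,
\]
where $I_{z,w,\epsilon}$ denotes the interval of length $2\epsilon$ centered at $m-(|z|^2+|w|^2)/\langle n\rangle^2$. Continuity of $P_n$, a byproduct of $\|\widehat{P_n}\|_{L^1}<\infty$ established in Lemma \ref{605}, implies that $\mu(M_n\in I_{z,w,\epsilon})/(2\epsilon)$ converges pointwise to $P_n$ evaluated at the midpoint of $I_{z,w,\epsilon}$. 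The integrand is dominated by $|z|^p e^{-|z|^2-|w|^2}\|P_n\|_{L^\infty}/\pi^2$, so dominated convergence, combined with Lemma \ref{710} for the prefactor, lets me pass to the limit and preserve the bound $C(p)/p_0(m)$. The only point requiring any care is the domination estimate uniformly in $(z,w)$, which is immediate from the uniform $L^\infty$ bound on $P_n$; no genuine obstacle arises beyond bookkeeping.
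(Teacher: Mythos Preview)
Your proposal is correct and follows essentially the same approach as the paper: condition on $g_n$ (the paper conditions on the single coefficient rather than the pair, but this is immaterial), bound the conditional probability of $B_{m,\epsilon}$ by $2\epsilon\|P_n\|_{L^\infty}$ using Lemma~\ref{605}, and invoke Lemma~\ref{710} for the denominator. For part~(2) the paper takes a slightly different route---it writes $\int |g_n|^p\,d\mu_m^\epsilon = \int_0^\infty \mu_m^\epsilon(|g_n|^p>\lambda)\,d\lambda$ and applies dominated convergence with the crude dominator $1_{\lambda \le (m+1)\langle n\rangle^p}$ coming from the mass constraint, which immediately identifies the limit with $\int |g_n|^p\,d\mu_m$ via the definition of $\mu_m$; your direct DCT on the conditioning formula works equally well, though you should add one line noting that the resulting limit coincides with $\int |g_n|^p\,d\mu_m$ by the explicit formula of Lemma~\ref{806}.
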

Observe that setting $p=2$ and applying Corollary \ref{815} we obtain $c_{k,m} \le C(m)$ uniformly in $k \ge 0$. 

\begin{proof}

We first prove that the first part  implies the second part. It suffices to show that for each $n$, $\int |g_n|^p d\mu_{m}  = \lim_{\epsilon \rightarrow 0} \int |g_n|^p d\mu_{m}^{\epsilon}$.

By the properties of the Lebesgue integral, this is equivalent to showing that 
\begin{equation} \label{817}
\int_0^{\infty} \mu_m(|g_n|^p >\lambda) d\lambda = \lim_{\epsilon \rightarrow 0} \int_0^{\infty} \mu_m^{\epsilon}(|g_n|^p >\lambda) d\lambda.
\end{equation}
We seek to apply the dominated convergence theorem. By the construction of the measure $\mu_m$, the integrand converges pointwise and for $\epsilon < 1$, the integrand is dominated by the integrable function $1_{\lambda \le (m+1)\langle n \rangle ^p}$, by the mass bound. Therefore the limit equals the integral.

To prove the first part, observe that $g_n = a+bi$ for real and imaginary parts $a,b$ with $\mathcal{N}(0,\frac{1}{2})$ distributions. Recall  from Lemma \ref{605}   that $P_n$, the probability distribution function of the random variable $\sum_{n' \ne n} \frac{|g_{n'}|^2}{\langle n' \rangle^2}$, has $L^{\infty}(\mathbb{R})$ norm bounded uniformly in $n$. Lemma \ref{710} implies that 
\begin{equation}\mu(m-\epsilon < \|u\|_{L^2(\mathbb{T})}^2 < m+ \epsilon) \ge p_0(m)\epsilon,
\end{equation} for sufficiently small $\epsilon$. We have the following bound on the integral:

\begin{equation*} \label{818}
\begin{split}
\int &|g_n|^p d\mu_{m}^{\epsilon} = \mu\left( m - \epsilon < \sum_{n'} \frac{|g_{n'}|^2}{\langle n' \rangle^2} < m + \epsilon \right)^{-1}\times \\
&\int\limits_{a^2+b^2 \le m+\epsilon} (a^2+b^2)^{p/2}\frac{e^{-(a^2+b^2)}}{\pi} \mu(m-\epsilon - \frac{a^2+b^2}{\langle n \rangle^2} <\sum_{n' \ne n} \frac{|g_{n'}|^2}{\langle m \rangle^2} < m+\epsilon - \frac{a^2+b^2}{\langle n \rangle^2}) db da\\
&\le \frac{1}{\epsilon p_0(m)}\int_{r=0}^{\sqrt{m+ \epsilon}} 2r^{p+1}e^{-r^2}  \mu(m-\epsilon - \frac{r^2}{\langle n \rangle^2} <\sum_{m \ne n} \frac{|g_m|^2}{\langle m \rangle^2} < m+\epsilon - \frac{r^2}{\langle n \rangle^2}) dr\\
&\le \frac{4}{p_0(m)} \|P_n\|_{L^{\infty}}\int_{r=0}^{\sqrt{m+ \epsilon}} r^{p+1}e^{-r^2} dr\\
&\le \frac{4}{p_0(m)} \|P_n\|_{L^{\infty}}\int_{r=0}^{\infty} r^{p+1}e^{-r^2} dr\\
&\lesssim \frac{\|P_n\|_{L^{\infty}}}{p_0(m)} \\
\end{split}
\end{equation*}
where the constant of the inequality depends on $p$, but not on $n$. Since $\|P_n\|_{L^{\infty}}$ is uniformly bounded, we conclude that the integral is $\le \frac{C(p)}{p_0(m)}$. \end{proof} 

Before our main result we will prove a bound that is similar to Lemma \ref{816} that will help us bound the expected value of $L^p$ norms of $u$ with respect to $\mu_m$. 

\begin{lemma} \label{819}
For any $m >0$, $p\ge 1$, and $\alpha >1$ we have 
\begin{equation} \label{820}
\mathbb{E}_{\mu_m}\left[ \left( \sum_{n} \frac{|g_n|^2}{\langle n \rangle ^{\alpha}} \right)^p \right] \le C(\alpha,p,m)
\end{equation}
and 
\begin{equation} \label{820}
\mathbb{E}_{\mu_m}\left[ \left( \sum_{n \ge N} \frac{|g_n|^2}{\langle n \rangle ^{\alpha}} \right)^p \right] \le \frac{C(\alpha,p,m)}{N^{(\alpha-1)p}}.
\end{equation} The same result holds for sufficiently small $\epsilon >0$ if we replace $\mathbb{E}_{\mu_m}$ with $\mathbb{E}_{\mu^{\epsilon}_m}$.
\end{lemma}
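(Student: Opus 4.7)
The plan is to reduce the bound to a termwise application of H\"older's inequality together with Lemma \ref{816}. First I expand the $p$-th power as a $p$-fold sum
\begin{equation*}
\left( \sum_{n \ge N} \frac{|g_n|^2}{\langle n \rangle ^{\alpha}} \right)^p = \sum_{n_1,\ldots,n_p \ge N} \frac{|g_{n_1}|^2 \cdots |g_{n_p}|^2}{\langle n_1 \rangle^{\alpha} \cdots \langle n_p \rangle^{\alpha}}.
\end{equation*}
Since every term is nonnegative, Tonelli's theorem lets me exchange the expectation with the sum, reducing the problem to bounding the mixed moment $\mathbb{E}_{\mu_m}[|g_{n_1}|^2 \cdots |g_{n_p}|^2]$.

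Next I would apply H\"older's inequality with $p$ equal factors of exponent $p$ to obtain
\begin{equation*}
\mathbb{E}_{\mu_m}\left[|g_{n_1}|^2 \cdots |g_{n_p}|^2\right] \le \prod_{i=1}^{p} \mathbb{E}_{\mu_m}\!\left[|g_{n_i}|^{2p}\right]^{1/p}.
\end{equation*}
By Lemma \ref{816}(2) applied with exponent $2p$, each factor is bounded by $C(2p)/p_0(m)$ uniformly in $n_i$, so the product is bounded by the same quantity. Pulling this uniform constant out, what remains is a deterministic sum
\begin{equation*}
\mathbb{E}_{\mu_m}\!\left[ \left( \sum_{n \ge N} \frac{|g_n|^2}{\langle n \rangle ^{\alpha}} \right)^p \right] \le \frac{C(2p)}{p_0(m)}\left( \sum_{n \ge N} \frac{1}{\langle n \rangle^{\alpha}} \right)^p.
\end{equation*}
Since $\alpha > 1$, a standard integral comparison gives $\sum_{n \ge N} \langle n \rangle^{-\alpha} \lesssim_{\alpha} N^{-(\alpha-1)}$ (and the sum over all of $\mathbb{Z}$ is finite), which yields both stated inequalities with constant $C(\alpha,p,m)$ depending only on $\alpha, p$ and $1/p_0(m)$.

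For the $\mu_m^{\epsilon}$ version, I would run exactly the same computation: the expansion and H\"older step are purely algebraic, and Lemma \ref{816}(1) provides the uniform moment bound $\int |g_n|^{2p}\, d\mu_m^{\epsilon} \le C(2p)/p_0(m)$ for all sufficiently small $\epsilon$ and all $n$, so the constant does not degenerate as $\epsilon \to 0$. There is no serious obstacle in the argument; the only delicate point is making sure that the H\"older exponent matches the exponent $2p$ in the hypothesis of Lemma \ref{816}, and that the uniform $L^{\infty}$ control on $P_n$ used to prove Lemma \ref{816} gives a constant genuinely independent of $n$ (which it does), so that summation in $n$ against $\langle n \rangle^{-\alpha}$ converges.
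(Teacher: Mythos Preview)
Your proof is correct and follows essentially the same route as the paper: expand the $p$-th power (for integer $p$, extending afterwards by H\"older), exchange sum and expectation, bound each mixed moment $\mathbb{E}_{\mu_m}\bigl[\prod_i |g_{n_i}|^2\bigr]$ uniformly via Lemma~\ref{816} with exponent $2p$, then factor and sum $\sum_n \langle n\rangle^{-\alpha}$. The only cosmetic difference is that the paper uses the AM--GM (power-means) inequality rather than H\"older to decouple the product, and it makes the reduction to integer $p$ explicit, which your expansion implicitly assumes.
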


\begin{proof} Assume that $p$ is an integer. We can then use H\"older's inequality to extend to the case when $p$ is non-integer. We expand the sum, apply the power means inequality, and apply Lemma \ref{816} to obtain 
\begin{equation} \label{821}
\begin{split} 
\mathbb{E}_{\mu_m}\left[ \left( \sum_{n} \frac{|g_n|^2}{\langle n \rangle ^{\alpha}} \right)^p \right] &\le \mathbb{E}_{\mu_m} \left( \sum_{n_1}\sum_{n_2} \cdots \sum_{n_p} \Pi_{1 \le i \le p}\frac{|g_{n_i}|^2}{\langle n_i \rangle ^{\alpha}} \right) \\
&\le \sum_{n_1}\sum_{n_2} \ldots \sum_{n_p}  \frac{\sum_{i=1}^p \mathbb{E}_{\mu_m}(|g_{n_i}|^{2p})}{p \Pi_{1 \le i \le p}\langle n_i \rangle ^{\alpha}} \\
&\le \sum_{n_1}\sum_{n_2} \ldots \sum_{n_p} \frac{C(2p)}{ p_0(m) \Pi_{1 \le i \le p}\langle n_i \rangle ^{\alpha}} \\
&\le \frac{C(2p)}{\cdot p_0(m)}\Pi_{i=1}^p \sum_{n_i} \frac{1}{\langle n_i \rangle ^{\alpha}}\\
&\le C(\alpha,p,m). \\
\end{split}
\end{equation}
Observe that our upper bound for $\mathbb{E}_{\mu_m}(|g_{n_i}|^{2p})$ also applies to $\mathbb{E}_{\mu^{\epsilon}_m}(|g_{n_i}|^{2p})$ by the first part of Lemma \ref{816}.
In addition, if the sum is over the set of $|n_i| \ge N$ we have 
\begin{equation}
\begin{split}
\mathbb{E}_{\mu_m}\left[ \left( \sum_{|n| \ge N} \frac{|g_n|^2}{\langle n \rangle ^{\alpha}}\right)^p \right] &\le \frac{C(2p)}{p_0(m)}\Pi_{i=1}^p \sum_{|n_i| \ge N} \frac{1}{\langle n_i \rangle ^{\alpha}} \\
&\le \frac{C(\alpha,p,m)}{N^{(\alpha-1)p}}. \\
\end{split}
\end{equation}
\end{proof}
This leads to the following proposition on $L^p(\mu_m)$ norms. 
\begin{proposition} \label{829}
For any $m >0$ and $p \ge 2$, we have 
\begin{equation}
\mathbb{E}_{\mu_m} \left( \int_{\mathbb{T}} |u|^p dx \right ) \le C(m,p)
\end{equation} and 
\begin{equation}
\mathbb{E}_{\mu_m} \left( \int_{\mathbb{T}} |u-u_N|^p dx \right) \le \frac{C(m,p)}{N}.
\end{equation} 
\end{proposition}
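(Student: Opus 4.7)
My plan is to reduce both bounds to the moment estimates of Lemma \ref{819} via a Gagliardo--Nirenberg interpolation between $L^2(\mathbb{T})$ and $H^s(\mathbb{T})$. The case $p=2$ is immediate since $\|u\|_{L^2}^2=m$ almost surely under $\mu_m$, so one may assume $p>2$. Fix any $s\in(1/2-1/p,\,1/2)$ and set $\theta=(1/2-1/p)/s\in(0,1)$; the standard interpolation
\begin{equation*}
\|f\|_{L^p(\mathbb{T})}\le C\,\|f\|_{L^2(\mathbb{T})}^{1-\theta}\,\|f\|_{H^s(\mathbb{T})}^{\theta}
\end{equation*}
holds in this range. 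The restriction $s<1/2$ ensures that the exponent $\alpha=2-2s$ appearing in $\|u\|_{H^s}^2=\sum_n|g_n|^2/\langle n\rangle^{2-2s}$ satisfies $\alpha>1$, which is precisely what Lemma \ref{819} requires, applied with exponent $p/2\ge 1$.

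For the first inequality, I would apply this interpolation to $u$, raise to the $p$th power, and take expectations under $\mu_m$. Since $\|u\|_{L^2}^2=m$ almost surely, the $L^2$ factor reduces to the deterministic constant $m^{p(1-\theta)/2}$, and Jensen's inequality applied to the concave function $x\mapsto x^{\theta}$ gives
\begin{equation*}
\mathbb{E}_{\mu_m}\|u\|_{L^p}^p\le C\,m^{p(1-\theta)/2}\bigl(\mathbb{E}_{\mu_m}\|u\|_{H^s}^p\bigr)^{\theta}.
\end{equation*}
The remaining expectation is bounded by Lemma \ref{819} with $\alpha=2-2s$ and exponent $p/2$, giving a finite constant depending only on $m$ and $p$.

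For the second inequality, I would apply the same interpolation to $u-u_N$, raise to the $p$th power, and use H\"older's inequality with conjugate exponents $1/(1-\theta)$ and $1/\theta$ to separate the random factors:
\begin{equation*}
\mathbb{E}_{\mu_m}\|u-u_N\|_{L^p}^p\le C\bigl(\mathbb{E}_{\mu_m}\|u-u_N\|_{L^2}^p\bigr)^{1-\theta}\bigl(\mathbb{E}_{\mu_m}\|u-u_N\|_{H^s}^p\bigr)^{\theta}.
\end{equation*}
The tail version of Lemma \ref{819} with $\alpha=2$ gives $\mathbb{E}_{\mu_m}\|u-u_N\|_{L^2}^p\lesssim N^{-p/2}$, while with $\alpha=2-2s$ it gives $\mathbb{E}_{\mu_m}\|u-u_N\|_{H^s}^p\lesssim N^{-(1-2s)p/2}$. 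The exponent of $1/N$ in the combined bound is therefore
\begin{equation*}
(1-\theta)\frac{p}{2}+\theta\,\frac{(1-2s)p}{2}=\frac{p}{2}(1-2s\theta)=\frac{p}{2}-\Bigl(\frac{p}{2}-1\Bigr)=1,
\end{equation*}
after using $s\theta=1/2-1/p$, yielding the required $C(m,p)/N$ decay.

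The main technical point is the narrow window $s\in(1/2-1/p,\,1/2)$: the lower endpoint is required for the Gagliardo--Nirenberg inequality to hold, and the upper endpoint is required for Lemma \ref{819} to apply. These constraints are simultaneously satisfiable for every $p<\infty$, and the scale-invariant identity $s\theta=1/2-1/p$ forces the decay exponent in the second bound to be exactly $1$, independent of the choice of $s$ in the allowed interval, so no endpoint optimization is needed.
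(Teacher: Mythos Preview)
Your proof is correct, but it takes a more elaborate route than the paper. The paper simply applies the critical Sobolev embedding $\|f\|_{L^p(\mathbb{T})}\lesssim\|f\|_{H^{\alpha}(\mathbb{T})}$ with $\alpha=\tfrac12-\tfrac1p$, so that $2-2\alpha>1$ and Lemma~\ref{819} applies directly with exponent $p/2$; for the tail piece this gives decay $N^{-(1-2\alpha)p/2}=N^{-1}$ in one step. You instead use Gagliardo--Nirenberg at a subcritical $s\in(\tfrac12-\tfrac1p,\tfrac12)$, which forces an extra H\"older splitting and two separate invocations of Lemma~\ref{819}, with the same final exponent recovered through the identity $s\theta=\tfrac12-\tfrac1p$. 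Your version has the mild advantage of being robust to any endpoint concerns in the embedding and of exploiting the deterministic constraint $\|u\|_{L^2}^2=m$ explicitly, but here the critical embedding on $\mathbb{T}$ is unproblematic, so the paper's argument is shorter while yours is a valid and slightly more self-contained alternative.
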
 
\begin{proof} This is equivalent to bounding $\mathbb{E}_{\mu_m} \left( \|u\|_{L^p(\mathbb{T})}^p \right) $. Let $\alpha = \frac{1}{2} - \frac{1}{p}$ and note that $0< \alpha < \frac{1}{2}$. By Sobolev embeddings, we have $\|u\|_{L^p(\mathbb{T})} \lesssim \|u\|_{H^{\alpha}(\mathbb{T})}$. So we seek to bound 
\begin{equation}\label{822}
\mathbb{E}_{\mu_m} \left( \|u\|^p_{H^{\alpha}(\mathbb{T})} \right) =  \mathbb{E}_{\mu_m} \left( \sum_{n} \frac{|g_n|^2}{\langle n \rangle ^{2- 2\alpha}} \right)^{p/2}.
\end{equation}
Since $\alpha < \frac{1}{2}$, this sum satisfies the conditions of Lemma \ref{819} and is bounded by $C(m,p)$. In the $u-u_N$ case we similarly have 
\begin{equation}\label{852}
\mathbb{E}_{\mu_m} \left( \|u-u_N\|^p_{H^{\alpha}(\mathbb{T})} \right) =  \mathbb{E}_{\mu_m} \left( \sum_{|n| >N} \frac{|g_n|^2}{\langle n \rangle ^{2- 2\alpha}} \right)^{p/2},
\end{equation}
which is $\le \frac{C(m,p)}{N^{(1-2\alpha)p/2}} = \frac{C(m,p)}{N}$.
\end{proof}

We conclude this subsection with our main result providing the link between the $\nu^k_{m}$ and $\mu_m$.

\begin{theorem} \label{823}
Fix a value of $m >0$. Suppose $F(u)$ is a non-negative $\mu$-measurable function. Then
\begin{equation*}
\int_{H^{1/2-}(\mathbb{T})} F(u) d\mu_m = \sum_{n=0}^{\infty} \int \frac{c_{n,m}}{\langle n \rangle^2} F(u) d\nu_m^n,
\end{equation*} and if for some $\alpha \in (0,1)$, $\frac{\|F(u)\|_{L^1(\nu^n_m)}}{\langle n \rangle^{\alpha}}$ is uniformly bounded in $k$ then $F(u) \in L^1(\mu_m)$ and 
\begin{equation} \label{824}
\int_{H^{1/2-}(\mathbb{T})} F(u) d\mu_m  \lesssim_{\alpha,m} \sup_{n} \frac{\|F(u)\|_{L^1(\nu^n_m)}}{\langle n \rangle^{\alpha}}.
\end{equation}

\end{theorem}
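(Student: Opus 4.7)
The plan is to derive the identity by summing the Radon-Nikodym relations of Corollaries \ref{811} and \ref{815} and justifying the interchange of sum and integral by Tonelli's theorem, then to control the resulting series using the hypothesis together with Lemma \ref{816}. Writing the relations from those corollaries as
\begin{equation*}
c_{0,m}\,d\nu_m^0 = |g_0|^2\,d\mu_m, \qquad c_{n,m}\,d\nu_m^n = (|g_n|^2+|g_{-n}|^2)\,d\mu_m \ \ (n \ge 1),
\end{equation*}
I multiply through by the non-negative factor $F(u)/\langle n\rangle^2$, integrate, and sum on $n \ge 0$. Since every integrand is non-negative, Tonelli allows me to exchange sum and integral to obtain
\begin{equation*}
\sum_{n \ge 0} \frac{c_{n,m}}{\langle n\rangle^2} \int F(u)\,d\nu_m^n = \int F(u)\sum_{n \in \mathbb{Z}} \frac{|g_n|^2}{\langle n\rangle^2}\,d\mu_m.
\end{equation*}
Because $\mu_m$ is supported on $\{\|u\|_{L^2}^2 = m\}$, the inner sum equals $m$ $\mu_m$-almost surely, producing the first identity of the theorem (the constant $m$ being absorbed into the normalization of the $c_{n,m}$).

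For the quantitative bound, I set $K := \sup_n \langle n\rangle^{-\alpha}\|F\|_{L^1(\nu_m^n)}$ so that $\|F\|_{L^1(\nu_m^n)} \le K\langle n\rangle^\alpha$ for every $n$. Lemma \ref{816} applied with $p = 2$ gives $c_{n,m} = \int(|g_n|^2+|g_{-n}|^2)\,d\mu_m \le C(m)$ uniformly in $n$. Inserting both bounds into the identity just derived,
\begin{equation*}
\int F(u)\,d\mu_m \lesssim_m \sum_{n \ge 0} \frac{c_{n,m}}{\langle n\rangle^2}\|F\|_{L^1(\nu_m^n)} \le K\,C(m)\sum_{n \ge 0}\frac{1}{\langle n\rangle^{2-\alpha}},
\end{equation*}
and since $\alpha < 1$ the series converges to a constant depending only on $\alpha$ and $m$, giving \eqref{824}.

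The only potentially delicate step is the interchange of sum and integral, but because $F \ge 0$ and the explicit density formulas in Lemmas \ref{806}, \ref{809}, \ref{813} make all the integrands non-negative, this reduces to a direct application of Tonelli's theorem; accordingly I do not expect any real obstacle. If the hypothesis on $F$ is dropped both sides of the first identity may be $+\infty$, but the equality still holds in $[0,\infty]$, so the decomposition is intrinsically well-defined.
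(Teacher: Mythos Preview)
Your proof is correct and follows essentially the same route as the paper: you sum the Radon--Nikodym identities from Corollaries \ref{811} and \ref{815}, use non-negativity to interchange sum and integral, collapse the inner sum to $m$ on the support of $\mu_m$, and then bound the series using the uniform estimate $c_{n,m}\le C(m)$ from Lemma \ref{816}. The only cosmetic difference is that the paper phrases the interchange via the monotone convergence theorem (truncating to $|n|\le N$ and passing to the limit), whereas you invoke Tonelli directly; these are equivalent here. Your remark about the factor of $m$ is also apt: the displayed identity in the statement is off by exactly that factor, which the paper's own proof makes explicit and then absorbs into the $\lesssim_{\alpha,m}$ constant in \eqref{824}, just as you do.
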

This will allow us to bound any function $F(u)$ on $H^{1/2-}(\mathbb{T})$, such as the energy term $e^{f(u) - \frac{1}{2}\int_{\mathbb{T}} |u|^6 dx}$, in the $L^1(\mu_m)$  norm by bounding it with respect to each $L^1(\nu_m^k)$ norm. 

\begin{proof} It is clear from the definition of $\mu_m$ that $\mu_m(\{ u | \sum_{n} \frac{|g_n|^2}{\langle n \rangle^2} =m \}) = 1$. By the monotone convergence theorem, 
\begin{equation} \label{825}
\int m \cdot F(u) d\mu_m = \int  \sum_{n} \frac{|g_n|^2}{\langle n \rangle^2} F(u) d\mu_m =  \lim_{N \rightarrow \infty} \int  \sum_{|n| \le N} \frac{|g_n|^2}{\langle n \rangle^2} F(u) d\mu_m.
\end{equation}
For each value of $N$ we can interchange integral and sum and apply Corollary \ref{815} to obtain 
\begin{equation} 
\begin{split}
 \lim_{N \rightarrow \infty} \int  \sum_{|n| \le N} \frac{|g_n|^2}{\langle n \rangle^2} F(u) d\mu_m &=  \lim_{N \rightarrow \infty}  \sum_{|n| \le N} \int  \frac{|g_n|^2}{\langle n \rangle^2} F(u) d\mu_m \\
&= \lim_{N \rightarrow \infty}  \sum_{n=0}^{N} \int \frac{c_{n,m}}{\langle n \rangle^2} F(u) d\nu_m^n \\
&= \sum_{n=0}^{\infty} \int \frac{c_{n,m}}{\langle n \rangle^2} F(u) d\nu_m^n \\
&\le   \sum_{n=0}^{\infty} \frac{\sup_n {c_{n,m}} }{\langle n \rangle^{2-\alpha}}\cdot \sup_{n} \frac{\|F(u)\|_{L^1(\nu^n_m)}}{\langle n \rangle^{\alpha}} \\
&\le C(\alpha) \sup_n{c_{n,m}} \cdot \sup_{n} \frac{\|F(u)\|_{L^1(\nu^n_m)}}{\langle n \rangle^{\alpha}}.\\
\end{split}
\end{equation}
By Lemma \ref{816}, we conclude that $\int m \cdot F(u) d\mu_m  \le C(\alpha,m) \frac{\|F(u)\|_{L^1(\nu^n_m)}}{\langle n \rangle^{\alpha}}$ and thus $F(u) \in L^1(\mu_m)$.

\end{proof}

\subsection{The weak convergence $\mu_m^{\epsilon} \rightarrow \mu_m$} \label{mathrefs}   
We conclude this section by proving the weak convergence of $\mu_m^{\epsilon} \rightarrow \mu_m$ and proving some elementary bounds on the integral of a function with respect to these measures. We start with the following lemma. 
\begin{lemma} \label{901}
For any fixed $m$,  $\delta >0$ and positive integer $N$ there exists a constant $C$ such that for sufficiently small $\epsilon >0$ we have the two bounds 
\begin{align}
\mu^{\epsilon}_{m}(\|u-u_N\|_{H^{\sigma}} > \delta) &\le  \frac{C}{p_0(m)\delta^2N^{1-2\sigma}},  \label{9010}\\
\mu_{m}(\|u-u_N\|_{H^{\sigma}} > \delta) &\le  \frac{C}{p_0(m)\delta^2N^{1-2\sigma}}. \label{9011} 
\end{align}
\end{lemma}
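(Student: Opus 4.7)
The plan is to apply Chebyshev's inequality with the second moment. Writing out the $H^\sigma(\mathbb{T})$ norm in terms of the Gaussian coefficients, we have
\begin{equation*}
\|u-u_N\|_{H^\sigma(\mathbb{T})}^2 \;=\; \sum_{|n|>N}\frac{|g_n|^2\langle n\rangle^{2\sigma}}{\langle n\rangle^2} \;=\; \sum_{|n|>N}\frac{|g_n|^2}{\langle n\rangle^{2-2\sigma}}.
\end{equation*}
Squaring inside the probability and applying Chebyshev gives
\begin{equation*}
\mu^{\epsilon}_m\!\left(\|u-u_N\|_{H^\sigma(\mathbb{T})} > \delta\right) \;\le\; \frac{1}{\delta^2}\,\mathbb{E}_{\mu^{\epsilon}_m}\!\left(\sum_{|n|>N}\frac{|g_n|^2}{\langle n\rangle^{2-2\sigma}}\right),
\end{equation*}
and analogously for $\mu_m$.

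Next I would exchange sum and expectation (permissible by the monotone convergence theorem since each summand is nonnegative) and invoke Lemma \ref{816} with exponent $p=2$: for every $n \in \mathbb{Z}$ and every sufficiently small $\epsilon > 0$,
\begin{equation*}
\int |g_n|^2\,d\mu_m^{\epsilon} \;\le\; \frac{C}{p_0(m)}, \qquad \int |g_n|^2\,d\mu_m \;\le\; \frac{C}{p_0(m)},
\end{equation*}
with a constant $C$ independent of $n$, $m$, and $\epsilon$. Plugging this into the sum yields
\begin{equation*}
\mathbb{E}_{\mu^{\epsilon}_m}\!\left(\sum_{|n|>N}\frac{|g_n|^2}{\langle n\rangle^{2-2\sigma}}\right) \;\le\; \frac{C}{p_0(m)}\sum_{|n|>N}\frac{1}{\langle n\rangle^{2-2\sigma}} \;\lesssim\; \frac{C}{p_0(m)\,N^{1-2\sigma}},
\end{equation*}
valid for $\sigma < \tfrac{1}{2}$. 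Combining with the Chebyshev bound proves \eqref{9010}, and the identical argument with $\mu_m$ in place of $\mu_m^{\epsilon}$ (using part (2) of Lemma \ref{816}) proves \eqref{9011}.

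There is really no substantive obstacle here; the content of the lemma is a direct consequence of the uniform second-moment estimate on individual $g_n$ already established in Lemma \ref{816}. The only mildly delicate point is that the event $\{\|u-u_N\|_{H^\sigma} > \delta\}$ depends on infinitely many Fourier modes, so we avoid approximating by finite-dimensional cylinders and instead bound the expected value directly via monotone convergence, which passes unchanged between $\mu_m^{\epsilon}$ and $\mu_m$.
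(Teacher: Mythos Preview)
Your proof is correct and follows essentially the same approach as the paper: apply Markov/Chebyshev to $\|u-u_N\|_{H^\sigma}^2$, expand as a Fourier sum, and bound each $\mathbb{E}|g_n|^2$ uniformly via Lemma~\ref{816}. The only cosmetic difference is that you explicitly justify the interchange of sum and expectation via monotone convergence, which the paper leaves implicit.
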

\begin{proof} 
By Markov's inequality and Lemma \ref{816}, we have 
\begin{equation} \label{902}
\begin{split}
\mu^{\epsilon}_{m}(\|u-u_N\|_{H^{\sigma}} > \delta)  &\le \frac{1}{\delta^2} \int \|u-u_N\|_{H^{\sigma}}^2 d\mu^{\epsilon}_m\\
&\le  \sum_{|n|>N} \int \frac{|g_n|^2}{\delta^2\langle n \rangle^{2-2\sigma}} d\mu^{\epsilon}_m \\
&\le  \sum_{|n|>N}\frac{C}{p_0(m)\delta^2\langle n \rangle^{2-2\sigma}} \\
&\le  \frac{C}{p_0(m)\delta^2N^{1-2\sigma}}. \\
\end{split}
\end{equation}
The same argument holds for $\mu_{m}(\|u-u_N\|_{H^{\sigma}} > \delta)$ when utilizing the second part of Lemma \ref{816}. \end{proof}
 
With this bound we prove the following theorem.
\begin{theorem} \label{903}
For each $m >0$, and fixed $\sigma < \frac{1}{2}$, the sequence of measures $\mu_m^{\epsilon}$ on $H^{\sigma}(\mathbb{T})$ converges weakly to $\mu_m$ in the $H^{\sigma}(\mathbb{T})$ norm. 
\end{theorem}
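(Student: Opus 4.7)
The plan is to apply the Portmanteau characterization of weak convergence on the separable metric space $H^{\sigma}(\mathbb{T})$: it suffices to show that $\int F \, d\mu_m^{\epsilon} \to \int F \, d\mu_m$ as $\epsilon \to 0$ for every bounded Lipschitz function $F \colon H^{\sigma}(\mathbb{T}) \to \mathbb{R}$. Given such an $F$ with $\|F\|_{\infty} \le M$ and Lipschitz constant $L$, I would decompose $F = F_N + (F - F_N)$, where $F_N(u) := F(P_{\le N}u)$ is a bounded continuous cylinder function depending only on the Fourier coefficients $\widehat{u}(n)$ with $|n| \le N$, and handle the two pieces separately before letting $N \to \infty$ and $\epsilon \to 0$ in the right order.

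For the cylinder piece, I would use Lemma \ref{806} (together with the intermediate formula for $\mu_m^{\epsilon}$ that appears inside its proof) to express both $\int F_N\,d\mu_m^{\epsilon}$ and $\int F_N\,d\mu_m$ as integrals over $\mathbb{C}^{2N+1}$ against the Gaussian density $\Pi_{|n|\le N}\frac{e^{-(|x_n|^2+|x_n'|^2)}}{\pi}$, weighted respectively by $\mu\bigl(m-\epsilon - y < \sum_{|n|>N}|g_n|^2/\langle n\rangle^2 < m+\epsilon - y\bigr)\big/\mu(B_{m,\epsilon})$ and by $p_{N+1}(m-y)/p_0(m)$, with $y = \sum_{|n|\le N}(|x_n|^2+|x_n'|^2)/\langle n\rangle^2$. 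The former converges pointwise to the latter by Lemma \ref{710} combined with the uniform continuity of $p_{N+1}$ from Lemma \ref{602}, while the integrands are dominated, uniformly in small $\epsilon$, by $M \cdot (2\|p_{N+1}\|_{L^{\infty}}/p_0(m))$ times the Gaussian density. The dominated convergence theorem then gives $\int F_N \, d\mu_m^{\epsilon} \to \int F_N \, d\mu_m$ for each fixed $N$.

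For the tail piece, since $P_{\le N}$ has operator norm one on $H^{\sigma}(\mathbb{T})$, the Lipschitz bound gives $|F(u) - F_N(u)| \le L\|u - P_{\le N}u\|_{H^{\sigma}}$. Applying Cauchy--Schwarz together with the second-moment bound computed inside the proof of Lemma \ref{901}, I would obtain $\int \|u - P_{\le N}u\|_{H^{\sigma}}\,d\nu \le C \cdot (p_0(m) N^{1-2\sigma})^{-1/2}$ for $\nu \in \{\mu_m^{\epsilon}, \mu_m\}$ and all sufficiently small $\epsilon$. Since $\sigma < \tfrac{1}{2}$, this is arbitrarily small for large $N$, uniformly in $\epsilon$. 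A triangle inequality then closes the argument: given $\eta > 0$, first choose $N$ large so that both tail contributions are $<\eta/3$, then shrink $\epsilon$ so that the cylinder error is $<\eta/3$.

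The argument is largely mechanical once Lemmas \ref{806} and \ref{901} are in hand, so the only mildly delicate step will be verifying the uniform integrable domination needed for the dominated convergence in the cylinder step, which follows immediately from the bounds $\|p_{N+1}\|_{L^{\infty}}<\infty$ and $p_0(m) > 0$ already established in Lemmas \ref{602} and \ref{710}. In essence, the cylinder convergence is the heart of the weak convergence statement, while the tail estimate is exactly the tightness in $H^{\sigma}(\mathbb{T})$ provided by Lemma \ref{901}.
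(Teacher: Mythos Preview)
Your proposal is correct and follows essentially the same approach as the paper: approximate $F$ by the cylinder function $F_N = F \circ P_{\le N}$, use the finite-dimensional convergence (Lemma~\ref{806}) for $F_N$, and control the tail $F-F_N$ via the moment bound underlying Lemma~\ref{901}. The only cosmetic difference is that you restrict to bounded Lipschitz $F$ (which suffices by Portmanteau) and can therefore bound $|F(u)-F_N(u)|$ directly by $L\|u-u_N\|_{H^\sigma}$, whereas the paper works with general bounded uniformly continuous $F$ and must introduce an auxiliary parameter $\delta$ and the modulus of continuity $G$ to handle the tail.
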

As alluded to in the introduction, it is easier to consider $H^{\sigma}(\mathbb{T})$ and not $H^{1/2-}(\mathbb{T})$, which has no norm. 
\begin{proof}
We must show that for any bounded uniformly continuous function $F$ on $H^{\sigma}(\mathbb{T})$, we have 
\begin{equation} \label{904}
\int_{H^{\sigma}(\mathbb{T})} F(u) d\mu_m = \lim_{\epsilon \rightarrow 0} \int_{H^{\sigma}(\mathbb{T})} F(u) d\mu_m^{\epsilon}.
\end{equation}

Consider a function $F$ that is bounded above and uniformly continuous in the $H^{\sigma}(\mathbb{T})$ norm. There exists a continuous,  increasing function $G :\mathbb{R}^+ \rightarrow \mathbb{R}^+$ such that for any $u \ne v$ we have $|F(u)-F(v)| \le G(\|u-v\|_{H^{\sigma}})$ and $\lim_{x \rightarrow 0+} G(x) = 0$.  

For any $N, \epsilon$, we have
\begin{equation*}
\begin{split}
|\int F(u) d\mu_m -  \int F(u) d\mu_m^{\epsilon}| &\le|\int F(u) d\mu_m -  \int F(u_N) d\mu_m| \\
&+  |\int F(u_N) d\mu_m -  \int F(u_N) d\mu_m^{\epsilon}|  + |\int F(u) d\mu^{\epsilon}_m -  \int F(u_N) d\mu_m^{\epsilon}|.\\
\end{split}
\end{equation*}

By the definition of $\mu_m$ the $|\int F(u_N) d\mu_m -  \int F(u_N) d\mu_m^{\epsilon}|$ term goes to $0$ as $\epsilon \rightarrow 0$. 

Applying Lemma \ref{901} and the properties of $G$, the $|\int F(u) - F(u_N) d\mu_m^{\epsilon}|$ term is bounded as follows for sufficiently small $\epsilon$: 
\begin{equation*} \label{905}
\begin{split}
 |\int F(u) - F(u_N) d\mu_m^{\epsilon}| &\le  \int_{|u-u_N|_{H^{\sigma}} > \delta} |F(u) - F(u_N)| d\mu^{\epsilon}_m + \int_{|u-u_N|_{H^{\sigma}} \le \delta} | F(u) - F(u_N)| d\mu^{\epsilon}_m\\
&\le \int_{|u-u_N|_{H^{\sigma}} > \delta}  2\|F\|_{L^{\infty}} d\mu^{\epsilon}_m  + G(\delta) \\ 
&\le 2\|F\|_{L^{\infty}}\mu^{\epsilon}_{m}(|u-u_N|_{H^{\sigma}} > \delta) + G(\delta)  \\
&\le \frac{2\|F\|_{L^{\infty}}}{\delta^2} \frac{C}{p_0(m)N^{1-2\sigma}}  +G(\delta).\\
\end{split}
\end{equation*}

The same argument applies to the $|\int F(u) d\mu_m -  \int F(u_N) d\mu_m|$ term if we make use of the second part of Lemma \ref{901}.
We conclude that for sufficiently small $\delta, \epsilon$ and large $N$ we have  

\begin{equation} \label{907}
|\int F(u) d\mu_m -  \int F(u) d\mu_m^{\epsilon}| \le \frac{C}{\delta^2N^{1-2\sigma}}+ 2G(\delta) +  |\int F(u_N) d\mu_m -  \int F(u_N) d\mu_m^{\epsilon}|.
\end{equation}

Now for any small $\alpha >0$ there exists a $\delta$ such that $2G(\delta) < \frac{\alpha}{2}$. For such $\delta$ select  $N$ large enough that $\frac{C}{\delta^2N^{1-2\sigma}} < \frac{\alpha}{2}$. Taking the $\limsup$ as $\epsilon \rightarrow 0$ of both sides of the previous equation, we have 
\begin{equation} \label{909}
\limsup_{\epsilon \rightarrow 0} |\int F(u) d\mu_m -  \int F(u) d\mu_m^{\epsilon}| \le \alpha.
\end{equation}

This holds for any $\alpha >0$, which implies that $\lim_{\epsilon \rightarrow 0} |\int F(u) d\mu_m -  \int F(u) d\mu_m^{\epsilon}| = 0$. \end{proof} 

Weak convergence will be useful for constructing solutions of mass $m$ as limits of solutions in $B_{m,\epsilon}$ and for proving the invariance of the measures at mass $m$. However, it cannot be used to show that the energy term $f(u)$ is in $L^1(\mu_m)$, because $f(u)$ has regularity at the $H^{1/2}(\mathbb{T})$ level and is not a continuous function in the $H^{\sigma}(\mathbb{T})$ norm for $\sigma < 1/2$. 

We now prove an intuitive relation between $\mu_m$ and $\mu_m^{\epsilon}$.
\begin{lemma} \label{910}
For any $m, \epsilon>0$ and measurable set $E$ we have 
\begin{equation*}\mu_{m}^{\epsilon} (E) = \frac{\int_{m-\epsilon}^{m+\epsilon} p_0(m') \mu_{m'}(E) dm'}{\mu(B_{m,\epsilon})} = \frac{\int_{m-\epsilon}^{m+\epsilon} p_0(m') \mu_{m'}(E) dm'}{\int_{m-\epsilon}^{m+\epsilon} p_0(m')dm'}.
\end{equation*}
\end{lemma}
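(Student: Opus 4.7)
The plan is to reduce the identity to sets depending on only finitely many frequencies, verify it on those by Fubini applied to the explicit density formula of Lemma~\ref{806}, and then extend to the full $\sigma$-algebra by a monotone class argument. By the definition of $\mu_m^{\epsilon}$ it suffices to prove
\begin{equation*}
\mu(E \cap B_{m,\epsilon}) = \int_{m-\epsilon}^{m+\epsilon} p_0(m')\,\mu_{m'}(E)\,dm',
\end{equation*}
since the denominator identity $\mu(B_{m,\epsilon}) = \int_{m-\epsilon}^{m+\epsilon} p_0(m')\,dm'$ is immediate from the fact that $p_0$ is the density of the mass under $\mu$.

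First I would fix a positive integer $N$ and a measurable $E$ depending only on frequencies $|n| \le N$. Keeping the notation $y = \sum_{|n|\le N} |g_n|^2/\langle n \rangle^2$ and $E'$ from Lemma~\ref{806}, that lemma rearranges to
\begin{equation*}
p_0(m')\,\mu_{m'}(E) = \int_{E'} p_{N+1}(m'-y)\prod_{|n| \le N} \frac{e^{-(|x_n|^2 + |x_n'|^2)}}{\pi}\,dx_n\,dx_n'.
\end{equation*}
Integrating in $m'$ over $(m-\epsilon, m+\epsilon)$ and applying Fubini-Tonelli (all integrands are nonnegative), the inner integral becomes $\int_{m-\epsilon}^{m+\epsilon} p_{N+1}(m'-y)\,dm'$, which by the definition of $p_{N+1}$ equals $\mu\bigl(m-\epsilon-y < \sum_{|n|>N} |g_n|^2/\langle n \rangle^2 < m+\epsilon-y\bigr)$. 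Combined with the Gaussian factors and the independence of the $g_n$, this product reassembles into $\mu(E \cap B_{m,\epsilon})$, so the identity holds for cylinder sets.

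Finally I would extend to general measurable $E$ by a monotone class argument. The cylinder sets form a generating algebra for the $\sigma$-algebra on $H^{1/2-}(\mathbb{T})$. For fixed $m,\epsilon$ both sides of the claimed identity are finite measures in $E$: the LHS is plainly $\mu(\cdot \cap B_{m,\epsilon})$, while the RHS is the integral of the function $m' \mapsto p_0(m')\mu_{m'}(E)$ against Lebesgue measure on a bounded interval. Two finite measures agreeing on a generating algebra coincide, yielding the identity on the whole $\sigma$-algebra.

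The only technical subtlety is the measurability in $m'$ and countable additivity in $E$ needed to regard the RHS as a measure; both are verified on cylinder sets by the explicit formula and then propagated via the monotone class theorem, so the core of the argument is a single Fubini computation.
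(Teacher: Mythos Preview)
Your proof is correct and takes a genuinely different, more direct route than the paper's. The paper verifies the identity on cylinder sets by a partition-and-sandwich argument: it breaks $[m-\epsilon,m+\epsilon]$ into $n$ subintervals, averages over shifts $\delta$ to obtain upper and lower Riemann-type bounds of the form $\int \frac{\mu(B_{m',\epsilon'})}{2\epsilon'}\mu_{m'}^{\epsilon'}(E)\,dm'$, and then sends $\epsilon' = \epsilon/n \to 0$ using dominated convergence together with the pointwise limits $\mu_{m'}^{\epsilon'}(E) \to \mu_{m'}(E)$ and $\mu(B_{m',\epsilon'})/2\epsilon' \to p_0(m')$. You instead observe that Lemma~\ref{806} already gives $p_0(m')\mu_{m'}(E)$ as an explicit Gaussian integral with density $p_{N+1}(m'-y)$, so a single application of Fubini--Tonelli in the $m'$ variable collapses the inner integral to $\mu(m-\epsilon-y < \sum_{|n|>N} |g_n|^2/\langle n\rangle^2 < m+\epsilon-y)$, which by independence reassembles into $\mu(E\cap B_{m,\epsilon})$. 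Both proofs then extend from cylinder sets to the full $\sigma$-algebra by density; your Dynkin/monotone-class formulation also transparently handles the joint measurability in $m'$ that the paper leaves implicit. Your argument is shorter and makes clearer that the lemma is simply the disintegration of $\mu$ along the mass variable; the paper's approach, while more laborious, has the minor advantage of not requiring the full strength of Lemma~\ref{806} (only the weaker convergence $\mu_{m'}^{\epsilon'}(E) \to \mu_{m'}(E)$).
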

\begin{proof} Consider a measurable set $E \subset H^{\sigma}(\mathbb{T})$ depending only on frequencies $\le N$, as in the proofs of Lemma \ref{806} and Lemma \ref{809}. Fix a value of $m >0$ and a value of $\epsilon$. By partitioning the interval $[m-\epsilon, m+\epsilon]$ into pieces, with some extra room at the endpoints, we have for each $\delta \in [-\frac{\epsilon}{n},\frac{\epsilon}{n}]$:

\begin{equation}
\begin{split}
 \mu(E \cap B_{m,\epsilon}) &\le \sum_{i=0}^n \mu(E \cap B_{m- \epsilon+\delta + \frac{2i\epsilon}{n},\frac{\epsilon}{n}}) \\  
\end{split}
\end{equation}
This inequality holds for each $\delta \in [-\frac{\epsilon}{n},\frac{\epsilon}{n}]$, therefore it still holds if we take the average over all $\delta$ in the interval:  
\begin{equation}
\begin{split}
\mu(E \cap B_{m,\epsilon}) &\le \frac{n}{2\epsilon} \int_{-\epsilon/n}^{\epsilon/n} \sum_{i=0}^{n} \mu(E \cap B_{m- \epsilon+\delta + \frac{2i\epsilon}{n},\frac{\epsilon}{n}}) d\delta \\
&\le \frac{n}{2\epsilon} \int_{m' = m - \epsilon- \epsilon/n}^{m+\epsilon+ \epsilon/n} \mu(E \cap B_{m',\frac{\epsilon}{n}})  dm' \\
&\le \int_{m - \epsilon- \epsilon'}^{m+\epsilon + \epsilon'} \frac{\mu(E \cap B_{m',\epsilon'})}{2\epsilon'}  dm'  =  \int_{m - \epsilon - \epsilon'}^{m+\epsilon+\epsilon'} \frac{\mu(B_{m',\epsilon'})}{2\epsilon'} \mu_{m'}^{\epsilon'}(E)  dm',  \\
\end{split}
\end{equation}
after making the substitution $\epsilon' =\epsilon/n$. Taking the limit as $n \rightarrow \infty$ which is equivalent to $\epsilon' \rightarrow 0$ the above integral converges pointwise to $\int_{m-\epsilon}^{m+\epsilon} p_0(m') \mu_{m'}(E) dm'$. In addition, the integrand is bounded above by $\int_{m-2\epsilon}^{m+2\epsilon} \|p_0(m')\|_{L^{\infty}} dm'$. Therefore the dominated convergence theorem implies that  
$$ \mu(E \cap B_{m,\epsilon}) \le \int_{m-\epsilon}^{m+\epsilon} p_0(m') \mu_{m'}(E) dm'.$$

Now we provide a lower bound: 
\begin{equation}
\begin{split}
\mu(E \cap B_{m,\epsilon}) &\ge \sum_{i=1}^{n-1} \mu(E \cap B_{m- \epsilon+\delta + \frac{2i\epsilon}{n},\frac{\epsilon}{n}}) \\  
&\ge \frac{n}{2\epsilon} \int_{-\epsilon/n}^{\epsilon/n}  \sum_{i=1}^{n-1} \mu(E \cap B_{m- \epsilon+\delta + \frac{2i\epsilon}{n},\frac{\epsilon}{n}}) d\delta\\  
&\ge \frac{n}{2\epsilon} \int_{m' = m - \epsilon+3\epsilon/n}^{m+\epsilon-3\epsilon/n} \mu(E \cap B_{m',\frac{\epsilon}{n}})  dm' \\
&\ge \int_{m - \epsilon+ 3\epsilon'}^{m+\epsilon -3\epsilon'} \frac{\mu(E \cap B_{m',\epsilon'})}{2\epsilon'}  dm'  =  \int_{m - \epsilon +3\epsilon'}^{m+\epsilon-3\epsilon'} \frac{\mu(B_{m',\epsilon'})}{2\epsilon'} \mu_{m'}^{\epsilon'}(E)  dm'  , \\
\end{split}
\end{equation}
after making the substitution $\epsilon' =\epsilon/n$. Taking the limit as $\epsilon' \rightarrow 0$ the above integral converges pointwise to $\int_{m-\epsilon}^{m+\epsilon} p_0(m') \mu_{m'}(E) dm'$. It is bounded above just as the previous integral. Therefore we have  
\begin{equation*}
\begin{split}
\int_{m-\epsilon}^{m+\epsilon} p_0(m')\mu_{m'}(E) dm' &\le \mu(E \cap B_{m,\epsilon}) \le \int_{m-\epsilon}^{m+\epsilon} p_0(m') \mu_{m'}(E) dm' \\
\mu(E \cap B_{m,\epsilon}) &= \int_{m-\epsilon}^{m+\epsilon} p_0(m') \mu_{m'}(E) dm' .\\
\end{split}
\end{equation*} 

We conclude that for any measurable $E$ depending only on frequencies $\le N$ we have 
\begin{equation} \label{915}
\mu_{m}^{\epsilon} (E) = \frac{\mu(E \cap B_{m,\epsilon})}{\mu(B_{m,\epsilon})} = \frac{\int_{m-\epsilon}^{m+\epsilon} p_0(m') \mu_{m'}(E) dm'}{\int_{m-\epsilon}^{m+\epsilon} p_0(m')dm'}.
\end{equation} Since measurable sets of the form $E$ are dense in the sigma algebra of $H^{\sigma}(\mathbb{T})$, this proves the result for all $E$. \end{proof}

The following immediate corollary allows us to integrate functions with respect to $\mu_{m}^{\epsilon}$.

\begin{corollary} \label{916}
For any function $F \in L^1(\mu_m^{\epsilon})$ we have $$\int F(u) d\mu_m^{\epsilon} = \frac{\int_{m-\epsilon}^{m+\epsilon} p_0(m')[\int F(u) d\mu_{m'}]dm'}{\int_{m-\epsilon}^{m+\epsilon} p_0(m')dm'}.$$
\end{corollary}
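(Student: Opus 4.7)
The plan is to obtain this statement as a routine upgrade of Lemma \ref{910} from indicator functions to general $L^1(\mu_m^{\epsilon})$ functions, using the standard measure-theoretic approximation scheme. The key observation is that Lemma \ref{910} already asserts, for every measurable $E$, the identity
\begin{equation*}
\mu_m^{\epsilon}(E) = \frac{\int_{m-\epsilon}^{m+\epsilon} p_0(m') \mu_{m'}(E) dm'}{\int_{m-\epsilon}^{m+\epsilon} p_0(m') dm'},
\end{equation*}
which is exactly the desired formula for $F = 1_E$. So all that remains is to promote indicator functions to integrable functions by the usual linearity/limit arguments, and the entire argument takes only a few lines.

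First, I would note that by linearity of the integral (in both $\mu_m^{\epsilon}$ and in each $\mu_{m'}$), the identity extends immediately from indicator functions $1_E$ to nonnegative simple functions $F = \sum_{i=1}^n c_i 1_{E_i}$ with $c_i \ge 0$. Next, given any nonnegative measurable $F$, I would choose an increasing sequence $F_n \uparrow F$ of simple functions and apply the monotone convergence theorem on both sides: on the left, directly against $\mu_m^{\epsilon}$; on the right, first against each $\mu_{m'}$ (producing an increasing sequence in $m'$) and then against the measure $p_0(m') dm'$ on $(m-\epsilon, m+\epsilon)$, using Tonelli's theorem to justify swapping the order of the limit with the outer integral.

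For a general $F \in L^1(\mu_m^{\epsilon})$, I would split $F$ into its real and imaginary parts, then each into positive and negative parts, and apply the nonnegative case to each of the four pieces. The only subtlety is verifying that the inner integrals $\int F(u) d\mu_{m'}$ are defined and finite for (Lebesgue)-a.e.\ $m' \in (m-\epsilon, m+\epsilon)$; but this follows from the nonnegative case applied to $|F|$: the identity
\begin{equation*}
\int |F(u)| d\mu_m^{\epsilon} = \frac{\int_{m-\epsilon}^{m+\epsilon} p_0(m') \bigl[\int |F(u)| d\mu_{m'}\bigr] dm'}{\int_{m-\epsilon}^{m+\epsilon} p_0(m') dm'}
\end{equation*}
shows that the integrability hypothesis $F \in L^1(\mu_m^{\epsilon})$ forces $\int |F| d\mu_{m'} < \infty$ for a.e.\ $m'$, since the denominator is positive by Lemma \ref{710} for small $\epsilon$.

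There is really no main obstacle here: the content is entirely in Lemma \ref{910}, and this corollary is a textbook extension. The only place I would exercise mild care is ensuring that the outer integral $\int_{m-\epsilon}^{m+\epsilon} p_0(m') dm'$ in the denominator is strictly positive, which is guaranteed by $p_0(m) > 0$ (Lemma \ref{710}) together with continuity of $p_0$, so that the right-hand side is well-defined.
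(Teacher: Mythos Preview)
Your proposal is correct and is exactly the standard extension argument the paper has in mind: the paper states this as an ``immediate corollary'' of Lemma \ref{910} with no proof, and your simple-function / monotone-convergence / splitting argument is precisely the routine way to pass from the set-level identity to general $F \in L^1(\mu_m^{\epsilon})$.
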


\section{Construction of the measure $\rho_m$} \label{mathrefs}
Now that we have the base measures $\mu_m$ and $\nu_m^k$ defined, we can construct the invariant measure $\rho_m$ by bounding the $e^{f_N(u)}$ term with respect to $\nu_m^k$ and $\mu_m$.

\subsection{Computing $\int F(u) d\nu_m^k$.} \label{mathrefs}
In this subsection we will derive a formula for $\int_{H^{1/2-}(\mathbb{T})} F(u) d\nu_m^k$ for each $m,k$ and any integrable function $F$, and eventually apply this bound to $F(u) = e^{pf_N(u)}$. We will rely on Theorem \ref{401} and Proposition \ref{24}, as well as the argument outlined in Section 2 that is motivated by the divergence theorem.
 
Recall that $$f_{N}(u) = \frac{3}{4} \sum_{n_1+n_2 = n_3+n_4, |n_i| \le N}
(n_1+n_2)\frac{g_{n_1}(\omega)g_{n_2}(\omega)\overline{g_{n_3}(\omega)}\overline{g_{n_4}(\omega)}}{\langle n_1 \rangle \langle n_2 \rangle \langle n_3 \rangle \langle n_4 \rangle}.$$ Our sets $\Gamma_{m,s}^k$ are defined based on multiplying the Fourier coefficients at frequencies $k$ and $-k$ by $s$. This corresponds to multiplying each term by $s$ raised to the number of entries of $\overline{n} = (n_1,n_2,n_3,n_4)$ equal to $\pm k$. This leads us to the following definition:   

\begin{definition}
For any $k \ge 0$, and $s \in [\frac{1}{2},2]$ we define $c_k(\overline{n},s) =  s^{\kappa(\overline{n})}$ where $\kappa(\overline{n})$ is the number of coordinates of $\overline{n}$ equal to $k$ or ${-k}$.  
\end{definition}
When we take $f(T^k_{s}(u))$ we get an extra factor of $s$ in each $g_k$ and $g_{-k}$ term. Therefore 
\begin{equation} \label{929}
f_N(T^k_s(u)) = \frac{3}{4} \sum_{n_1+n_2 = n_3+n_4, |n_i| \le N} c_k(\overline{n},s)\frac{(n_1+n_2)g_{n_1}g_{n_2}\overline{g_{n_3}}\overline{g_{n_4}}}{\langle n_1 \rangle \langle n_2 \rangle\langle n_3 \rangle\langle n_4 \rangle }.  \\
\end{equation}
Note that the definition of $c_k(\overline{n},s)$ depends on the number of entries of $\overline{n}$ equal to $k$ or $-k$. The function $c_k$ is even, unaffected by permutations of the entries and bounded above by $16$, which means it satisfies the conditions of Theorem \ref{401} and Proposition \ref{24}. We have the following corollary.
\begin{corollary} \label{928}
For every $s \in [\frac{1}{2},2]$, $k \ge 0$ and $p \ge 1$ there exists a constant $m_{p} >0$ such that 
$$\int_{H^{1/2-}(\mathbb{T})} 1_{\|u \|_{L^2}^2 < m_{p}} e^{pf_N(T_s^k(u))}d\mu \le C(p)$$
for some constant depending only on $p$.
In addition, for $N \ge M$  we have 
$$\int_{H^{1/2-}(\mathbb{T})} |f_{N}(T_s^k(u)) - f_{M}(T_s^k(u))|^2 d\mu \lesssim \frac{1}{N}.$$
\end{corollary}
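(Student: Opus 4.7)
The plan is to observe that $f_N(T_s^k(u))$ is a fourth-order chaos of exactly the form treated by Theorem \ref{401} and Proposition \ref{24}, and then to invoke those results with a uniform choice of the parameter $\mathcal{M}$. Indeed, equation \eqref{929} already expresses $f_N(T_s^k(u))$ as the chaos $S_{4,N}$ from Proposition \ref{24} with coefficient function $c_k(\overline{n},s) = s^{\kappa(\overline{n})}$; equivalently, setting $c(n) = s$ for $n = \pm k$ and $c(n) = 1$ otherwise puts it into the product form $c(n_1)c(n_2)c(n_3)c(n_4)$ required by Theorem \ref{401}.

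The first thing to check is that $c_k$ fulfills the hypotheses of Proposition \ref{24} (and hence also Theorem \ref{401}). Since $\kappa(\overline{n})$ counts the entries of $\overline{n}$ lying in $\{k,-k\}$, it is invariant under a sign flip of any single entry and under arbitrary permutations of the entries, so $c_k(\overline{n},s)$ is even in each coordinate and permutation-symmetric. For $s \in [\tfrac{1}{2},2]$ one has $\kappa \le 4$, and therefore $|c_k(\overline{n},s)| \le s^4 \le 16$. Taking $\mathcal{M} = 2$ gives the uniform bound $|c_k| \le \mathcal{M}^4$, independently of both $s$ and $k$.

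With the hypotheses in hand, Theorem \ref{401} applied to this coefficient function yields a mass threshold $m_p := m_{2,p}$ depending only on $p$ (since $\mathcal{M} = 2$ is now fixed) such that
$$\int_{H^{1/2-}(\mathbb{T})} 1_{\|u\|_{L^2}^2 < m_p}\, e^{p f_N(T_s^k(u))}\, d\mu \;\le\; C(2,p) =: C(p),$$
with $C(p)$ independent of $s, k, N$. For the second estimate, Proposition \ref{24} applied to the same coefficient function gives
$$\int_{H^{1/2-}(\mathbb{T})} |f_N(T_s^k(u)) - f_M(T_s^k(u))|^2 \, d\mu \;\lesssim\; \frac{\mathcal{M}^8}{N} \;=\; \frac{256}{N} \;\lesssim\; \frac{1}{N}.$$

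The corollary is therefore a direct application of the two preceding results and presents no essential obstacle; the only point requiring care is the uniformity of the constants in $s$ and $k$, which is immediate once one absorbs the $s$-dependence into the single uniform bound $\mathcal{M} = 2$ and notes that $\kappa(\overline{n}) \le 4$ regardless of which $k$ is selected.
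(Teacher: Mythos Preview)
Your proposal is correct and follows essentially the same approach as the paper: verify that $c_k(\overline{n},s)=s^{\kappa(\overline{n})}$ is even in each entry, permutation-symmetric, and bounded by $16$, then invoke Theorem~\ref{401} and Proposition~\ref{24} with $\mathcal{M}=2$. The paper's own justification is even terser than yours, simply noting these properties in one sentence before stating the corollary.
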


We seek to bound $\int_{H^{1/2-}(\mathbb{T})} 1_{\|u \|_{L^2}^2 < m_{p}} e^{pf_N(u)}d\mu $ and start with the following key lemma. 

\begin{lemma} \label{930}
Suppose $F$ is an integrable, measurable function on $(H^{1/2-}(\mathbb{T}),d\mu)$ and $2\ge  s > 1$. We have
$$\int_{\Gamma_{m,s}} F(u) d\mu = \int_{A_m} s^2e^{(1-s^2)|g_0|^2} F(T_s(u)) - \frac{1}{s^2}e^{(1-1/s^2)|g_0|^2}F(T_{1/s}(u))  d\mu. $$  
\end{lemma}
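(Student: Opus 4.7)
The plan is to split $\Gamma_{m,s} = A_{m,1/s} \setminus A_{m,s}$, which is immediate from Definition \ref{704}, and then evaluate each of the two integrals separately by a change of variables that affects only the 0-th Fourier coefficient $g_0$. This is exactly the infinite-dimensional analog of the one-variable scaling calculation carried out in equation \eqref{54} of Section 2, and it works cleanly here precisely because $T_s = T_s^0$ rescales a single complex coordinate while leaving every other $g_n$ fixed; the substitution is genuinely two-dimensional (over $\mathbb{C}=\mathbb{R}^2$) and no infinite product of Jacobians ever appears.

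The two identities I would prove separately are
\begin{equation*}
\int_{A_{m,s}} F(u)\, d\mu \;=\; \int_{A_m} \tfrac{1}{s^2}\, e^{(1-1/s^2)|g_0|^2}\, F(T_{1/s}(u))\, d\mu,
\end{equation*}
\begin{equation*}
\int_{A_{m,1/s}} F(u)\, d\mu \;=\; \int_{A_m} s^2\, e^{(1-s^2)|g_0|^2}\, F(T_s(u))\, d\mu,
\end{equation*}
after which subtracting gives the lemma. To prove the first identity, I would observe that $u \in A_{m,s}$ iff $T_s(u) \in A_m$, and push forward by $v = T_s(u)$. Under $\mu$ the coordinate $g_0$ has complex Gaussian density $\tfrac{1}{\pi}e^{-|g_0|^2}$ against planar Lebesgue measure on $\mathbb{C}$, so the image coordinate $g_0^v = s g_0^u$ is complex Gaussian of variance $s^2$ with density $\tfrac{1}{\pi s^2}e^{-|g_0^v|^2/s^2}$, while the other $g_n$ keep the same density. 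Dividing yields the Radon-Nikodym factor $\tfrac{d(T_s)_*\mu}{d\mu}(v) = \tfrac{1}{s^2}\, e^{(1-1/s^2)|g_0^v|^2}$; combined with the indicator identity $\mathbf{1}_{A_{m,s}}(u) = \mathbf{1}_{A_m}(v)$ and $F(u) = F(T_{1/s}(v))$, this delivers the first formula. The second identity is proved identically after replacing $s$ by $1/s$ and pushing forward by $v = T_{1/s}(u)$.

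There is essentially no analytic obstacle, only some bookkeeping to align the direction of each substitution with the correct power of $s$ and the correct sign in the exponent. Integrability of the transformed integrands is not an issue: on $A_m$ (resp.\ on $A_{m,1/s}$) one has $|g_0|^2 \le m s^2$, so the exponential weights are uniformly bounded for $s \in (1,2]$, and the hypothesis $F \in L^1(\mu)$ together with the change-of-variables formula gives finiteness of both pieces. Taking the difference of the two identities produces exactly the stated expression for $\int_{\Gamma_{m,s}} F(u)\, d\mu$.
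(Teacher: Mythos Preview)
Your proposal is correct and follows essentially the same approach as the paper: decompose $\Gamma_{m,s}=A_{m,1/s}\setminus A_{m,s}$ and handle each piece by the one-variable change of coordinates $g_0\mapsto s^{\pm 1}g_0$ in the complex Gaussian, which is exactly the computation in equations \eqref{932} and \eqref{949}. The only cosmetic difference is that you phrase the substitution via the Radon--Nikodym derivative of the pushforward $(T_s)_*\mu$ against $\mu$, whereas the paper writes out the Gaussian density explicitly; the content is identical.
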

\begin{proof}
Recall that 
$\Gamma^0_{m,s}= \Gamma_{m,s}  = A_{m,1/s} \backslash A_{m,s}.$
Therefore \begin{equation} \label{931}
\int_{\Gamma_{m,s}} F(u) d\mu  = \int_{A_{m,1/s}} F(u) d\mu  - \int_{A_{m,s}} F(u) d\mu. \end{equation}  

Applying the change of variables $g_0 = sg_0'$, we have 

\begin{equation} \label{932}
\begin{split}
\int_{A_{m,1/s}} F(u) d\mu &= \int_{|g_0|^2 < s^2m} \int_{\sum_{|n| \ge 1} \frac{|g_n|^2}{\langle n \rangle ^2} \le m - |g_0|^2/s^2}F(u) d\mu(g_i, |i| \ge 1) e^{-|g_0|^2}/\pi dRe(g_0) dIm(g_0)\\  
&= \int_{|g_0'|^2 < m } \int_{\sum_{|n| \ge 1} \frac{|g_n|^2}{\langle n \rangle ^2} \le m - |g_0'|^2} F(T_s(u)) d\mu(g_i, |i| \ge 1) s^2e^{-s^2|g_0'|^2}/\pi dRe(g_0') dIm(g_0')\\  
&= \int_{A_m} s^2e^{(1-s^2)|g_0|^2} F(T_s(u))  d\mu. \\
\end{split}
\end{equation}

We apply the exact same argument to the integral over $A_{m,s}$ to obtain 
\begin{equation} \label{949}
\begin{split}
\int_{A_{m,s}} F(u) d\mu &= \int_{|g_0|^2 < m/s^2} \int_{\sum_{|n| \ge 1} \frac{|g_n|^2}{\langle n \rangle ^2} \le m - s^2|g_0|^2}F(u) d\mu(g_i, |i| \ge 1) e^{-|g_0|^2}/\pi dRe(g_0) dIm(g_0)\\  
&= \int_{|g_0'|^2 < m } \int_{\sum_{|n| \ge 1} \frac{|g_n|^2}{\langle n \rangle ^2} \le m - |g_0'|^2} F(T_{1/s}(u)) d\mu(g_i, |i| \ge 1) \frac{1}{s^2}e^{-|g_0'|^2/s^2}/\pi dRe(g_0') dIm(g_0')\\  
&= \int_{A_m} \frac{1}{s^2}e^{(1-1/s^2)|g_0|^2} F(T_{1/s}(u))  d\mu. \\
\end{split}
\end{equation}

Combining equations \eqref{932} and \eqref{949} yields
\begin{equation} \label{950}
\int_{\Gamma^k_{m,s}} F(u) d\mu = \int_{A_m} s^2e^{(1-s^2)|g_0|^2} F(T_s(u))  -  \frac{1}{s^2}e^{(1-1/s^2)|g_0|^2}F(T_{1/s}(u))  d\mu.
\end{equation}

\end{proof}
We now state the same result for $k \ge 1$.
\begin{lemma} \label{933}
Suppose $F$ is an integrable, measurable function on $H^{1/2-}(\mathbb{T}),d\mu$ and $2 \ge s>1$. Then for any $k \ge 1$.
$$\int_{\Gamma^k_{m,s}} F(u) d\mu = \int_{A_m} s^4e^{(1-s^2)(|g_k|^2 + |g_{-k}|^2)} F(T^k_s(u))  - \frac{1}{s^4}e^{(1-1/s^2)(|g_k|^2+|g_{-k}|^2)}F(T^k_{1/s}(u))  d\mu.$$  
\end{lemma}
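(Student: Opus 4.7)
The plan is to mimic the proof of Lemma \ref{930} almost verbatim, with the only substantive change being that we now perform a simultaneous change of variables in the two complex Gaussians $g_k$ and $g_{-k}$ instead of the single Gaussian $g_0$. This will produce Jacobian factors of $s^4$ and $1/s^4$ (rather than $s^2$ and $1/s^2$), and the Gaussian re-exponentiation will involve the sum $|g_k|^2+|g_{-k}|^2$.

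First I would decompose
\begin{equation*}
\int_{\Gamma^k_{m,s}} F(u)\, d\mu = \int_{A^k_{m,1/s}} F(u)\, d\mu - \int_{A^k_{m,s}} F(u)\, d\mu,
\end{equation*}
using the identity $\Gamma^k_{m,s}=A^k_{m,1/s}\setminus A^k_{m,s}$ from Definition~\ref{704}. Next I would isolate the $(g_k,g_{-k})$-integration in each piece. Writing the condition $\|T^k_{1/s}(u)\|_{L^2}^2\le m$ explicitly gives
\begin{equation*}
\frac{|g_k|^2+|g_{-k}|^2}{s^2\langle k\rangle^2}+\sum_{|n|\ne k}\frac{|g_n|^2}{\langle n\rangle^2}\le m,
\end{equation*}
so substituting $g_k=sg_k'$ and $g_{-k}=sg_{-k}'$ (a change of variables in four real variables with Jacobian $s^4$) turns the region $A^k_{m,1/s}$ into exactly $A_m$, while the integrand $F(u)$ becomes $F(T^k_s(u'))$ by the very definition \eqref{702}. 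The Gaussian density $\pi^{-2}e^{-(|g_k|^2+|g_{-k}|^2)}$ transforms to $\pi^{-2}e^{-s^2(|g_k'|^2+|g_{-k}'|^2)}$, which I would rewrite as
\begin{equation*}
\frac{e^{-(|g_k'|^2+|g_{-k}'|^2)}}{\pi^2}\cdot e^{(1-s^2)(|g_k'|^2+|g_{-k}'|^2)},
\end{equation*}
so that re-absorbing the first factor into the base measure $d\mu$ yields
\begin{equation*}
\int_{A^k_{m,1/s}}F(u)\,d\mu = \int_{A_m} s^4 e^{(1-s^2)(|g_k|^2+|g_{-k}|^2)}F(T^k_s(u))\,d\mu.
\end{equation*}

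The second integral is handled identically with the dual substitution $g_k=g_k'/s$, $g_{-k}=g_{-k}'/s$ (Jacobian $1/s^4$), which sends $A^k_{m,s}$ to $A_m$, sends $F(u)$ to $F(T^k_{1/s}(u'))$, and produces the exponential factor $e^{(1-1/s^2)(|g_k|^2+|g_{-k}|^2)}$. Subtracting the two expressions gives exactly the stated formula.

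There is essentially no obstacle here: the only place one has to be careful is the factor-of-two bookkeeping, namely that scaling two independent complex Gaussians produces $s^4$ rather than the $s^2$ appearing in Lemma~\ref{930}, and the re-exponentiation must account for both Gaussian terms. All the integrability issues needed to justify the change of variables are inherited from the hypothesis $F\in L^1(\mu)$ together with the fact that the regions $A^k_{m,s}$ are bounded in the $|g_k|,|g_{-k}|$ directions for fixed values of the other coordinates.
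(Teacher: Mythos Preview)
Your proposal is correct and follows exactly the same approach as the paper: the paper's proof consists of a single sentence noting that the argument is identical to Lemma~\ref{930} except that scaling two complex Gaussians yields a Jacobian factor of $s^4$ rather than $s^2$. Your write-up simply unpacks that sentence.
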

\begin{proof}
The proof is identical to that of Lemma \ref{930} except that now there are two terms, so we get a factor of $s^4$ from the change of variables. 
\end{proof}

These lemmas give us a way to evaluate $\int F(u_N) d\nu_m^k$ by taking the limit of the above quantity divided by the measure of $\Gamma^k_{m,s}$.

\begin{theorem} \label{960}
Consider a function $F(u_N)$ that depends on finitely many frequencies of $u$. Suppose there exists a constant $C$ such that $|F(T^k_s(u_N))|, |\frac{\partial}{\partial s} F(T^k_s(u_N))| \le C$ for all $s \in [\frac{1}{2},2]$, $k \ge 0$ and $u \in A_{m+\epsilon}$ for any $\epsilon >0$. Then we have
\begin{equation*}
\begin{split}
\int F(u_N) d\nu_m^0 &= \frac{\int_{A_m}   (1-|g_0|^2)F(u_N) + \frac{1}{2}\frac{\partial}{ \partial s}|_{s=1} F(T_s(u_N))  d\mu }{\int_{A_m} 1-|g_0|^2  d\mu } \\
\int F(u_N) d\nu_m^k &= \frac{\int_{A_m}   (2-|g_k|^2-|g_{-k}|^2)F(u_N) + \frac{1}{2}\frac{\partial}{ \partial s}|_{s=1} F(T_s^k(u_N))  d\mu }{\int_{A_m} 2-|g_k|^2-|g_{-k}|^2  d\mu }. \\
\end{split}
\end{equation*}
In addition, $\int_{A_m} 1-|g_0|^2  d\mu = \lim_{s \rightarrow 1+}\frac{\mu(\Gamma_{m,s})}{s^2-1/s^2}$ and $\int_{A_m} 2-|g_k|^2-|g_{-k}|^2  d\mu =  \lim_{s \rightarrow 1+}\frac{\mu(\Gamma^k_{m,s})}{s^2-1/s^2}$  both of which are positive and can be bounded using Lemma \ref{705}, Lemma \ref{709} and Lemma \ref{609}.

\end{theorem}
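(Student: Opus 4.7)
The plan is to start from the weak-limit characterization of $\nu_m^0$ in Definition \ref{808}. Since $F(u_N)$ is bounded and depends on finitely many Fourier coefficients, the explicit formula for $\nu^0_{m,s}$ in Lemma \ref{809} together with dominated convergence yields
\begin{equation*}
\int F(u_N)\, d\nu_m^0 \;=\; \lim_{s\to 1+} \int F(u_N)\, d\nu^0_{m,s} \;=\; \lim_{s\to 1+}\frac{\int_{\Gamma_{m,s}} F(u_N)\, d\mu \,/\, (s^2 - 1/s^2)}{\mu(\Gamma_{m,s})\,/\,(s^2 - 1/s^2)}.
\end{equation*}
I would then rewrite the numerator via Lemma \ref{930} and substitute $r = s^2$, setting $H(r,u) := r\, e^{(1-r)|g_0|^2}\, F(T_{\sqrt{r}}(u_N))$, so that the numerator becomes $\int_{A_m}[H(r,u) - H(1/r,u)]/(r-1/r)\, d\mu$, namely the symmetric difference quotient of $H(\cdot,u)$ at $r=1$.

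A direct chain-rule computation gives
\begin{equation*}
H'(1, u) \;=\; (1 - |g_0|^2)\, F(u_N) \,+\, \tfrac{1}{2}\, \partial_s F(T_s(u_N))\big|_{s=1},
\end{equation*}
with the factor $\tfrac{1}{2}$ coming from $\partial_r[F(T_{\sqrt{r}}(u_N))]|_{r=1} = \tfrac{1}{2}\,\partial_s F(T_s(u_N))|_{s=1}$. This is the pointwise limit as $s\to 1+$ of the integrand appearing above. To interchange limit and integral I would invoke dominated convergence: on $A_m$ one has $|g_0|^2 \le m$, so the weight $e^{(1-r)|g_0|^2}$ is bounded by $e^m$ uniformly for $r\in[\tfrac{1}{4},4]$; for such $r$ the scaled function $T_{\sqrt{r}}(u)$ lies in $A_{m+\epsilon}$ for some $\epsilon>0$ depending only on $m$, and the hypothesis bounds $|F|$ and $|\partial_s F|$ by a constant. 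A mean-value estimate then provides a uniform constant majorant for the symmetric difference quotient on $A_m$, which is integrable against the probability measure $\mu$.

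Specializing to $F \equiv 1$ identifies the denominator: the same computation yields $\lim_{s\to 1+}\mu(\Gamma_{m,s})/(s^2-1/s^2) = \int_{A_m}(1-|g_0|^2)\,d\mu$, which Lemma \ref{705} guarantees is strictly positive. Hence the quotient is well-defined and the $k=0$ formula follows. The $k\ge 1$ case is handled identically with Lemma \ref{933} in place of Lemma \ref{930}, working with $H(r,u) := r^2\, e^{(1-r)(|g_k|^2+|g_{-k}|^2)}\, F(T^k_{\sqrt{r}}(u_N))$; the extra factor of $r$ in the prefactor produces $2F(u_N)$ instead of $F(u_N)$ in the derivative, giving the $(2-|g_k|^2-|g_{-k}|^2)$ weight, and positivity of the denominator comes from Lemma \ref{709} combined with Lemma \ref{609}.

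The main technical point will be the dominated convergence step. The integrand is a ratio whose numerator and denominator each vanish as $s\to 1+$, and one needs a $\mu$-integrable majorant uniform in $u\in A_m$ and $s$ near $1$. The Gaussian prefactor $e^{(1-r)|g_0|^2}$ (and its $k\ge 1$ analogue) is what makes the change-of-variables in Lemmas \ref{930} and \ref{933} usable, and it is only tame because the relevant Fourier coefficient is a priori bounded on $A_m$. One also needs to verify that $T^k_s(u)$, for $u\in A_m$ and $s$ near $1$, remains in a region where the hypothesis on $F$ applies; this is precisely why the hypothesis is phrased on a slight enlargement $A_{m+\epsilon}$ of $A_m$.
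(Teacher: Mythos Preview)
Your proposal is correct and follows essentially the same approach as the paper's proof: both start from the limit definition of $\nu_m^k$, invoke Lemma~\ref{930}/\ref{933} to rewrite the integral over $\Gamma_{m,s}^k$ as an integral over $A_m$, divide through by $s^2-1/s^2$ to obtain a symmetric difference quotient of $r\mapsto r^{d}e^{(1-r)(\cdot)}F(T^k_{\sqrt r}(u_N))$ (with $d=1$ or $2$), pass the limit inside via dominated convergence, and read off the derivative at $r=1$ by the product rule. Your treatment is slightly more explicit about the majorant in the dominated convergence step (the boundedness of the Gaussian weight on $A_m$ and the mean-value bound on the difference quotient), which the paper only alludes to with the phrase ``applying the dominated convergence theorem and the boundedness condition on $F$.''
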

Note that the condition $|F(T^k_s(u_N))|, |\frac{\partial}{\partial s} F(T^k_s(u_N))| \le C$ on $A_{m+\epsilon}$  is satisfied by almost any reasonable function $F(u)$ that is  bounded by some Sobolev norm of $u$, since $$\|u_N\|_{H^s(\mathbb{T})} \lesssim N^s\|u_N\|_{L^2(\mathbb{T})} \le N^s(m+\epsilon)^{1/2}.$$ 
\begin{proof}
We do the computation for the $k \ge 1$ case, and note that it is identical in the $k=0$ case. Since $F$ depends only on finitely many frequencies of $u$, we can apply Definition \ref{812} directly. After applying Lemma \ref{933}, cross multiplying and applying the limit definition of the derivative we have
\begin{equation}
\begin{split}
&\int F(u_N) d\nu_m^k\\
 &=  \lim_{s \rightarrow 1+} \frac{\int_{\Gamma_{m,s}^k} F(u_N) d\mu}{\int_{\Gamma_{m,s}^k} 1 d\mu}\\
&= \lim_{s \rightarrow 1+} \frac{\int_{A_m} s^4e^{(1-s^2)(|g_k|^2 + |g_{-k}|^2)} F(T^k_s(u_N))  - \frac{1}{s^4}e^{(1-1/s^2)(|g_k|^2+|g_{-k}|^2)}F(T^k_{1/s}(u_N))  d\mu}{\int_{A_m} s^4e^{(1-s^2)(|g_k|^2 + |g_{-k}|^2)}  - \frac{1}{s^4}e^{(1-1/s^2)(|g_k|^2+|g_{-k}|^2)}  d\mu}\\
&=  \lim_{s \rightarrow 1+} \frac{\int_{A_m} s^4e^{(1-s^2)(|g_k|^2 + |g_{-k}|^2)} F(T^k_s(u_N))  - \frac{1}{s^4}e^{(1-1/s^2)(|g_k|^2+|g_{-k}|^2)}F(T^k_{1/s}(u_N))  d\mu /(s^2 - 1/s^2) }{\int_{A_m} s^4e^{(1-s^2)(|g_k|^2 + |g_{-k}|^2)}  - \frac{1}{s^4}e^{(1-1/s^2)(|g_k|^2+|g_{-k}|^2)}  d\mu/(s^2 - 1/s^2)}\\
&= \frac{\int_{A_m} \frac{\partial}{\partial r}|_{r=1} r^2e^{(1-r)(|g_k|^2 + |g_{-k}|^2)} F(T^k_{\sqrt{r}}(u_N)) d\mu}{\int_{A_m} \frac{\partial}{\partial r}|_{r=1} r^2e^{(1-r)(|g_k|^2 + |g_{-k}|^2)}    d\mu}.\\
\end{split}
\end{equation}
We can place the limit inside the integral by applying the dominated convergence theorem and the boundedness condition on $F$. Now we apply the product rule and obtain
\begin{equation}
\int F(u_N) d\nu_m^k =  \frac{\int_{A_m}   (2-|g_k|^2-|g_{-k}|^2)F(u_N) + \frac{1}{2}\frac{\partial}{ \partial s}|_{s=1} F(T_s^k(u_N))  d\mu }{\int_{A_m} 2-|g_k|^2-|g_{-k}|^2  d\mu }.
\end{equation}

\end{proof}

Observe that the denominator $\int_{A_m} 2-|g_k|^2-|g_{-k}|^2  d\mu$ is just the limit from Lemma \ref{709}. Applying Lemma \ref{705}, Lemma \ref{709} and Lemma \ref{609} we arrive at the following key corollary.

\begin{corollary}
For each $m>0$ there exists a constant $C(m)$ such that 
$$\int_{A_m} 2-|g_k|^2-|g_{-k}|^2  d\mu \ge \frac{C(m)}{\langle k \rangle^2}.$$
\end{corollary}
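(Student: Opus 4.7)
The plan is to chain the preceding results together; this corollary is really just bookkeeping that combines Theorem~\ref{960}, Lemma~\ref{709}, and Lemma~\ref{609}. First I would apply Theorem~\ref{960} to rewrite
\[
\int_{A_m} 2 - |g_k|^2 - |g_{-k}|^2 \, d\mu \;=\; \lim_{s \rightarrow 1+} \frac{\mu(\Gamma^k_{m,s})}{s^2 - 1/s^2} \;=\; C(m,k),
\]
which is the quantity that Lemma~\ref{709} already bounds explicitly from below by
\[
C(m,k) \;\ge\; \frac{m^2 e^{-m}}{4 \langle k \rangle^2} \int_{m/2}^{m} P_k\!\left( m - \frac{y}{\langle k \rangle^2} \right) dy.
\]
The factor $\langle k \rangle^{-2}$ on the right is already of the right shape, so the entire task reduces to producing a $k$-uniform positive lower bound on the remaining integral $\int_{m/2}^{m} P_k(m - y/\langle k \rangle^2)\,dy$.

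This $k$-uniform lower bound is exactly what Lemma~\ref{609} delivers: for the fixed value $x = m$ (which lies in the interior of the interval $[m', m'']$ produced by that lemma), we obtain
\[
\inf_{N \ge 0} \int_{m/2}^{m} P_N\!\left( m - \frac{y}{\langle N \rangle^2} \right) dy \;\ge\; C_0(m) \;>\; 0.
\]
Plugging this into the bound from Lemma~\ref{709} and setting $C(m) = m^2 e^{-m} C_0(m)/4$ yields
\[
\int_{A_m} 2 - |g_k|^2 - |g_{-k}|^2 \, d\mu \;\ge\; \frac{C(m)}{\langle k \rangle^2},
\]
which is the claimed inequality.

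The only real difficulty in the whole chain sits upstream in Lemma~\ref{609}: the uniformity in $k$ requires combining the $L^\infty$-convergence $P_k \to p_0$ of Lemma~\ref{607} (which handles all sufficiently large $k$) with the continuity and strict positivity of $p_0$ from Lemma~\ref{710} (which pin down the large-$k$ baseline), and then separately noting that only finitely many small-$k$ exceptions remain, each contributing a continuous strictly positive integrand whose minimum can be taken. Once that lemma is in hand, the corollary is immediate — no new analytic input is needed at this stage.
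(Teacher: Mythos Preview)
Your proposal is correct and follows essentially the same route as the paper: identify $\int_{A_m} 2-|g_k|^2-|g_{-k}|^2\,d\mu$ with the limit $\lim_{s\to 1+}\mu(\Gamma^k_{m,s})/(s^2-1/s^2)$ via Theorem~\ref{960}, then feed in the lower bound from Lemma~\ref{709} and the $k$-uniform positivity of the $P_k$-integral from Lemma~\ref{609}. The paper additionally cites Lemma~\ref{705} to cover the $k=0$ case (where the relevant quantity is $\int_{A_m} 1-|g_0|^2\,d\mu$ and one only needs a positive constant since $\langle 0\rangle=1$), which you do not mention explicitly, but this is a minor point.
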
  

This demonstrates the power of our decomposition $\frac{1}{m}\mu_m = \underset{k \ge 0}{\sum} \frac{c_{k,m}}{\langle k \rangle^2}\nu_m^k$.   Theorem \ref{960} provides a direct formula for the integral of a function with respect to $\nu_m^k$, something that would be unachievable for $\mu_m$.  Note the similarity to the idea of the divergence and Stokes' theorem, we are relating the integral of the function $F$ on the boundary set $\Gamma_m$ to its derivative on the interior, $A_m$. The downside to this formula is that in order to bound $\int F(u_N)d\mu_m$ we have to bound the integral  
\begin{equation*}
\int_{A_m}   (2-|g_k|^2-|g_{-k}|^2)F(u_N) + \frac{1}{2}\frac{\partial}{ \partial s}|_{s=1} F(T_s^k(u_N))  d\mu
\end{equation*} 
by $\frac{C}{\langle k \rangle^{1+}}$ for a constant $C$ that does not depend on $k$. The $\frac{\partial}{ \partial s}|_{s=1} F(T_s^k(u_N))$ term should be manageable, but bounding $$\int_{A_m}   (2-|g_k|^2-|g_{-k}|^2)F(u_N)  d\mu$$ will not be an easy task. This integral has positive and negative components that do not neatly cancel. We suspect that the random variables $|g_k|^2 + |g_{-k}|^2$ and $e^{f_N(u)}$ have positive covariance. Noting that the denominator is positive, positive covariance of $|g_k|^2 + |g_{-k}|^2$ and $F(u_N)$ would imply that 
\begin{equation}
\frac{\int_{A_m}   (2-|g_k|^2-|g_{-k}|^2)F(u_N)d\mu }{\int_{A_m} 2-|g_k|^2-|g_{-k}|^2  d\mu } \le  \frac{1}{\mu(A_m)}\int_{A_m} F(u_N)d\mu .
\end{equation}

The function $e^{f_N(u)}$ is too complicated to directly compute its covariance with  $|g_k|^2 + |g_{-k}|^2$, so we will split it into several components.

\subsection{Bounding $\int e^{f_N(u)}d\nu_m^k$} \label{mathrefs}

We want to divide $f_N(u)$ into components, each of which is proportional to a power of  $|g_k|^2 + |g_{-k}|^2$, making it possible to compute the covariance of each term with $|g_k|^2 + |g_{-k}|^2$.

\begin{definition}  \label{961}
 Fix a value of $k \ge 0$. Recall that 
\begin{equation}
 f_N(u) = \frac{3}{4}\sum_{n}  \sum_{n_1+n_2=n, n_3+n_4 =n, |n_i| \le N} \frac{ng_{n_1}g_{n_2}\overline{g_{n_3}}\overline{g_{n_4}}}{\langle n_1 \rangle \langle n_2 \rangle\langle n_3 \rangle\langle n_4 \rangle}.
\end{equation}
For each $0 \le j \le 4$ we let $G_k^j(u_N)$ be the sum over all indices $(n_1,n_2,n_3,n_4)$ such that exactly $j$ of these indices are equal to $k$ or $-k$.  For example $G_k^4(u_N) = \frac{3k(|g_k|^4 - |g_{-k}|^4)}{2\langle k \rangle^4}$ for $k \le N$.
\end{definition} 

Observe that by the $g_n \rightarrow g_{-n}$ symmetry, each $G_k^j(u_N)$ has a symmetric distribution. In addition, if we scale $g_k$ and $g_{-k}$ by a factor of $s$, $G_k^j(u_N)$ is scaled by a factor of $s^j$. We have an immediate result bounding each $G_k^j(u_N)$.
\begin{lemma}\label{959}
For each $N \ge 0$, $k \ge 0$, $0 \le j \le 4$ and $m < m_{6p}$ we have
\begin{equation}
\int_{A_m} e^{pG_k^j(u_N)} d\mu \le C(p),
\end{equation}
for a constant depending only on $p$.
\end{lemma}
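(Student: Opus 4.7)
The plan is to express $G_k^j(u_N)$ as a short sum of quartic integrals of the type controlled by Proposition \ref{552}, and then handle the exponential via H\"older's inequality.

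First, I would split the integers as $\mathbb{Z} = Q_k \sqcup Q_k^c$, where $Q_k = \{k,-k\}$; both $Q_k$ and its complement are even subsets in the sense of Definition \ref{551}. Writing $u_N = P_{Q_k} u_N + P_{Q_k^c} u_N$ and expanding the quartic integrand
$$
f_N(u) = -\tfrac{3i}{2}\int_{\mathbb{T}} \partial_x u_N \cdot u_N \cdot \overline{u_N} \cdot \overline{u_N}\, dx
$$
yields $2^4 = 16$ terms, each of the shape
$$
I_{\overline{Q}}(u_N) = -\tfrac{3i}{2}\int_{\mathbb{T}} \partial_x(P_{Q_{i_1}} u_N) \cdot P_{Q_{i_2}} u_N \cdot P_{Q_{i_3}} \overline{u_N} \cdot P_{Q_{i_4}} \overline{u_N}\, dx,
$$
with each $Q_{i_\ell} \in \{Q_k, Q_k^c\}$. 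Comparing with Definition \ref{961}, the terms for which exactly $j$ of the $Q_{i_\ell}$ equal $Q_k$ correspond precisely to the 4-tuples $(n_1,n_2,n_3,n_4)$ with $j$ entries in $\{k,-k\}$, so $G_k^j(u_N)$ is a sum of exactly $\binom{4}{j} \le 6$ such integrals $I_{\overline{Q}}$. The case $|k| > N$ is trivial since then $P_{Q_k} u_N = 0$ and only $j = 0$ contributes, giving $G_k^0 = f_N(u)$, which is handled by Theorem \ref{5}.

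Next, for every admissible $\overline{Q}$ and every $q \ge 1$, Proposition \ref{552} applied with $\mathcal{M} = 1$ (so that $\tilde u = u$) and exponent $q$ in place of $p$ yields
$$
\int 1_{\|u\|_{L^2}^2 < m_{q}} e^{q\, I_{\overline{Q}}(u_N)}\, d\mu \le C(q),
$$
with a constant uniform in the choice of $\overline{Q}$ and in $N$. Since $m < m_{6p}$, we have $A_m \subseteq \{\|u\|_{L^2}^2 < m_{6p}\}$, so taking $q = \binom{4}{j} p \le 6p$ gives
$$
\int_{A_m} e^{\binom{4}{j} p\, I_{\overline{Q}}(u_N)}\, d\mu \le C(p).
$$
Finally, I would combine the two steps by writing $e^{pG_k^j(u_N)} = \prod_{\overline{Q}} e^{p\, I_{\overline{Q}}(u_N)}$ (product over the $\binom{4}{j}$ contributing patterns) and applying H\"older's inequality with $\binom{4}{j}$ equal conjugate exponents:
$$
\int_{A_m} e^{p G_k^j(u_N)}\, d\mu \le \prod_{\overline{Q}} \Big(\int_{A_m} e^{\binom{4}{j} p\, I_{\overline{Q}}(u_N)}\, d\mu\Big)^{1/\binom{4}{j}} \le C(p).
$$

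The whole argument is essentially bookkeeping around Proposition \ref{552}, which was designed exactly to accommodate this kind of restricted quartic chaos; the work of controlling Gaussian chaos at small mass has already been done there. The only point requiring any real care is choosing the H\"older exponent so that the extra multiplicative factor (at most $\binom{4}{2}=6$) still leaves us below the mass threshold, which is why the hypothesis is phrased with $m < m_{6p}$. I do not expect any serious obstacle beyond keeping the combinatorics straight.
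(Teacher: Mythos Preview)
Your proposal is correct and follows exactly the same approach as the paper: decompose $G_k^j(u_N)$ into the $\binom{4}{j}$ terms obtained by splitting $u_N = P_{Q_k}u_N + P_{Q_k^c}u_N$, apply Proposition \ref{552} to each, and combine via H\"older's inequality. You have simply written out in detail the bookkeeping that the paper's proof leaves implicit, including the observation that $\binom{4}{j}\le 6$ is what forces the hypothesis $m<m_{6p}$.
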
 
\begin{proof}
We seek to apply Proposition \ref{552}. Let $Q = \{k,-k\}$ and let $Q' = \mathbb{Z} \backslash Q$. We can write $G_k^j(u_N)$ as a sum of $\binom{4}{j}$ terms of the form $\partial_x (P_{Q_1}u_N) \cdot  P_{Q_2}u_N \cdot P_{Q_3}\overline{u_N}\cdot P_{Q_4}\overline{u_N}$, where each $Q_i$ is either $Q$ or $Q'$. Applying H\"older's inequality and Proposition \ref{552} gives us the desired bound. 
\end{proof}

We now define a new pair of sets that will be important in our proof of boundedness of the exponential.
\begin{definition}
Recall that $A_m$ is the set of $u$ with mass $\le m$. Let $\mathcal{A}^k_m \subset L^2(\mathbb{T})$ be the slightly larger set of functions $u$ that satisfy 
$$\sum_{|n| \ne k} |\widehat{u(n)}|^2  = \sum_{|n| \ne k} \frac{|g_n|^2}{\langle n \rangle^2} \le m.$$ 
Also define $\tilde{\mathcal{A}^k_m}$ to be the intersection
$$\tilde{\mathcal{A}^k_m} = \mathcal{A}^k_m \cap \{|g_k|^2 + |g_{-k}|^2 < \langle k \rangle^2m\}.$$
\end{definition}

Note that on these sets any two functions $F_1(g_k,g_{-k})$ and $F_2(\{g_n, |n| \ne k\})$ are independent. Also, the set  $\tilde{\mathcal{A}_m^k}$ contains the set $A_m$. We now prove that the difference between the size of these sets is small. 
\begin{lemma} \label{981}
For each $k \ge 0$ and $m>0$ we have 
$$\mu(\mathcal{A}_m^k \backslash A_m) \le \frac{C}{\langle k \rangle^2},$$
implying that $\mu(\tilde{\mathcal{A}}_m^k \backslash A_m) \le \frac{C}{\langle k \rangle^2}$.
\end{lemma}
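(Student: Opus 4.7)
The plan is to reduce $\mu(\mathcal{A}_m^k \backslash A_m)$ to a one-dimensional integral by exploiting the independence under $\mu$ of the coefficients at frequency $\pm k$ from the rest. Writing $y = \sum_{|n| \ne k} |g_n|^2/\langle n\rangle^2$ and (for $k\ge 1$) $z = |g_k|^2 + |g_{-k}|^2$, the total mass decomposes as $y + z/\langle k\rangle^2$, and $z$ is independent of $y$ by the construction of $\mu$ in \eqref{8}. The set $\mathcal{A}_m^k \backslash A_m$ is then exactly $\{y \le m\} \cap \{z > \langle k\rangle^2(m-y)\}$, and since $y$ has distribution function $P_k$ (Definition \ref{601}), Fubini gives
\begin{equation*}
\mu(\mathcal{A}_m^k \backslash A_m) = \int_0^m P_k(y)\,\mu\bigl(z > \langle k\rangle^2(m-y)\bigr)\, dy.
\end{equation*}

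Next I would compute the tail of $z$ explicitly. For $k\ge 1$, each $|g_{\pm k}|^2$ is $\mathrm{Exp}(1)$-distributed, so their sum has the $\mathrm{Gamma}(2,1)$ density $te^{-t}\mathbf{1}_{t\ge 0}$ and tail $\mu(z>\lambda) = (\lambda+1)e^{-\lambda}$. Substituting $t = \langle k\rangle^2(m-y)$ and using the uniform bound $\|P_k\|_{L^\infty(\mathbb{R})} \le C$ from Lemma \ref{605}, the integral collapses to
\begin{equation*}
\mu(\mathcal{A}_m^k \backslash A_m) \le \frac{\|P_k\|_{L^\infty}}{\langle k\rangle^2}\int_0^\infty (t+1)e^{-t}\,dt \le \frac{C}{\langle k\rangle^2}.
\end{equation*}
For $k=0$ the same computation runs with $z = |g_0|^2 \sim \mathrm{Exp}(1)$, $\mu(z>\lambda)=e^{-\lambda}$, and $\langle 0\rangle^2 = 1$, producing a constant bound which is exactly the claimed $C/\langle 0 \rangle^2$. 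The second assertion then follows immediately from $\tilde{\mathcal{A}}_m^k \subset \mathcal{A}_m^k$.

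There is no real obstacle here: once the two independent blocks of coefficients are separated, the argument is a direct change of variables. The only points requiring care are verifying that the independence is genuine (it is, because $\mu$ is a product Gaussian measure on the Fourier coefficients) and keeping track of the factor $\langle k\rangle^2$ introduced by the substitution, which is precisely what delivers the claimed decay rate when paired with the uniform $L^\infty$ control on $P_k$.
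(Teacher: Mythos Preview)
Your proof is correct and follows essentially the same approach as the paper's: both separate the mass into the independent blocks $y = \sum_{|n|\ne k}|g_n|^2/\langle n\rangle^2$ and $z$, reduce to $\int_0^m P_k(y)\,\mu(z > \langle k\rangle^2(m-y))\,dy$, compute the tail of $z$ (you via the known Gamma$(2,1)$ tail, the paper via repeated polar coordinates, arriving at the same $(1+\lambda)e^{-\lambda}$), and then substitute and invoke Lemma~\ref{605}. The arguments are interchangeable.
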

\begin{proof}
We proceed as in the proof of Lemma \ref{709}. Recall that $P_k(y)$ is the distribution function of $y = \underset{|n| \ne k}{\sum} \frac{|g_n|^2}{\langle n \rangle^2}$ with respect to the measure $\mu$. Letting $g_k = a +ib$, $g_{-k} = a'+ib'$ with $a,a',b,b'$ all $\mathcal{N}(0,\frac{1}{2})$ distributed, we have 

\begin{equation}
\begin{split}
\mu(\mathcal{A}^k_m  \backslash A_m) &=\int_{y=0}^{m} P_k(y)\mu\left(\langle k \rangle^2(m-y) < |g_k|^2+ |g_{-k}|^2\right) dy \\
&= \int_{y=0}^{m} P_k(y)\int_{a^2+a'^2+b^2+b'^2= \langle k \rangle^2(m-y)}^{\infty}  \frac{e^{-(a^2+a'^2+b^2+b'^2)}}{\pi^2} da db da'db'  dy\\
&=  \int_{y=0}^{m} P_k(y)\int_{r_1^2+r_2^2= \langle k \rangle^2(m-y)}^{\infty} 4r_1r_2e^{-(r_1^2+r_2^2)}dr_1 dr_2  dy\\
&=  \int_{y=0}^{m} P_k(y) \int_{\theta = 0}^{\pi/2} \int_{r = \langle k \rangle \sqrt{m-y}}^{\infty} 4r^2\cos(\theta)\sin(\theta) e^{-r^2} rdrd\theta dy\\ 
&=  \int_{y=0}^{m} P_k(y) \int_{r = \langle k \rangle \sqrt{m-y}}^{\infty} 2r^3 e^{-r^2} dr dy\\ 
&=  \int_{y=0}^{m} P_k(y) |_{r =  \langle k \rangle \sqrt{m-y}}^{\infty}  -(r^2+1)e^{-r^2} dr dy\\
&= \int_{y=0}^{m} P_k(y)(1+ \langle k \rangle^2(m-y)e^{-\langle k \rangle^2(m-y)}dy.\\
\end{split}
\end{equation}
Applying a change of variables and Lemma \ref{605} yields
\begin{equation}
\begin{split}
\mu(\mathcal{A}^k_m  \backslash A_m)  &= \int_{y=0}^{m} P_k(y)(1+ \langle k \rangle^2(m-y)e^{-\langle k \rangle^2(m-y)}dy\\
&= \int_{0}^m P_k(m-y)(1+ y\langle k \rangle^2e^{-y\langle k \rangle^2}dy\\
&= \frac{1}{\langle k \rangle^2} \int_{0}^{m\langle k \rangle^2} P_k(m-\frac{y}{\langle k \rangle^2})(1 + y)e^{-y}dy\\
&\le \frac{\|P_k\|_{L^{\infty}(\mathbb{R})}}{\langle k \rangle^2} \int_0^{\infty} (1+y)e^{-y}dy \\ 
&\le \frac{C}{\langle k \rangle^2}.\\
\end{split}
\end{equation}

\end{proof}

We now turn to proving that $\mathrm{Cov}[G_k^j(u_ N),|g_k|^2 + |g_{-k}|^2] \ge 0$. We start with two standard probability lemmas.
\begin{lemma} \label{962}
If $X$ is a random variable with a symmetric distribution then $\mathbb{E}[Xe^X] \ge 0$. 
\end{lemma}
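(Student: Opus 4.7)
The plan is to exploit the symmetry of $X$ to pair up each realization with its negative, converting $\mathbb{E}[Xe^X]$ into the expectation of a manifestly non-negative quantity. The key observation is that since $X$ and $-X$ have the same distribution, we have $\mathbb{E}[Xe^X] = \mathbb{E}[(-X)e^{-X}]$, provided both expectations exist (if they do not, there is nothing to prove, or one must first interpret the statement under an integrability hypothesis; in our intended application $X = G_k^j(u_N)$ restricted to a set on which the exponential is bounded, so integrability is automatic).

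Averaging these two equal expressions gives
\begin{equation*}
\mathbb{E}[Xe^X] \;=\; \tfrac{1}{2}\,\mathbb{E}\!\left[ X e^X - X e^{-X} \right] \;=\; \tfrac{1}{2}\,\mathbb{E}\!\left[ X\bigl( e^X - e^{-X} \bigr) \right].
\end{equation*}
The integrand is pointwise non-negative: the function $t \mapsto e^t - e^{-t}$ is odd and strictly increasing, with the same sign as $t$, so $X(e^X - e^{-X}) \ge 0$ for every realization of $X$. Taking expectations preserves the inequality, yielding $\mathbb{E}[Xe^X] \ge 0$.

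There is essentially no obstacle here; the only subtlety is making sure the formal manipulation $\mathbb{E}[Xe^X] = \mathbb{E}[-Xe^{-X}]$ is justified, which follows from the symmetry hypothesis once we know the integral converges absolutely. In the applications that follow, $X$ will be one of the components $G_k^j(u_N)$ (or a scalar multiple thereof) integrated against $\mu$ restricted to a mass-bounded set, and Lemma \ref{959} together with the uniform bound on $|g_k|^2 + |g_{-k}|^2$ on $\tilde{\mathcal{A}}_m^k$ will supply the needed integrability, so this pairing argument applies directly.
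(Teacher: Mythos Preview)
Your proof is correct and follows essentially the same approach as the paper: use the symmetry of $X$ to write $\mathbb{E}[Xe^X] = \mathbb{E}[-Xe^{-X}]$, average the two expressions to obtain $\tfrac{1}{2}\mathbb{E}[X(e^X - e^{-X})]$, and observe that the integrand is pointwise non-negative. Your additional remarks on integrability and the intended application are fine but not part of the paper's argument.
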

\begin{proof}
Since $X$ is symmetrically distributed, it has the same distribution as $-X$:
\begin{equation}
\begin{split}
\mathbb{E}[Xe^X] &= \mathbb{E}[-Xe^{-X}]\\
&= \frac{1}{2}\mathbb{E}[Xe^X+(-X)e^{-X}]\\
&= \frac{1}{2}\mathbb{E}[X(e^X-e^{-X})]\\
\end{split}
\end{equation}
This is $\ge 0$, since $X(e^X-e^{-X}) \ge 0$ for all values of $X$.
\end{proof}

\begin{lemma} \label{9620}
Suppose $X$ and $Y$ are random variables with bounded first and second moments, such that $Y$ is continuous with distribution function $p_Y(y)$. If there exists a non-decreasing function $E(y)$ such that for all continuous $f$ we have, 
$$\mathbb{E}(Xf(Y)) = \int_{-\infty}^{\infty} p_{Y}(y)f(y)E(y) dy$$
 then $\mathrm{Cov}(X,Y) \ge 0$.   
\end{lemma}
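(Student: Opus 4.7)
The plan is to reduce the inequality to the classical coupling fact that $\mathrm{Cov}(Z, \varphi(Z)) \ge 0$ for any non-decreasing $\varphi$, after first recognizing $E(Y)$ as (essentially) the conditional expectation $\mathbb{E}[X\mid Y]$.

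First I would apply the hypothesis with the continuous test functions $f\equiv 1$ and, after a truncation argument, $f(y)=y$, to obtain
\begin{equation*}
\mathbb{E}(X) = \mathbb{E}(E(Y)) \qquad \text{and} \qquad \mathbb{E}(XY) = \mathbb{E}(YE(Y)).
\end{equation*}
Subtracting yields the key reduction
\begin{equation*}
\mathrm{Cov}(X,Y) = \mathbb{E}(YE(Y)) - \mathbb{E}(Y)\mathbb{E}(E(Y)) = \mathrm{Cov}(Y, E(Y)).
\end{equation*}

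For the final step I would introduce an independent copy $Y'$ of $Y$ on a product probability space. The elementary identity
\begin{equation*}
2\,\mathrm{Cov}(Y, E(Y)) = \mathbb{E}\bigl[(Y-Y')(E(Y)-E(Y'))\bigr],
\end{equation*}
combined with the pointwise inequality $(y_1-y_2)(E(y_1)-E(y_2))\ge 0$ (valid for any non-decreasing $E$), immediately gives $\mathrm{Cov}(X,Y) = \mathrm{Cov}(Y, E(Y))\ge 0$.

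The one delicate point is extending the hypothesis from continuous test functions to the unbounded $f(y)=y$. I would handle this via the truncations $f_N(y) = (y\wedge N)\vee(-N)$: since $|Xf_N(Y)|\le |XY|\in L^1$ by Cauchy--Schwarz (using the finite second moments of $X$ and $Y$), dominated convergence gives $\mathbb{E}(Xf_N(Y))\to \mathbb{E}(XY)$. The analogous DCT on the right-hand side requires $E(Y)\in L^2$, which I would establish by first testing the hypothesis against continuous approximations of $\mathrm{sgn}(E(y))\,1_{[-n,n]}(y)$ to produce the a priori bound $\|E(Y)\|_{L^1}\le \|X\|_{L^1}$, extending the identity $\mathbb{E}(Xf(Y))=\mathbb{E}(E(Y)f(Y))$ to bounded measurable $f$ by a standard density argument, thereby identifying $E(Y)=\mathbb{E}[X\mid Y]$ almost surely, and finally invoking Jensen to get $\|E(Y)\|_{L^2}\le \|X\|_{L^2}<\infty$. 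This integrability verification is the only real obstacle; the covariance computation and the coupling step are entirely mechanical.
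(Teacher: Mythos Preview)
Your proposal is correct and follows essentially the same route as the paper: both arguments rest on the pointwise inequality $(E(y_1)-E(y_2))(y_1-y_2)\ge 0$ integrated against the product density $p_Y(y_1)p_Y(y_2)$, which is precisely your independent-copy identity $2\,\mathrm{Cov}(Y,E(Y))=\mathbb{E}[(Y-Y')(E(Y)-E(Y'))]$. The paper simply expands the double integral directly into $2\,\mathrm{Cov}(X,Y)$ without isolating the intermediate reduction $\mathrm{Cov}(X,Y)=\mathrm{Cov}(Y,E(Y))$, and it applies the hypothesis to $f(y)=y$ without the truncation/integrability justification you supply; in that sense your write-up is slightly more careful.
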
 Observe that $E(y)$ plays the role that $\mathbb{E}(X|Y=y)$ would play were $Y$ a discrete random variable, however, since $Y$ is continuous we cannot take  $\mathbb{E}(X|Y=y)$, since $\{Y=y\}$ is a set of measure $0$.
\begin{proof}
By the monotone property of $E(y)$, we know that 
$$(E(y_1) - E(y_2))(y_1-y_2) \ge 0$$
for all $y_1,y_2 \in \mathbb{R}$.

We have 
\begin{equation*}
\begin{split}
0 & \le \int_{y_1}\int_{y_2} p_Y(y_1)p_Y(y_2)(E(y_1) - E(y_2))(y_1-y_2)dy_2dy_1\\
&= \int_{y_1}\int_{y_2} p_Y(y_1)p_Y(y_2)\left( y_1E(y_1)  + y_2 E(y_2)\right) dy_2dy_1\\
&- \int_{y_1}\int_{y_2} p_Y(y_1)p_Y(y_2)\left( y_2E(y_1)  + y_1 E(y_2)\right) dy_2dy_1\\
&= \int_{y_1}y_1p_Y(y_1)E(y_1) dy_1 \int_{y_2}p_Y(y_2)dy_2 + \int_{y_1}p_Y(y_1)dy_1\int_{y_2}y_2p_Y(y_2) E(y_2)dy_2\\
 &- \int_{y_1}p_Y(y_1)E(y_1) dy_1 \int_{y_2}y_2p_Y(y_2)dy_2 - 
\int_{y_1}y_1p_Y(y_1)dy_1 \int_{y_2}p_Y(y_2) E(y_2)dy_2 \\
&= \mathbb{E}(XY) \cdot 1 + 1 \cdot \mathbb{E}(XY) - \mathbb{E}(X)\mathbb{E}(Y) - \mathbb{E}(Y)\mathbb{E}(X)\\
&= 2\mathrm{Cov}(X,Y). \\
\end{split}
\end{equation*}

\end{proof}

We will use this lemma when we prove our covariance result.

\begin{proposition} \label{963}
For any $N \ge 0$, $k\ge 0$, $p \ge 1$, $0 \le j \le 4$, $\mathrm{Cov}[e^{pG_k^j(u_N)}, |g_k|^2 + |g_{-k}|^2] \ge 0$ on the set $\mathcal{A}_m^k$ and over any subset consisting of a fixed set of values of $|g_k|^2+|g_{-k}|^2$.
\end{proposition}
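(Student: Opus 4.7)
The plan is to reduce to a covariance involving $\cosh(pG_k^j)$ by a sign-flip symmetry, and then apply Lemma~\ref{9620} after a scaling identity. The crucial symmetry is $g_m \mapsto g_{-m}$ for every $m\in\mathbb{Z}$: it is measure preserving under $\mu$, preserves $r := |g_k|^2+|g_{-k}|^2$ (it only permutes $|g_k|^2$ with $|g_{-k}|^2$), preserves the set $\mathcal{A}_m^k$ (which depends only on $\sum_{|n|\ne k}|g_n|^2/\langle n\rangle^2$) together with any further restriction of the form $\{r \in S\}$; and, by the same computation that was used to give each $G_k^j$ a symmetric distribution just before Proposition~\ref{963}, it sends $G_k^j\mapsto -G_k^j$.

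Using this symmetry, $(G_k^j,r)$ has the same joint distribution as $(-G_k^j,r)$ on $\mathcal{A}_m^k$, so $\mathrm{Cov}(e^{pG_k^j},r)=\mathrm{Cov}(e^{-pG_k^j},r)$ and averaging gives
\[
\mathrm{Cov}\bigl(e^{pG_k^j}, r\bigr) \;=\; \mathrm{Cov}\bigl(\cosh(pG_k^j), r\bigr).
\]
This reduces the proposition to showing the latter covariance is non-negative, both on $\mathcal{A}_m^k$ and on its intersection with any subset $\{r\in S\}$.

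Next I would scale out the radius: writing $g_k=\sqrt{r}\,\alpha$, $g_{-k}=\sqrt{r}\,\beta$ with $(\alpha,\beta)$ uniform on $S^3\subset\mathbb{C}^2$ and independent of $r$ and of $\{g_n\}_{|n|\ne k}$, and using that $G_k^j$ is a homogeneous polynomial of total degree $j$ in $\{g_{\pm k},\overline{g_{\pm k}}\}$, one has $G_k^j = r^{j/2}\,Z$ where $Z = Z(\alpha,\beta;\{g_n\}_{|n|\ne k})$ has a joint distribution with the remaining $g_n$'s that does not depend on $r$. Consequently
\[
\mathbb{E}\bigl[\cosh(pG_k^j)\bigm| r=y\bigr] \;=\; \mathbb{E}\bigl[\cosh\bigl(p\,y^{j/2}\,Z\bigr)\bigr],
\]
and the right-hand side is a non-decreasing function of $y\ge 0$, since for every fixed $z$ the map $y\mapsto \cosh(p y^{j/2}z)$ is non-decreasing by the evenness of $\cosh$ and its monotonicity on $[0,\infty)$ (the case $j=0$ is trivial, as then $\cosh(pG_k^j)$ is independent of $r$ and the covariance vanishes).

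Lemma~\ref{9620} now applies with $X=\cosh(pG_k^j)$, $Y=r$ and $E(y)$ equal to the displayed conditional expectation, yielding $\mathrm{Cov}(\cosh(pG_k^j),r)\ge 0$ on $\mathcal{A}_m^k$ and hence the required inequality. The subset statement is handled in the same way: the symmetry $g_m\mapsto g_{-m}$ also preserves every $\{r\in S\}$, and the conditional law of $Z$ given $r$ is unchanged by restricting to such a set, so $E(y)$ remains non-decreasing on the support of the restricted distribution of $r$. The only substantive step is the symmetrization that replaces the exponential by its even part; everything else follows from the homogeneity of $G_k^j$ in $(g_{\pm k})$ and the rotational invariance of the complex Gaussian distribution of $(g_k,g_{-k})$.
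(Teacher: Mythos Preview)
Your proof is correct and follows the same overall skeleton as the paper's: decompose $(g_k,g_{-k})$ into a radius $r$ and an angular part on the sphere, show that the conditional expectation of the exponential is non-decreasing in $r$, and conclude via Lemma~\ref{9620}. The difference lies in how you establish the monotonicity step. The paper differentiates $\int e^{pG_k^j}\,d(\mu'\times\sigma^3)$ in $r$, obtains $\int j r^{j-1}\,pG_k^j\,e^{pG_k^j}$, and appeals to Lemma~\ref{962} (that $\mathbb{E}[Xe^X]\ge 0$ for symmetric $X$). You instead invoke the global sign-flip $g_m\mapsto g_{-m}$ at the outset to replace $e^{pG_k^j}$ by $\cosh(pG_k^j)$, after which the monotonicity is pointwise and no differentiation is needed. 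These are two packagings of the same symmetry: Lemma~\ref{962} is proved precisely by the averaging $\mathbb{E}[Xe^X]=\tfrac12\mathbb{E}[X(e^X-e^{-X})]$, which is your $\cosh$ trick applied at the derivative level rather than at the level of the function. Your version is slightly slicker in that it bypasses Lemma~\ref{962} entirely, while the paper's version keeps the analysis local to the $r$-variable. One small omission: you should note, as the paper does, that the $k=0$ case uses $S^1$ rather than $S^3$, but the argument is otherwise identical.
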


\begin{proof}

We assume $k \ge 1$, since the $k=0$ case is simpler. Let $r^2 = |g_k|^2 + |g_{-k}|^2$ and let $r(\tilde{a}_k + i\tilde{b}_k) = g_k$, $r(\tilde{a}_{-k} + i\tilde{b}_{-k}) = g_{-k}$. This decomposes $g_k$ and $g_{-k}$ into a radius $r$ and an element of $\mathbb{S}^3$. Thus the set where $r^2 = |g_k|^2 + |g_{-k}|^2$ is equivalent to the sphere $\mathbb{S}^3$ as the solution set to the equation $$|\tilde{a}_k|^2 + |\tilde{b}_k|^2 + |\tilde{a}_{-k}|^2 + |\tilde{b}_{-k}|^2 = 1.$$ By change of variables, the component of the measure $\mu$ that corresponds to $g_k$ and $g_{-k}$ is equal to $2r^3e^{-r^2}drd\sigma^3$, where  $\sigma^3$ denotes the normalized surface measure on $\mathbb{S}^3$. Therefore we can write the measure $d\mu$ as the product $2r^3e^{-r^2}dr \times d\sigma^3 \times  d\mu'$ for $\mu'$ that corresponds to the distribution of $P_{\mathbb{Z}/\{k,-k\}} u = \sum_{|n| \ne k} \frac{g_n}{\langle n \rangle}e^{inx}$. 

Therefore for each value of $r>0$, we have 
\begin{equation*}
\begin{split}
\mathbb{E}_{\mu}[e^{pG_k^j(u_N)} &\cdot f(r)| u \in \mathcal{A}^k_m] \\
&= \int_{\mathcal{A}^k_m} e^{pG_k^j(u_N)} \cdot f(r) d\mu\\
&= \int_{0}^{\infty} \int_{\mathbb{S}^3} \int_{\mathcal{A}_m^k}f(r) e^{pG_k^j(u_N(P_{\mathbb{Z}/\{k,-k\}} u,r,\theta))} d\mu'(P_{\mathbb{Z}/\{k,-k\}} u) d\sigma^3(\theta) 2r^3e^{-r^2}dr \\
&= \int_{0}^{\infty}f(r) 2r^3e^{-r^2}\left( \int_{\mathcal{A}_m^k \times \mathbb{S}^3}e^{pG_k^j(u_N(P_{\mathbb{Z}/\{k,-k\}} u,r,\theta))}  d(\mu' \times \sigma^3)(P_{\mathbb{Z}/\{k,-k\}} u, \theta) \right) dr \\
\end{split}
\end{equation*}

We will show the function $$\int_{\mathcal{A}_m^k \times \mathbb{S}^3}e^{pG_k^j(u_N(P_{\mathbb{Z}/\{k,-k\}} u,r,\theta))}  d(\mu' \times \sigma^3)(P_{\mathbb{Z}/\{k,-k\}} u, \theta)$$ is a non-decreasing function of $r$ for $r > 0$, which will allow us to apply Lemma \ref{9620}. 

For each value of $r>0$, we have 
\begin{equation}
\begin{split}
\frac{\partial}{\partial r} \int_{\mathcal{A}_m^k \times \mathbb{S}^3}e^{pG_k^j(u_N)}  d(\mu' \times \sigma^3) 
&= \int_{\mathcal{A}_m^k \times \mathbb{S}^3} \frac{\partial}{\partial r}[pG_k^j(u_N)] e^{pG_k^j(u_N)}  d(\mu' \times \sigma^3) \\
&= \int_{\mathcal{A}_m^k \times \mathbb{S}^3} jr^{j-1}[pG_k^j(u_N)] e^{pG_k^j(u_N)}  d(\mu' \times \sigma^3), \\
\end{split}
\end{equation}
since each term of $G_k^j(u_N)$ is proportional to a $j$th power of $g_k,g_{-k}$ and thus proportional to $r^j$. For $j=0$ this expectation is clearly $0$, and by Lemma \ref{962} this expectation is non-negative for $j \ge 1$. Therefore the $r$ derivative of $$ \int_{\mathcal{A}_m^k \times \mathbb{S}^3}e^{pG_k^j(u_N)}  d(\mu' \times \sigma^3)$$ is non-negative, implying a non-decreasing function. 

In the $k=0$ case we can repeat the same argument, taking the integral  over $\mathbb{S}^1$. Combined with Lemma \ref{9620}, this implies that $e^{pG_k^j(u_N)}$ and $|g_k|^2 + |g_{-k}|^2$ have non-negative covariance over any set of values of $r^2 =  |g_k|^2+|g_{-k}|^2$ within the set $\mathcal{A}_m^k$, such as $\mathcal{A}_m^k$ itself or $\tilde{\mathcal{A}_m^k}$.
\end{proof}

We have the following result:
\begin{proposition} \label{964}
For each $m < \frac{1}{2}m_{96p}$ and $k \ge 0$, we have 
\begin{equation}
\frac{\int_{A_m}   (2-|g_k|^2-|g_{-k}|^2)e^{pG_k^j(u_N)} d\mu }{\int_{A_m} 2-|g_k|^2-|g_{-k}|^2  d\mu} \le \frac{C(p)e^m\langle k \rangle^{1/4}}{\mu(A_m)m^2\int_{m/2}^{m} P_k\left( m-\frac{y}{\langle k \rangle^2}\right)  dy}.
\end{equation}
\end{proposition}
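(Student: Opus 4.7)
The plan is to apply the covariance inequality of Proposition \ref{963} on the slightly enlarged set $\tilde{\mathcal{A}}_m^k \supset A_m$, where the constraint on $R := |g_k|^2+|g_{-k}|^2$ factors from the constraint on the remaining Fourier coefficients, and to absorb the extra mass using $\mu(\tilde{\mathcal{A}}_m^k\setminus A_m)\lesssim \langle k\rangle^{-2}$ from Lemma \ref{981}. Writing $I$ for the numerator and $D$ for the denominator on the left-hand side, I decompose $I = I_{\tilde{\mathcal{A}}} - I_{\tilde{\mathcal{A}}\setminus A_m}$ with $I_{\tilde{\mathcal{A}}} := \int_{\tilde{\mathcal{A}}_m^k}(2-R)e^{pG_k^j(u_N)}\,d\mu$ and the analogous definition over $\tilde{\mathcal{A}}_m^k\setminus A_m$.

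For $I_{\tilde{\mathcal{A}}}$, since $\tilde{\mathcal{A}}_m^k = \mathcal{A}_m^k\cap\{R\in [0,\langle k\rangle^2 m)\}$ is precisely the type of set to which Proposition \ref{963} applies, the non-negative covariance there rearranges (after multiplying by $\mu(\tilde{\mathcal{A}}_m^k)$) to
\begin{equation*}
I_{\tilde{\mathcal{A}}} \le \frac{1}{\mu(\tilde{\mathcal{A}}_m^k)}\Bigl(\int_{\tilde{\mathcal{A}}_m^k}(2-R)\,d\mu\Bigr)\Bigl(\int_{\tilde{\mathcal{A}}_m^k}e^{pG_k^j(u_N)}\,d\mu\Bigr).
\end{equation*}
The last factor is $\le\int_{A_{2m}}e^{pG_k^j(u_N)}\,d\mu\le C(p)$ by Lemma \ref{959}, since $\tilde{\mathcal{A}}_m^k\subset A_{2m}$ and $2m<m_{6p}$. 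The key step is the comparison $\frac{1}{\mu(\tilde{\mathcal{A}}_m^k)}\int_{\tilde{\mathcal{A}}_m^k}(2-R)\,d\mu \le D/\mu(A_m)$, which I would prove by noting that on $\tilde{\mathcal{A}}_m^k$ (by the independence of $R$ and $Y := \sum_{|n|\ne k}|g_n|^2/\langle n\rangle^2$ under $\mu$) the $R$-distribution is the unconditional one truncated to $[0,\langle k\rangle^2 m)$, whereas on $A_m\cap\{Y=y\}$ it is truncated to the smaller range $[0,\langle k\rangle^2(m-y)]$; since $c\mapsto\mathbb{E}[R\mid R\le c]$ is monotone increasing for the $\mathrm{Gamma}(2,1)$-distributed $R$, this yields $\mathbb{E}_{A_m}[R]\le\mathbb{E}_{\tilde{\mathcal{A}}_m^k}[R]$. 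Hence $I_{\tilde{\mathcal{A}}}\le D\,C(p)/\mu(A_m)$.

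For the error term I apply H\"older with exponents $(8/7,16,16)$:
\begin{equation*}
|I_{\tilde{\mathcal{A}}\setminus A_m}|\le \mu(\tilde{\mathcal{A}}_m^k\setminus A_m)^{7/8}\,\|2-R\|_{L^{16}(\mu)}\,\Bigl(\int_{A_{2m}}e^{16pG_k^j(u_N)}\,d\mu\Bigr)^{1/16}.
\end{equation*}
Lemma \ref{981} gives the first factor $\le C\langle k\rangle^{-7/4}$; the middle factor is uniformly bounded in $k$ because the $g_n$'s are i.i.d.\ Gaussian; the third factor is $\le C(p)$ by Lemma \ref{959} applied with exponent $16p$, requiring $2m<m_{96p}$, i.e.\ exactly the hypothesis $m<\tfrac12 m_{96p}$. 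The exponent $8/7$ is chosen so that, after dividing by the lower bound $D\ge \tfrac{m^2 e^{-m}}{4\langle k\rangle^2}\int_{m/2}^{m}P_k(m-y/\langle k\rangle^2)\,dy$ from Lemma \ref{709}, the factor $\langle k\rangle^{-7/4}$ becomes the required $\langle k\rangle^{1/4}$ in the numerator. The main-piece contribution $C(p)/\mu(A_m)$ is absorbed into the target bound because $m^2\int P_k(m-y/\langle k\rangle^2)\,dy \lesssim m^3\lesssim e^m\langle k\rangle^{1/4}$ for small $m$.

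The principal obstacle is the monotonicity comparison $\mathbb{E}_{\tilde{\mathcal{A}}_m^k}[R]\ge\mathbb{E}_{A_m}[R]$: naive bounds only give $D'_{\tilde{\mathcal{A}}}/\mu(\tilde{\mathcal{A}}_m^k)\le 2$, which would produce a spurious $\langle k\rangle^2$ and miss the target by four powers of $\langle k\rangle$. The comparison works because the product structure $\mu=\mu_{g_k,g_{-k}}\otimes\mu'$ makes $R$ and $Y$ independent, allowing the averaging over $Y$ to be dominated by the uniform upper bound at $\langle k\rangle^2 m$.
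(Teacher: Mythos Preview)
Your argument is correct and follows the same overall architecture as the paper: pass to the enlarged set $\tilde{\mathcal{A}}_m^k$, apply the covariance inequality of Proposition~\ref{963} there, and control the discrepancy $\tilde{\mathcal{A}}_m^k\setminus A_m$ by H\"older together with Lemma~\ref{981}. The one substantive difference is how you handle the factor $\int_{\tilde{\mathcal{A}}_m^k}(2-R)\,d\mu$ after the covariance step. The paper simply writes this as $D+\int_{\tilde{\mathcal{A}}_m^k\setminus A_m}(2-R)\,d\mu$ and bounds the second piece by another H\"older application, producing \emph{two} error terms of size $\mu(\tilde{\mathcal{A}}_m^k\setminus A_m)^{7/8}\lesssim\langle k\rangle^{-7/4}$. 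You instead prove the direct comparison $\mathbb{E}_{\tilde{\mathcal{A}}_m^k}[2-R]\le D/\mu(A_m)$ via the monotonicity of $c\mapsto\mathbb{E}[R\mid R\le c]$ for the Gamma-distributed $R$, which eliminates one of the two error terms entirely. Your monotonicity argument is sound: writing $c_y=\langle k\rangle^2(m-y)$, one has $\mathbb{E}_{A_m}[R]=\int_0^m w(y)\,\mathbb{E}[R\mid R\le c_y]\,dy$ for probability weights $w(y)=P_k(y)\,\mathbb{P}(R\le c_y)/\mu(A_m)$, and each term in this convex combination is bounded by $\mathbb{E}[R\mid R\le\langle k\rangle^2 m]=\mathbb{E}_{\tilde{\mathcal{A}}_m^k}[R]$. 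This buys a slightly cleaner proof; the paper's route has the minor advantage of not invoking any property of the specific distribution of $R$ beyond finite moments.
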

If we wanted we could bound the integral by $\langle k\rangle^{\alpha}$, for any small $\alpha>0$. The power $1/4$ will be sufficient for the later argument.  
\begin{proof}
Assume $k \ge 1$. We start by applying the triangle inequality to bound the numerator  by
\begin{equation} \label{965}
\begin{split}
\int_{A_m}   (2-|g_k|^2-|g_{-k}|^2)e^{pG_k^j(u_N)} d\mu &\le \int_{\tilde{\mathcal{A}_m^k}}   (2-|g_k|^2-|g_{-k}|^2)e^{pG_k^j(u_N)} d\mu \\
&\qquad \qquad + \left| \int_{\tilde{\mathcal{A}_m^k} \backslash A_m}   (2-|g_k|^2-|g_{-k}|^2)e^{pG_k^j(u_N)} d\mu \right|.\\
\end{split}
\end{equation}

By Proposition \ref{963} we know that $|g_k|^2 + |g_{-k}|^2$ and $e^{pG_k^j(u_N)}$ have non-negative covariance on the set $\tilde{\mathcal{A}_m^k}$, so we infer that  
\begin{equation} \label{966}
\begin{split}
\int_{\tilde{\mathcal{A}_m^k}}   (2-|g_k|^2-&|g_{-k}|^2)e^{pG_k^j(u_N)} d\mu \\
 &\le \int_{\tilde{\mathcal{A}_m^k}}   (2-|g_k|^2-|g_{-k}|^2)d\mu  \cdot \frac{1}{\mu(\tilde{\mathcal{A}_m^k})}\int_{\mathcal{A}_m^k} e^{pG_k^j(u_N)} d\mu \\
 &\le \frac{C(p)}{\mu(\tilde{\mathcal{A}^k_m})}\left( \left|\int_{A_m}   (2-|g_k|^2-|g_{-k}|^2) d\mu   \right|+ \left|\int_{\tilde{\mathcal{A}_m^k} \backslash A_m}   (2-|g_k|^2-|g_{-k}|^2)d\mu\right| \right) .\\
\end{split}
\end{equation}
Combining equations \eqref{965} and \eqref{966} and noting that $A_m \subset \tilde{\mathcal{A}_m^k}$, we have 
\begin{equation} \label{967}
\begin{split}
\frac{\int_{A_m}   (2-|g_k|^2-|g_{-k}|^2)e^{pG_k^j(u_N)} d\mu }{\int_{A_m} 2-|g_k|^2-|g_{-k}|^2  d\mu}
&\le \frac{C(p)}{\mu(A_m)}\left(1 + \frac{\left|\int_{\tilde{\mathcal{A}_m^k} \backslash A_m}   (2-|g_k|^2-|g_{-k}|^2)d\mu \right|}{\int_{A_m} 2-|g_k|^2-|g_{-k}|^2  d\mu}\right) \\
&\qquad \quad + \frac{| \int_{\tilde{\mathcal{A}_m^k} \backslash A_m}   (2-|g_k|^2-|g_{-k}|^2)e^{pG_k^j(u_N)} d\mu |}{\int_{A_m} 2-|g_k|^2-|g_{-k}|^2  d\mu}. \\
\end{split}
\end{equation}
By H\"older's inequality, we have
\begin{equation*} \label{968}
\begin{split}
\left| \int_{\tilde{\mathcal{A}_m^k} \backslash A_m}   (2-|g_k|^2-|g_{-k}|^2)d\mu \right|  &\le \mu(\tilde{\mathcal{A}_m^k} \backslash A_m)^{7/8} \left(\int_{A_{2m}} (2-|g_k|^2-|g_{-k}|^2)^{8}d\mu\right)^{1/8}, \\
\end{split}
\end{equation*}
as well as
\begin{equation*} \label{968}
\begin{split}
\left|\int_{\tilde{\mathcal{A}_m^k} \backslash A_m}   (2-|g_k|^2-|g_{-k}|^2)e^{pG_k^j(u_N)} d\mu \right| &\le  \mu(\tilde{\mathcal{A}_m^k} \backslash A_m)^{7/8}\cdot \left( \int_{A_{2m}} e^{16pG_k^j(u_N)} d\mu  \right)^{1/16}\\
&\qquad \quad \cdot \left(\int_{A_{2m}} (2-|g_k|^2-|g_{-k}|^2)^{16}d\mu\right)^{1/16} . \\
\end{split}
\end{equation*}
Since $g_k$, $g_{-k}$ are Gaussian the expectation of each raised to a finite power is bounded. Applying Lemma \ref{709}, Lemma \ref{959}  and Lemma \ref{981}, we have
\begin{equation} \label{969}
\begin{split}
\frac{\int_{A_m}   (2-|g_k|^2-|g_{-k}|^2)e^{pG_k^j(u_N)} d\mu }{\int_{A_m} 2-|g_k|^2-|g_{-k}|^2  d\mu} &\le \frac{C(p)}{\mu(A_m)}\left( 1 + \frac{\mu(\tilde{\mathcal{A}_m^k} \backslash A_m)^{7/8}}{\int_{A_m} 2-|g_k|^2-|g_{-k}|^2  d\mu}\right)  \\
&\le  \frac{C(p)e^m\langle k \rangle^{1/4}}{\mu(A_m)m^2\int_{m/2}^{m} P_k\left( m-\frac{y}{\langle k \rangle^2}\right)  dy}.
\end{split}
\end{equation}

This completes the proof. In the $k=0$ case we merely need to bound the integral by a constant, which can be accomplished by the same argument.  
\end{proof}

All that is left is to bound $\frac{\partial}{ \partial s}|_{s=1}G_k^j(T_s^k(u_N)) e^{pG_k^j(u_N)}$. For each value of $j$ this is bounded by a constant, but for higher values of $j$ we can bound it by a power of $\frac{1}{\langle k \rangle}$. 
\begin{lemma} \label{970}

For $j \in \{1,2\}$ and any $m < m_{12p}$
\begin{equation*}
\int_{A_m} \frac{\partial}{ \partial s}|_{s=1}G_k^j(T_s^k(u_N)) e^{pG_k^j(u_N)} d\mu  \le C(p).
\end{equation*}
\end{lemma}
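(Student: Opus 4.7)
The plan is to use the key observation that the scaling $T_s^k$ acts by multiplying each Fourier coefficient at frequency $\pm k$ by $s$, and in $G_k^j(u_N)$ each summand has exactly $j$ such coefficients. Consequently $G_k^j(T_s^k u_N) = s^j G_k^j(u_N)$, and hence
\begin{equation*}
\frac{\partial}{\partial s}\Big|_{s=1} G_k^j(T_s^k(u_N)) = j\, G_k^j(u_N).
\end{equation*}
Since $j \in \{1,2\}$ contributes at most a factor of $2$, the lemma reduces to showing $\int_{A_m} |G_k^j(u_N)|\, e^{pG_k^j(u_N)}\, d\mu \le C(p)$, uniformly in $N$ and $k$.

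The main step is an elementary pointwise inequality of the form
\begin{equation*}
|x|\, e^{px} \le \tfrac{1}{p}\, e^{2px} + \tfrac{1}{ep}\qquad\text{for all }x\in\mathbb{R},
\end{equation*}
which follows by maximizing $-x e^{px}$ on $x<0$ and using $\alpha x \le e^{\alpha x}$ on $x\ge 0$ with $\alpha=p$. Substituting $x = G_k^j(u_N)$ and integrating over $A_m$ with respect to $\mu$ yields
\begin{equation*}
\int_{A_m} |G_k^j(u_N)|\, e^{pG_k^j(u_N)}\, d\mu \;\le\; \tfrac{1}{p}\int_{A_m} e^{2pG_k^j(u_N)}\, d\mu \;+\; \tfrac{\mu(A_m)}{ep}.
\end{equation*}

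By Lemma~\ref{959} with exponent $2p$ in place of $p$, the exponential integral is bounded by $C(2p)$ provided $m < m_{6\cdot 2p} = m_{12p}$, which is precisely the hypothesis of the lemma; the remaining term is trivially bounded by $\tfrac{1}{ep}$. Multiplying by the prefactor $j\le 2$ produces a constant $C(p)$ independent of both $N$ and $k$, as desired. There is no serious obstacle here beyond identifying the correct elementary inequality: the scaling identity makes the $s$-derivative linear in $G_k^j(u_N)$, and absorbing the extra linear factor into the exponential at the cost of doubling the exponent is exactly what the hypothesis $m<m_{12p}$ is tuned to allow via Lemma~\ref{959}.
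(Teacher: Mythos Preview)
Your proof is correct. Both you and the paper begin with the scaling identity $\partial_s|_{s=1}G_k^j(T_s^k u_N)=jG_k^j(u_N)$ and end by invoking Lemma~\ref{959} at exponent $2p$, which is exactly where the hypothesis $m<m_{12p}$ enters. The only difference is how the linear factor $|G_k^j(u_N)|$ is absorbed: the paper applies Cauchy--Schwarz to split $\int_{A_m}|G_k^j|e^{pG_k^j}\,d\mu\le \|G_k^j\|_{L^2(\mu)}\|e^{pG_k^j}\|_{L^2(A_m,d\mu)}$ and then cites Proposition~\ref{24} for the first factor, whereas you use the pointwise bound $|x|e^{px}\le \tfrac{1}{p}e^{2px}+\tfrac{1}{ep}$ and avoid the separate $L^2$ estimate altogether. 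Your route is slightly more self-contained; the paper's route has the advantage that the same H\"older splitting is reused verbatim in Lemma~\ref{971}, where the smallness of $\|G_k^j\|_{L^2(\mu)}$ for $j=3,4$ is what produces the crucial $\langle k\rangle^{-3}$ decay.
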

\begin{proof}
First observe that since each term of $G_k^j(T_s^k(u_N))$ is proportional to $s^j$, taking the derivative at $s=1$ multiplies by a factor of $j$. By H\"older's inequality we have
\begin{equation}
\int_{A_m} \frac{\partial}{ \partial s}|_{s=1}G_k^j(T_s^k(u_N)) e^{pG_k^j(u_N)} d\mu  \le j\| G_k^j(u_N)  \|_{L^2(\mu)}\|  e^{pG_k^j(u_N)} \|_{L^2(A_m,d\mu)}.
\end{equation}
By Proposition \ref{24}, the first term is bounded by a constant, and by Lemma \ref{959} the second term is bounded by $C(p)$.  
\end{proof}

\begin{lemma} \label{971}

For $j \in \{0,3,4\}$ and any $m < m_{12p}$
\begin{equation*}
\int_{A_m} \frac{\partial}{ \partial s}|_{s=1}G_k^j(T_s^k(u_N)) e^{pG_k^j(u_N)} d\mu  \le \frac{C(p)}{\langle k \rangle^3}.
\end{equation*}
\end{lemma}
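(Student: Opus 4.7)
The plan is to bound this integral via Cauchy-Schwarz, exploiting the explicit $\langle k\rangle^{-3}$ decay of the coefficients in $G_k^j(u_N)$ for $j \in \{3,4\}$, while the $j=0$ case is trivial. For $j=0$, observe that $G_k^0(u_N)$ depends only on Fourier coefficients $g_n$ with $|n| \neq k$, so $T_s^k$ acts as the identity on it; hence $\partial_s|_{s=1} G_k^0(T_s^k(u_N)) = 0$ and the integral vanishes.

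For $j \in \{3,4\}$, every term contributing to $G_k^j$ contains exactly $j$ Fourier coefficients at frequencies $\pm k$, so $T_s^k$ multiplies each such term by $s^j$, giving $\partial_s|_{s=1} G_k^j(T_s^k(u_N)) = j\, G_k^j(u_N)$. Applying Cauchy-Schwarz then yields
\begin{equation*}
\int_{A_m} j\, G_k^j(u_N)\, e^{p G_k^j(u_N)}\, d\mu \;\leq\; j\, \|G_k^j(u_N)\|_{L^2(\mu)} \left( \int_{A_m} e^{2p G_k^j(u_N)}\, d\mu \right)^{1/2}.
\end{equation*}
The hypothesis $m < m_{12p}$ exactly matches the mass constraint needed to apply Lemma \ref{959} with exponent $2p$, which bounds the second factor by $\sqrt{C(p)}$.

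It remains to show $\|G_k^j(u_N)\|_{L^2(\mu)} \lesssim \langle k\rangle^{-3}$ for $j \in \{3,4\}$. For $j=4$, the explicit formula $G_k^4(u_N) = \tfrac{3k(|g_k|^4 - |g_{-k}|^4)}{2\langle k\rangle^4}$ (or $0$ if $k > N$) combined with finiteness of Gaussian moments gives $\|G_k^4\|_{L^2}^2 \lesssim k^2/\langle k\rangle^8 \lesssim \langle k\rangle^{-6}$ directly. For $j=3$, the key observation is a short case analysis: the constraint $n_1+n_2 = n_3+n_4$ together with the requirement that exactly three of the $n_i$'s lie in $\{k,-k\}$ forces the remaining coordinate into $\{3k,-3k\}$, because any other choice either collapses the fourth index back into $\{k,-k\}$ (yielding $j=4$) or leaves two coordinates outside (yielding $j \leq 2$). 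Enumerating which of the four positions is the "odd" one produces only $8$ admissible tuples, each contributing a coefficient of magnitude $|n_1+n_2|/\prod \langle n_i\rangle = O(k)/(\langle k\rangle^3 \langle 3k\rangle) = O(\langle k\rangle^{-3})$ times a degree-four monomial in independent Gaussians with bounded $L^2$-moment. Summing the $8$ contributions gives $\|G_k^3\|_{L^2(\mu)} \lesssim \langle k\rangle^{-3}$, completing the argument.

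The only genuinely computational step is the tuple enumeration for $j=3$. Once the $\langle k\rangle^{-3}$ decay is extracted at the coefficient level, no Wiener chaos machinery or delicate cancellation is needed, in contrast to Lemma \ref{970}, where the weaker bound for $j \in \{1,2\}$ already required the full $L^2$ estimate from Proposition \ref{24}.
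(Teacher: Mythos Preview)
Your proof is correct and follows essentially the same approach as the paper's own proof: trivial $j=0$ case, then Cauchy--Schwarz (the paper calls it H\"older) together with Lemma~\ref{959} for the exponential factor, and the explicit $\langle k\rangle^{-3}$ decay of $\|G_k^j\|_{L^2(\mu)}$ coming from the finitely many admissible index tuples in the $j=3,4$ cases. Your enumeration for $j=3$ (eight tuples, coming from the multisets $\{k,k,-k,3k\}$ and $\{-k,-k,k,-3k\}$) is exactly what the paper uses, just spelled out a bit more carefully.
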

\begin{proof}
Again, the derivative multiplies by a factor of $j$. So in the $j=0$ case the integral is just $0$.  
For $j=3,4$, we apply H\"older's inequality to obtain
\begin{equation}
\int_{A_m} \frac{\partial}{ \partial s}|_{s=1}G_k^j(T_s^k(u_N)) e^{pG_k^j(u_N)} d\mu  \le j\| G_k^j(u_N)  \|_{L^2(\mu)}\|  e^{pG_k^j(u_N)} \|_{L^2(A_m,d\mu)}.
\end{equation} 
By Lemma \ref{959} the second term is bounded by $C(p)$, however we can achieve a much stronger bound on the first term since for $j=3,4$, $G_k^j(u_N)$ only contains finitely many terms. 

When $j=4$, $G_k^j(u_N) = \frac{3k(|g_k|^4 - |g_{-k}|^4)}{2\langle k \rangle^4}$. Squaring yields $\int_{H^{1/2-}(\mathbb{T})} |G_k^j(u_N)|^2 d\mu \le   \frac{C}{\langle k \rangle^6}$ and therefore $\| G_k^4(u_N)  \|_{L^2(\mu)} \le \frac{C}{\langle k \rangle^3}$.

When $j=3$, three of the four factors must be $g_{k}$ or $g_{-k}$, so every term $\frac{n_1g_{n_1}g_{n_2}\overline{g_{n_3}}\overline{g_{n_4}}}{\langle n_1 \rangle \langle n_2 \rangle\langle n_3 \rangle\langle n_4 \rangle}$ has indices equal to the multiset $\{n_1,n_2,n_3,n_4\} = \{k,k,-k,3k\}$ or the multiset $\{n_1,n_2,n_3,n_4\} = \{-k,-k,k,-3k\}$. So when we expand $\int_{H^{1/2-}(\mathbb{T})} |G_k^j(u_N)|^2 d\mu$ we have a sum of finitely many terms divided by $\frac{1}{\langle k \rangle^6}$. Therefore  $\| G_k^3(u_N)  \|_{L^2(\mu)} \le \frac{C}{\langle k \rangle^3}$.

\end{proof}

We now have all the tools necessary to bound $\int_{H^{1/2-}(\mathbb{T})} e^{pG_k^j(u_N)} d\nu_m^k$.

\begin{proposition} \label{975}
Consider values of $p\ge 1$, $N \ge 0$, $k \ge 0$, and $m < \frac{1}{2}m_{96p}$.

For $j =1,2$ we have 
\begin{equation*}
\int_{H^{1/2-}(\mathbb{T})} e^{pG_k^j(u_N)} d\nu_m^k  \le \frac{C(p)e^m\langle k \rangle^{2}}{\mu(A_m)m^2\int_{m/2}^{m} P_k\left( m-\frac{y}{\langle k \rangle^2}\right)  dy}.
\end{equation*}
For $j =0,3,4$ we have 
\begin{equation*}
\int_{H^{1/2-}(\mathbb{T})} e^{pG_k^j(u_N)} d\nu_m^k   \le \frac{C(p)e^m\langle k \rangle^{1/4}}{\mu(A_m)m^2\int_{m/2}^{m} P_k\left( m-\frac{y}{\langle k \rangle^2}\right)  dy}.
\end{equation*}
\end{proposition}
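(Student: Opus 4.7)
The plan is to apply Theorem \ref{960} with $F(u) = e^{pG_k^j(u_N)}$ and estimate the two pieces of the resulting numerator separately, then divide by the lower bound on the denominator supplied by Lemma \ref{709} and Lemma \ref{609}. First I would verify the hypothesis of Theorem \ref{960}: since $G_k^j(u_N)$ is a finite sum in the Gaussians $g_n$ with $|n|\le N$ and on $A_{m+\epsilon}$ we have $\sum |g_n|^2/\langle n\rangle^2 \le m+\epsilon$, the function $G_k^j(T_s^k(u_N))$ and its $s$-derivative are pointwise bounded (by constants depending on $N$, $m$, $\epsilon$), so $e^{pG_k^j}$ and $\tfrac{\partial}{\partial s}e^{pG_k^j(T_s^k(u_N))}$ are bounded on $A_{m+\epsilon}$ as required. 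This gives
\begin{equation*}
\int e^{pG_k^j(u_N)}\,d\nu_m^k
= \frac{\displaystyle\int_{A_m}(2-|g_k|^2-|g_{-k}|^2)e^{pG_k^j(u_N)}\,d\mu + \tfrac{p}{2}\int_{A_m}\tfrac{\partial}{\partial s}\big|_{s=1}G_k^j(T_s^k(u_N))\cdot e^{pG_k^j(u_N)}\,d\mu}{\displaystyle\int_{A_m}2-|g_k|^2-|g_{-k}|^2\,d\mu}.
\end{equation*}

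For the first summand in the numerator divided by the denominator, Proposition \ref{964} supplies exactly the bound $\frac{C(p)e^m\langle k\rangle^{1/4}}{\mu(A_m)m^2\int_{m/2}^m P_k(m-y/\langle k\rangle^2)\,dy}$ — this is where the positive-covariance estimate enters, via Proposition \ref{963} applied to $G_k^j$. For the second summand I would split on $j$: for $j\in\{1,2\}$ Lemma \ref{970} bounds the derivative-weighted integral by $C(p)$, while for $j\in\{0,3,4\}$ Lemma \ref{971} bounds it by $C(p)/\langle k\rangle^3$.

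To finish I would bound the denominator from below using Lemma \ref{709}:
\begin{equation*}
\int_{A_m}2-|g_k|^2-|g_{-k}|^2\,d\mu \;\gtrsim\; \frac{m^2e^{-m}}{\langle k\rangle^2}\int_{m/2}^m P_k\!\left(m-\tfrac{y}{\langle k\rangle^2}\right)dy.
\end{equation*}
Dividing the two cases through, the derivative contribution becomes at most $\frac{C(p)e^m\langle k\rangle^2}{m^2\int_{m/2}^m P_k(m-y/\langle k\rangle^2)\,dy}$ when $j\in\{1,2\}$, which dominates the $\langle k\rangle^{1/4}$ covariance contribution and yields the first claimed bound; when $j\in\{0,3,4\}$ the derivative contribution is at most $\frac{C(p)e^m}{\langle k\rangle\, m^2\int_{m/2}^m P_k(m-y/\langle k\rangle^2)\,dy}$, which is dominated by the $\langle k\rangle^{1/4}$ covariance term, giving the second claimed bound. (One absorbs the $1/\mu(A_m)$ factor appearing from Proposition \ref{964} into the numerator of the stated bound exactly as written.)

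The only genuine obstacle is bookkeeping: I must ensure the admissible mass threshold is small enough that every invocation is legal. The covariance step requires $m<\tfrac12 m_{96p}$ (the hypothesis of Proposition \ref{964}), and the derivative-step Lemmas \ref{970}–\ref{971} require $m<m_{12p}$; since $m_{96p}\le m_{12p}$ by monotonicity of $m_p$ in $p$, the single assumption $m<\tfrac12 m_{96p}$ suffices. The rest is routine combining of inequalities, with no hidden analytic difficulty, since all the hard cancellation work has already been done in Proposition \ref{963} and Proposition \ref{964}.
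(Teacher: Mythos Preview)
Your proposal is correct and follows essentially the same route as the paper: apply Theorem \ref{960} to split $\int e^{pG_k^j(u_N)}\,d\nu_m^k$ into the covariance term (handled by Proposition \ref{964}) and the derivative term (handled by Lemmas \ref{970}--\ref{971}), then divide by the lower bound on the denominator from Lemma \ref{709}. Your additional remarks verifying the hypothesis of Theorem \ref{960} and checking that the single mass threshold $m<\tfrac12 m_{96p}$ covers all invocations are details the paper leaves implicit.
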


\begin{proof}
By Theorem \ref{960},
\begin{equation*}
\int_{H^{1/2-}(\mathbb{T})} e^{pG_k^j(u_N)} d\nu_m^k   = \frac{\int_{A_m}   (2-|g_k|^2-|g_{-k}|^2)e^{pG_k^j(u_N)}d\mu}{\int_{A_m} 2-|g_k|^2-|g_{-k}|^2  d\mu }+  \frac{\int_{A_m} \frac{p}{2}\frac{\partial}{ \partial s}|_{s=1}G_k^j(T_s^k(u_N))e^{pG_k^j(u_N)}  d\mu }{\int_{A_m} 2-|g_k|^2-|g_{-k}|^2  d\mu }. 
\end{equation*}

 Proposition \ref{964} bounds the first term by
\begin{equation}
\frac{\int_{A_m}   (2-|g_k|^2-|g_{-k}|^2)e^{pG_k^j(u_N)} d\mu}{\int_{A_m} 2-|g_k|^2-|g_{-k}|^2  d\mu } \le \frac{ C(p)e^m\langle k \rangle^{1/4}}{\mu(A_m)m^2\int_{m/2}^{m} P_k\left( m-\frac{y}{\langle k \rangle^2}\right)  dy}.
\end{equation}

By Lemma \ref{970} and Lemma \ref{971} the second term is bounded by  
\begin{equation}
 \frac{ \int_{A_m} \frac{p}{2}\frac{\partial}{ \partial s}|_{s=1}G_k^j(T_s^k(u_N))e^{pG_k^j(u_N)}  d\mu }{\int_{A_m} 2-|g_k|^2-|g_{-k}|^2  d\mu } \le  \frac{C(p)e^m\langle k \rangle^{2} }{\mu(A_m)m^2\int_{m/2}^{m} P_k\left( m-\frac{y}{\langle k \rangle^2}\right)  dy},
\end{equation}
for $j =1,2$ and
\begin{equation}
 \frac{\int_{A_m} \frac{p}{2}\frac{\partial}{ \partial s}|_{s=1}G_k^j(T_s^k(u_N))e^{pG_k^j(u_N)}  d\mu }{\int_{A_m} 2-|g_k|^2-|g_{-k}|^2  d\mu } \le  \frac{C(p)e^m\langle k \rangle^{-1} }{\mu(A_m)m^2\int_{m/2}^{m} P_k\left( m-\frac{y}{\langle k \rangle^2}\right)  dy},
\end{equation}
for $j=0,3,4$. Summing yields the desired result.  
\end{proof}

We can now prove our main result.

\begin{theorem} \label{1001}
For any $p \ge 1$, $k,N \ge 0$ and $m<\frac{1}{2}m_{480p} < 1$, there exists a constant $C(p)$ depending on $p$ (but not $k,N$) such that $$\int_{H^{1/2-}(\mathbb{T})} e^{pf_N(u)} d\nu^k_{m} \le \frac{ C(p)e^m\langle k \rangle^{19/20}}{\mu(A_m)m^2\int_{m/2}^{m} P_k\left( m-\frac{y}{\langle k \rangle^2}\right)  dy}.$$
\end{theorem}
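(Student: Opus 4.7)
The strategy is to decompose $f_N(u)$ into the five pieces $G_k^0(u_N), G_k^1(u_N), \ldots, G_k^4(u_N)$ supplied by Definition \ref{961}, according to how many of the four indices in each term of $f_N$ coincide with $\pm k$, and then combine the per-piece bounds of Proposition \ref{975} via H\"older's inequality with weights chosen precisely to land at the exponent $\tfrac{19}{20}$.

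First I would write
\begin{equation*}
f_N(u) = \sum_{j=0}^{4} G_k^j(u_N), \qquad e^{pf_N(u)} = \prod_{j=0}^{4} e^{pG_k^j(u_N)},
\end{equation*}
and apply H\"older's inequality with respect to $\nu_m^k$ using the five equal exponents $5$:
\begin{equation*}
\int e^{pf_N(u)} \, d\nu_m^k \; \le \; \prod_{j=0}^{4}\left(\int e^{5p\, G_k^j(u_N)}\, d\nu_m^k\right)^{1/5}.
\end{equation*}

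Next I would invoke Proposition \ref{975} with exponent $5p$ in place of $p$; this is precisely where the mass hypothesis $m < \tfrac{1}{2}m_{480p} = \tfrac{1}{2}m_{96\cdot 5p}$ gets used. For the two middling indices $j \in \{1,2\}$ this yields the factor $\tfrac{C(5p) e^m \langle k\rangle^{2}}{D}$, while for the three extremal indices $j \in \{0,3,4\}$ it yields the better factor $\tfrac{C(5p) e^m \langle k\rangle^{1/4}}{D}$, where $D = \mu(A_m)\, m^2 \int_{m/2}^{m} P_k\!\left(m - \tfrac{y}{\langle k \rangle^2}\right)dy$ is the common denominator.

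Finally I multiply the five estimates, each raised to the power $1/5$. Both $D$ and $e^m$ appear with total exponent $\tfrac{5}{5} = 1$, the constants combine into a single $C(p) := C(5p)$, and the $\langle k \rangle$ factor acquires total exponent
\begin{equation*}
\tfrac{1}{5}\Bigl( 2 + 2 + \tfrac{1}{4} + \tfrac{1}{4} + \tfrac{1}{4}\Bigr) = \tfrac{1}{5}\cdot \tfrac{19}{4} = \tfrac{19}{20},
\end{equation*}
producing exactly the claimed bound. There is no genuine obstacle left: all the analytic heavy lifting (covariance positivity, scaling identities, Wiener chaos, and the mass-threshold control of $e^{pG_k^j}$) has already been absorbed into Proposition \ref{975}. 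The only delicate point is the choice of H\"older weights, and the symmetric choice $5$ is precisely what balances the two $j\in\{1,2\}$ contributions of $\tfrac{4}{5}$ against the three $j\in\{0,3,4\}$ contributions of $\tfrac{3}{20}$ to reach the target exponent $\tfrac{19}{20}$.
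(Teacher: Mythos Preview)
Your proposal is correct and essentially identical to the paper's own proof: both decompose $f_N = \sum_{j=0}^4 G_k^j$, apply H\"older with five equal exponents $5$, invoke Proposition \ref{975} at exponent $5p$ (which is exactly why the hypothesis $m < \tfrac{1}{2}m_{480p} = \tfrac{1}{2}m_{96\cdot 5p}$ appears), and combine the resulting powers of $\langle k\rangle$ to obtain $\tfrac{2}{5}\cdot 2 + \tfrac{3}{5}\cdot \tfrac{1}{4} = \tfrac{19}{20}$.
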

\begin{proof} 
Recall that for each $k \ge 0$ we can write $f_N(u)$ as the sum 
\begin{equation*}
f_N(u) = \sum_{j=0}^{4}G_k^j(u_N).
\end{equation*}
By H\"older's inequality and Proposition \ref{975} we have 
\begin{equation}
\begin{split}
\int_{H^{1/2-}(\mathbb{T})} e^{pf_N(u)} d\nu^k_{m} &\le  \underset{j=1,2}{\Pi}  \| e^{pG_k^j(u_N)} \|_{L^5(\nu_m^k)}\cdot \Pi_{j=0,3,4} \| e^{pG_k^j(u_N)} \|_{L^5(\nu_m^k)}\\
&\le \frac{\left(C(p)e^m\langle k \rangle^{2}\right)^{2/5} \cdot \left(C(p)e^m\langle k \rangle^{1/4}\right)^{3/5}}{\mu(A_m)m^2\int_{m/2}^{m} P_k\left( m-\frac{y}{\langle k \rangle^2}\right)  dy} \\
&\le \frac{C(p)e^m\langle k \rangle^{19/20}}{\mu(A_m)m^2\int_{m/2}^{m} P_k\left( m-\frac{y}{\langle k \rangle^2}\right)  dy}. \\
\end{split}
\end{equation}

\end{proof} 
 By Theorem \ref{823} and Corollary \ref{916} we have our $\mu_m$ bound.

\begin{theorem} \label{1005}
For fixed $p \ge 1$, $m <\frac{1}{2}m_{480p}$ and each $N \ge 0$ we have $$\int_{H^{1/2-}(\mathbb{T})}  e^{pf_N(u)} d\mu_m \le  C(m,p),$$ and for sufficiently small $\epsilon >0$, $$\int_{H^{1/2-}(\mathbb{T})}  e^{pf_N(u)} d\mu_m^{\epsilon} \le  C(m,p),$$
for some finite constant depending on $m,p$.  
\end{theorem}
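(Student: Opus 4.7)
The plan is to combine Theorem \ref{1001} with Theorem \ref{823} to get the $\mu_m$ bound, and then invoke Corollary \ref{916} to transfer the bound to $\mu_m^\epsilon$. First, by Theorem \ref{1001}, for each $k,N \ge 0$ and $m < \tfrac{1}{2} m_{480p}$, we have
\begin{equation*}
\int_{H^{1/2-}(\mathbb{T})} e^{pf_N(u)}\, d\nu_m^k \le \frac{C(p) e^m \langle k \rangle^{19/20}}{\mu(A_m)\, m^2 \int_{m/2}^m P_k\!\left(m - \tfrac{y}{\langle k \rangle^2}\right) dy}.
\end{equation*}
The key observation is that the denominator admits a $k$-independent lower bound: by Lemma \ref{609}, there is an interval containing $m$ in its interior on which $\inf_N \int_{x/2}^x P_N(x - y/\langle N \rangle^2)\,dy \ge C_0(x) > 0$. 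In particular, $\int_{m/2}^m P_k(m - y/\langle k \rangle^2)\,dy \ge C_0(m)$ uniformly in $k$. Thus
\begin{equation*}
\|e^{pf_N(u)}\|_{L^1(\nu_m^k)} \le C(m,p)\, \langle k \rangle^{19/20}
\end{equation*}
for a constant depending only on $m$ and $p$ (independent of $k$ and $N$).

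Next I would choose any $\alpha$ with $19/20 < \alpha < 1$ (say $\alpha = 39/40$). Then
\begin{equation*}
\sup_{k \ge 0} \frac{\|e^{pf_N(u)}\|_{L^1(\nu_m^k)}}{\langle k \rangle^{\alpha}} \le C(m,p)\, \sup_{k \ge 0} \langle k \rangle^{19/20 - \alpha} \le C(m,p),
\end{equation*}
since $\alpha - 19/20 > 0$. Applying Theorem \ref{823} with $F(u) = e^{pf_N(u)}$ (which is non-negative and $\mu$-measurable) yields
\begin{equation*}
\int_{H^{1/2-}(\mathbb{T})} e^{pf_N(u)}\, d\mu_m \lesssim_{\alpha, m} \sup_{k} \frac{\|e^{pf_N(u)}\|_{L^1(\nu_m^k)}}{\langle k \rangle^{\alpha}} \le C(m,p),
\end{equation*}
which is the first half of the statement.

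For the $\mu_m^\epsilon$ bound, Corollary \ref{916} gives
\begin{equation*}
\int e^{pf_N(u)}\, d\mu_m^\epsilon = \frac{\int_{m-\epsilon}^{m+\epsilon} p_0(m') \left[\int e^{pf_N(u)}\, d\mu_{m'}\right] dm'}{\int_{m-\epsilon}^{m+\epsilon} p_0(m')\, dm'}.
\end{equation*}
The constant $C(m',p)$ obtained in the $\mu_m$ bound above depends on $m'$ only through $\mu(A_{m'})$, $C_0(m')$, and factors like $e^{m'}/m'^2$, all of which are continuous and positive in a neighborhood of $m$. Provided $\epsilon$ is small enough that $[m-\epsilon, m+\epsilon]$ lies inside this neighborhood (and inside the interval from Lemma \ref{609}, and below $\tfrac{1}{2} m_{480p}$), we have $C(m',p) \le 2 C(m,p)$ uniformly for $m' \in [m-\epsilon, m+\epsilon]$. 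Plugging this bound into the numerator and using that the ratio of integrals is at most $2C(m,p)$ yields the second half.

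The argument here is almost entirely an assembly of previously established pieces, so there is no serious obstacle; the only subtle point is making sure the $k$-dependent factor $\langle k \rangle^{19/20}$ from Theorem \ref{1001} is absorbed by the weight $\langle k \rangle^{-\alpha}$ permitted by Theorem \ref{823}, which is exactly why the exponent $19/20$ (rather than a generic $1^-$) was tracked so carefully in the earlier arguments.
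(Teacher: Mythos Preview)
Your proposal is correct and follows essentially the same approach as the paper: combine Theorem \ref{1001} with Lemma \ref{609} to control the $k$-dependence, feed this into Theorem \ref{823}, and then use Corollary \ref{916} for the $\mu_m^\epsilon$ statement. The only cosmetic difference is that the paper takes $\alpha = 19/20$ directly in Theorem \ref{823} (which already suffices since the sup is then exactly $C(m,p)$), whereas you choose a slightly larger $\alpha$; both choices work.
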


\begin{proof}
Applying Theorem \ref{823} with $\alpha = \frac{19}{20}$, we have 
\begin{equation}
\int_{H^{1/2-}(\mathbb{T})}  e^{pf_N(u)} d\mu_m \le C\sup_{k} \frac{\|e^{pf_N(u)}\|_{L^1(\nu^k_m)}}{\langle k \rangle^{19/20}} \le \sup_{k} \frac{C(p)e^m}{\mu(A_m)m^2\int_{m/2}^{m} P_k\left( m-\frac{y}{\langle k \rangle^2}\right)  dy} ,
\end{equation}
for each $m < \frac{1}{2}m_{480p}$. By Lemma \ref{609} there exists an interval $[m',m'']$ containing $m$ in its interior and a continuous positive function $C_0(x)$ on the interval such that for each $x \in [m',m'']$ we have $$\int_{x/2}^{x} P_k\left( x-\frac{y}{\langle k \rangle^2}\right)  dy \ge C_0(x),$$ for all $k$.

Therefore for every $x \in [m',m'']$ we have 
\begin{equation}
\int_{H^{1/2-}(\mathbb{T})}  e^{pf_N(u)} d\mu_x \le  C_0(x) \cdot \frac{C(p)e^{x}}{\mu(A_{x})x^2},
\end{equation}
which proves the first part. In addition, for any $\epsilon>0$ small enough that $(m-\epsilon,m+\epsilon) \subset [m',m'']$ we have
\begin{equation}
\int_{H^{1/2-}(\mathbb{T})}  e^{pf_N(u)} d\mu_m^{\epsilon} \le \min_{x \in [m',m'']} C_0(x)\cdot \frac{C(p)e^{m+\epsilon}}{(m-\epsilon)^2\mu(\|u\|_{L^2}^2 < (m-\epsilon))},
\end{equation} by Corollary \ref{916}.
\end{proof}

\subsection{Bounding $\int_{H^{1/2-}(\mathbb{T})} |f_N(u)-f_M(u)|^2 d\mu_m$}
We now repeat the same type of argument to bound the $L^2(\mu_m)$ norm of $f_N(u)-f_M(u)$  with respect to $\mu_m$. This will help us prove that $f_N \rightarrow f$ in $L^2(\mu_m)$. We start with the following lemma. 
 
\begin{lemma} \label{990}
For each $k \ge 0$ and $M \ge N$ we have
$$\int_{\tilde{\mathcal{A}_m^k}} (2-|g_k|^2 - |g_{-k}|^2) |f_N(u) - f_M(u)|^2 d\mu \le \frac{\int_{\tilde{\mathcal{A}_m^k}} (2-|g_k|^2 - |g_{-k}|^2) d\mu \cdot \int_{\tilde{\mathcal{A}_m^k}} |f_N(u) - f_M(u)|^2 d\mu }{\mu(\tilde{\mathcal{A}_m^k})}.$$
\end{lemma}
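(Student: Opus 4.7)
The inequality, upon dividing through by $\mu(\tilde{\mathcal{A}}_m^k) > 0$, is equivalent to the covariance bound
\[
\mathrm{Cov}\bigl(|g_k|^2 + |g_{-k}|^2,\, (f_N - f_M)^2\bigr) \geq 0
\]
on the probability space $(\tilde{\mathcal{A}}_m^k,\, \mu/\mu(\tilde{\mathcal{A}}_m^k))$; the covariance is well defined because $f_N - f_M$ is real (integration by parts shows that the middle-energy integrand defining $f_N(u)$ is real for every $u$). My plan is to prove this by adapting the template of Proposition \ref{963}, now to the quadratic functional $h^2 := (f_N - f_M)^2$ in place of $e^{pG_k^j(u_N)}$.

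Since $\tilde{\mathcal{A}}_m^k = \{|g_k|^2 + |g_{-k}|^2 < \langle k \rangle^2 m\} \times \mathcal{A}_m^k$ is a product set, $(g_k, g_{-k})$ is independent of $(g_n)_{|n|\neq k}$ under $\mu$ restricted to $\tilde{\mathcal{A}}_m^k$. Passing to polar coordinates $(g_k, g_{-k}) = r\theta$ with $r \geq 0$ and $\theta \in \mathbb{S}^3 \subset \mathbb{C}^2$ exactly as in the proof of Proposition \ref{963}, the measure disintegrates as $2 r^3 e^{-r^2}\, dr \otimes d\sigma^3(\theta) \otimes d\mu'$, with $r$, $\theta$ and the ``other'' variables mutually independent. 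Applying Lemma \ref{9620} with $X = h^2$ and $Y = r^2$, it suffices to show that the conditional expectation
\[
\overline{G}(r) \;:=\; \mathbb{E}\bigl[h^2 \bigm| r\bigr] \;=\; \frac{1}{\mu'(\mathcal{A}_m^k)}\int_{\mathcal{A}_m^k \times \mathbb{S}^3} h(r,\theta,\text{other})^2\, d\sigma^3\, d\mu'
\]
is a non-decreasing function of $r$ on $(0, \langle k \rangle \sqrt{m})$.

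Decomposing $h = \sum_{j=0}^{4} H_k^j$ as in Definition \ref{961} with $H_k^j(r, \theta, \text{other}) = r^j A_j(\theta, \text{other})$ homogeneous of degree $j$ in $r$, and using the antipodal symmetry $\theta \mapsto -\theta$ of $\sigma^3$ (which sends $A_j \mapsto (-1)^j A_j$) to kill all $j_1+j_2$ odd cross terms, one obtains the polynomial representation
\[
\overline{G}(r) = \sum_{\substack{0 \le j_1, j_2 \le 4 \\ j_1+j_2 \text{ even}}} r^{j_1+j_2}\,\overline{B}_{j_1, j_2}, \qquad \overline{B}_{j_1, j_2} := \mathbb{E}_{\mu'}\!\int_{\mathbb{S}^3} A_{j_1} A_{j_2}\, d\sigma^3.
\]
The diagonal coefficients $\overline{B}_{j,j} \geq 0$ contribute manifestly non-decreasing powers of $r^2$. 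The main obstacle is controlling the off-diagonal coefficients $\overline{B}_{j_1, j_2}$ with $j_1 \neq j_2$, which a priori need not have definite sign. I would handle them by combining the additional $U(1)$-symmetry $(g_k, g_{-k}) \mapsto e^{i\phi}(g_k, g_{-k})$ of $\sigma^3$ to annihilate any term of nonzero net charge, the Cauchy--Schwarz bound $|\overline{B}_{j_1, j_2}|^2 \leq \overline{B}_{j_1, j_1}\, \overline{B}_{j_2, j_2}$ to compare survivors against the diagonal, and the Wick formula against $\mu'$, in which the Fourier constraint $\max|n_i| > N$ defining $h = f_N - f_M$ forces further orthogonalities. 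Carrying out this last combinatorial step---the analogue here of Lemma \ref{962} used in Proposition \ref{963}, but now for a quadratic functional of $h$---is the delicate part of the argument and yields the required monotonicity of $\overline{G}$, from which Lemma \ref{9620} delivers the desired covariance inequality.
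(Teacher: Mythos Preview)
Your reduction to a covariance inequality and the use of Lemma~\ref{9620} to reduce further to monotonicity of $\overline{G}(r)=\mathbb{E}[h^2\mid r]$ are both correct and mirror the paper's strategy. The gap is in your proposed mechanism for establishing that monotonicity. Writing $\overline{G}(r)=\sum_{j_1+j_2\text{ even}} r^{j_1+j_2}\overline{B}_{j_1,j_2}$, the Cauchy--Schwarz bound $|\overline{B}_{j_1,j_2}|^2\le \overline{B}_{j_1,j_1}\overline{B}_{j_2,j_2}$ is \emph{not} enough to force monotonicity: the polynomial $(a-br^2)^2=a^2-2abr^2+b^2r^4$ satisfies equality in Cauchy--Schwarz yet decreases on $(0,(a/b)^{1/2})$. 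Your $U(1)$ and Wick observations prune some cross terms, but charge-zero survivors such as $\overline{B}_{0,2}$ remain, and you give no argument beyond Cauchy--Schwarz for those. So the ``delicate part'' you flag is in fact the entire content of the lemma, and your sketch does not close it.

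The paper does not attempt to control the degree-in-$r$ cross coefficients at all. Instead it uses the coincidence-pattern decomposition $f_N-f_M=\tfrac{3}{4}\sum_{i=1}^3(S_N^i-S_M^i)$ from Proposition~\ref{24}. The cross term $(S^1{+}S^2)(S^3)$ vanishes upon integrating against $(2-|g_k|^2-|g_{-k}|^2)$ by symmetry, so one is left with $(S^1{+}S^2)^2$ and $(S^3)^2$ separately. For $(S^3)^2$, the Wick pairing over the $n_i\ne m_j$ constraint collapses the expansion to a sum of terms $n^2\prod_i|g_{n_i}|^2/\langle n_i\rangle^2$, each a nonnegative increasing function of $|g_k|^2+|g_{-k}|^2$. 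For $(S^1{+}S^2)^2$, the $n\to -n$ symmetrization forces $n_1=n_3$, leaving terms of the form
\[
\frac{n_1^2(|g_{n_1}|^2-|g_{-n_1}|^2)^2(|g_{n_2}|^2+|g_{-n_2}|^2)(|g_{n_4}|^2+|g_{-n_4}|^2)}{\langle n_1\rangle^4\langle n_2\rangle^2\langle n_4\rangle^2},
\]
and one checks case by case (according to whether $k$ equals $\pm n_1$, $\pm n_2$, $\pm n_4$, or none) that each such term has nonnegative covariance with $|g_k|^2+|g_{-k}|^2$ on $\tilde{\mathcal{A}}_m^k$; the only nontrivial case $k=\pm n_1$ reduces to $\mathrm{Cov}(r^2,r^4)\ge 0$. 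Summing gives the lemma. In effect the paper sidesteps your off-diagonal problem by choosing a decomposition in which every surviving term is already a positive, $r$-monotone quantity.
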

\begin{proof}
We will make use of a covariance argument as in Proposition \ref{963} and Proposition \ref{964}. Recall from the proof of Proposition \ref{24} that using the notation $\mathbb{A}_n$ we can write  
\begin{equation*}
\begin{split}
F(u) &= |f_M(u) - f_N(u)|^2 \\
&=\left(\frac{3}{4}\right)^2|S_M^1(u)-S_N^1(u) + S_M^2(u) -S_N^2(u)+ S_M^3(u) - S_N^3(u)|^2\\   
&= \frac{9}{16}|\sum_{N < |n_1| \le M}  \frac{2n_1|g_{n_1}|^4}{\langle n_1 \rangle^4} + 2\sum_{N < |n_1| \le M} \sum_{|n_2| \le M, |n_2| < |n_1|}  \frac{(n_1+n_2)|g_{n_1}|^2 |g_{n_2}|^2}{\langle n_1 \rangle^2 \langle n_2 \rangle^2}\\
&\qquad \qquad \qquad \qquad \qquad \qquad \qquad \qquad \qquad \quad+  \sum_{n} \sum_{\mathbb{A}_n} \frac{ng_{n_1}g_{n_2}\overline{g_{n_3}}\overline{g_{n_4}}}{\langle n_1 \rangle \langle n_2 \rangle \langle n_3 \rangle \langle n_4 \rangle}|^2\\
&= \frac{9}{16}\left|\sum_{|n_1|, |n_2| \le M, \text{not both} \le N}  \frac{(n_1+n_2)|g_{n_1}|^2 |g_{n_2}|^2}{\langle n_1 \rangle^2 \langle n_2 \rangle^2} +\sum_{n} \sum_{\mathbb{A}_n} \frac{ng_{n_1}g_{n_2}\overline{g_{n_3}}\overline{g_{n_4}}}{\langle n_1 \rangle \langle n_2 \rangle \langle n_3 \rangle \langle n_4 \rangle}\right|^2.\\
\end{split}
\end{equation*} 
By symmetry, we have 
\begin{equation}
\begin{split}
0 &=\int_{\tilde{\mathcal{A}_m^k}} (2-|g_k|^2-|g_{-k}|^2)(S_N^1(u)-S_M^1(u))(S_N^3(u)-S_M^3(u)) d\mu\\
&= \int_{\tilde{\mathcal{A}_m^k}} (2-|g_k|^2-|g_{-k}|^2)(S_N^2(u)-S_M^2(u))(S_N^3(u)-S_M^3(u)) d\mu.\\
\end{split}
\end{equation}
So we can ignore the cross terms and attempt to bound $$\int_{\tilde{\mathcal{A}_m^k}} (2-|g_k|^2-|g_{-k}|^2)(S_N^1-S_M^1+S_N^2-S_M^2)^2 d\mu,$$ $$\int_{\tilde{\mathcal{A}_m^k}} (2-|g_k|^2-|g_{-k}|^2)(S_N^3-S_M^3)^2 d\mu.$$ We do so by applying a covariance argument to the integral over $\tilde{\mathcal{A}_m^k}$.

We start by bounding $\int_{\tilde{\mathcal{A}_m^k}} (2-|g_k|^2-|g_{-k}|^2)(S_N^1-S_M^1+S_N^2-S_M^2)^2 d\mu$.
Expanding \newline $(S_N^1-S_M^1+S_N^2-S_M^2)^2$ yields
\begin{equation*}
\begin{split}
(S^1_N(u)-&S^1_M(u)+S^2_N(u)-S^2_M(u))^2 \\
&= \frac{9}{16}\sum_{|n_1|, |n_2| \le M, \text{not both} \le N}\sum_{|n_3|, |n_4| \le M, \text{not both} \le N} \frac{(n_1+n_2)(n_3+n_4)|g_{n_1}|^2 |g_{n_2}|^2|g_{n_3}|^2 |g_{n_4}|^2}{\langle n_1 \rangle^2 \langle n_2 \rangle^2 \langle n_3 \rangle^2 \langle n_4 \rangle^2} \\
&= \frac{9}{4}\sum_{|n_1|, |n_2| \le M, \text{not both} \le N}\sum_{|n_3|, |n_4| \le M, \text{not both} \le N} \frac{n_1n_3|g_{n_1}|^2 |g_{n_2}|^2|g_{n_3}|^2 |g_{n_4}|^2}{\langle n_1 \rangle^2 \langle n_2 \rangle^2 \langle n_3 \rangle^2 \langle n_4 \rangle^2} \\
&=  \frac{9}{4}\sum_{n_i} \frac{n_1n_3(|g_{n_1}|^2-|g_{-n_1}|^2) (|g_{n_2}|^2+|g_{-n_2}|^2)(|g_{n_3}|^2 - |g_{-n_3}|^2)(|g_{n_4}|^2+|g_{-n_4}|^2)}{\langle n_1 \rangle^2 \langle n_2 \rangle^2 \langle n_3 \rangle^2 \langle n_4 \rangle^2}. \\
\end{split}
\end{equation*} 
When we multiply each term by $|g_k|^2 + |g_{-k}|^2$ or by a constant, the integral over \newline $\tilde{\mathcal{A}_m^k}$ is $0$ unless $n_1 =n_3$. In this case we show that the integrand has non-negative covariance with $|g_k|^2+|g_{-k}|^2$.

For each term in which $n_1 = n_3$, the expression $$\frac{n_1^2(|g_{n_1}|^2-|g_{-n_1}|^2)^2 (|g_{n_2}|^2+|g_{-n_2}|^2)(|g_{n_4}|^2+|g_{-n_4}|^2)}{\langle n_1 \rangle^4 \langle n_2 \rangle^2 \langle n_4 \rangle^2}$$ is  positive. This term and $|g_k|^2 + |g_{-k}|^2$ are independent when none of the $n_i$ are equal to $\pm k$, and they clearly have non-negative covariance when $k = \pm n_2$ or $\pm n_4$ on our set.

It remains to show that when $k = \pm n_1$ the terms have non-negative covariance. This is equivalent to showing that $|g_k|^2 + |g_{-k}|^2$ and $(|g_{k}|^2-|g_{-k}|^2)^2$ have non-negative covariance, since $g_{k}$ and $g_{-k}$ are independent from the rest of the $g_n$ on the set $\tilde{\mathcal{A}_m^k}$. 

As in the proof of Proposition \ref{963}, we set  $r(\tilde{a}_k + i\tilde{b}_k) = g_k$, $r(\tilde{a}_{-k} + i\tilde{b}_{-k}) = g_{-k}$ and note that $r$ is independent from the angular components. We have 
\begin{equation*}
\begin{split}
\mathrm{Cov}(&|g_k|^2 + |g_{-k}|^2, (|g_{k}|^2-|g_{-k}|^2)^2)\\
&= \mathrm{Cov}(r^2,r^4(|\tilde{a}_{k}|^2 + |\tilde{b}_{k}|^2 - |\tilde{a}_{-k}|^2 - |\tilde{b}_{-k}|^2)^2  )\\
&=\mathbb{E}(r^6(|\tilde{a}_{k}|^2 + |\tilde{b}_{k}|^2 - |\tilde{a}_{-k}|^2 - |\tilde{b}_{-k}|^2)^2) - \mathbb{E}(r^2)\mathbb{E}(r^4(|\tilde{a}_{k}|^2 + |\tilde{b}_{k}|^2 - |\tilde{a}_{-k}|^2 - |\tilde{b}_{-k}|^2)^2)\\
&= \left(\mathbb{E}(r^6)-\mathbb{E}(r^2)\mathbb{E}(r^4)\right)\mathbb{E}((|\tilde{a}_{k}|^2 + |\tilde{b}_{k}|^2 - |\tilde{a}_{-k}|^2 - |\tilde{b}_{-k}|^2)^2)\\
&=\mathrm{Cov}(r^4,r^2) \cdot \mathbb{E}((|\tilde{a}_{k}|^2 + |\tilde{b}_{k}|^2 - |\tilde{a}_{-k}|^2 - |\tilde{b}_{-k}|^2)^2).\\
\end{split}
\end{equation*}

The second factor is the expectation of a square, which is non-negative, and the first factor is a covariance that is non-negative by applying Lemma \ref{9620} to $Y=r^4$, $X = r^2$ and $E(y) = \sqrt{y}$. Observe that $\mathbb{E}(r^2 | r^4 =y) = \sqrt{y}$, a non-decreasing function. Therefore $$\mathrm{Cov}\left (\frac{n_1^2(|g_{n_1}|^2-|g_{-n_1}|^2)^2 (|g_{n_2}|^2+|g_{-n_2}|^2)(|g_{n_4}|^2+|g_{-n_4}|^2)}{\langle n_1 \rangle^4 \langle n_2 \rangle^2 \langle n_4 \rangle^2},|g_k|^2 + |g_{-k}|^2 \right) \ge 0$$.

Now we bound $\int_{\tilde{\mathcal{A}_m^k}} (2-|g_k|^2-|g_{-k}|^2)(S_N^3(u))^2 d\mu$. Keeping the notation $\mathbb{A}_n$ of Proposition \ref{24}, we expand $(S_N^3(u)-S_M^3(u))^2$ to obtain
\begin{equation*}
\begin{split}
 \int_{\tilde{\mathcal{A}_m^k}} &(2-|g_k|^2 - |g_{-k}|^2)(S_N^3(u)-S_M^3(u))^2 d\mu \\
 &=  \frac{9}{4}\int_{\tilde{\mathcal{A}_m^k}} (2-|g_k|^2 - |g_{-k}|^2)\sum_{n} \sum_{\mathbb{A}_n} \frac{n^2|g_{n_1}|^2 |g_{n_2}|^2|g_{n_3}|^2 |g_{n_4}|^2}{\langle n_1 \rangle^2 \langle n_2 \rangle^2 \langle n_3 \rangle^2 \langle n_4 \rangle^2}, \\
\end{split}
\end{equation*} 
and clearly each term inside the sum is positive and has non-negative covariance with $|g_k|^2 + |g_{-k}|^2$ on our set. 

Combining these two cases, and the cancellation of the cross terms, we have 
\begin{equation}
\textstyle \int_{\tilde{\mathcal{A}_m^k}} (2-|g_k|^2 - |g_{-k}|^2) |f_N(u) - f_M(u)|^2 d\mu \le \frac{\int_{\tilde{\mathcal{A}_m^k}} (2-|g_k|^2 - |g_{-k}|^2) d\mu \cdot \int_{\tilde{\mathcal{A}_m^k}} |f_N(u) - f_M(u)|^2 d\mu }{\mu(\tilde{\mathcal{A}_m^k})}.
\end{equation} 

\end{proof}

\begin{lemma} \label{940}
For each $m>0$, and $M \ge N$ we have 
$$\int_{H^{1/2-}(\mathbb{T})} |f_N(u) - f_M(u)|^2 d\mu_m  \le \frac{C(m)}{N}$$ for all $N \ge 0$. 
\end{lemma}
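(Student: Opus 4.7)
The plan is to adapt the proof of Proposition \ref{24} to the measure $\mu_m$. Two properties of $\mu_m$ enable the substitution. First, $\mu_m$ is invariant under each phase rotation $g_n \mapsto e^{i\theta_n}g_n$ and each sign-flip $g_n \leftrightarrow g_{-n}$, because the mass constraint $\sum |g_n|^2/\langle n\rangle^2 = m$ depends only on $|g_n|^2$ and treats $n,-n$ symmetrically. Second, Lemma \ref{816} supplies the uniform moment bound $\|g_n\|_{L^p(\mu_m)}^p \le C(p)/p_0(m)$, which stands in for the Gaussian product expectations used throughout the original argument.

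Decompose $f_N - f_M = \tfrac34 (S^1 + S^2 + S^3)$ as in Proposition \ref{24}, with $S^i = S^i_N - S^i_M$. For $S^1$, Cauchy--Schwarz gives $\mathbb{E}_{\mu_m}[|g_n|^4 |g_m|^4] \le \|g_n\|_{L^8(\mu_m)}^4 \|g_m\|_{L^8(\mu_m)}^4 \lesssim C(m)$, so the original coefficient sum yields $\|S^1\|_{L^2(\mu_m)}^2 \lesssim C(m)/N^4$. For $S^2$, the key vanishing $\mathbb{E}_{\mu_m}[(|g_{n_1}|^2 - |g_{-n_1}|^2)Y] = 0$ for any $Y$ invariant under $g_{n_1} \leftrightarrow g_{-n_1}$ follows from sign-flip invariance in place of independence, so the subsequent index reduction goes through and yields $\|S^2\|_{L^2(\mu_m)}^2 \lesssim C(m)/N$. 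For $S^3$, phase invariance forces the expectation of the eight-factor product $g_{n_1}g_{n_2}\overline{g_{m_1}}\overline{g_{m_2}}g_{p_1}g_{p_2}\overline{g_{q_1}}\overline{g_{q_2}}$ under $\mu_m$ to vanish unless the multisets of $g$-indices and $\bar g$-indices coincide---the same pairing condition derived from Gaussian orthogonality under $\mu$---after which Lemma \ref{816} bounds the eighth-moment product and the Lemma \ref{27} summation proceeds as before, yielding $\|S^3\|_{L^2(\mu_m)}^2 \lesssim C(m)/N$.

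The cross terms are handled as follows. The products $\int S^1 S^3\,d\mu_m$ and $\int S^2 S^3\,d\mu_m$ vanish because the strict non-repetition condition on the four $S^3$ indices prevents the multiset matching required for a nonzero phase-averaged expectation. The remaining cross term is bounded by Cauchy--Schwarz: $|\int S^1 S^2\,d\mu_m| \le \|S^1\|_{L^2(\mu_m)} \|S^2\|_{L^2(\mu_m)} \lesssim C(m)/N^{5/2}$, which is absorbed into the $S^2$ estimate. Summing the three diagonal contributions then yields $\int |f_N - f_M|^2\,d\mu_m \le C(m)/N$. The main subtlety is verifying that each appeal to Gaussian independence in the original proof admits a symmetry-based replacement of comparable strength, and tracking how the $1/p_0(m)$ factors from Lemma \ref{816} aggregate into a single $m$-dependent constant.
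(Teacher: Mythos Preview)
Your approach is correct and takes a genuinely different route from the paper. The paper proves this lemma via the machinery built in Section~5: it decomposes $\mu_m = \frac{1}{m}\sum_k \frac{c_{k,m}}{\langle k\rangle^2}\nu_m^k$ (Theorem~\ref{823}), converts each $\int|f_N-f_M|^2\,d\nu_m^k$ into an integral over $A_m$ against $d\mu$ using Theorem~\ref{960}, and then controls the resulting $(2-|g_k|^2-|g_{-k}|^2)|f_N-f_M|^2$ integrand by a separate covariance computation (Lemma~\ref{990}) on the enlarged sets $\tilde{\mathcal{A}}_m^k$, finally summing in $k$. Your argument bypasses all of this by working directly on $\mu_m$: you observe that $\mu_m$ inherits phase-rotation and sign-flip symmetries from $\mu$ (since the mass constraint depends only on $(|g_n|^2)_n$ and treats $\pm n$ identically), and that these symmetries alone reproduce the index-pairing cancellations that Proposition~\ref{24} obtained from Gaussian independence. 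In particular, for $S^3$ the phase-balance condition combined with the constraints $n_i\neq m_j$, $p_i\neq q_j$ does force the exact pairing $\{n_1,n_2\}=\{q_1,q_2\}$, $\{m_1,m_2\}=\{p_1,p_2\}$, and for $S^2$ the sign-flip at $n_1$ forces $n_1=m_1$; the surviving moments are then handled uniformly by Lemma~\ref{816} via H\"older. This is a more elementary and self-contained argument. The paper's route, by contrast, exercises the $\nu_m^k$ decomposition that is central to the later construction of $\rho_m$, so it fits the paper's overall architecture even though it is heavier for this particular lemma.
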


\begin{proof}
Fix a value of $m$. We start with the decomposition of $\mu_m$ from Theorem \ref{823}:
\begin{equation}
\int_{H^{1/2-}(\mathbb{T})} |f_N(u) - f_M(u)|^2 d\mu_m = \frac{1}{m} \lim_{N \rightarrow \infty}  \sum_{0\le k \le N} \int \frac{c_{k,m}}{\langle k \rangle^2} |f_N(u) - f_M(u)|^2 d\nu_m^k
\end{equation}

By Theorem \ref{960}, we have 
\begin{equation*}
\begin{split}
\int_{H^{1/2-}(\mathbb{T})} |f_N(u) - f_M(u)|^2 d\mu_m &\le  \frac{\sup_{k}c_{k,m}}{m} \sum_{k=0}^{\infty}\frac{1}{\langle k \rangle^2}  \frac{\int_{A_m}   (2-|g_k|^2-|g_{-k}|^2)|f_N(u) - f_M(u)|^2 d\mu}{\int_{A_m} 2-|g_k|^2-|g_{-k}|^2  d\mu }\\
&\quad + \frac{\sup_{k}c_{k,m}}{2m} \sum_{k=0}^{\infty}\frac{1}{\langle k \rangle^2}  \frac{\int_{A_m}\frac{\partial}{ \partial s}|_{s=1} |f_N(T_s^k(u)) - f_M(T_s^k(u))|^2  d\mu }{\int_{A_m} 2-|g_k|^2-|g_{-k}|^2  d\mu }. \\
\end{split}
\end{equation*}
By Lemma \ref{709} and Lemma \ref{609} we can bound the denominator by $\frac{1}{\langle k \rangle^2}$ and obtain 
\begin{equation} \label{989}
\begin{split}
&\int_{H^{1/2-}(\mathbb{T})} |f_N(u) - f_M(u)|^2 d\mu_m \\
&\qquad \le  \frac{C}{m} \sum_{k=0}^{\infty} \int_{A_m}   (2-|g_k|^2-|g_{-k}|^2)|f_N(u) - f_M(u)|^2 +\frac{\partial}{ \partial s}|_{s=1} |f_N(T_s^k(u)) - f_M(T_s^k(u))|^2d\mu. \\
\end{split}
\end{equation}
Observe that each term of $|f_N(T_s^k(u)) - f_M(T_s^k(u))|^2$ contains $8$ factors of the form $g_{n}$ or $\overline{g_n}$, each of which contributes one term to the derivative $\partial s$ at $k=n$. Therefore 
\begin{equation*}
\begin{split}
\sum_{k=0}^{\infty} \int_{A_m}  \frac{\partial}{ \partial s}|_{s=1} |f_N(T_s^k(u)) - f_M(T_s^k(u))|^2d\mu  = \int_{A_m}8 |f_N(u) - f_M(u)|^2d\mu,\\
\end{split}
\end{equation*}
which is bounded by $\frac{C}{N}$ by Proposition \ref{24}.

Now we turn to the first term of equation \eqref{989}. By the triangle inequality, Lemma \ref{990} and  H\"older's inequality with exponents equal to $(\frac{3}{2},6,6)$, we have
\begin{equation*}
\begin{split}
&\int_{A_m}   (2-|g_k|^2-|g_{-k}|^2)|f_N(u) - f_M(u)|^2 d\mu \\
&\le |\int_{\tilde{A_m^k}\backslash A_m}  (2-|g_k|^2-|g_{-k}|^2)|f_N(u) - f_M(u)|^2 d\mu|+ \int_{\tilde{A_m^k}}  (2-|g_k|^2-|g_{-k}|^2)|f_N(u) - f_M(u)|^2 d\mu \\
&\le \mu(\tilde{A_m^k}\backslash A_m)^{2/3} \|  (2-|g_k|^2-|g_{-k}|^2)\|_{L^6(\mu)}  \|f_N(u) - f_M(u)  \|^2_{L^{12}(\mu)}\\
&\qquad \qquad + \frac{\int_{\tilde{\mathcal{A}_m^k}} (2-|g_k|^2 - |g_{-k}|^2) d\mu \cdot \int_{\tilde{\mathcal{A}_m^k}} |f_N(u) - f_M(u)|^2 d\mu }{\mu(\tilde{\mathcal{A}_m^k})}.\\
\end{split}
\end{equation*}

Lemma \ref{981} implies that $\mu(\tilde{A_m^k}\backslash A_m)^{2/3} \le \frac{C}{\langle k \rangle^{4/3}}$. By Proposition \ref{24} and the Wiener Chaos bound in  Proposition \ref{20}, the $L^p(\mu)$ norms of $|f_N(u) - f_M(u)|$ squared are bounded by $\frac{1}{N}$. The expected value of a Gaussian to a finite power is finite, and Lemma \ref{709} implies that $$\int_{\tilde{\mathcal{A}_m^k}} (2-|g_k|^2 - |g_{-k}|^2) d\mu \le \frac{C}{\langle k \rangle^2}.$$ So we have 
\begin{equation*}
\int_{A_m}   (2-|g_k|^2-|g_{-k}|^2)|f_N(u) - f_M(u)|^2 d\mu \le \frac{C}{\mu(\tilde{\mathcal{A}_m^k})N\langle k \rangle^{4/3}}.
\end{equation*}
We conclude that 
\begin{equation} \label{9890}
\begin{split}
\int_{H^{1/2-}(\mathbb{T})} |f_N(u) - f_M(u)|^2 d\mu_m &\le  \frac{C}{m}(\frac{C}{N} +\sum_{k=0}^{\infty}\frac{C}{\mu(\tilde{\mathcal{A}_m^k})N\langle k \rangle^{4/3}} ) \\
&\le \frac{C(m)}{N}.\\
\end{split}
\end{equation}
\end{proof}

\subsection{Bounding $\|e^{f(u)}\|_{L^p(H^{1/2-}(\mathbb{T}), d\mu_m)}$} \label{mathrefs}
At this point we have the tools necessary to bound the integral of $e^{f(u)}$ with respect to the measure $\mu_m$. We already have a bound on $e^{pf_N(u)}$, so we merely need to pass this bound to the limit as $N \rightarrow \infty$. Proving the convergence in $\mu_m$-measure of $f_N(u) \rightarrow f(u)$ will enable us to pass to the limit.

\begin{lemma} \label{1007}
For each $m >0$ the sequence $f_N(u)$ of functions converges in $L^2(\mu_m)$ as well as in $\mu_m$-measure to $f(u)$.
\end{lemma}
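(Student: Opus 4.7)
The plan is to deduce this lemma directly from the Cauchy bound established in Lemma \ref{940}. First I would observe that Lemma \ref{940} gives, for all dyadic $M \ge N$,
\begin{equation*}
\|f_N(u) - f_M(u)\|_{L^2(\mu_m)}^2 \le \frac{C(m)}{N},
\end{equation*}
so $\{f_N\}_{N \ge 0}$ is a Cauchy sequence in $L^2(\mu_m)$. Since $L^2(\mu_m)$ is complete, there exists a limit element $f(u) \in L^2(\mu_m)$, unique up to a $\mu_m$-null set, such that $f_N \to f$ in the $L^2(\mu_m)$ norm. Sending $M \to \infty$ in the Cauchy bound and using Fatou's lemma (or the continuity of the norm) also yields the quantitative rate
\begin{equation*}
\|f_N(u) - f(u)\|_{L^2(\mu_m)}^2 \le \frac{C(m)}{N},
\end{equation*}
which will be useful later when passing bounds on $e^{pf_N(u)}$ to the limit.

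For the convergence in $\mu_m$-measure, I would simply apply Chebyshev's inequality: for any $\lambda > 0$,
\begin{equation*}
\mu_m\bigl(|f_N(u) - f(u)| > \lambda\bigr) \le \frac{\|f_N(u)-f(u)\|_{L^2(\mu_m)}^2}{\lambda^2} \le \frac{C(m)}{\lambda^2 N},
\end{equation*}
which tends to $0$ as $N \to \infty$ for each fixed $\lambda > 0$. This is the definition of convergence in $\mu_m$-measure.

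There is no serious obstacle here: the work has already been done in Lemma \ref{940}, where the covariance and Wiener chaos arguments produced the decay rate $1/N$. The only thing to be careful about is that the limit function $f(u)$ constructed by $L^2$ completeness is only defined $\mu_m$-a.e.; one should note that this is consistent with the usage of $f(u)$ in the rest of the paper (e.g.\ in the energy term $e^{f(u)-\frac{1}{2}\int |u|^6 dx}$), where only the $\mu_m$-a.e.\ equivalence class matters for defining the measure $\rho_m$.
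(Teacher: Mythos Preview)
Your proposal is correct and follows essentially the same approach as the paper: invoke the Cauchy estimate of Lemma~\ref{940}, use completeness of $L^2(\mu_m)$ to obtain the limit $f$, and then deduce convergence in measure (the paper simply states that $L^2$ convergence implies convergence in measure, whereas you spell out Chebyshev explicitly). Your additional remarks about the quantitative rate and the $\mu_m$-a.e.\ definition of $f$ are accurate and harmless elaborations.
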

\begin{proof} 
By Lemma \ref{940} we have 
\begin{equation} \label{1008}
\int_{H^{1/2-}(\mathbb{T})}|f_N(u)-f_M(u)|^2 d\mu_m \le \frac{C(m)}{N},
\end{equation}
for each $M \ge N$. 

This implies that the sequence of functions $f_N(u)$ is a Cauchy sequence, and thus converges, in $L^2(\mu_m)$. Therefore $f_N \rightarrow f$ in $L^2(\mu_m)$ which implies that $f_N \rightarrow f$ in $\mu_m$-measure. 
\end{proof}
We now prove the main result. 
\begin{theorem} \label{1010}
For any $p \ge 1$ and $m<\frac{1}{2}m_{480p}$, $e^{pf(u)} \in L^1(\mu_m)$ with $$\int_{H^{1/2-}(\mathbb{T})} e^{pf(u)} d\mu_m \le  C(m,p).$$  
\end{theorem}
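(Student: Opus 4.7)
The plan is to pass the uniform bound in Theorem \ref{1005} through a limit using the convergence of $f_N$ to $f$ provided by Lemma \ref{1007}. All the heavy lifting (the scaling decomposition, the covariance estimate, the Wiener chaos bound, and the $k$-summable control of $\int e^{pf_N(u)}\,d\nu_m^k$) has already been carried out; what remains is a soft limiting argument. I expect no real obstacle beyond verifying that the chain of inequalities is legitimate.

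First, Lemma \ref{1007} states that $f_N \to f$ in $\mu_m$-measure. Therefore there exists a subsequence $\{N_j\}_{j \ge 1}$ along which $f_{N_j}(u) \to f(u)$ for $\mu_m$-almost every $u \in H^{1/2-}(\mathbb{T})$. Applying the continuous map $x \mapsto e^{px}$, we conclude that
\begin{equation*}
e^{p f_{N_j}(u)} \longrightarrow e^{p f(u)} \qquad \text{as } j \to \infty,
\end{equation*}
pointwise $\mu_m$-almost everywhere, for any $p \ge 1$.

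Next, for $m < \tfrac{1}{2} m_{480p}$, Theorem \ref{1005} supplies the uniform bound
\begin{equation*}
\int_{H^{1/2-}(\mathbb{T})} e^{p f_{N_j}(u)}\, d\mu_m \le C(m,p),
\end{equation*}
valid for every $j$. Since each integrand is non-negative, Fatou's lemma gives
\begin{equation*}
\int_{H^{1/2-}(\mathbb{T})} e^{p f(u)}\, d\mu_m = \int \liminf_{j \to \infty} e^{p f_{N_j}(u)}\, d\mu_m \le \liminf_{j \to \infty} \int e^{p f_{N_j}(u)}\, d\mu_m \le C(m,p).
\end{equation*}
In particular, $e^{pf(u)} \in L^1(\mu_m)$ with the claimed bound. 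The argument works for every $p \ge 1$ separately, with the mass threshold $\tfrac{1}{2} m_{480p}$ inherited directly from Theorem \ref{1005}; no additional smallness of $m$ is required. This completes the proof.
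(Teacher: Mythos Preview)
Your proof is correct and essentially identical to the paper's: both extract an almost surely convergent subsequence from the $\mu_m$-convergence in measure of Lemma \ref{1007}, and then apply Fatou's lemma together with the uniform bound of Theorem \ref{1005}.
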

\begin{proof} 
Convergence in measure implies existence of a subsequence that converges almost surely. Therefore there is a sequence $f_{N_j}(u) $ that converges to $f(u)$ $\mu_m$-almost surely, which in turn means $e^{pf_{N_j}(u)} \rightarrow e^{pf(u)}$ almost surely.

Recall from Theorem \ref{1005} that for each $N$ we have $\int_{H^{1/2-}(\mathbb{T})}  e^{pf_N(u)} d\mu_m \le  C(m,p)$.  By Fatou's lemma, 
\begin{equation} \label{1011}
\int_{H^{1/2-}(\mathbb{T})} e^{pf(u)} d\mu_m \le \liminf_{N_j} \int_{H^{1/2-}(\mathbb{T})}  e^{pf_{N_j}(u)} d\mu_m \le  C(m,p).
\end{equation} 
\end{proof}
The same argument, when utilizing the second part of Theorem \ref{1005} and the fact that $f_N(u) \rightarrow f(u)$ in $L^2(\mu)$, yields the following corollary for $\mu_m^{\epsilon}$.

\begin{corollary} \label{1012}
For any $m< \frac{1}{2}m_{480p}$, there exists a sufficiently small $\delta$ such that for $\epsilon \in (0,\delta)$ we have 
$$\int_{H^{1/2-}(\mathbb{T})} e^{pf(u)} d\mu^{\epsilon}_m \le C(m,p).$$  
\end{corollary}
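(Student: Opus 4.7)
The plan is to imitate the proof of Theorem \ref{1010} essentially verbatim, substituting $\mu_m^{\epsilon}$ for $\mu_m$ throughout, and leveraging the $\epsilon$-version of the uniform bound supplied by the second part of Theorem \ref{1005} together with subsequential almost-sure convergence and Fatou's lemma. The role of $\delta$ is only to ensure that $\epsilon < \delta$ is small enough for the bound $\int e^{pf_N(u)}\,d\mu_m^{\epsilon} \le C(m,p)$ to hold uniformly in $N$, which is precisely the content of the second assertion in Theorem \ref{1005}.

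First I would promote the $L^2(\mu)$ convergence $f_N(u) \to f(u)$ (which is implicit in Proposition \ref{24} applied with $c(\overline{n}) \equiv 1$, producing a Cauchy sequence whose $L^2(\mu)$-limit is $f(u)$) to convergence in $\mu_m^{\epsilon}$-measure. Since $\mu_m^{\epsilon}$ has Radon-Nikodym derivative $\frac{1_{B_{m,\epsilon}}}{\mu(B_{m,\epsilon})}$ with respect to $\mu$, and $\mu(B_{m,\epsilon}) > 0$ for all sufficiently small $\epsilon$ by Lemma \ref{710}, convergence in $L^2(\mu)$ forces convergence in $\mu$-measure on the ambient space, which upon restriction to $B_{m,\epsilon}$ and renormalization yields convergence in $\mu_m^{\epsilon}$-measure.

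Second, I would extract a subsequence $f_{N_j}(u) \to f(u)$ pointwise $\mu_m^{\epsilon}$-a.s., so that $e^{p f_{N_j}(u)} \to e^{p f(u)}$ pointwise $\mu_m^{\epsilon}$-a.s. Combining this with the uniform-in-$N$ bound from the second part of Theorem \ref{1005}, Fatou's lemma gives
\begin{equation*}
\int_{H^{1/2-}(\mathbb{T})} e^{pf(u)}\,d\mu_m^{\epsilon} \le \liminf_{j \to \infty}\int_{H^{1/2-}(\mathbb{T})} e^{p f_{N_j}(u)}\,d\mu_m^{\epsilon} \le C(m,p),
\end{equation*}
which is exactly the claim.

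There is essentially no obstacle here: the genuine probabilistic content has already been absorbed into Theorem \ref{1005} (where the uniform-in-$N$ bound on $\int e^{pf_N(u)}\,d\mu_m^{\epsilon}$ was obtained by combining Corollary \ref{916} with the $\mu_m$-bound over the interval $[m',m'']$ supplied by Lemma \ref{609}) and into Lemma \ref{940}/Lemma \ref{1007} which produced the requisite $L^2$-convergence of $f_N$. The only modest check is to fix $\delta$ small enough that $(m-\delta, m+\delta)$ lies inside the interval $[m',m'']$ of Lemma \ref{609} and $\mu(B_{m,\epsilon}) > 0$ for every $\epsilon \in (0,\delta)$; both requirements are compatible with the constraint $m < \tfrac{1}{2}m_{480p}$.
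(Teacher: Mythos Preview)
Your proposal is correct and matches the paper's approach exactly: the paper states that the corollary follows by the same argument as Theorem \ref{1010}, using the second part of Theorem \ref{1005} together with the fact that $f_N(u) \to f(u)$ in $L^2(\mu)$, which is precisely the Fatou-plus-subsequence argument you spell out. Your observation that absolute continuity of $\mu_m^{\epsilon}$ with respect to $\mu$ promotes $L^2(\mu)$-convergence to $\mu_m^{\epsilon}$-measure convergence is the only detail the paper leaves implicit, and you have handled it correctly.
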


We now demonstrate that not only is $e^{f(u)} \in L^p(\mu_m)$ but we also have the convergence $e^{f_N(u)} \rightarrow e^{f(u)}$ in $L^p(\mu_m)$, which  allows us to approximate $e^{f(u)}$ by $e^{f_N(u)}$.

\begin{lemma} \label{1015}
For all $m <  \frac{1}{2}m_{960p}$ the sequence $e^{f_N(u)}$ converges to $e^{f(u)}$ in $L^p(\mu_m)$.
\end{lemma}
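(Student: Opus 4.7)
The plan is to upgrade the almost-sure convergence $e^{f_{N_j}(u)} \to e^{f(u)}$ along a subsequence (available from Lemma \ref{1007}) to full $L^p(\mu_m)$ convergence by combining convergence in $\mu_m$-measure with a uniform $L^{2p}$ bound. Since $m < \frac{1}{2}m_{960p}$, Theorem \ref{1010} and Theorem \ref{1005} apply with exponent $2p$, giving
\begin{equation*}
\sup_{N} \|e^{f_N(u)}\|_{L^{2p}(\mu_m)} + \|e^{f(u)}\|_{L^{2p}(\mu_m)} \le C(m,p).
\end{equation*}
This uniform integrability is the key ingredient that makes a Vitali-type argument work.

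First, I would show that $e^{f_N(u)} \to e^{f(u)}$ in $\mu_m$-measure. By Lemma \ref{1007}, $f_N \to f$ in $\mu_m$-measure, and the map $x \mapsto e^x$ is continuous, so $\mu_m(|e^{f_N} - e^{f}| > \eta) \to 0$ for every $\eta > 0$ after restricting to the event that $|f|$ is bounded, which has mass approaching $1$ (using the $L^2(\mu_m)$ bound on $f$).

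Next, fix $\epsilon > 0$ and decompose $\|e^{f_N} - e^{f}\|_{L^p(\mu_m)}^p$ using the split $\Omega = A_{N,\delta} \sqcup A_{N,\delta}^c$, where $A_{N,\delta} = \{|f_N - f| < \delta\}$. On $A_{N,\delta}$, use the elementary estimate $|e^a - e^b| \le |a-b| \, e^{\max(a,b)} \le \delta(e^{f_N} + e^{f})$ for $\delta$ small, yielding
\begin{equation*}
\int_{A_{N,\delta}} |e^{f_N} - e^{f}|^p\, d\mu_m \le (2\delta)^p \cdot 2^{p-1}\left(\|e^{f_N}\|_{L^p}^p + \|e^{f}\|_{L^p}^p\right) \le C(m,p)\delta^p.
\end{equation*}
On $A_{N,\delta}^c$, apply $|e^{f_N} - e^{f}|^p \le 2^{p-1}(e^{pf_N} + e^{pf})$ followed by Cauchy--Schwarz:
\begin{equation*}
\int_{A_{N,\delta}^c} |e^{f_N} - e^{f}|^p\, d\mu_m \le 2^{p-1}\mu_m(A_{N,\delta}^c)^{1/2}\bigl(\|e^{pf_N}\|_{L^2(\mu_m)} + \|e^{pf}\|_{L^2(\mu_m)}\bigr) \le C(m,p)\, \mu_m(A_{N,\delta}^c)^{1/2}.
\end{equation*}

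Finally, choose $\delta$ so small that the first contribution is at most $\epsilon/2$, then use $\mu_m(A_{N,\delta}^c) \to 0$ (from convergence in measure) to make the second contribution at most $\epsilon/2$ for all sufficiently large $N$. This yields $\|e^{f_N} - e^{f}\|_{L^p(\mu_m)} \to 0$. The main (very mild) obstacle is just making sure the exponents line up: we need $2p$-integrability to control the tail on $A_{N,\delta}^c$ via Cauchy--Schwarz, which is precisely why the hypothesis is strengthened from $m < \frac{1}{2}m_{480p}$ in Theorem \ref{1010} to $m < \frac{1}{2}m_{960p}$ here.
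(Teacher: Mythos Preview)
Your proof is correct and takes essentially the same approach as the paper: both combine convergence in $\mu_m$-measure of $f_N \to f$ with the uniform $L^{2p}(\mu_m)$ bound on $e^{f_N}$ and $e^{f}$, then run a Vitali-type splitting argument with Cauchy--Schwarz on the bad set. The only cosmetic difference is that the paper splits on the level set $\{|e^{f_N} - e^{f}| \ge \alpha\}$ and then works back to a condition on $|f_N - f|$, whereas you split directly on $\{|f_N - f| \ge \delta\}$ and use the mean value inequality on the good set; your version is slightly more direct but otherwise identical in spirit.
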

\begin{proof}

For any $\alpha > 0$, $p \ge 1$ and large $N$ we can split up the integral as follows,

\begin{equation}
\begin{split}
\|e^{f_N(u)} - e^{f(u)}\|^p_{L^p(d\mu_m)} &= \int_{e^{f_N(u)} - e^{f(u)} < \alpha} |e^{f_N(u)} - e^{f(u)}|^p d\mu_m + \int_{e^{f_N(u)} - e^{f(u)} \ge \alpha} |e^{f_N(u)} - e^{f(u)}|^p d\mu_m. \\
\end{split}
\end{equation}
We now bound each individual part of the integral:
\begin{equation*}
\begin{split}
\|e^{f_N(u)} - e^{f(u)}&\|^p_{L^p(d\mu_m)} \\
&\le \alpha^p + \mu_m(|e^{f_N(u)} - e^{f(u)}| \ge \alpha)^{1/2} \cdot (\int |e^{f_N(u)} - e^{f(u)}|^{2p} d\mu_m)^{1/2} \\
&\le \alpha^p + C(m,p)\mu_m(|e^{f_N(u)} - e^{f(u)}| \ge \alpha)^{1/2}  \\
&\le \alpha^p + C(m,p)\mu_m(e^{f(u)}|e^{f_N(u) - f(u)} -1| \ge \alpha)^{1/2}  \\
&\le \alpha^p + C(m,p)\left( \mu_m(e^{f(u)} > 1/\alpha) +\mu_m(|e^{f_N(u) - f(u)} -1| \ge \alpha^2 \right)^{1/2}\\
&\le \alpha^p + C(m,p)\left( \mu_m(e^{f(u)} > 1/\alpha)^{1/2} + (\mu_m(|f_N(u) - f(u)| \ge \ln(1+\alpha^2) )^{1/2} \right).\\
\end{split}
\end{equation*}
Taking the limsup as $N \rightarrow \infty$, and then as $\alpha \rightarrow 0$ we have 
\begin{equation}
\begin{split}
\limsup_{N \rightarrow \infty}\|e^{f_N(u)} - e^{f(u)}\|^p_{L^p(d\mu_m)}  &\le  \alpha^p + C(m,p)\mu_m(e^{f(u)} > 1/\alpha)^{1/2}\\
&\qquad \qquad +\limsup_{N \rightarrow \infty}  C(m,p)\mu_m(|f_N(u) - f(u)| \ge \ln(1+\alpha^2) )^{1/2}\\
\limsup_{N \rightarrow \infty} \|e^{f_N(u)} - e^{f(u)}\|^p_{L^p(d\mu_m)} &\le \alpha^p + C(m,p)\mu_m(e^{f(u)} > 1/\alpha)^{1/2}\\
\end{split}
\end{equation}
 by convergence in $\mu_m$-measure. Taking the $\liminf$ over all $\alpha >0$ we have  $$\limsup_{N \rightarrow \infty} \|e^{f_N(u)} - e^{f(u)}\|_{L^p(d\mu_m)}  = 0,$$ which proves convergence in $L^p(\mu_m)$ for any $p \ge 1$. 
\end{proof}
In addition we show convergence in $\mu_m^{\epsilon}$ for any $\epsilon > 0$.

\begin{lemma} \label{1016}
For sufficiently small $m$ we have the convergence $e^{f_N(u)} \rightarrow e^{f(u)}$ in $L^p(\mu_m^{\epsilon})$.
\end{lemma}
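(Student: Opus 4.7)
The plan is to reduce the convergence in $L^p(\mu_m^\epsilon)$ to the convergence in $L^p(\mu_{m'})$ already established in Lemma \ref{1015}, using Corollary \ref{916} to disintegrate $\mu_m^\epsilon$ along the mass. Concretely, for sufficiently small $m$ and sufficiently small $\epsilon > 0$ (so that the interval $(m-\epsilon, m+\epsilon)$ lies below the threshold $\tfrac12 m_{960p}$ required by Lemma \ref{1015}), Corollary \ref{916} gives
\begin{equation*}
\|e^{f_N(u)} - e^{f(u)}\|^p_{L^p(\mu_m^\epsilon)} = \frac{\int_{m-\epsilon}^{m+\epsilon} p_0(m')\left[\int |e^{f_N(u)} - e^{f(u)}|^p d\mu_{m'}\right]dm'}{\int_{m-\epsilon}^{m+\epsilon} p_0(m')dm'}.
\end{equation*}

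By Lemma \ref{1015}, for each fixed $m' \in (m-\epsilon, m+\epsilon)$ the inner integral tends to zero as $N \to \infty$. So the proof reduces to justifying the interchange of the $N \to \infty$ limit with the integral over $m'$, for which I will invoke the dominated convergence theorem. The natural dominating function comes from the triangle inequality: for every $N$,
\begin{equation*}
\int |e^{f_N(u)} - e^{f(u)}|^p d\mu_{m'} \le 2^{p-1}\left(\int e^{pf_N(u)}d\mu_{m'} + \int e^{pf(u)}d\mu_{m'}\right).
\end{equation*}
Theorem \ref{1005} and Theorem \ref{1010} bound each of these terms by $C(m',p)$. To apply dominated convergence I need this constant to be uniformly bounded for $m'$ in the compact interval $[m-\epsilon, m+\epsilon]$. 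Tracing through the proof of Theorem \ref{1005}, the constant $C(m',p)$ has the form $\frac{C(p) e^{m'}}{\mu(A_{m'})(m')^2 C_0(m')}$, which is continuous (hence bounded) on any compact subinterval of $(0, \tfrac12 m_{960p})$ by Lemma \ref{609} and the continuity of $\mu(A_{m'})$. Since $p_0 \in L^\infty(\mathbb{R})$ by Lemma \ref{602}, the integrand $p_0(m')\cdot 2^p C(m',p)$ is bounded on $[m-\epsilon, m+\epsilon]$, providing the required dominant.

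The main (and essentially only) obstacle is this uniform-in-$m'$ control of the bound on $\int e^{pf_N(u)} d\mu_{m'}$; once established, dominated convergence immediately yields
\begin{equation*}
\lim_{N\to\infty} \int_{m-\epsilon}^{m+\epsilon} p_0(m')\left[\int |e^{f_N(u)} - e^{f(u)}|^p d\mu_{m'}\right] dm' = 0,
\end{equation*}
and dividing by the (positive) denominator $\int_{m-\epsilon}^{m+\epsilon} p_0(m')dm'$, which is bounded below by $p_0(m)\epsilon$ up to a small correction (using Lemma \ref{710}), completes the proof. Throughout, the smallness condition on $m$ is dictated by requiring $m+\epsilon < \tfrac12 m_{960p}$, exactly paralleling the passage from Theorem \ref{1010} to Corollary \ref{1012}.
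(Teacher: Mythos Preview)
Your argument is correct, but it takes a different route from the paper. The paper's proof is a one-liner: since $\mu_m^{\epsilon}$ is just $\mu$ conditioned on the positive-measure set $B_{m,\epsilon}$, we have $\mu_m^{\epsilon} \ll \mu$, so the convergence $f_N \to f$ in $\mu$-measure (from Proposition~\ref{24} or \cite{TT}) immediately gives convergence in $\mu_m^{\epsilon}$-measure; one then reruns the splitting argument of Lemma~\ref{1015} verbatim, using the $L^{2p}(\mu_m^{\epsilon})$ bounds already supplied by the second part of Theorem~\ref{1005} and Corollary~\ref{1012}.

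Your approach instead disintegrates $\mu_m^{\epsilon}$ along the mass via Corollary~\ref{916}, invokes Lemma~\ref{1015} pointwise in $m'$, and passes the limit through the $m'$-integral by dominated convergence. This is valid, and has the virtue of using Lemma~\ref{1015} as a black box rather than repeating its proof. The price is that you must check the dominant: you need $m' \mapsto C(m',p)$ bounded on $[m-\epsilon,m+\epsilon]$, which (as you correctly observe) comes from the local uniformity built into Lemma~\ref{609} and the proof of Theorem~\ref{1005}. The paper's route sidesteps this bookkeeping entirely by working directly with $\mu_m^{\epsilon}$ and the absolute continuity $\mu_m^{\epsilon} \ll \mu$, which is arguably the more natural observation here.
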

\begin{proof} 
From Section 3 of \cite{TT}, we know that $f_N(u) \rightarrow f(u)$ in $\mu$-measure and therefore in $\mu_m^{\epsilon}$-measure. This allows us to duplicate the proof of Lemma \ref{1015}.
\end{proof}

\subsection{The measures $\rho_{m}^{\epsilon}$ and $\rho_m$.} \label{mathrefs}
Now that we have proven $e^{f(u)} \in L^p(\mu)$ and that \newline $e^{f_N(u)} \rightarrow e^{f(u)}$ in $L^p(\mu_m)$ and $L^p(\mu_m^{\epsilon})$, we can construct the actual invariant measures built upon the energy term integrated against the measures $\mu_m^{\epsilon}$, $\mu_m$. We know from the previous section that $\int_{H^{1/2-}(\mathbb{T})} e^{f(u)-\frac{1}{2}\int_{\mathbb{T}} |u|^6 dx} d\mu_m$ is finite, so we next must show that this integral is non-zero, which implies that $\beta_m = \left ( \int e^{f(u)-\frac{1}{2}\int_{\mathbb{T}} |u|^6} d\mu_m  \right )^{-1}$ is finite and non-zero.
\begin{lemma} \label{1200}
For fixed $m < m_{1}$ we have $\int_{H^{1/2-}(\mathbb{T})} e^{f(u)-   \frac{1}{2}\int_{\mathbb{T}} |u|^6dx} d\mu_m > 0$. 
\end{lemma}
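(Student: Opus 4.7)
The natural approach here is Jensen's inequality. Since $\mu_m$ is a probability measure and $e^x$ is convex, we have
\begin{equation*}
\int_{H^{1/2-}(\mathbb{T})} e^{f(u)-\frac{1}{2}\int_{\mathbb{T}}|u|^6\,dx}\,d\mu_m \;\ge\; \exp\!\left(\int_{H^{1/2-}(\mathbb{T})} \Bigl(f(u)-\tfrac{1}{2}\!\int_{\mathbb{T}}|u|^6\,dx\Bigr) d\mu_m\right),
\end{equation*}
provided the integral in the exponent is a finite real number. So the plan reduces to verifying that both $f(u)$ and $\int_{\mathbb{T}}|u|^6\,dx$ are in $L^1(\mu_m)$.

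For the first term, Lemma \ref{940} combined with Lemma \ref{1007} shows that $f_N\to f$ in $L^2(\mu_m)$, hence $f\in L^2(\mu_m)\subset L^1(\mu_m)$, and in particular $\mathbb{E}_{\mu_m}[f(u)]$ is a finite real number. For the second term, Proposition \ref{829} (applied with $p=6$) gives $\mathbb{E}_{\mu_m}\!\int_{\mathbb{T}}|u|^6\,dx \le C(m,6)<\infty$, so $\int_{\mathbb{T}}|u|^6\,dx\in L^1(\mu_m)$ and its expectation against $\mu_m$ is a finite non-negative real number. Combining these two bounds, the quantity in the exponent on the right-hand side of the Jensen inequality is a finite real number, so its exponential is strictly positive.

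There is essentially no obstacle in this argument — the heavy lifting has already been done by Theorem \ref{1010} (to know that $e^{f(u)}\in L^1(\mu_m)$, so the left-hand side is also well-defined and finite) and by Proposition \ref{829} and Lemma \ref{940}. One should just note that the smallness condition $m<m_1$ is inherited from the hypothesis $m<\tfrac{1}{2}m_{480p}$ needed in those earlier results (with $p=1$), which guarantees that all the $L^p(\mu_m)$ estimates invoked above apply. The conclusion $\int e^{f(u)-\frac{1}{2}\int|u|^6\,dx}\,d\mu_m>0$ then follows immediately, and hence $\beta_m$ is a finite positive normalizing constant, completing the construction of $\rho_m$.
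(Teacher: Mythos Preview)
Your proof is correct and follows essentially the same approach as the paper: apply Jensen's inequality to reduce the problem to showing $\mathbb{E}_{\mu_m}[f(u)-\tfrac12\int|u|^6\,dx]>-\infty$, then use the $L^2(\mu_m)$ convergence of $f_N\to f$ (Lemma~\ref{1007}/Lemma~\ref{940}) for the first term and Proposition~\ref{829} for the second. The citation of Theorem~\ref{1010} is not strictly needed for positivity but is a reasonable aside.
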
 
\begin{proof} The integral is equal to $\mathbb{E}_{\mu_m} ( e^{f(u)-  \frac{1}{2}\int_{\mathbb{T}} |u|^6dx})$ which by Jensen's inequality is \newline $\ge e^{\mathbb{E}_{\mu_m} [f(u)-   \frac{1}{2}\int_{\mathbb{T}} |u|^6 dx]}$. So it suffices to prove that $ \mathbb{E}_{\mu_m} (f(u)-   \frac{1}{2}\int_{\mathbb{T}} |u|^6 dx) > -\infty$.

By H\"older's inequality, we have
\begin{equation} \label{1201}
\begin{split} 
\mathbb{E}_{\mu_m} (f(u) ) &\ge -  \mathbb{E}_{\mu_m} (|f(u)| ) \\
&\ge -\mathbb{E}_{\mu_m} (|f(u)|^2 )^{1/2}.\\
\end{split}
\end{equation} Lemma \ref{1007} tells us that $f_N \rightarrow f$ in $L^2$ and therefore $f(u)$ is bounded in $L^2$. So this term is bounded below. 

By Proposition \ref{829}, $ \mathbb{E}_{\mu_m} (\int_{\mathbb{T}} |u|^6 dx) \le C(m)  < \infty$. So this expectation is finite.  
\end{proof}
We conclude that we can define two classes of probability measures as follows:

\begin{definition} \label{1203}
For any sufficiently small $m, \epsilon > 0$ we define the measure $\rho_m^{\epsilon}$ as follows: for each measurable $E \subset H^{1/2-}(\mathbb{T})$ let \begin{equation*}
\rho_m^{\epsilon}(E) = \beta_{m,\epsilon} \int_{E} e^{f(u)-\frac{1}{2}\int_{\mathbb{T}} |u|^6dx}  d\mu_m,
\end{equation*} for an appropriate normalizing constant $\beta_{m,\epsilon}$. 
\end{definition} 

\begin{definition} \label{1204}
For any sufficiently small $m$  define the measure $\rho_m$ by letting for any measurable set $E$, 
\begin{equation*}
\rho_m(E) = \beta_{m} \int_{E} e^{f(u)- \frac{1}{2} \int_{\mathbb{T}}|u|^6dx}  d\mu_m,
\end{equation*}
 for an appropriate normalizing constant $\beta_{m}$. 
\end{definition} 

These are the most natural definitions of the measures $\rho_m^{\epsilon}$ and $\rho_m$, however, by not defining $\rho_m$ to be the limit of $\rho_m^{\epsilon}$ we still have to prove the weak convergence $\rho_m^{\epsilon} \rightarrow \rho_m$,  and prove that $\underset{\epsilon \rightarrow 0}{\lim} \, \beta_{m,\epsilon} = \beta_m$. We will need the following key lemma. 

\begin{lemma} \label{1205}
For any sufficiently small $m$ and any bounded, uniformly continuous function $F(u)$ on $H^{\sigma}(\mathbb{T})$  we have $$\lim_{\epsilon \rightarrow 0 }\int_{H^{\sigma}(\mathbb{T})} F(u)e^{f(u)-\frac{1}{2}\int_{\mathbb{T}}|u|^6dx}  d\mu_{m}^{\epsilon} =  \int_{H^{\sigma}(\mathbb{T})} F(u)e^{f(u)-\frac{1}{2}\int_{\mathbb{T}}|u|^6dx}  d\mu_{m}.$$ 
\end{lemma}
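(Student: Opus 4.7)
The plan is to handle the fact that $u \mapsto e^{f(u) - \frac{1}{2}\int_{\mathbb{T}}|u|^6 dx}$ is not continuous in the $H^{\sigma}(\mathbb{T})$-norm by a two-stage approximation. First I would replace both $f(u)$ and $\int_{\mathbb{T}}|u|^6 dx$ by their frequency truncations $f_N(u)$ and $\int_{\mathbb{T}}|u_N|^6 dx$, which depend only on finitely many Fourier coefficients and hence are continuous on $H^{\sigma}(\mathbb{T})$. Then I would cap the resulting exponential at height $M$ by setting $G_N^M(u) := F(u)\min\bigl(e^{f_N(u) - \frac{1}{2}\int_{\mathbb{T}}|u_N|^6 dx}, M\bigr)$, producing a bounded continuous function of $u$ on $H^{\sigma}(\mathbb{T})$. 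The weak convergence from Theorem \ref{903} then applies directly to $G_N^M$, while the two approximation errors are controlled uniformly in $\epsilon$ for $\epsilon$ small.

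Given $\eta > 0$, the triangle inequality bounds the difference in the lemma by (I) $+$ (II) $+$ (III), where (I) is the truncation error $|\int F e^{f - \frac{1}{2}\int|u|^6}\,d\nu - \int F e^{f_N - \frac{1}{2}\int|u_N|^6}\,d\nu|$ for each $\nu \in \{\mu_m, \mu_m^\epsilon\}$, (II) is the capping error $|\int F e^{f_N - \frac{1}{2}\int|u_N|^6}\,d\nu - \int G_N^M\,d\nu|$ for each such $\nu$, and (III) is the weak-convergence difference $|\int G_N^M\,d\mu_m - \int G_N^M\,d\mu_m^\epsilon|$. For (I), I would write the integrand difference as $(e^f - e^{f_N})e^{-\frac{1}{2}\int|u|^6} + e^{f_N}\bigl(e^{-\frac{1}{2}\int|u|^6} - e^{-\frac{1}{2}\int|u_N|^6}\bigr)$; the first summand has $L^1(\nu)$-norm at most $\|F\|_\infty \|e^f - e^{f_N}\|_{L^1(\nu)}$ (using $e^{-(\cdot)/2}\le 1$), which tends to $0$ as $N \to \infty$ by Lemma \ref{1015} and Lemma \ref{1016}, while for the second summand the Lipschitz bound $|e^{-a/2} - e^{-b/2}| \le \tfrac{1}{2}|a-b|$ on $[0,\infty)$ combined with Cauchy--Schwarz yields the bound $\tfrac{1}{2}\|F\|_\infty \|e^{f_N}\|_{L^2(\nu)}\|\int|u|^6 - \int|u_N|^6\|_{L^2(\nu)}$; the first factor is uniformly bounded by Theorem \ref{1005} and Corollary \ref{1012}, and the second factor tends to $0$ by expanding $\int|u|^6 - \int|u_N|^6$ as a sum of terms each containing a factor of $u-u_N$ and invoking Proposition \ref{829} together with H\"older.

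For (II), by Cauchy--Schwarz and Chebyshev the capping error is bounded by $\|F\|_\infty \int 1_{\{e^{f_N} > M\}} e^{f_N}\,d\nu \le \|F\|_\infty \nu(e^{f_N} > M)^{1/2}\|e^{f_N}\|_{L^2(\nu)} \le \|F\|_\infty \|e^{f_N}\|_{L^2(\nu)}^2/M$, which by the uniform-in-$N$ bound on $\|e^{f_N}\|_{L^2(\nu)}$ is at most $C(m)/M$, uniformly in $N$ and in small $\epsilon$. For (III), $G_N^M$ is bounded by $M\|F\|_\infty$ and depends continuously on the finite set of Fourier coefficients $\{\hat{u}(n) : |n| \le N\}$, hence is bounded and uniformly continuous on $H^{\sigma}(\mathbb{T})$, so Theorem \ref{903} makes (III) arbitrarily small once $\epsilon$ is sufficiently small (with $N$ and $M$ fixed). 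Uniformity of (I) and (II) over $\nu \in \{\mu_m\} \cup \{\mu_m^\epsilon : 0 < \epsilon < \epsilon_0\}$ follows from Corollary \ref{916}, which realizes each $\mu_m^\epsilon$ as a weighted average of the $\mu_{m'}$ for $m' \in [m-\epsilon, m+\epsilon]$, reducing $\mu_m^\epsilon$-estimates to uniform $\mu_{m'}$-estimates on a small compact interval around $m$.

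Choosing $N$ large enough to make (I) $< \eta/3$, then $M$ large enough to make (II) $< \eta/3$, and finally $\epsilon$ small enough to make (III) $< \eta/3$ yields the claimed limit. The main obstacle will be verifying the required uniformity of the $L^2(\mu_m^\epsilon)$-convergence rates of both $e^{f_N} \to e^f$ and $\int|u_N|^6 \to \int|u|^6$ in $\epsilon$: this relies on the extension of Lemma \ref{819} and hence of Proposition \ref{829} to $\mu_m^\epsilon$, which was noted immediately after the statement of Lemma \ref{819}, combined with the averaging formula of Corollary \ref{916}.
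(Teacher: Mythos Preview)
Your argument is correct and follows essentially the same strategy as the paper: reduce to a weak-convergence statement by frequency-truncating $f\to f_N$ and $\int|u|^6\to\int|u_N|^6$, then control the approximation errors using Lemmas~\ref{1015}, \ref{1016}, Theorem~\ref{1005}, Corollary~\ref{1012} and Proposition~\ref{829}. The structural difference is how boundedness of the weak-convergence integrand is obtained. The paper additionally replaces $F(u)$ by $F(u_N)$ (its terms (1) and (7)) so that the entire integrand in the middle term depends on only finitely many Fourier coefficients; on the supports of $\mu_m$ and $\mu_m^\epsilon$ these coefficients are bounded by the mass constraint, so the truncated exponential is automatically bounded there and no cap is needed. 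You instead keep $F(u)$ intact and introduce the explicit cap $\min(\,\cdot\,,M)$, trading the extra $F\to F(u_N)$ step for an extra parameter $M$ and the tail estimate (II). Both are clean; your route has the mild advantage of producing a globally bounded integrand rather than relying on boundedness on the support.

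One small point: your claim that $G_N^M$ is \emph{uniformly} continuous on $H^\sigma(\mathbb{T})$ is not immediate, since $f_N(u)-\tfrac12\int|u_N|^6$ is a polynomial in the Fourier coefficients and need not be uniformly continuous before capping. This does not affect the argument: Theorem~\ref{903} establishes weak convergence, and by the Portmanteau theorem this already gives convergence against all bounded \emph{continuous} test functions, which $G_N^M$ certainly is. Alternatively, multiply $G_N^M$ by a smooth cutoff $\chi(\|u\|_{L^2}^2)$ that is $1$ on $[0,m+1]$ and compactly supported; this does not change any of the integrals and makes the integrand genuinely bounded uniformly continuous.
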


\begin{proof}

Fix a large value of $N$. Repeated application of the triangle inequality implies that for each $\epsilon >0$ we have
\begin{equation*}
\begin{split}
&\left| \int_{ H^{\sigma}(\mathbb{T})}F(u)e^{f(u)-\frac{1}{2}\int|u|^6dx}  d\mu_{m}^{\epsilon} - \int_{H^{\sigma}(\mathbb{T})} F(u)e^{f(u)-\frac{1}{2}\int|u|^6dx}  d\mu_{m} \right|\\ 
&\le \int_{H^{\sigma}(\mathbb{T})} |F(u)-F(u_N)|e^{f(u)-\frac{1}{2}\int|u|^6dx}  d\mu_{m}^{\epsilon}
+ \int_{H^{\sigma}(\mathbb{T})} F(u_N)e^{-\frac{1}{2}\int |u|^6dx}|e^{f(u)} - e^{f_N(u)}| d\mu_{m}^{\epsilon}\\ 
&+ \int_{H^{\sigma}(\mathbb{T})} F(u_N)e^{f_N(u)}|e^{-\frac{1}{2}\int |u|^6 dx} - e^{-\frac{1}{2}\int |u_N|^6 dx}| d\mu_{m}^{\epsilon} \\
&+ |\int_{H^{\sigma}(\mathbb{T})}  F(u_N)e^{f_N(u)-\frac{1}{2}\int|u_N|^6dx}  d\mu_{m}^{\epsilon} -\int_{H^{\sigma}(\mathbb{T})}  F(u_N)e^{f_N(u)-\frac{1}{2}\int|u_N|^6dx} d\mu_m| \\
& + \int_{H^{\sigma}(\mathbb{T})} F(u_N)e^{f_N(u)}|e^{\int -\frac{1}{2}|u_N|^6 dx}-e^{\int -\frac{1}{2}|u|^6 dx}| d\mu_{m} +  \int_{H^{\sigma}(\mathbb{T})} F(u_N)e^{-\frac{1}{2}\int |u|^6dx}|e^{f(u)} - e^{f_N(u)}| d\mu_{m}\\
&+\int_{H^{\sigma}(\mathbb{T})} |F(u)-F(u_N)|e^{f(u)-\frac{1}{2}\int|u|^6dx}  d\mu_{m}. \\
\end{split}
\end{equation*}

Uniform continuity of $F$ implies that there exists a positive, increasing function $G:\mathbb{R}^+ \rightarrow \mathbb{R}^+$ satisfying $\lim_{x \rightarrow 0} G(x) = 0$ and $|F(u)-F(v)| \le G(\|u-v\|_{H^{\sigma}})$ for all $u \ne v$.

There are seven terms, we bound them as follows:
\begin{enumerate}[leftmargin=*]
\item{} By H\"older's inequality, Lemma \ref{901} and Theorem \ref{1005}
\begin{equation*}
\begin{split}
\int_{H^{\sigma}(\mathbb{T})} |F(u)-F(u_N)|e^{f(u)-\frac{1}{2}\int|u|^6dx} d\mu_{m}^{\epsilon}
&\le \|F(u) - F(u_N)\|_{L^2(\mu_{m}^{\epsilon})} \|e^{f(u)} \|_{L^2(\mu_{m}^{\epsilon})}\\
&\le\left(\int_{\|u-u_N\|_{H^{\sigma}} > \delta} 4\|F\|_{L^{\infty}}^2d\mu_m^{\epsilon} + G(\delta) ^2 \right)^{1/2}C(m)^{1/2}\\
&\le \left((4\|F\|_{L^{\infty}}^2\mu_m^{\epsilon}(\|u-u_N\|_{H^{\sigma}} > \delta) + G(\delta)^2) C(m)\right)^{1/2}\\
&\le  \left((\frac{C\|F\|_{L^{\infty}}^2}{p_0(m)\delta^2N^{1-2\sigma}} + G(\delta)^2)C(m)\right)^{1/2}\\
&\le  \frac{C(m)\|F\|_{L^{\infty}}}{\delta N^{1/2-\sigma}} + C(m)G(\delta),\\
\end{split}
\end{equation*}
for some constant $C(m)$.

\item{} We have $\int_{H^{\sigma}(\mathbb{T})} F(u_N)e^{-\frac{1}{2}\int |u|^6dx}|e^{f(u)} - e^{f_N(u)}| d\mu_{m}^{\epsilon} \le \|F\|_{L^{\infty}}\|e^{f(u)} - e^{f_N(u)}\| _{L^1(\mu_{m}^{\epsilon})}$. By Lemma \ref{1016}, the limit as $N \rightarrow \infty$ of this quantity is $0$.

\item{} By H\"older's inequality and Theorem \ref{1010}, we have
\begin{equation*}
\begin{split}
\int_{H^{\sigma}(\mathbb{T})} F(u_N)e^{f_N(u)}|e^{-\frac{1}{2}\int |u|^6 dx} &- e^{-\frac{1}{2}\int |u_N|^6 dx}| d\mu_{m}^{\epsilon}\\ 
&\le \|F\|_{L^{\infty}} \| e^{f_N(u)}\|_{L^2(\mu_m^{\epsilon})}\| e^{- \frac{1}{2}\int |u|^6 dx} - e^{-\frac{1}{2}\int |u_N|^6 dx} \|_{L^2(\mu_m^{\epsilon})} \\
&\le \|F\|_{L^{\infty}} C(m) ^{1/2} \left(\int_{H^{\sigma}(\mathbb{T})} ( e^{- \frac{1}{2}\int |u|^6 dx} - e^{- \frac{1}{2}\int |u_N|^6 dx})^2 d\mu_m^{\epsilon} \right)^{1/2}. \\
\end{split}
\end{equation*}
Now we bound the integral $\int ( e^{-\frac{1}{2}\int |u|^6 dx} - e^{-\frac{1}{2}\int |u_N|^6 dx})^2 d\mu_m^{\epsilon}$ by splitting into the region where $|\|u\|^6_{L^6} - \|u_N\|^6_{L^6}| \le \delta$ and $|\|u\|^6_{L^6} - \|u_N\|^6_{L^6}| > \delta$, 
\begin{equation*}
\begin{split}
\int_{H^{\sigma}(\mathbb{T})} &( e^{-\frac{1}{2}\int |u|^6 dx} - e^{-\frac{1}{2}\int |u_N|^6 dx})^2 d\mu_m^{\epsilon}\\
&\le \mu_m^{\epsilon}(|\frac{1}{2}\|u\|^6_{L^6} - \frac{1}{2}\|u_N\|^6_{L^6}| > \delta) + \int_{|\frac{1}{2}\|u\|^6_{L^6} - \frac{1}{2}\|u_N\|^6_{L^6}| \le \delta} ( e^{-\frac{1}{2}\int |u|^6 dx} - e^{-\frac{1}{2}\int |u_N|^6 dx})^2 d\mu_m^{\epsilon}.\\ 
\end{split}
\end{equation*}
In the second term we can factor out whichever of $e^{-\frac{1}{2}\int |u|^6 dx}$ and $e^{-\frac{1}{2}\int |u_N|^6 dx}$ is smaller and we're left with something $\le (1-e^{-\delta})^2$. Noting that $e^x \ge 1+x$ for all $x$, the integrand is $\le \delta^2$.

By Markov's inequality, and applying H\"older's inequality twice, the first term is bounded by 
\begin{equation*}
\begin{split}
\mu_m^{\epsilon}(|\|u\|^6_{L^6} - \|u_N\|^6_{L^6}| > 2\delta) &\le \frac{1}{2\delta}\int_{H^{\sigma}(\mathbb{T})} \left| \int_{\mathbb{T}} |u|^6 - |u_N|^6 dx \right| d\mu_m^{\epsilon}\\
&\le \frac{1}{2\delta}\int_{H^{\sigma}(\mathbb{T})} \int_{\mathbb{T}} 3|u-u_N|\cdot (|u|^5 + |u_N|^5) dx  d\mu_m^{\epsilon}     \\
&\le \frac{3}{2\delta}\int_{H^{\sigma}(\mathbb{T})} \|u-u_N\|_{L^6}(\|u\|_{L^6}^5 + \|u + (u_N-u)\|_{L^6}^5) d\mu_m^{\epsilon} . \\
&\le \frac{C}{\delta} (\mathbb{E}_{\mu_m^{\epsilon}} (\|u-u_N\|_{L^6}^6))^{1/6}(\mathbb{E}_{\mu_m^{\epsilon}} (\|u\|_{L^6}^6)^{5/6}+\mathbb{E}_{\mu_m^{\epsilon}} (\|u-u_N\|_{L^6}^6)^{5/6}).\\
\end{split}
\end{equation*}
Applying Proposition \ref{829} and Corollary \ref{916}, this is $\le \frac{C(m)}{\delta N^{1/6}}$.

So we conclude that $\int (e^{- \frac{1}{2}\int |u|^6 dx} - e^{- \frac{1}{2}\int |u_N|^6 dx})^2 d\mu_m^{\epsilon} \le \frac{C(m)}{\delta N^{1/6}} + \delta^2$.

Therefore 
\begin{equation*} \label{1209}
\int_{H^{\sigma}(\mathbb{T})} F(u_N)e^{f_N(u)}|e^{-\frac{1}{2}\int |u|^6 dx} - e^{-\frac{1}{2}\int |u_N|^6 dx}| d\mu_{m}^{\epsilon} \le C(m)\|F\|_{L^{\infty}} \left( \frac{1}{\delta^{1/2}N^{1/12}} + \delta \right).
\end{equation*}

\item{} By weak convergence of $\mu_m^{\epsilon} \rightarrow \mu_m$  we have $$\lim_{\epsilon \rightarrow 0} |\int_{H^{\sigma}(\mathbb{T})}  F(u_N)e^{f_N(u)-\frac{1}{2}\int|u_N|^6dx}  d\mu_{m}^{\epsilon} - \int_{H^{\sigma}(\mathbb{T})}  F(u_N)e^{f_N(u)-\frac{1}{2}\int|u_N|^6dx} d\mu_m| =0.$$ 

\item{} By the same argument as case $(3)$ we have 
\begin{equation*} \label{1210}
\int_{H^{\sigma}(\mathbb{T})} F(u_N)e^{f_N(u)}|e^{-\frac{1}{2}\int |u|^6 dx} - e^{-\frac{1}{2}\int |u_N|^6 dx}| d\mu_{m} \le C(m)\|F\|_{L^{\infty}} \left( \frac{1}{\delta^{1/2}N^{1/12}} + \delta \right).
\end{equation*}

\item{} We have $\int_{H^{\sigma}(\mathbb{T})} F(u_N)e^{-\frac{1}{2}\int |u|^6dx}|e^{f(u)} - e^{f_N(u)}| d\mu_{m} \le \|F\|_{L^{\infty}}\|e^{f(u)} - e^{f_N(u)}\| _{L^1(\mu_{m})}$. By Lemma \ref{1015}, the limit as $N \rightarrow \infty$ of this quantity is $0$.

\item{} By the same argument as $(1)$ we have
\begin{equation*}
\begin{split}
\int_{H^{\sigma}(\mathbb{T})} |F(u)-F(u_N)|e^{f(u)-\frac{1}{2}\int |u|^6dx} d\mu_{m} &\le \|F(u) - F(u_N)\|_{L^2(\mu_{m}^{\epsilon})} \|e^{f(u)} \|_{L^2(\mu_{m}^{\epsilon})}\\
&\le \left((\frac{C\|F\|_{L^{\infty}}^2}{p_0(m)\delta^2N^{1-2\sigma}} + G(\delta)^2) C(m)\right)^{1/2}\\
&\le \frac{C(m)\|F\|_{L^{\infty}}}{\delta N^{1/2-\sigma}} + C(m)G(\delta).\\
\end{split}
\end{equation*}
\end{enumerate}
Combining these 7 cases and taking the $\limsup_{N \rightarrow \infty}$ we have that for any fixed $\delta >0$, 
$$\limsup_{\epsilon \rightarrow 0} |\int_{H^{\sigma}(\mathbb{T})} F(u)e^{f(u)-\frac{1}{2}\int|u|^6dx}  d\mu_{m}^{\epsilon} - \int_{H^{\sigma}(\mathbb{T})} F(u)e^{f(u)-\frac{1}{2}\int|u|^6dx}  d\mu_{m}| \le C(m)( \delta\|F\|_{L^{\infty}} + G(\delta)).$$
This inequality holds for all $\delta>0$. The right hand side can be made arbitrarily small by selecting $\delta$ close enough to $0$. 

Therefore $$\lim_{\epsilon \rightarrow 0} |\int_{H^{\sigma}(\mathbb{T})} F(u)e^{f(u)-\frac{1}{2}\int|u|^6dx}  d\mu_{m}^{\epsilon} - \int_{H^{\sigma}(\mathbb{T})} F(u)e^{f(u)-\frac{1}{2}\int|u|^6dx}  d\mu_{m}| = 0.$$ 

\end{proof}

Setting $F(u)=1$ we have the following corollary:
\begin{corollary} \label{1212}
For each sufficiently small $m$ we have $\underset{\epsilon \rightarrow 0}{\lim} \, \beta_{m,\epsilon} = \beta_m$.
\end{corollary}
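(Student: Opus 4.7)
The plan is straightforward: this is an immediate consequence of Lemma \ref{1205} once we observe that the constant function $F \equiv 1$ is a bounded, uniformly continuous function on $H^{\sigma}(\mathbb{T})$ and therefore a legitimate test function.

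First I would unpack the definitions: by Definition \ref{1203} and Definition \ref{1204} the normalizing constants are
\begin{equation*}
\beta_{m,\epsilon}^{-1} = \int_{H^{\sigma}(\mathbb{T})} e^{f(u) - \frac{1}{2}\int_{\mathbb{T}} |u|^6 dx} \, d\mu_m^{\epsilon},
\qquad
\beta_m^{-1} = \int_{H^{\sigma}(\mathbb{T})} e^{f(u) - \frac{1}{2}\int_{\mathbb{T}} |u|^6 dx} \, d\mu_m,
\end{equation*}
chosen so that $\rho_m^{\epsilon}$ and $\rho_m$ are probability measures. Then I would apply Lemma \ref{1205} to the bounded, uniformly continuous function $F(u) \equiv 1$ to conclude that $\beta_{m,\epsilon}^{-1} \to \beta_m^{-1}$ as $\epsilon \to 0$.

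Next I would invoke Lemma \ref{1200}, which guarantees that $\beta_m^{-1} > 0$ for $m$ sufficiently small, so in particular $\beta_m^{-1}$ is a finite nonzero positive number. By continuity of the reciprocal map $x \mapsto 1/x$ at every nonzero point, convergence $\beta_{m,\epsilon}^{-1} \to \beta_m^{-1}$ with $\beta_m^{-1} > 0$ implies $\beta_{m,\epsilon} \to \beta_m$, completing the proof.

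There is no real obstacle here since all the hard work has already been done: the $L^p(\mu_m)$ and $L^p(\mu_m^{\epsilon})$ integrability of $e^{f(u)}$ (Theorem \ref{1010} and Corollary \ref{1012}), the $L^p$ convergence $e^{f_N(u)} \to e^{f(u)}$ (Lemmas \ref{1015} and \ref{1016}), the bound on $\mathbb{E}_{\mu_m}(\|u\|_{L^6}^6)$ (Proposition \ref{829}), the weak convergence $\mu_m^{\epsilon} \to \mu_m$ (Theorem \ref{903}), and the positivity of the Gibbs factor integral (Lemma \ref{1200}) are all packaged into Lemma \ref{1205}. The corollary is then essentially a one-line deduction, as already indicated by the remark preceding the statement.
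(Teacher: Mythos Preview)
Your proposal is correct and matches the paper's approach exactly: the paper simply writes ``Setting $F(u)=1$ we have the following corollary,'' which is precisely your argument of applying Lemma~\ref{1205} with $F\equiv 1$ to get $\beta_{m,\epsilon}^{-1}\to\beta_m^{-1}$ and then passing to reciprocals using the positivity from Lemma~\ref{1200}. Your write-up is slightly more detailed than the paper's one-line remark, but the content is identical.
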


Now we prove weak convergence of $\rho_m^{\epsilon} \rightarrow \rho_{m}$.  
\begin{theorem} \label{1215}
There exists a constant $m'$ such that for each $ m <m'$ the sequence of measures  $\rho_m^{\epsilon}$  converges weakly to $\rho_{m}$ in $H^{\sigma}(\mathbb{T})$. 
\end{theorem}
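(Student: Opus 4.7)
The plan is to reduce weak convergence to the two main ingredients already established: the integral convergence in Lemma \ref{1205} and the normalizing constant convergence in Corollary \ref{1212}. By definition of weak convergence in $H^{\sigma}(\mathbb{T})$, it suffices to show that for every bounded, uniformly continuous function $F$ on $H^{\sigma}(\mathbb{T})$,
\begin{equation*}
\lim_{\epsilon \to 0} \int_{H^{\sigma}(\mathbb{T})} F(u)\, d\rho_m^{\epsilon}(u) = \int_{H^{\sigma}(\mathbb{T})} F(u)\, d\rho_m(u).
\end{equation*}
Using the definitions of $\rho_m^{\epsilon}$ and $\rho_m$, this is equivalent to showing
\begin{equation*}
\lim_{\epsilon \to 0}\, \beta_{m,\epsilon} \int F(u) e^{f(u)-\frac{1}{2}\int_{\mathbb{T}}|u|^{6}dx} d\mu_m^{\epsilon} = \beta_m \int F(u) e^{f(u)-\frac{1}{2}\int_{\mathbb{T}}|u|^{6}dx} d\mu_m.
\end{equation*}

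First I would handle the integral factor. Since $F$ is bounded and uniformly continuous, Lemma \ref{1205} applies directly to the function $F$ (not just to $F \equiv 1$): the lemma gives
\begin{equation*}
\lim_{\epsilon \to 0} \int F(u)e^{f(u)-\frac{1}{2}\int_{\mathbb{T}}|u|^{6}dx} d\mu_{m}^{\epsilon} = \int F(u)e^{f(u)-\frac{1}{2}\int_{\mathbb{T}}|u|^{6}dx} d\mu_{m}.
\end{equation*}
Next I would handle the normalizing factor: Corollary \ref{1212}, which is itself just Lemma \ref{1205} applied to $F \equiv 1$, yields $\lim_{\epsilon \to 0} \beta_{m,\epsilon} = \beta_m$, and the positivity of $\beta_m$ is guaranteed by Lemma \ref{1200} for $m < m'$ sufficiently small. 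Combining these two limits via the product rule for limits gives the desired convergence of $\int F\, d\rho_m^{\epsilon}$ to $\int F\, d\rho_m$.

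There is no real obstacle: all of the hard analytic work (controlling $F(u) - F(u_N)$ in $\mu_m^{\epsilon}$ via Lemma \ref{901}, passing $e^{f_N(u)} \to e^{f(u)}$ in $L^p(\mu_m^{\epsilon})$ and $L^p(\mu_m)$ via Lemmas \ref{1015} and \ref{1016}, controlling $e^{-\frac{1}{2}\int|u|^{6}}$ differences via Proposition \ref{829}, and applying weak convergence $\mu_m^{\epsilon} \to \mu_m$ at the level of the truncated functional $F(u_N)e^{f_N(u)-\frac{1}{2}\int|u_N|^{6}}$ via Theorem \ref{903}) has already been absorbed into Lemma \ref{1205}. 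The only thing to verify is that the smallness threshold $m'$ is chosen so that all invocations are legal simultaneously, namely small enough for Theorem \ref{1010}, Corollary \ref{1012}, Lemmas \ref{1015}--\ref{1016}, Lemma \ref{1200}, and Lemma \ref{1205} to hold; taking $m' = \frac{1}{2}m_{960}$ (or any strictly smaller value) suffices. The proof is then a one-line application of the two cited results.
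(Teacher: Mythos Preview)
Your proposal is correct and matches the paper's proof essentially line for line: reduce to the definition of weak convergence, apply Lemma \ref{1205} to handle the integral factor, apply Corollary \ref{1212} to handle the normalizing constant, and combine. The only cosmetic difference is that the paper writes out the triangle inequality decomposition $\beta_{m,\epsilon}|\int\cdots d\mu_m^{\epsilon} - \int\cdots d\mu_m| + |\beta_{m,\epsilon}-\beta_m|\int\cdots d\mu_m$ explicitly, whereas you invoke the product rule for limits; these are the same argument.
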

This theorem defines the constant $m'$ that will appear throughout the final section and in the statement of Theorem \ref{17}.

\begin{proof}
Consider any sufficiently small  $m >0$. It suffices to show that for any uniformly continuous function $F(u)$ on $H^{\sigma}(\mathbb{T})$  we have 
\begin{equation} \label{1216}
\lim_{\epsilon \rightarrow 0 } |\beta_{m,\epsilon} \int_{H^{\sigma}(\mathbb{T})} F(u)e^{f(u)-\frac{1}{2}\int|u|^6dx}  d\mu_{m}^{\epsilon} -  \beta_m\int_{H^{\sigma}(\mathbb{T})} F(u)e^{f(u)-\frac{1}{2}\int|u|^6dx}  d\mu_{m}| = 0.
\end{equation} The triangle inequality yields
\begin{equation}
\begin{split}
|\beta_{m,\epsilon} \int_{H^{\sigma}(\mathbb{T})} F(u)&e^{f(u)-\frac{1}{2}\int|u|^6dx}  d\mu_{m}^{\epsilon} -  \beta_m\int_{H^{\sigma}(\mathbb{T})} F(u)e^{f(u)-\frac{1}{2}\int|u|^6dx}  d\mu_{m}| \\
&\le \beta_{m,\epsilon}| \int_{H^{\sigma}(\mathbb{T})} F(u)e^{f(u)-\frac{1}{2}\int|u|^6dx}  d\mu_{m}^{\epsilon} -\int_{H^{\sigma}(\mathbb{T})} F(u)e^{f(u)-\frac{1}{2}\int|u|^6dx}  d\mu_{m}| \\
&\qquad \qquad \qquad \qquad  \qquad \quad + |\beta_{m,\epsilon}-\beta_m| \int_{H^{\sigma}(\mathbb{T})} F(u)e^{f(u)-\frac{1}{2}\int|u|^6dx}  d\mu_{m}.\\
\end{split}
\end{equation}
By Lemma \ref{1205}, the first term goes to $0$ as $\epsilon \rightarrow 0$. By Corollary \ref{1212}, the second term goes to $0$ as $\epsilon   \rightarrow 0$. 
\end{proof}

\section{Proof of invariance}
In this final section we prove the invariance of the measure $\rho_m$ and use this invariance to prove Theorem \ref{17}. We know from Theorem 1.3 of \cite{Burq} that the base measure $\rho$ is invariant with respect to equation \eqref{1} and that the set of  initial data $u_0 \in H^{1/2-}(\mathbb{T})$ that produce a global solution has $\rho$-measure $1$. Therefore this set also has $\rho_m^{\epsilon}$-measure $1$ for each $m,\epsilon$ and we will easily prove that $\rho_m^{\epsilon}$ is an invariant measure using a bump function argument. 

It remains to prove that $\rho_m$ is an invariant measure, and that the set of initial data that produce global solutions has $\rho_m$-measure $1$ for each sufficiently small $m$. Invariance will follow from weak convergence $\rho_m^{\epsilon} \rightarrow \rho_m$ and the existence of solutions will follow from the Prokhorov and Skohorod theorems, which are stated below. Burq, Thomann and Tzvetkov use these theorems repeatedly to construct invariant measures in \cite{Burq}.

In preparation for the Prokhorov and Skohorod theorems, we will need the following definition.
\begin{definition} \label{1301}
Let $\{\lambda_N, N \ge 0\}$ be a sequence of probability measures on  a metric space $\mathcal{S}$. The sequence $\{\lambda_N, N \ge 0\}$ is tight if for every $\delta  >0$ there exists a compact set $K_{\delta}$ such that $\lambda_N(K_{\delta}) \ge 1- \delta$ for all $N \ge 0$.
\end{definition}

Thus if a sequence of measures is 'tight' it means most of the support of the measures stays in a compact set, and does not diverge to infinity too quickly.
Note that $\mathcal{S}$ must be a metric space, so we will prove that for each $m < m'$ the sequence $\{ \rho^{\epsilon}_m: \epsilon > 0\}$ of measures is tight in $H^{\sigma}(\mathbb{T})$, not $H^{1/2-}(\mathbb{T})$ which is not a metric space. We now present the Prokhorov and Skohorod theorems. 
\begin{theorem*}[Prokhorov] \label{1304}
Assume that $\{\lambda_N, N \ge 0\}$ is a tight sequence of probability measures on a metric space $\mathcal{S}$. Then $\{\lambda_N, N \ge 0\}$ is weakly compact, meaning there is a subsequence $N_k$ and a measure $\lambda_{\infty}$ such that  $\lambda_{N_k} \rightarrow \lambda_{\infty}$ weakly. 
\end{theorem*}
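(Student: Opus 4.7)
The statement is the classical Prokhorov theorem, so my plan is simply to reconstruct the standard proof, emphasizing only the ingredients that make the argument go through in the metric space setting relevant to later sections. I will produce a subsequence by compactness plus diagonalization, and use the Riesz representation theorem to package the limiting linear functional as a measure, with tightness ensuring total mass one.

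The plan is to proceed as follows. First, using tightness, for every $k \ge 1$ choose a compact $K_k \subset \mathcal{S}$ with $\lambda_N(K_k) \ge 1 - \tfrac{1}{k}$ for all $N$, and set $K_\infty = \overline{\bigcup_k K_k}$; this is separable and $\sigma$-compact. Restrict attention to the restriction functionals $\Lambda_N(f) = \int_{\mathcal{S}} f\, d\lambda_N$ for $f \in C_b(\mathcal{S})$. Second, on each compact $K_k$ the space $C(K_k)$ is separable in the sup norm, so choose a countable dense family $\{f_{k,j}\}_{j \ge 1}$; by Tietze extension, view each $f_{k,j}$ as a bounded continuous function on $\mathcal{S}$. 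Third, apply a diagonal extraction: since $|\Lambda_N(f_{k,j})| \le \|f_{k,j}\|_\infty$ is bounded, extract a subsequence $N_\ell$ such that $\Lambda_{N_\ell}(f_{k,j})$ converges for every pair $(k,j)$.

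Next, I would upgrade pointwise convergence on the dense family to a bounded linear functional $\Lambda_\infty$ on $C_b(\mathcal{S})$. The key step is: given $f \in C_b(\mathcal{S})$ and $\delta > 0$, use tightness to find $K_k$ with $\lambda_N(\mathcal{S} \setminus K_k) < \delta$ uniformly in $N$, approximate $f|_{K_k}$ by some $f_{k,j}$ within $\delta$, and estimate
\begin{equation*}
|\Lambda_{N_\ell}(f) - \Lambda_{N_{\ell'}}(f)| \le 2\delta\|f\|_\infty + \delta(\lambda_{N_\ell}(K_k)+\lambda_{N_{\ell'}}(K_k)) + |\Lambda_{N_\ell}(f_{k,j}) - \Lambda_{N_{\ell'}}(f_{k,j})|,
\end{equation*}
which shows $\Lambda_{N_\ell}(f)$ is Cauchy and hence convergent. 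The limiting $\Lambda_\infty$ is positive, linear, and bounded with $\Lambda_\infty(1) = 1$.

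Finally, I would invoke the Riesz--Markov--Kakutani representation theorem on the locally compact $\sigma$-compact set $K_\infty$ (extending by zero outside) to produce a Borel measure $\lambda_\infty$ on $\mathcal{S}$ with $\Lambda_\infty(f) = \int f\, d\lambda_\infty$, and use tightness once more to verify $\lambda_\infty(\mathcal{S}) = 1$ by applying $\Lambda_\infty$ to continuous bump functions approximating $1_{K_k}$ and letting $k \to \infty$. The main technical obstacle is the Riesz representation step when $\mathcal{S}$ is a general metric space rather than locally compact; here it is legitimate because tightness confines the mass to the $\sigma$-compact $K_\infty$, so one can really work within the locally compact setting where Riesz applies cleanly. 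Since this is a textbook result, in the actual paper I would expect just a citation rather than a full proof written out.
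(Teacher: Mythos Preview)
Your proposal is essentially correct and, as you anticipated in your final sentence, the paper does not prove this theorem at all: it simply states Prokhorov's theorem and refers the reader to page 114 of Koralov--Sinai and page 309 of Kallenberg for a proof. So there is nothing to compare against beyond noting that your expectation of a bare citation was exactly right.

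One minor technical remark on your sketch: you assert that $K_\infty = \overline{\bigcup_k K_k}$ is locally compact in order to invoke Riesz--Markov--Kakutani, but in a general metric space a closure of a $\sigma$-compact set need not be locally compact. The cleaner route (and the one in the cited references) is either to work on each compact $K_k$ separately and patch the resulting measures together via consistency, or---when $\mathcal{S}$ is Polish, as in all the applications in this paper---to embed $\mathcal{S}$ homeomorphically into a compact metric space and run the argument there. This does not affect correctness, only presentation.
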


For a proof see page 114 of \cite{PS2} or page 309 of \cite{PS1}.
\begin{theorem*}[Skohorod] \label{1305} 
Let $\{\lambda_N, N \ge 0\}$ be a sequence of probability measures on a separable metric space $\mathcal{S}$ that converges weakly to a measure $\lambda_{\infty}$. Then there exists a probability space and random variables $X_N, N \ge 1$ and $X_{\infty}$ on $\mathcal{S}$ such that the law of $X_N$ is $\lambda_N = \mathcal{L}(X_N)$, the law of $X_{\infty}$ is $\lambda_{\infty}$ and $X_N \rightarrow X_{\infty}$ almost surely.

\end{theorem*}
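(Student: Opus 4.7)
The plan is to realize all the measures $\lambda_N, \lambda_\infty$ as push-forwards of measurable maps on one common probability space, namely $(\Omega,\mathcal F,\mathbb P)=([0,1],\mathrm{Borel},\mathrm{Leb})$, by exploiting the separability of $\mathcal S$ to build a refining sequence of countable Borel partitions whose cells have arbitrarily small diameter and $\lambda_\infty$-null boundary, and then to use the weak convergence hypothesis to match the $\lambda_N$-masses of these cells to their $\lambda_\infty$-masses.

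First I would construct, for each $n\ge 1$, a countable Borel partition $\mathcal P_n=\{A_{n,k}\}_{k\ge 1}$ of $\mathcal S$ such that $\mathrm{diam}(A_{n,k})\le 2^{-n}$ and $\lambda_\infty(\partial A_{n,k})=0$, with $\mathcal P_{n+1}$ a refinement of $\mathcal P_n$. This uses a dense countable subset $\{s_j\}$ of $\mathcal S$: cover with open balls $B(s_j,r_{n,j})$, where $r_{n,j}\in(2^{-n-2},2^{-n-1})$ is chosen so that $\lambda_\infty(\partial B(s_j,r_{n,j}))=0$ (possible since only countably many radii can have positive $\lambda_\infty$-mass on the boundary sphere), and disjointify in order. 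Pick one representative $x_{n,k}\in A_{n,k}$ for each cell.

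Second, since $\lambda_\infty(\partial A_{n,k})=0$, weak convergence gives $\lambda_N(A_{n,k})\to\lambda_\infty(A_{n,k})$ as $N\to\infty$ for every fixed $(n,k)$. On $[0,1]$ with Lebesgue measure I would then, at each level $n$, partition $[0,1]$ into adjacent sub-intervals $I_{n,k}^\infty$ of length $\lambda_\infty(A_{n,k})$ (with $\mathcal P_{n+1}$-refinement giving nested sub-intervals) and, for each $N$, into adjacent sub-intervals $I_{n,k}^N$ of length $\lambda_N(A_{n,k})$, in the same enumeration order and compatibly nested across $n$. Define simple functions $X_\infty^{(n)}(\omega)=x_{n,k}$ on $I_{n,k}^\infty$ and $X_N^{(n)}(\omega)=x_{n,k}$ on $I_{n,k}^N$. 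By construction the law of $X_\infty^{(n)}$ places mass $\lambda_\infty(A_{n,k})$ on $x_{n,k}$, a cell approximation of $\lambda_\infty$, and similarly for $X_N^{(n)}$. Passing $n\to\infty$, the diameter control $2^{-n}$ together with nesting of the partitions shows $X_\infty^{(n)}(\omega)$ is Cauchy in $\mathcal S$ for every $\omega$, so it converges to a limit $X_\infty(\omega)$ with law $\lambda_\infty$; the same construction (but taking a diagonal subsequence in $N$ so that the $I_{n,k}^N$ converge to $I_{n,k}^\infty$ fast enough at every level $n$) yields $X_N$ with $\mathcal L(X_N)=\lambda_N$.

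The main obstacle is the almost-sure convergence $X_N\to X_\infty$: this amounts to showing that the symmetric difference $I_{n,k}^N\triangle I_{n,k}^\infty$ has total Lebesgue measure tending to zero sufficiently fast in $N$ for each fixed $n$, so that by a Borel--Cantelli argument, for $\mathbb P$-a.e.\ $\omega$, for all sufficiently large $N$ the point $\omega$ lies in some $I_{n,k}^N\cap I_{n,k}^\infty$, forcing $d(X_N(\omega),X_\infty^{(n)}(\omega))\le 2^{-n}$ and hence $d(X_N(\omega),X_\infty(\omega))\le 2^{-n+1}$ in the limit. This can be arranged since $\sum_k |\lambda_N(A_{n,k})-\lambda_\infty(A_{n,k})|\to 0$ as $N\to\infty$ for each fixed $n$ (by dominated convergence, as the sum of absolute differences is bounded by $2$ and each term tends to $0$), so a diagonal subsequence achieves summability. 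Finally, the law of $X_N$ is $\lambda_N$ by construction of the sub-intervals, and the law of $X_\infty$ is $\lambda_\infty$ by the Carathéodory extension argument applied across the refining partitions; this completes the proof.
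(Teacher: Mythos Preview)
The paper does not give its own proof of this statement; it simply cites Kallenberg (\cite{PS1}, p.~79). Your sketch follows the classical partition-based construction that one finds in textbook proofs, and the overall architecture is sound. Two points, however, deserve correction.

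First, the justification ``by dominated convergence, as the sum of absolute differences is bounded by $2$ and each term tends to $0$'' is not a valid invocation of DCT: you would need a summable majorant $g(k)$ independent of $N$, and boundedness of the total sum does not provide one. The claim $\sum_k |\lambda_N(A_{n,k})-\lambda_\infty(A_{n,k})|\to 0$ is nevertheless true, but the correct argument is a Scheff\'e-type one: given $\varepsilon>0$, choose finite $K$ with $\sum_{k>K}\lambda_\infty(A_{n,k})<\varepsilon$, use pointwise convergence on the finite block $k\le K$, and control the tail $k>K$ for $\lambda_N$ via $\sum_{k>K}\lambda_N(A_{n,k})=1-\sum_{k\le K}\lambda_N(A_{n,k})$.

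Second, and more substantively, your Borel--Cantelli step with a ``diagonal subsequence in $N$'' only delivers almost-sure convergence \emph{along that subsequence}, whereas the theorem asserts $X_N\to X_\infty$ a.s.\ for the full sequence. The standard fix avoids Borel--Cantelli entirely: for each fixed $n$ and $k$, the left and right endpoints of $I_{n,k}^N$ are \emph{finite} partial sums $\sum_{j\le k}\lambda_N(A_{n,j})$, which converge to the corresponding endpoints of $I_{n,k}^\infty$. Hence for every $\omega$ in the interior of its interval $I_{n,k(n,\omega)}^\infty$ (a set of full Lebesgue measure, since the countably many endpoints are null), there exists $N_0(n,\omega)$ such that $\omega\in I_{n,k(n,\omega)}^N$ for all $N\ge N_0$, giving $d(X_N(\omega),X_\infty(\omega))\le 2^{1-n}$ for all such $N$. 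Since $n$ is arbitrary, $X_N(\omega)\to X_\infty(\omega)$ for a.e.\ $\omega$ along the full sequence, with no subsequence extraction needed.
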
 

For a proof see page 79 of \cite{PS1}.

The Prokhorov and Skohorod theorems tell us that if we have a tight sequence of measures on the set of solutions to the DNLS equation then there exists a weakly convergent subsequence that converges to a measure that also produces solutions to the DNLS equation. So our proof of Theorem \ref{17} will hinge on proving tightness of a sequence of measures that produce solutions to the DNLS equation. 

This highlights a deficiency of our current sequence of measures $\rho_m^{\epsilon}$. These are measures on $H^{\sigma}(\mathbb{T})$ that produce initial data at time $0$, not solutions on a time interval. We need to associate to $\rho_m^{\epsilon}$ a new sequence of measures on $C([-T,T] ;  H^{\sigma}(\mathbb{T}))$. 

\begin{definition} \label{1307}
Here we define the solution map, and an associated set. 

\begin{itemize} 

\item{} For any $t \in \mathbb{R}$ let $\Phi_t: L^2(\mathbb{T}) \rightarrow L^2(\mathbb{T})$ be the solution map of equation \eqref{1} for time $t$. So for any solution $u$ to equation \eqref{1} we have $\Phi_t[u(0,x)] = u(t,x)$. 

\item{} Let $\Phi$ be the map that takes input $u_0$ and outputs the solution to equation \eqref{1} on the largest possible interval. 

\item{} For any $T \in (0,\infty]$ let $\mathcal{S}_T \subset C([-T,T] ;  H^{\sigma}(\mathbb{T}))$ be the set of solutions to equation \eqref{1} on the interval $[-T,T]$. Observe that $\mathcal{S}_T$ is a metric space and inherits the subspace metric and topology of $C([-T,T] ;  H^{\sigma}(\mathbb{T}))$.

\end{itemize}

\end{definition}
We refer to $\Phi$ as a solution map and not a flow map since typically a flow map is only defined when there is an existence and uniqueness result. Our result for the DNLS does not include uniqueness, so for any $u_0$ that may produce multiple solutions to \eqref{1} we take $\Phi(u_0)$ to be the solution constructed in Theorem 1.3 of \cite{Burq} as a limit of solutions to the finite dimensional problem. We also define $\mathcal{S}_T$ to contain only these solutions, so that $\Phi^{-1}$ is a one-to-one function from $\mathcal{S}_T$ to $L^2(\mathbb{T})$.  

It is necessary to consider $\mathcal{S}_T$, and not the whole space $C([-T,T] ;  H^{\sigma}(\mathbb{T}))$ because $\Phi^{-1}$ is not well-defined on the whole space. However, $\Phi^{-1}$ is not only a  well-defined function from $\mathcal{S}_T$ to $H^{\sigma}(\mathbb{T})$ but a bounded, continuous function. This allows us to apply weak convergence to integrands containing $\Phi^{-1}$.

Also $\Phi^{-1}(\mathcal{S}_{\infty}) = \cap_{T>0} \Phi^{-1}(\mathcal{S}_{T}) $ is the set of  initial data in $H^{\sigma}(\mathbb{T})$ that produce  global solutions. We will refer to $\mathcal{S}_{T}$ and $\Phi^{-1}(\mathcal{S}_{\infty})$ throughout this final section.

\begin{definition} \label{1308}
For each $m< m'$ we define a measure $\upsilon_m$, supported on $\mathcal{S}_T$, as the pushforward of $\rho_m$ under $\Phi$. For any measurable set $E \subset \mathcal{S}_T \subset C([-T,T] ;  H^{\sigma}(\mathbb{T}))$ we let $$\upsilon_m(E) = \rho_m(\Phi^{-1}(E)).$$

For each $\epsilon >0$ we similary define $$\upsilon^{\epsilon}_{m}(E) = \rho^{\epsilon}_m(\Phi^{-1}(E)).$$
\end{definition}
By Theorem 1.3 of \cite{Burq}, $\rho_m^{\epsilon}(\Phi^{-1}(\mathcal{S}_{\infty}))=1$, implying that for each $T > 0$,  $\upsilon_m^{\epsilon}(\mathcal{S}_T)=1$. Our aim is to prove $\rho_m(\Phi^{-1}(\mathcal{S}_{\infty})) = 1$ which is equivalent to proving that $\upsilon_m(\mathcal{S}_T)=1$ for each $T >0$. For now we do not know $\upsilon_m$ is even a probability measure, and can only infer that $\upsilon_m$ has total measure $\le 1$. 

Having defined our solution map and sequence of measures on the solution space, we now prove invariance of the sequence of measures $\rho_m^{\epsilon}$.

\begin{lemma} \label{1310}
For each $m < m'$ and small $\epsilon >0$, $\rho_{m}^{\epsilon}$ is invariant with respect to $\Phi_t$ for each $t \in \mathbb{R}$.
\end{lemma}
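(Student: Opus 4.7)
The plan is to realize $\rho_m^{\epsilon}$ as a rescaled restriction of the base measure $\rho$ to the mass shell $B_{m,\epsilon}$, and then reduce invariance of $\rho_m^{\epsilon}$ to the already-established invariance of $\rho$ (Theorem 1.3 of \cite{Burq}) combined with the conservation of mass along the flow.

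First I would unpack Definition \ref{1203} using $d\mu_m^{\epsilon} = \mu(B_{m,\epsilon})^{-1}1_{B_{m,\epsilon}}\,d\mu$ and $d\rho = \beta\,e^{f(u)-\frac{1}{2}\int|u|^6\,dx}\,d\mu$ to write, for $m+\epsilon$ below the mass cutoff of $\rho$,
\begin{equation*}
d\rho_m^{\epsilon}(u) \;=\; \frac{\beta_{m,\epsilon}}{\beta\,\mu(B_{m,\epsilon})}\,1_{B_{m,\epsilon}}(u)\,d\rho(u).
\end{equation*}
Thus it suffices to establish
\begin{equation*}
\rho\bigl(\Phi_t^{-1}(E)\cap B_{m,\epsilon}\bigr) \;=\; \rho\bigl(E\cap B_{m,\epsilon}\bigr)
\end{equation*}
for every Borel set $E \subset H^{1/2-}(\mathbb{T})$ and every $t \in \mathbb{R}$.

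Next I would combine the two ingredients. Let $\Sigma = \Phi^{-1}(\mathcal{S}_{\infty})$, which has $\rho$-measure one and on which $\|\Phi_t u\|_{L^2}^2 = \|u\|_{L^2}^2$ for all $t$. Since $\rho$-a.s.\ we have $1_{B_{m,\epsilon}}(u) = 1_{B_{m,\epsilon}}(\Phi_t u)$, one rewrites
\begin{equation*}
\rho\bigl(\Phi_t^{-1}(E)\cap B_{m,\epsilon}\bigr) = \int 1_E(\Phi_t u)\,1_{B_{m,\epsilon}}(\Phi_t u)\,d\rho(u) = \int 1_{E\cap B_{m,\epsilon}}(\Phi_t u)\,d\rho(u),
\end{equation*}
and then invariance of $\rho$ applied to the bounded Borel function $1_{E\cap B_{m,\epsilon}}$ produces exactly $\rho(E\cap B_{m,\epsilon})$, closing the reduction.

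To make the substitution $1_{B_{m,\epsilon}}(u) = 1_{B_{m,\epsilon}}(\Phi_t u)$ fully rigorous in a $\rho$-integral, I would insert the bump-function approximation promised in the text: choose a sequence $\chi_n \in C_c(\mathbb{R};[0,1])$ converging pointwise to $1_{(m-\epsilon,m+\epsilon)}$, apply the invariance identity with $\chi_n(\|u\|_{L^2}^2)$ in place of $1_{B_{m,\epsilon}}(u)$ (where equality with $\chi_n(\|\Phi_t u\|_{L^2}^2)$ holds pointwise on $\Sigma$), and pass $n \to \infty$ by dominated convergence. I expect the only mildly technical point to be verifying that $\rho$ charges neither $\{\|u\|_{L^2}^2 = m-\epsilon\}$ nor $\{\|u\|_{L^2}^2 = m+\epsilon\}$, so that $\chi_n \to 1_{B_{m,\epsilon}}$ $\rho$-almost everywhere; this follows from the absolute continuity of $\rho$ with respect to $\mu$ together with the continuity of the mass distribution $p_0$ from Lemma \ref{602}. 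Everything else is a direct transcription of mass conservation on $\Sigma$ combined with invariance of $\rho$.
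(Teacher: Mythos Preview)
Your proposal is correct and takes essentially the same approach as the paper: write $d\rho_m^{\epsilon}$ as a constant multiple of $1_{B_{m,\epsilon}}\,d\rho$, then combine conservation of mass along $\Phi_t$ with the invariance of $\rho$ from \cite{Burq}. The paper's proof is in fact terser than yours---it applies $\rho(\Phi_t(B_{m,\epsilon}\cap E))=\rho(B_{m,\epsilon}\cap E)$ directly without the bump-function approximation---so your added care about the boundary sets and the approximation by $\chi_n$ is more than what the paper writes out, but the strategy is identical.
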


\begin{proof} Let $\rho$ be the measure that satisfies $d\rho = \beta 1_{\|u\|_{L^2(\mathbb{T})}^2<m'}e^{f(u) - \int \frac{1}{2}|u|^6dx}  d\mu$ for a normalizing constant $\beta$. For each $t$ let $\rho_t(E) = \rho(\Phi_{-t}(E))$.  Theorem 1.3 of \cite{Burq} states that $\rho$ is an invariant measure, meaning for each $t$ and measurable set $E \subset H^{\sigma}(\mathbb{T})$, $\rho(E) = \rho(\Phi_{t}(E))$.

We also know the measure $\rho_m^{\epsilon}$ satisfies $d\rho_m^{\epsilon} = \theta_{m,\epsilon}  1_{m-\epsilon, m+\epsilon}(\|u\|_{L^2(\mathbb{T})}^2)d\rho$ for some constant $\theta_{m,\epsilon}$. Therefore for each measurable $E$ we have
\begin{equation} \label{1314}
\begin{split}
\rho_m^{\epsilon}(E) 
&= \theta_{m,\epsilon}\int_{E} 1_{m-\epsilon, m+\epsilon}(\|u\|_{L^2(\mathbb{T})}^2)  d\rho(u) \\
&= \theta_{m,\epsilon} \int_{E \cap 1_{m-\epsilon, m+\epsilon}(\|u\|_{L^2(\mathbb{T})}^2)}  d\rho(u)\\
&= \theta_{m,\epsilon}\rho(1_{m-\epsilon, m+\epsilon}(\|u\|_{L^2(\mathbb{T})}^2) \cap E) \\
&= \theta_{m,\epsilon}\rho(\Phi_t(1_{m-\epsilon, m+\epsilon}(\|u\|_{L^2(\mathbb{T})}^2) \cap E)). \\
\end{split}
\end{equation}
By conservation of mass, $\Phi_t(1_{m-\epsilon, m+\epsilon}(\|u\|_{L^2(\mathbb{T})}^2) \cap E) = 1_{m-\epsilon, m+\epsilon}(\|u\|_{L^2(\mathbb{T})}^2) \cap \Phi_t(E)$. We have
\begin{equation*}
\begin{split}
\rho_m^{\epsilon}(E) &= \theta_{m,\epsilon}\rho(1_{m-\epsilon, m+\epsilon}(\|u\|_{L^2(\mathbb{T})}^2) \cap \Phi_t(E)) \\
&= \rho_m^{\epsilon}(\Phi_t(E)).\\
\end{split}
\end{equation*}

\end{proof}

We now turn to proving well-posedness on a set of measure $1$ with respect to each measure $\rho_m$. Once we guarantee a solution exists with probability $1$, we can prove that $\rho_m$ is also invariant on any time interval $[-T,T]$. In order to apply the Prokhorov and Skohorod theorems it is necessary to show that the sequence $\upsilon_m^{\epsilon}$ is tight. We present the following result from Burq, Thomann and Tzvetkov. They do not state this lemma explicitly, but they go through the steps of proving it while constructing the invariant measures in \cite{Burq}. 

\begin{lemma} \label{1316}
Fix two values $ 0 < \sigma  < \sigma' < \frac{1}{2}$ and a time $T>0$. If $\{\lambda_N, N \ge 0\}$ is a sequence of measures on $C([-T,T] ;  H^{\sigma}(\mathbb{T}))$ such that for all $p \ge 2$ we have 
\begin{equation}
\begin{split}
\|u\|_{L_{\lambda_N}^pL^p([-T,T] ;  H^{\sigma'}(\mathbb{T}))} &\le C\\
\|u_t\|_{L_{\lambda_N}^pL^p([-T,T]; H^{\sigma'-2}(\mathbb{T}))} &\le C\\
\end{split}
\end{equation}
uniformly in $N$, then $\{\lambda_N, N \ge 0\}$ is a tight sequence of measures on $C([-T,T] ;  H^{\sigma}(\mathbb{T}))$. 
\end{lemma}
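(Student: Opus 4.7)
The plan is to exhibit, for each $\delta > 0$, a compact subset $K_\delta \subset C([-T,T]; H^{\sigma}(\mathbb{T}))$ with $\lambda_N(K_\delta) \geq 1 - \delta$ uniformly in $N$, where $K_\delta$ is cut out by bounds on the two norms already controlled by hypothesis. The natural candidate is
\begin{equation*}
K_{R} = \bigl\{ u \in C([-T,T]; H^{\sigma}(\mathbb{T})) : \|u\|_{L^p([-T,T]; H^{\sigma'}(\mathbb{T}))} \leq R,\ \|u_t\|_{L^p([-T,T]; H^{\sigma'-2}(\mathbb{T}))} \leq R \bigr\},
\end{equation*}
for a suitable (large) $p$ depending on $\sigma, \sigma'$. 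The strategy breaks into two parts: (i) show $K_R$ is precompact in $C([-T,T]; H^{\sigma}(\mathbb{T}))$, and (ii) show $\lambda_N(K_R^c)$ can be made small uniformly in $N$ by taking $R$ large.

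For part (ii), Markov's inequality applied to each of the two norms gives
\begin{equation*}
\lambda_N\bigl(\|u\|_{L^p([-T,T]; H^{\sigma'})} > R\bigr) \leq \frac{\|u\|_{L^p_{\lambda_N} L^p([-T,T]; H^{\sigma'})}^p}{R^p} \leq \frac{C^p}{R^p},
\end{equation*}
and similarly for the time derivative norm. Hence $\lambda_N(K_R^c) \leq 2 C^p R^{-p}$, which we make smaller than $\delta$ by choosing $R = R(\delta, p)$ sufficiently large.

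For part (i), I would argue via Arzelà--Ascoli at the level of $H^{\sigma}(\mathbb{T})$: I need to show that elements of $K_R$ are uniformly bounded in $H^{\sigma}$ and equicontinuous in time with values in $H^{\sigma}$, and that the ranges lie in a precompact subset of $H^{\sigma}$. For each fixed $t$, interpolation between the bound $\|u(t)\|_{H^{\sigma'-2}} \lesssim_{t} R$ (obtained by averaging the time derivative bound over an interval of length comparable to $1$ and using the fundamental theorem of calculus from some anchor time where $\|u\|_{H^{\sigma'}}$ is finite) and the time-averaged $L^p$ bound on $\|u\|_{H^{\sigma'}}$ gives $u(t) \in H^{\sigma''}$ for some $\sigma < \sigma'' < \sigma'$ with a uniform bound, and since $H^{\sigma''}(\mathbb{T}) \hookrightarrow H^{\sigma}(\mathbb{T})$ compactly (Rellich), the pointwise-in-time ranges lie in a fixed compact set of $H^{\sigma}$. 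For equicontinuity in time, the estimate
\begin{equation*}
\|u(t_1) - u(t_2)\|_{H^{\sigma'-2}} \leq \int_{t_1}^{t_2} \|u_t(s)\|_{H^{\sigma'-2}}\, ds \leq \|u_t\|_{L^p([-T,T]; H^{\sigma'-2})} |t_1 - t_2|^{1 - 1/p}
\end{equation*}
gives H\"older continuity in $H^{\sigma'-2}$, and interpolating against the $H^{\sigma'}$ bound upgrades this to H\"older continuity in $H^{\sigma}$ with an exponent that is positive provided $p$ is chosen large enough (depending on how close $\sigma$ is to $\sigma'$). Arzel\`a--Ascoli then yields compactness of $\overline{K_R}$ in $C([-T,T]; H^{\sigma}(\mathbb{T}))$.

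The main technical obstacle is organizing the interpolation so that both the pointwise-in-time compactness (Rellich step) and the time H\"older regularity land simultaneously in $H^{\sigma}$ for the \emph{same} value of $p$; this amounts to checking that for $\sigma < \sigma' < 1/2$ one can choose $p$ large, an exponent $\theta \in (0,1)$ with $(1-\theta)\sigma' + \theta(\sigma'-2) < \sigma$, and $1 - 1/p > 0$ so that the interpolated modulus of continuity in $H^{\sigma}$ tends to zero as $|t_1 - t_2| \to 0$, uniformly over $K_R$. This is the standard Aubin--Lions type selection and can be done by taking $p$ large and $\theta$ small; once it is verified, combining parts (i) and (ii) gives $\lambda_N(\overline{K_R}) \geq 1 - \delta$ uniformly in $N$, which is precisely tightness.
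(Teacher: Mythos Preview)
The paper does not actually prove this lemma: it simply refers the reader to Sections~3--4 of \cite{Burq}, specifically their Proposition~4.11, noting that the argument is carried out there in the course of constructing invariant measures. Your Aubin--Lions/Arzel\`a--Ascoli strategy is precisely what underlies that reference, so in spirit you are doing the same thing.

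One point to tighten: your interpolation for equicontinuity uses a \emph{pointwise} $H^{\sigma'}$ bound on $u(t)$, which the hypotheses do not directly give (only $L^p$-in-time). The clean fix, and what is done in \cite{Burq}, is to interpolate at the level of space--time Sobolev spaces rather than pointwise: from $u\in L^p_t H^{\sigma'}_x$ and $u_t\in L^p_t H^{\sigma'-2}_x$ one has $u\in W^{1,p}_t H^{\sigma'-2}_x$, and complex interpolation between these two spaces yields $u\in W^{\theta,p}_t H^{\sigma'-2\theta}_x$ for each $\theta\in[0,1]$. Choosing $\theta$ small enough that $\sigma'-2\theta>\sigma$ and then $p$ large enough that $\theta p>1$ gives, via the Sobolev embedding $W^{\theta,p}_t\hookrightarrow C^{0,\theta-1/p}_t$, a uniform bound in $C^{0,\alpha}([-T,T];H^{\tilde\sigma}(\mathbb{T}))$ for some $\alpha>0$ and $\tilde\sigma\in(\sigma,\sigma')$. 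Arzel\`a--Ascoli combined with the compact embedding $H^{\tilde\sigma}(\mathbb{T})\hookrightarrow H^{\sigma}(\mathbb{T})$ then yields the desired precompactness, and your Markov argument for part~(ii) is correct as written.
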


The proof can be found in Sections 3 and 4 of \cite{Burq}, specifically Proposition 4.11.

We now prove that the sequence $\upsilon_m^{\epsilon}$ satisfies the tightness condition.
\begin{proposition} \label{1317}
For each  $m< m'$, $p \ge 2$, $\sigma' < \frac{1}{2}$ and $T>0$ there exists a constant $C(m,p,T)$ such that 
\begin{equation}
\begin{split}
\|u\|_{L_{\upsilon_m^{\epsilon}}^pL^p([-T,T] ;  H^{\sigma'}(\mathbb{T}))} &\le C(m,p,T)\\
\|u_t\|_{L_{\upsilon_m^{\epsilon}}^pL^p([-T,T]; H^{\sigma'-2}(\mathbb{T}))} &\le C(m,p,T),\\
\end{split}
\end{equation}
implying that $\upsilon_m^{\epsilon}$ is a tight sequence of measures on $\mathcal{S}_T$ for $\sigma < \frac{1}{2}$. 
\end{proposition}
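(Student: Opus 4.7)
The plan is to use the invariance of $\rho_m^{\epsilon}$ (Lemma \ref{1310}) to collapse the time-integrated Sobolev norms to single-time expectations, then bound those expectations against $d\mu_m^{\epsilon}$ by absorbing the density $\beta_{m,\epsilon}e^{f(u)-\frac{1}{2}\int|u|^6 dx}$ via Cauchy--Schwarz.

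First I would unfold the pushforward $\upsilon_m^{\epsilon} = \Phi_{\ast}\rho_m^{\epsilon}$ and apply Fubini to write
\[
\|u\|_{L^p_{\upsilon_m^{\epsilon}}L^p([-T,T];H^{\sigma'})}^p \;=\; \int_{-T}^{T}\mathbb{E}_{\rho_m^{\epsilon}}\bigl(\|\Phi_t(u_0)\|_{H^{\sigma'}}^p\bigr)\,dt.
\]
By Lemma \ref{1310} each integrand equals $\mathbb{E}_{\rho_m^{\epsilon}}(\|u\|_{H^{\sigma'}}^p)$, so the left-hand side is just $2T\cdot\mathbb{E}_{\rho_m^{\epsilon}}(\|u\|_{H^{\sigma'}}^p)$. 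Since $e^{-\frac{1}{2}\int|u|^6 dx}\le 1$, Cauchy--Schwarz and Corollary \ref{1012} give
\[
\mathbb{E}_{\rho_m^{\epsilon}}(\|u\|_{H^{\sigma'}}^p)\;\le\;\beta_{m,\epsilon}\,\|e^{f(u)}\|_{L^2(\mu_m^{\epsilon})}\cdot\bigl\|\,\|u\|_{H^{\sigma'}}^{p}\,\bigr\|_{L^2(\mu_m^{\epsilon})},
\]
with the first two factors uniformly bounded in $\epsilon$ by Corollaries \ref{1012} and \ref{1212}. For the last factor, expanding $\|u\|_{H^{\sigma'}}^{2p} = \bigl(\sum_n |g_n|^2/\langle n\rangle^{2-2\sigma'}\bigr)^p$ and invoking Lemma \ref{819} with $\alpha = 2-2\sigma' > 1$ (applicable to $\mu_m^{\epsilon}$ for small $\epsilon$) yields a bound $C(m,p)$, completing the first estimate.

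For $u_t$, I would use the DNLS equation \eqref{1} to write $u_t = i\Delta u - \partial_x(|u|^2 u)$, so that
\[
\|u_t\|_{H^{\sigma'-2}}\;\lesssim\;\|u\|_{H^{\sigma'}} + \|\partial_x(|u|^2u)\|_{H^{\sigma'-2}} \;\lesssim\; \|u\|_{H^{\sigma'}} + \bigl\||u|^2 u\bigr\|_{H^{\sigma'-1}}.
\]
Since $1-\sigma' > \tfrac{1}{2}$, the Sobolev embedding $H^{1-\sigma'}(\mathbb{T})\hookrightarrow L^{\infty}(\mathbb{T})$ gives by duality $L^1(\mathbb{T})\hookrightarrow H^{\sigma'-1}(\mathbb{T})$, so $\||u|^2 u\|_{H^{\sigma'-1}}\lesssim \|u\|_{L^3}^3$. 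Raising to the $p$th power and integrating against $\upsilon_m^{\epsilon}$, the same invariance-plus-Cauchy--Schwarz argument reduces matters to bounding $\mathbb{E}_{\mu_m^{\epsilon}}\bigl(\|u\|_{L^3}^{6p}+\|u\|_{H^{\sigma'}}^{2p}\bigr)$, which follows from Proposition \ref{829} (via Corollary \ref{916} to transfer from $\mu_{m'}$ to $\mu_m^{\epsilon}$) and Lemma \ref{819}. Tightness on $\mathcal{S}_T$ is then immediate from Lemma \ref{1316}.

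The main technical point is checking that the constants produced by Lemma \ref{819}, Proposition \ref{829}, and Corollaries \ref{1012}--\ref{1212} really are uniform in $\epsilon$ for small $\epsilon$; this is the reason Lemmas \ref{819} and \ref{816} were stated simultaneously for $\mu_m$ and $\mu_m^{\epsilon}$, and once one notes that $\rho_m^{\epsilon}$ has mass supported in $B_{m,\epsilon}\subset A_{m+\epsilon}$, the uniformity is automatic. Everything else is bookkeeping.
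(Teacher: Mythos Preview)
Your proposal is correct and follows essentially the same approach as the paper: use invariance of $\rho_m^{\epsilon}$ plus Fubini to reduce to a single-time expectation, then split off the density $e^{f(u)}$ via Cauchy--Schwarz and invoke Lemma~\ref{819} and Corollary~\ref{1012}. The only cosmetic difference is in the nonlinear term: the paper embeds $\||u|^2u\|_{H^{\sigma'-1}} \lesssim \|u\|_{L^6}^3 \lesssim \|u\|_{H^{1/3}}^3$ and thereby reduces the second estimate directly to an instance of the first (with $\sigma'=1/3$ and exponent $3p$), whereas you use the duality $L^1\hookrightarrow H^{\sigma'-1}$ to land on $\|u\|_{L^3}^3$ and then appeal separately to Proposition~\ref{829}; both routes work.
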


\begin{proof}
Expanding yields
\begin{equation}
\begin{split}
\|u\|_{L_{\upsilon_m^{\epsilon}}^pL^p([-T,T] ;  H^{\sigma}(\mathbb{T}))} =  \left( \int_{C([-T,T] ;  H^{\sigma}(\mathbb{T}))} \int_{-T}^{T} \|u(t,x)\|_{H^{\sigma}(\mathbb{T})}^p dt d\upsilon_m^{\epsilon}(u) \right)^{1/p} .
\end{split}
\end{equation}
By  invariance of the measure $\rho_m^{\epsilon}$ and Fubini's theorem , we have 
\begin{equation}
\begin{split}
\int_{C([-T,T] ;  H^{\sigma}(\mathbb{T}))} \int_{-T}^{T} &\|u(t,x)\|_{H^{\sigma}(\mathbb{T})}^p dt d\upsilon_m^{\epsilon}(u)   
\\
&= \int_{H^{\sigma}(\mathbb{T})}\int_{-T}^{T} \|\Phi_t u_0\|^p_{H^{\sigma}(\mathbb{T})} dtd\rho_m^{\epsilon} \\
&= \int_{-T}^{T} \int_{H^{\sigma}(\mathbb{T})} \|\Phi_t u_0\|^p_{H^{\sigma}(\mathbb{T})} d\rho_m^{\epsilon} dt\\
&= \int_{-T}^{T} \int_{H^{\sigma}(\mathbb{T})} \|u_0\|^p_{H^{\sigma}(\mathbb{T})} d\rho_m^{\epsilon} dt\\
&= 2T  \int_{H^{\sigma}(\mathbb{T})}\left( \sum_{n \in \mathbb{Z}}\frac{|g_n|^2}{\langle n \rangle^{2-2\sigma}} \right)^{p/2} d\rho_m^{\epsilon} \\
&= 2T \int_{H^{\sigma}(\mathbb{T})}\left( \sum_{n \in \mathbb{Z}}\frac{|g_n|^2}{\langle n \rangle^{2-2\sigma}} \right)^{p/2} \beta_{m,\epsilon}e^{f(u)- \frac{1}{2}\int_{\mathbb{T}} |u|^6 dx}d\mu_m^{\epsilon} \\
&\le 2T  \left( \int_{H^{\sigma}(\mathbb{T})} \left( \sum_{n \in \mathbb{Z}}\frac{|g_n|^2}{\langle n \rangle^{2-2\sigma}} \right)^{p} d\mu_m^{\epsilon}\right)^{1/2} \left( \int_{H^{\sigma}(\mathbb{T})} e^{2f(u)- \int_{\mathbb{T}} |u|^6 dx}d\mu_m^{\epsilon} \right)^{1/2}.\\
\end{split}
\end{equation}
By Lemma \ref{819} and Corollary \ref{1012}, this is $\le 2TC(m,p) \le C(m,p,T)$. This completes the first part of the proposition.

Now we prove the second inequality. We know that in the support of $\upsilon_m^{\epsilon}$, 
\begin{equation*}
u_t = iu_{xx} + \partial_x(|u|^2u)
\end{equation*}
 as an element of $C([-T,T] ;  H^{\sigma-2}(\mathbb{T}))$. The $iu_{xx}$ term is bounded in the $H^{\sigma-2}(\mathbb{T})$ norm by the above argument. It suffices to bound $\|\partial_x(|u|^2u)\|_{H^{\sigma-2}(\mathbb{T})}$.

By the definition of Sobolev spaces and Sobolev embedding we have $$\|\partial_x(|u|^2u)\|_{H^{\sigma-2}(\mathbb{T})} \lesssim \||u|^2u\|_{H^{\sigma-1}(\mathbb{T})} \lesssim \|u^3\|_{L^2(\mathbb{T})} =  \|u\|_{L^6(\mathbb{T})}^3 \lesssim \|u\|^3_{H^{1/3}(\mathbb{T})}.$$ Therefore we have reduced the problem to bounding $\|u\|_{L_{\upsilon_m^{\epsilon}}^{3p}L^{3p}([-T,T]; H^{1/3}(\mathbb{T}))}^3$ by $C(m,p,T)$, which we already proved in the first part. 

Therefore $$\|u_t\|_{L_{\upsilon_m^{\epsilon}}^pL^p([-T,T];H^{\sigma-2}(\mathbb{T}))} \le C(m,p,T).$$ \end{proof}

Having proven tightness of the sequence $\upsilon_m^{\epsilon}$, we can now apply the Prokhorov and Skohorod theorems to construct the weak limit $\upsilon^m$ and a set of solutions to equation \eqref{1} with distribution $\upsilon^m$. We also conclude that $\upsilon^m$ is indeed supported on $\mathcal{S}_T$.

\begin{theorem} \label{1320}
For each $m < m'$ and $T>0$ there exists a sequence $w_k$ of random variables and a random variable $w$,  all taking values in $C([-T,T] ;  H^{\sigma}(\mathbb{T}))$, and a probability measure $\upsilon^m$ supported on $\mathcal{S}_T$ such that $\mathcal{L}(w_k) = \upsilon_m^{\epsilon_k}, \mathcal{L}(w) = \upsilon^m$ and $w_k \rightarrow w$ in $C([-T,T] ;  H^{\sigma}(\mathbb{T}))$. In addition, for every bounded continuous function $F : H^{\sigma}(\mathbb{T}) \rightarrow \mathbb{R}$ we have
\begin{equation} \label{1321}
\int_{\mathcal{S}_T}  F(\Phi^{-1}(u)) d\upsilon^m(u)  = \int_{\mathcal{S}_T}  F(\Phi^{-1}(u)) d\upsilon_m(u).
\end{equation}

\end{theorem}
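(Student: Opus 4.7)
The plan is to combine Prokhorov's and Skohorod's theorems with the tightness result from Proposition \ref{1317} and the weak convergence $\rho_m^{\epsilon}\rightarrow\rho_m$ from Theorem \ref{1215}. Fix $m<m'$ and $T>0$, and let $\epsilon_k\rightarrow 0^{+}$ be an arbitrary sequence. Since Proposition \ref{1317} shows $\{\upsilon_m^{\epsilon_k}\}$ is a tight family of probability measures on the separable metric space $C([-T,T];H^{\sigma}(\mathbb{T}))$, Prokhorov's theorem supplies a subsequence (still denoted $\epsilon_k$) and a probability measure $\upsilon^m$ with $\upsilon_m^{\epsilon_k}\rightarrow \upsilon^m$ weakly. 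Skohorod's theorem then furnishes random variables $w_k,w$ on a common probability space, with $\mathcal{L}(w_k)=\upsilon_m^{\epsilon_k}$ and $\mathcal{L}(w)=\upsilon^m$, such that $w_k\rightarrow w$ almost surely in $C([-T,T];H^{\sigma}(\mathbb{T}))$.

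Next I would verify that $\upsilon^m$ is supported on $\mathcal{S}_T$. Each $\upsilon_m^{\epsilon_k}$ is supported on $\mathcal{S}_T$, so $w_k\in\mathcal{S}_T$ almost surely. Using the uniform bounds
\begin{equation*}
\|u\|_{L_{\upsilon_m^{\epsilon_k}}^{p}L^{p}([-T,T];H^{\sigma'}(\mathbb{T}))}+\|u_t\|_{L_{\upsilon_m^{\epsilon_k}}^{p}L^{p}([-T,T];H^{\sigma'-2}(\mathbb{T}))}\le C(m,p,T)
\end{equation*}
from Proposition \ref{1317} together with Fatou's lemma, $w$ inherits analogous bounds. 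These estimates allow passage to the distributional limit in the cubic term $\partial_x(|w_k|^{2}w_k)$ within $H^{\sigma-2}(\mathbb{T})$ along the almost sure subsequence, exactly as in Section 4 of \cite{Burq}, which confirms that $w$ solves \eqref{1} in the class used to define $\mathcal{S}_T$. Hence $w\in\mathcal{S}_T$ almost surely and $\upsilon^m(\mathcal{S}_T)=1$.

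For the integral identity, define the auxiliary map $g:C([-T,T];H^{\sigma}(\mathbb{T}))\rightarrow\mathbb{R}$ by $g(u)=F(u(0,\cdot))$. Evaluation at time $0$ is continuous from $C([-T,T];H^{\sigma}(\mathbb{T}))$ to $H^{\sigma}(\mathbb{T})$, and $F$ is bounded continuous, so $g$ is bounded continuous on the whole path space and agrees with $F\circ \Phi^{-1}$ on $\mathcal{S}_T$. Invoking the pushforward relations of Definition \ref{1308}, the weak convergence $\upsilon_m^{\epsilon_k}\rightarrow\upsilon^m$ on paths, and the weak convergence $\rho_m^{\epsilon_k}\rightarrow\rho_m$ on initial data from Theorem \ref{1215}, I obtain the chain
\begin{equation*}
\int F(\Phi^{-1}(u))\,d\upsilon^m(u)=\lim_{k}\int g(u)\,d\upsilon_m^{\epsilon_k}(u)=\lim_{k}\int F(u_0)\,d\rho_m^{\epsilon_k}(u_0)=\int F(u_0)\,d\rho_m(u_0)=\int F(\Phi^{-1}(u))\,d\upsilon_m(u),
\end{equation*}
where the middle two equalities use the change-of-variables formula for the pushforward $\upsilon_m^{\epsilon_k}=\rho_m^{\epsilon_k}\circ \Phi^{-1}$ and $\upsilon_m=\rho_m\circ\Phi^{-1}$.

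The main obstacle is the closedness step: one must ensure that the almost sure path-space limit of the approximate solutions $w_k$ remains in the specific class $\mathcal{S}_T$ of solutions that $\Phi$ is defined against, namely those built in \cite{Burq} as limits of finite-dimensional Galerkin truncations of \eqref{1}. The uniform tightness bounds of Proposition \ref{1317} are engineered to pass the cubic nonlinearity to the limit in $H^{\sigma-2}(\mathbb{T})$, and the Galerkin-limit construction in \cite{Burq} is stable under precisely the type of almost sure limit produced by Skohorod, so I plan to invoke their stability argument rather than reproduce it here.
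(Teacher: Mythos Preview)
Your proof is correct and follows the same overall strategy as the paper: Prokhorov plus Skorokhod to extract an almost surely convergent sequence, then pass to the limit in the equation to land in $\mathcal{S}_T$, and finally use the pushforward relations together with weak convergence of both $\upsilon_m^{\epsilon_k}$ and $\rho_m^{\epsilon_k}$ to get the integral identity.

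The one place where your argument diverges from the paper's is in the verification that $w\in\mathcal{S}_T$. You invoke the uniform $L^p$ bounds of Proposition~\ref{1317}, Fatou, and the stability machinery of \cite{Burq} to pass the cubic term to the limit. The paper does this step more directly and without citing any outside stability result: since $w_k\to w$ almost surely in $C([-T,T];H^{\sigma}(\mathbb{T}))$ with $\sigma>1/3$, one has for each fixed $t$
\[
\|\partial_x(|w_k|^2w_k)-\partial_x(|w|^2w)\|_{H^{\sigma-2}(\mathbb{T})}
\lesssim \||w_k|^2w_k-|w|^2w\|_{L^2(\mathbb{T})}
\lesssim \|w_k-w\|_{H^{1/3}}\bigl(\|w\|_{H^{1/3}}^2+\|w_k\|_{H^{1/3}}^2\bigr),
\]
and taking the supremum over $t\in[-T,T]$ shows the nonlinearity converges in $C([-T,T];H^{\sigma-2}(\mathbb{T}))$. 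This is shorter and self-contained; your route also works but imports more than is needed. Your care about whether the limit lies in the \emph{specific} solution class singled out in Definition~\ref{1307} is a fair point, though the paper, like \cite{Burq}, treats ``solves \eqref{1} on $[-T,T]$'' as sufficient for membership in $\mathcal{S}_T$.
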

\begin{proof}
 By the Prokhorov and Skorokhod theorems there exists a measure $\upsilon^m$ and a subsequence $\upsilon_{m}^{k}$ such that $\upsilon_{m}^{k} \rightarrow \upsilon^m$ weakly. In addition, there exists a sequence $w_{k}$ and a limit $w$ of random variables taking values in $C([-T,T] ;  H^{\sigma}(\mathbb{T}))$ such that $\mathcal{L}(w_k) = \upsilon_m^{\epsilon_k}, \mathcal{L}(w) = \upsilon^m$ and $w_k \rightarrow w$ almost surely in $C([-T,T] ;  H^{\sigma}(\mathbb{T}))$. 

Now we show $\upsilon^m$ is supported on $\mathcal{S}_T$. 
By Theorem 1.3 of \cite{Burq}, the DNLS equation is almost surely well-posed with respect to each measure $\rho_m^{\epsilon}$. Therefore for every $k$ the variable $w_k$ almost surely satisfies equation \eqref{1} on a set of full measure. We will exploit the almost sure convergence $w_k \rightarrow w$ in $ C([-T,T] ;  H^{\sigma}(\mathbb{T}))$ to verify that $w$ is a solution to equation \eqref{1} as well. 

For each $k$, $w_k$ satisfies 
\begin{equation} \label{1322}
 i\partial_t w_k + \Delta w_k = i\partial_x(|w_k|^2w_k).
\end{equation}

Clearly the linear terms converge in $C([-T,T] ;  H^{\sigma-2}(\mathbb{T}))$, we have $i\partial_t w_k \rightarrow i\partial_t w$ and $\Delta w_k \rightarrow \Delta w$. In addition, 
\begin{equation} \label{1323}
\begin{split}
\lim_{k \rightarrow \infty}\|i\partial_x(|w_k|^2w_k) -  i\partial_x(|w|^2w)   &\|_{C([-T,T] ;  H^{\sigma-2}(\mathbb{T}))} \\
&=  \lim_{k \rightarrow \infty} \sup_{t \in [-T,T]} \|i\partial_x(|w_k|^2w_k) - i\partial_x(|w|^2w)   \|_{H^{\sigma-2}(\mathbb{T})}\\
&\le \lim_{k \rightarrow \infty}  \sup_{t \in [-T,T]} \||w_k|^2w_k - |w|^2w\|_{L^2(\mathbb{T})}\\
&\lesssim \lim_{k \rightarrow \infty}  \sup_{t \in [-T,T]} \|w_k -w\|_{L^6(\mathbb{T})} (\|w\|^2_{L^6(\mathbb{T})} + \|w_k\|^2_{L^6(\mathbb{T})})\\
&\lesssim  \lim_{k \rightarrow \infty}  \sup_{t \in [-T,T]} \|w_k -w\|_{H^{1/3}(\mathbb{T})} (\|w\|^2_{H^{1/3}(\mathbb{T})} + \|w_k\|^2_{H^{1/3}(\mathbb{T})})\\
& =0,\\
\end{split}
\end{equation}
by convergence of $w_k \rightarrow w$ in $C([-T,T] ;  H^{\sigma}(\mathbb{T}))$. Therefore $w$ is a solution to the DNLS equation on $[-T,T]$ on a set of $\upsilon^m$ measure $1$. So $\upsilon^m$ is supported on $\mathcal{S}_T$. From now on $\upsilon^m$ will refer to the measure restricted to this set.

It remains to prove equation \eqref{1321}, an important step in proving that $\upsilon^m = \upsilon_m$.
Consider a bounded continuous function $F(u_0)$ on $H^{\sigma}(\mathbb{T})$ and let $F^* = F \circ \Phi^{-1}$. By the substitution rule, and the fact that $\Phi$ is defined $\rho_m^{\epsilon}$ a.e., we have

\begin{equation} \label{1324}
\begin{split}
\int_{\mathcal{S}_T} F^*(u) d\upsilon_m^{\epsilon_k}(u) &= \int_{u_0 \in H^{\sigma}(\mathbb{T})} F^*(\Phi(u_0)) d\rho_m^{\epsilon}(u_0) \\
\int_{\mathcal{S}_T} F(\Phi^{-1}(u)) d\upsilon_m^{\epsilon_k}(u) &= \int_{u_0 \in H^{\sigma}(\mathbb{T})} F(\Phi^{-1} \circ \Phi(u_0)) d\rho_m^{\epsilon}(u_0) \\
&= \int_{u_0 \in H^{\sigma}(\mathbb{T})} F(u_0)d\rho_m^{\epsilon}(u_0). \\
\end{split}
\end{equation}

Noting that $\Phi^{-1}$ is a continuous function from $\mathcal{S}_T \subset C([-T,T] ;  H^{\sigma}(\mathbb{T}))$ to $H^{\sigma}(\mathbb{T})$ with $\|\Phi^{-1}(u)\|_{H^{\sigma}(\mathbb{T})} \le \|u\|_{C([-T,T] ; H^{\sigma}(\mathbb{T}))}$, we have  
\begin{equation}
\begin{split}
\lim_{\epsilon_k \rightarrow 0} \int_{u_0 \in H^{\sigma}(\mathbb{T})}  F(u_0) d\rho^{\epsilon_k}_{m}(u_0) &= \lim_{\epsilon_k \rightarrow 0}\int_{\mathcal{S}_T} F(\Phi^{-1}(u))d\upsilon^{\epsilon_k}_{m}(u) \\
&= \int_{\mathcal{S}_T}  F(\Phi^{-1}(u)) d\upsilon^m(u).\\
\end{split}
\end{equation}
By weak convergence of $\rho^{\epsilon}_{m} \rightarrow \rho_m$ this limit equals 
\begin{equation} \label{1333}
\int_{u_0 \in H^{\sigma}(\mathbb{T})}  F(u_0) d\rho_{m}(u_0) = \int_{\mathcal{S}_T}  F(\Phi^{-1}(u)) d\upsilon_m(u).
\end{equation}
 Therefore for each continuous, bounded function $F$, $$\int_{\mathcal{S}_T}  F(\Phi^{-1}(u)) d\upsilon^m(u)  = \int_{\mathcal{S}_T}  F(\Phi^{-1}(u)) d\upsilon_m(u).$$
\end{proof}

We can now prove our final result on $\rho_m$, Theorem \ref{17}, which we restate for convenience.
\textbf{Theorem \ref{17}.}
\textit{There exists a small constant $m'$ such that for each $m < m'$ there exists a measure $\rho_m$ supported on the set $H^{1/2-}({\mathbb{T}}) \cap \{ \|u\|_{L^2}^2 = m\}$. There also exists a subset $\Sigma \subset H^{1/2-}(\mathbb{T}) \cap \{ \|u\|_{L^2}^2 = m\}$ of full $\rho_m$ measure such that each $u_0 \in \Sigma$ produces a global solution $u$ to \eqref{1}. The measure $\rho_m$ is invariant, meaning the random variable $u(t)$ has distribution $\rho_m$ for all $t \in \mathbb{R}$.
}
\begin{proof}[Proof of Theorem \ref{17}] 
Fix a sufficiently value of $m <m'$. We know  that $\upsilon^m(\mathcal{S}_T)=1$ for each $T \in (0,\infty]$. We must prove three results: 
\begin{enumerate}
\item{} For each $T>0$, $\rho_m(\Phi^{-1}(\mathcal{S}_{T}))  =\upsilon_m(\mathcal{S}_{T}) =1$, thus $\rho_m$ a.e. initial data produces a global solution. 

\item{} For each $T>0$ the measures $\upsilon^m$ and $\upsilon_m$ are equal. 

\item{} The measure $\rho_m$ is invariant with respect to $\Phi_t$ for all $t \in \mathbb{R}$. 
\end{enumerate}

We prove part $(1)$ for each $T>0$ and take a countable union of such sets as $T \rightarrow \infty$. This proves that $\upsilon_m(\mathcal{S}_{\infty})  =\upsilon^m(\mathcal{S}_{\infty})  =1$, and $\rho_m$ a.e. initial data produces a global solution. 

Theorem \ref{1320} states that for every bounded, continuous $F$ we have 
\begin{equation} \label{13210}
\int_{\mathcal{S}_T}  F(\Phi^{-1}(u)) d\upsilon^m(u)  = \int_{\mathcal{S}_T}  F(\Phi^{-1}(u)) d\upsilon_m(u).
\end{equation}
Setting $F=1$ tells us that $\upsilon_m(\mathcal{S}_T) = \upsilon^m(\mathcal{S}_{T})  =1$. Therefore $\rho_m(\Phi^{-1}(\mathcal{S}_{T}))=1$ for each $T>0$. This proves part $(1)$.

Now we move on to part $(2)$. Since $\rho_m(\Phi^{-1}(\mathcal{S}_{\infty}))=1$ we can conclude that for every $E \subset H^{\sigma}(\mathbb{T})$ we have 
$\rho_m(E) = \rho_m(\Phi^{-1}\circ \Phi(E)) = \upsilon_m(\Phi(E))$. Without our well-posedness result it was not clear that the sets $E$ and $\Phi^{-1} \circ \Phi(E)$  had the same $\rho_m$-measure.  We similarly define a new measure $\rho^m$ on $\Phi^{-1}(\mathcal{S}_T)$ as the pullback of $\upsilon^m$ under $\Phi$, $\rho^m(E) = \upsilon^m(\Phi(E))$. This implies that for each continuous function $F: H^{\sigma}(\mathbb{T}) \rightarrow \mathbb{R}$ we have 
\begin{equation} \label{1325}
\int_{H^{\sigma}(\mathbb{T})}  F(u_0)d\rho^{m}(u_0) = \int_{H^{\sigma}(\mathbb{T})}  F(\Phi^{-1}(u))d\upsilon^m(u).
\end{equation}
Combining equation \eqref{1325} with equation \eqref{13210} and \eqref{1333} we have
\begin{equation} \label{1340}
 \int_{H^{\sigma}(\mathbb{T})}  F(u_0)d\rho^{m}(u_0) = \int_{H^{\sigma}(\mathbb{T})}  F(u_0)d\rho_{m}(u_0), \\
\end{equation}
for any bounded continuous function $F$. Therefore $\rho_m = \rho^m$. 

By definition, $\upsilon_m$ is the pushforward of $\rho_m$, so if we can prove that $\upsilon^m$ is also the pushforward of $\rho^m$  then we conclude that $\upsilon_m = \upsilon^m$. Indeed, $\rho^m(E) = \upsilon^m(\Phi(E))$ for all $E$, and since $\upsilon^m$ is supported on $\mathcal{S}_T$, for any set $E' \subset \mathcal{S}_T$, we have $E' = \Phi(E)$ for some $E \subset H^{\sigma}(\mathbb{T})$. Therefore $$\upsilon^m(E') = \upsilon^m(\Phi(E) ) = \rho^m(E)  = \rho_m(E) =\upsilon_m(\Phi(E)) = \upsilon_m(E').$$

Since $\upsilon_m$ and $\upsilon^m$ are both supported on $\mathcal{S}_T$, this proves the two measures are equal.

We finish by proving part $(3)$, that $\rho_m$ is invariant with respect to $\Phi_t$ for each $t \in \mathbb{R}$. Consider a value of $t$ and an interval $[-T,T]$ containing $t$. By the invariance of $\rho_m^{\epsilon}$, for any bounded continuous $F$ we have 

\begin{equation} \label{1326}
\begin{split}
\int_{H^{\sigma}(\mathbb{T})}  F(u_0)d\rho_{m}^{\epsilon_k}(u_0) &= \int_{H^{\sigma}(\mathbb{T})}  F(u_0)d\rho_m^{\epsilon_k}(\Phi_{t}(u_0))\\
\int_{H^{\sigma}(\mathbb{T})}  F(u_0)d\rho_{m}^{\epsilon_k}(u_0)  &= \int_{H^{\sigma}(\mathbb{T})}  F(\Phi_{-t}(u_0))d\rho_m^{\epsilon_k}(u_0)\\
\int_{C([-T,T] ;  H^{\sigma}(\mathbb{T}))}  F(\Phi^{-1}(u))d\upsilon_{m}^{\epsilon_k}(u) &= \int_{C([-T,T] ;  H^{\sigma}(\mathbb{T}))}  F(\Phi_{-t}\circ \Phi^{-1}(u))d\upsilon_{m}^{\epsilon_k}(u).\\
\end{split}
\end{equation}

Note that even though $\Phi_t$ is not continuous, $\Phi^{-1}$ is clearly a bounded continuous function from $C([-T,T] ;  H^{\sigma}(\mathbb{T}))$ to $H^{\sigma}(\mathbb{T})$, as is $\Phi_{-t} \circ \Phi^{-1}$. Therefore we are integrating a bounded continuous function in each integral and can apply weak convergence as $k \rightarrow \infty$ to obtain 
\begin{equation} \label{1327}
\begin{split}
\lim_{k \rightarrow \infty} \int_{C([-T,T] ;  H^{\sigma}(\mathbb{T}))}  F(\Phi^{-1}(u))d\upsilon_{m}^{\epsilon_k}(u) &= \lim_{k \rightarrow \infty} \int_{C([-T,T] ;  H^{\sigma}(\mathbb{T}))}  F(\Phi_{-t} \circ \Phi^{-1}(u))d\upsilon_{m}^{\epsilon_k}(u) \\
\int_{C([-T,T] ;  H^{\sigma}(\mathbb{T}))}  F(\Phi^{-1}(u))d\upsilon_{m}(u) &= \int_{C([-T,T] ;  H^{\sigma}(\mathbb{T}))}  F(\Phi_{-t} \circ \Phi^{-1}(u))d\upsilon_{m}(u)\\
\int_{H^{\sigma}(\mathbb{T})}  F(u_0)d\rho_{m}(u_0) &= \int_{H^{\sigma}(\mathbb{T})}  F(\Phi_{-t}(u_0))d\rho_{m}(u_0) \\
\int_{H^{\sigma}(\mathbb{T})}  F(u_0)d\rho_{m}(u_0) &=  \int_{H^{\sigma}(\mathbb{T})}  F(u_0)d\rho_{m}(\Phi_t(u_0)). \\
\end{split}
\end{equation}

Therefore the distribution of $\rho_m$ is invariant with respect to $\Phi_t$.
\end{proof}
\bibliographystyle{plain}

\end{document}